\def\mathcal{\mathscr}
\newtheorem{thm}{Theorem}[section]
\newtheorem{lem}[thm]{Lemma}
\newtheorem{cor}[thm]{Corollary}
\newtheorem{prop}[thm]{Proposition}
\theoremstyle{definition}
\newtheorem{rem}[thm]{Remark}
\newtheorem{defn}[thm]{Definition}
\newcommand{\mf}[1]{{\mathfrak{#1}}}
\newcommand{\mca}[1]{{\mathcal{#1}}}
\newcommand{\mtrg}[1]{{\mathring{#1}}}
\newcommand{\fbp}[2]{ \,_{#1}\!\times_{#2}}
\def\Z{{\mathbb Z}}
\def\C{{\mathbb C}}
\def\R{{\mathbb R}}
\def\Aut{\text{\rm Aut}\,}
\def\con{\text{\rm con}\,}
\def\dim{\text{\rm dim}\,}
\def\dR{\text{\rm dR}}
\def\ep{\varepsilon} 
\def\ev{\text{\rm ev}\,}
\def\evl{\ev^{\mca{L}}}
\def\exterior{\text{\rm ext}\,}
\def\Hom{\text{\rm Hom}}
\def\id{\text{\rm id}}
\def\Image{\text{\rm Im}\,}
\def\interior{\text{\rm int}\,}
\def\Ker{\text{\rm Ker}\,}
\def\ol{\overline}
\def\part{\partial}
\def\ph{\varphi}
\def\pr{\text{\rm pr}}
\def\rk{\text{\rm rk}\,}
\def\sgn{\text{\rm sgn}}
\def\sing{\text{\rm sing}}
\def\supp{\text{\rm supp}\,}
\def\sym{\mathbb{S}\,}
\def\wh{\widehat}
\def\wt{\widetriangle}
\begin{document}
\pagestyle{plain}
\thispagestyle{plain}

\title[Chain level loop bracket and pseudo-holomorphic disks]
{Chain level loop bracket and pseudo-holomorphic disks}

\author[Kei Irie]{Kei Irie}
\address{Research Institute for Mathematical Sciences, Kyoto University,
Kyoto 606-8502, Japan}
\email{iriek@kurims.kyoto-u.ac.jp}


\begin{abstract}
Let $L$ be a Lagrangian submanifold in a symplectic vector space which is closed, oriented and spin. 
Using virtual fundamental chains of moduli spaces of nonconstant pseudo-holomorphic disks with boundaries on $L$, 
one can define a Maurer-Cartan element of a Lie bracket operation in string topology (the loop bracket) defined at chain level. 
This observation is due to Fukaya, who also pointed out its important consequences in symplectic topology. 
The goal of this paper  is to work out details of this observation. 
Our argument is based on a string topology chain model previously introduced by the author, 
and the theory of Kuranishi structures on moduli spaces of pseudo-holomorphic disks, 
which has been developed by Fukaya-Oh-Ohta-Ono. 
\end{abstract}

\begin{center} 
\large Link to an erratum for the published version 
\end{center} 
\begin{center} 
Kei Irie 
\end{center} 

The following article is identical to v2, which does \textit{not} reflect corrections and revisions made in the published version. 
The published version of this paper is available on the following webpage of J. Topology: 

https://doi.org/10.1112/topo.12140

The aim of this update is to post a link to an erratum to the published version: 

https://sites.google.com/view/kei-irie-math/errata

As explained in the erratum, all errors can be fixed and none of them affect the main result of this paper. 
Nonetheless, I think this erratum is important especially for those using technical details of this paper in their own work. 
If further errors in the published version are found, I will update the erratum on the above webpage. 

\newpage 

\maketitle

\section{Introduction} 

The study of Lagrangian submanifolds is one of central topics in symplectic topology. 
In the monumental paper \cite{Gromov_pseudoholomorphic}, 
Gromov proved that the first Betti number of 
a closed Lagrangian submanifold in $\C^n$ (with the standard symplectic structure) is nonzero, 
using moduli spaces of (perturbed) pseudo-holomorphic disks with boundaries on the Lagrangian submanifold. 
On the other hand, string topology is the study of algebraic structures on (homology of) loop spaces, 
introduced by Chas-Sullivan \cite{ChSu_99}. 

In this paper we discuss an application of \textit{chain level} string topology operations to the pseudo-holomorphic curve theory in symplectic topology. 
This idea is due to Fukaya \cite{Fukaya_06}, who also pointed out its important consequences, 
including a proof of Audin's conjecture for (closed, oriented and spin) aspherical Lagrangian submanifolds in $\C^n$, 
and a complete classification of 
orientable, closed, prime three-manifolds admitting Lagrangian embeddings into $\C^3$. 

Let us briefly sketch the key argument in \cite{Fukaya_06}. 
Let $L$ be a closed, oriented and spin Lagrangian submanifold in $\C^n$, and 
$\mca{L}L:= C^\infty(S^1, L)$ be the space of free loops on $L$. 
Let $D:= \{ z \in \C \mid |z| \le 1\}$, 
and let $\mca{M}$ denote the (compactified) moduli space of nonconstant holomorphic maps 
$(D, \partial D) \to (\C^n, L)$ modulo $\text{Aut}(D, 1)$. 
Then one can define a map $\ev_{\mca{M}}: \mca{M} \to \mca{L}L$ by $\ev_{\mca{M}}(u):= u|_{\partial D}$
(strictly speaking, this ``definition'' has an ambiguity up to parametrizations of loops, however we omit this issue for the moment). 
Considering virtual fundamental chain of the moduli space $\mca{M}$, 
the pair $(\mca{M}, \ev_{\mca{M}})$ defines a ``chain'' $x \in \mca{C}_*(\mca{L}L)$. 
Here $\mca{C}_*(\mca{L}L)$ denotes the complex of ``chains'' on $\mca{L}L$, 
on which the \textit{loop bracket} is defined and makes $\mca{C}_*(\mca{L}L)$ a dg Lie algebra. 
Since the codimension $1$ boundary of $\mca{M}$ consists of configurations of two disks glued at a point, one sees that 
the chain $x$ satisfies the \textit{Maurer-Cartan equation} 
\begin{equation}\label{170627_4} 
\partial x  - \frac{1}{2} [x,x] = 0
\end{equation} 
where $[\, , \,]$ denotes the loop bracket defined at \textit{chain level}. 
Nextly, we take a time-dependent Hamiltonian $H$ on $\C^n$ which displaces $L$, 
and define a moduli space $\mca{N}$ which consists of solutions of the Cauchy-Riemann equation perturbed by $H$. 
Then, the associated chain $y:= (\mca{N}, \ev_{\mca{N}}) \in \mca{C}_*(\mca{L} L)$ satisfies 
\begin{equation}\label{170627_5} 
\partial y - [x, y] = z 
\end{equation}
where $z$ is another chain whose ``symplectic area zero part'' 
is a cycle representing the fundamental class of $L$. 
Once we obtain chains $x$, $y$, $z$ satisfying equations (\ref{170627_4}) and (\ref{170627_5}), 
using the homotopy transfer theorem for $L_\infty$-algebras, 
one can formulate an equivalent result on homology of the free loop space (Theorem \ref{161011_1}). 
This result has the following remarkable consequences: 
\begin{itemize}
\item[(i):] 
If $L$ is aspherical, then 
there exists  $a \in H_1(L: \Z)$ with Maslov index $2$ and positive symplectic action 
(Corollary \ref{170421_1}). 
\item[(ii):] 
If $n=3$ and $L$ is prime as a three-manifold, then $L$ is diffeomorphic to $S^1$ times a closed surface
(Corollary \ref{170622_1}). 
\end{itemize} 
(i) in particular confirms Audin's conjecture for Lagrangian tori in $\C^n$, 
and (ii) gives 
a complete classification of 
orientable, closed, prime three-manifolds admitting Lagrangian embeddings into $\C^3$; 
see Section 3 for details and previous related works. 

In the above argument, 
$\mca{C}_*(\mca{L}L)$
denotes the complex of ``chains'' on $\mca{L}L$, 
on which the loop bracket is defined and makes it a dg Lie algebra. 
It is a highly nontrivial technical problem to define such chain models of the free loop space, in particular those compatible with virtual techniques in the pseudo-holomorphic curve theory. 
Partly due to this issue, 
in spite of the importance of its consequences, 
full details of the above argument have not been available so far. 

In \cite{Irie_17}, the author developed foundations for part of chain level algebraic structures (specifically, Batalin-Vilkovisky structure) in string topology, 
using de Rham chains on spaces of Moore loops with arbitrarily many marked points. 
The goal of this paper is to combine techniques from \cite{Irie_17} with the theory of Kuranishi structures \cite{FOOO_Kuranishi} 
to work out details of the argument sketched above. 

Now we describe the structure of this paper. 
The goal of the first part (Sections 2--6) is to state the main result and introduce our setup in string topology. 
Section 2 explains some preliminaries on $L_\infty$-algebras, in particular the homotopy transfer theorem. 
Section 3 states the main result (Theorem \ref{161011_1}) on homology of the free loop space.  
We also recall a few applications in symplectic topology from \cite{Fukaya_06}. 
In Sections 4--6, we introduce our setup in string topology, following \cite{Irie_17} with minor modifications, 
and reduce Theorem \ref{161011_1} to a chain-level statement (Theorem \ref{161215_1}). 
Further details will be explained in the last paragraph of Section 3. 

The goal of the second part (Sections 7--10) is to prove Theorem \ref{161215_1}. 
The plan of the proof will be explained at the beginning of Section 7. 
Our proof uses the theory of Kuranishi structures on moduli spaces of (perturbed) pseudo-holomorphic disks. 
In particular, our arguments heavily rely on \cite{FOOO_Kuranishi} by Fukaya-Oh-Ohta-Ono. 
Section 10 very briefly explains some notions in the theory of Kuranishi structures, mainly to fix notations. 

\textbf{Conventions.}
Throughout this paper all manifolds are assumed to be of $C^\infty$. 
All vector spaces are over $\R$, unless otherwise specified. 

\textbf{Acknowledgements.}
The author appreciates Kenji Fukaya for sharing his time and insights into virtual techniques in 
the pseudo-holomorphic curve theory, 
and his comments on an early version of this paper. 
The author also appreciates the Simons Center for Geometry and Physics for a great work environment.
This work is supported by JSPS Postdoctoral Fellowship for Research Abroad. 

\section {Preliminaries on $L_\infty$-algebras} 

We briefly recall basics of $L_\infty$-algebras, partially following \cite{Latschev_15}. 

\subsection{Bar construction} 

Let $C = \bigoplus_{i \in \Z} C_i$ be a $\Z\,$-graded vector space. 
For every integer $k \ge 1$, 
let $\sym_k$ denote the $k$-th symmetric group, 
and 
let us define an $\sym_k$\,-action on $C^{\otimes k}$ by 
\[ 
\rho \cdot (c_1 \otimes \cdots \otimes c_k) := \ep(\rho: c_1, \ldots, c_k) \cdot c_{\rho(1)} \otimes \cdots \otimes c_{\rho(k)}
\]
where $\ep(\rho: c_1, \ldots, c_k):= \prod_{\substack{i<j \\ \rho(i)>\rho(j)}} (-1)^{|c_i||c_j|}$. 
Let $S^kC$ denote the quotient of $C^{\otimes k}$ by the $\sym_k$-action, 
and set $SC:= \bigoplus_{1 \le k \le \infty} S^kC$.

We define a coproduct $\Delta: SC \to SC^{\otimes 2}$ by 
\[ 
\Delta(c_1 \ldots c_k) := \sum_{\substack{k_1+k_2 = k \\ \rho \in \sym_k}} \frac{\ep(\rho:c_1,\ldots,c_k)}{k_1!k_2!} \cdot c_{\rho(1)} \cdots c_{\rho(k_1)} \otimes c_{\rho(k_1+1)} \cdots c_{\rho(k)}. 
\]
Then $\Delta$ is coassociative, namely $(1 \otimes \Delta) \circ \Delta = (\Delta \otimes 1) \circ \Delta$. 
We denote $\Delta$ by $\Delta_C$ when we need to specify $C$.

\subsection{$L_\infty$-algebras and $L_\infty$-homomorphisms}

For any $\Z$-graded vector space $C$ and $n \in \Z$, 
we define a shifted complex $C[n]$ by 
$C[n]_d:= C_{n+d} \,(\forall d \in \Z)$. 

\begin{defn}
\begin{enumerate}
\item[(i):] 
An $L_\infty$-algebra is a pair of a graded vector space $C$ and 
a linear map $l: S(C[-1])  \to S(C[-1])$ such that 
\begin{itemize}
\item $|l|=-1$.
\item $l$ is a coderivation; $\Delta \circ l = (l \otimes 1 + 1 \otimes l) \circ \Delta$.
\item $l^2=0$.
\end{itemize}
For each integer $k \ge 1$, we define $l_k: S^k(C[-1])\to C[-1]$ by 
$l_k:= \pr_1 \circ l|_{S^k(C[-1])} $, where 
$\pr_1: S(C[-1]) \to S^1(C[-1]) \cong C[-1]$
denotes the projection. 
\item[(ii):]
Let $(C, l)$ and $(C', l')$ be $L_\infty$-algebras. 
An $L_\infty$-homomorphism from $(C, l)$ to $(C', l')$ is a linear map 
$f: S(C[-1]) \to S(C'[-1]) $ such that 
\begin{itemize}
\item $|f|=0$. 
\item $f$ is a coalgebra map; $\Delta_{C'[-1]} \circ f = (f \otimes f) \circ \Delta_{C[-1]}$. 
\item $\l' \circ f = f \circ l$. 
\end{itemize}
For each integer $k \ge 1$, we define $f_k: S^k(C[-1]) \to C'[-1]$ by 
$f_k:= \pr_1 \circ f|_{S^k(C[-1]) }$. 
\end{enumerate}
\end{defn}

Here is another definition of $L_\infty$-algebras and $L_\infty$-homomorphisms using exterior products. 
For each integer $k \ge 1$, let $\Lambda^k C$ denote the quotient of $C^{\otimes k}$ by the $\sym_k$-action defined by 
\[ 
\rho \cdot (c_1 \otimes \cdots \otimes c_k) : = \sgn(\rho) \cdot \ep(\rho: c_1, \ldots, c_k) \cdot c_{\rho(1)} \otimes \cdots \otimes c_{\rho(k)}. 
\] 
Then there exists a natural isomorphism 
\[
\sigma_k: (\Lambda^k C)[-k] \to S^k (C[-1]); \quad c_1 \wedge \cdots \wedge c_k \mapsto (-1)^{ \sum_i (k-i) |c_i|} c_1 \cdots c_k. 
\]
We define $\lambda_k: \Lambda^k C \to C$ by $\lambda_k:= \sigma_1^{-1} \circ l_k \circ \sigma_k$. 

Then one can define an $L_\infty$-structure on $C$ as a sequence $(\lambda_k)_{k \ge 1}$ such that 
each $\lambda_k: \Lambda^k C \to C$ is of degree $k-2$ and satisfies the equation 
\[ 
\sum_{\substack{k_1+k_2 = k+1 \\ \rho \in \sym_k}} \pm \frac{1}{k_1 ! (k-k_1)!} \lambda_{k_2} ( \lambda_{k_1}(c_{\rho(1)} \wedge \cdots \wedge c_{\rho(k_1)}) \wedge c_{\rho(k_1+1)} \wedge \cdots \wedge c_{\rho(k)}) = 0 
\]
where $\pm$ stands for appropriate signs. 
Similarly, one can define an $L_\infty$-homomorphism from $(C, (\lambda_k)_k)$ to $(C', (\lambda'_k)_k)$ as a sequence $(\ph_k)_{k \ge 1}$ such that 
each $\ph_k: \Lambda^k C \to C'$ is of degree $k-1$ and satisfies the equation 
\begin{align*} 
&\sum_{\substack{k_1+k_2=k+1 \\ \rho \in \sym_k}} \pm \frac{1}{k_2! (k_1-1)!} \cdot \ph_{k_1} (\lambda_{k_2} (c_{\rho(1)} \wedge \cdots \wedge c_{\rho(k_2)}) \wedge c_{\rho(k_2+1)} \wedge \cdots \wedge c_{\rho(k)}) \\
= &\sum_{\substack{k_1+\cdots +k_r = k \\ \rho \in \sym_k}} \pm \frac{1}{r! k_1! \cdots k_r!} \cdot \lambda'_r ( \ph_{k_1}(c_{\rho(1)} \wedge \cdots \wedge c_{\rho(k_1)}) \wedge \cdots \wedge \ph_{k_r}(c_{\rho(k-k_r+1)} \wedge \cdots \wedge c_{\rho(k)})) \\
\end{align*} 
where $\pm$ stands for appropriate signs. 

\begin{rem}\label{170829_1} 
For later purposes we need to specify signs for dg Lie algebras. 
A dg Lie algebra is an $L_\infty$-algebra such that $\lambda_k = 0$ for every $k \ge 3$. 
Setting $\partial x := \lambda_1(x)$ and $[x,y]:= \lambda_2(x, y)$, there holds 
\begin{align*} 
& [x,y] + (-1)^{|x||y|} [y,x] = 0, \\ 
& \partial [x, y] = [\partial x, y] + (-1)^{|x|} [x, \partial y], \\ 
& [x, [y,z]] + (-1)^{|x|(|y|+|z|)} [y, [z,x]] + (-1)^{|z|(|x|+|y|)} [z, [x,y]] = 0. 
\end{align*} 
\end{rem} 

We also introduce the following notions for later purposes. 

\begin{defn} 
\begin{enumerate} 
\item[(i):] Let $V$ be a vector space and $A$ be a commutative semigroup. 
A decomposition of $V$ over $A$ is a decomposition $V = \bigoplus_{a \in A} V(a)$, 
where each $V(a)$ is a subspace of $V$, 
and $V(a) + V(a') \subset V(a+a')$ for every $a, a' \in A$. 
\item[(ii):] Let $V$ and $W$ be vector spaces with decompositions over $A$. 
A linear map $f: V^{\otimes k} \to W$ respects these  decompositions if 
\[ 
f (V(a_1) \otimes \cdots \otimes V(a_k)) \subset W(a_1+ \cdots + a_k)
\] 
for every $a_1, \ldots, a_k \in A$. 
\item[(iii):] An $L_\infty$-algebra structure 
$l = (l_k)_k$ on $V$ respects the decomposition of $V$
if $l_k$ respects the decomposition for every $k \ge 1$. 
\end{enumerate} 
\end{defn} 

\subsection{Homotopy transfer theorem} 

Finally we state the homotopy transfer theorem for $L_\infty$-algebras.
The proof is only sketched since it is now standard 
(perhaps goes back to the paper by Kadeishvili \cite{Kadeishvili} on $A_\infty$-algebras). 

\begin{thm}\label{170623_2} 
Let $(C, l)$ be an $L_\infty$-algebra, and 
$H(C) := \Ker l_1/ \Image l_1$. 
Suppose that there are linear maps
\[
\iota: H_*(C) \to C_*, \quad
\pi: C_* \to H_*(C), \quad
\kappa: C_* \to C_{*+1}
\]
such that 
\[
l_1 \circ \iota  = 0, \quad
\pi \circ l_1 = 0, \quad
\pi \circ \iota = \id_{H(C)}, \quad
\id_C  - \iota \circ \pi  = l_1 \circ \kappa + \kappa \circ l_1. 
\]
Then there exist
an $L_\infty$-algebra structure $l^H$ on $H(C)$ 
and an $L_\infty$-homomorphism $p: (C, l) \to (H(C), l^H)$, 
such that $l^H_1 = 0$ and $p_1 = \pi$. 

When $C$ has a decomposition over a commutative semigroup $A$
(thus $H(C)$ also has a decomposition over $A$), 
and linear maps $\iota$, $\pi$ and $\kappa$ respect these decompositions, 
then one can take $l^H$ and $p$ so that they respect the decompositions over $A$.  
\end{thm}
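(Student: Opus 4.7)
The plan is to use the classical sum-over-trees construction, which is the standard proof of the homotopy transfer theorem (going back to Kadeishvili for $A_\infty$). Since any sequence of graded-symmetric maps $l^H_k : S^k(H(C)[-1]) \to H(C)[-1]$ extends uniquely to a coderivation on $S(H(C)[-1])$, and any sequence $p_k : S^k(C[-1]) \to H(C)[-1]$ extends uniquely to a coalgebra morphism $p : S(C[-1]) \to S(H(C)[-1])$, it suffices to define these component maps and verify the coderivation/coalgebra-map relations component-wise under $\pr_1$.

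For each isomorphism class of rooted tree $T$ with $k$ input leaves and all internal vertices of arity $\ge 2$, I would define a multilinear operation $\Phi_T: C^{\otimes k} \to C$ by decorating each internal vertex of arity $m$ with $l_m$, each internal edge with the homotopy $\kappa$, and composing from leaves toward the root. Setting
\[
p_k := \pi \circ \sum_T \frac{1}{|\Aut(T)|} \Phi_T, \qquad l^H_k := \pi \circ \sum_T \frac{1}{|\Aut(T)|} \Phi_T \circ \iota^{\otimes k}
\]
with appropriate signs (from the shifts $C \leftrightarrow C[-1]$, graded symmetrizations, and tree orientations), together with the normalizations $l^H_1 := 0$ and $p_1 := \pi$, gives the candidate structures. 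The $L_\infty$-structure equation for $l^H$ and the $L_\infty$-homomorphism equation for $p$ are then checked by expanding the relevant compositions as sums over trees with one distinguished internal edge, on which we apply the side condition $l_1 \kappa + \kappa l_1 = \id_C - \iota \circ \pi$: each term either contracts the distinguished edge (fusing two vertices into one, reproducing a coderivation term) or splits the tree at the $\iota \circ \pi$-insertion (reproducing a composition term involving $l^H$ or $p$ on the two resulting sub-trees). Boundary terms where the distinguished edge is adjacent to a leaf or to the root cancel thanks to $l_1 \circ \iota = 0$ and $\pi \circ l_1 = 0$.

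For the graded version over the semigroup $A$, observe that each $\Phi_T$ respects the decomposition as soon as every $l_m$, $\iota$, $\pi$, $\kappa$ does, because composition and tensor product of maps respecting the decomposition produce such maps (this uses precisely the semigroup law on $A$). Hence $l^H_k$ and $p_k$ as defined above automatically respect the decomposition, so no further modification is needed. The main obstacle in turning this outline into a full proof is the consistent bookkeeping of signs coming from the degree shift, the $\sym_k$-symmetrizations, and the choice of orientations on the trees; this is purely combinatorial and is presumably why the author defers the details to the existing literature.
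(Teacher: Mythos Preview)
Your approach is correct and is the standard explicit ``sum-over-trees'' construction, but it is genuinely different from what the paper does. The paper gives an inductive, obstruction-theoretic argument: it builds $l^H_{\le k}$ and $p_{\le k}$ stage by stage, at each step observing that the defect $l^H_{\le k}\circ p_{\le k} - p_{\le k}\circ l$ is a cycle in $\Hom_A(S^{k+1}C,H)$ (with boundary $f\mapsto f\circ l_1$), defining $l^H_{k+1}$ as its homology class, and then choosing $p_{k+1}$ to bound the remaining exact term. No trees, no explicit formulas, and the signs are absorbed into the coalgebra formalism.

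The trade-offs are as one would expect. Your tree formulas give closed-form expressions for $l^H_k$ and $p_k$, which is valuable if one ever needs to compute them; the price is exactly the sign bookkeeping you flag, together with the need to verify carefully that the tree formula for $p$ (the direction $C\to H$, which is the less commonly written-out one) really satisfies the $L_\infty$-morphism equations without imposing the auxiliary side conditions $\pi\kappa=0$, $\kappa\iota=0$, $\kappa^2=0$. The paper's inductive argument sidesteps both issues entirely and handles the $A$-grading by simply working inside $\Hom_A$ throughout. Your observation that the $A$-grading is automatic for tree formulas (since every ingredient respects it) is equally valid. Finally, your closing guess that the paper ``defers the details to the existing literature'' is not quite right: the paper does supply a complete (if terse) proof, just not the one you wrote.
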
 
\begin{proof}
It is sufficient to define sequences $(l^H_{\le k})_{k \ge 1}$ and $(p_{\le k})_{k \ge 1}$ satisfying the following conditions: 
\begin{itemize}
\item $p_{\le k}$ is a coalgebra map of degree $0$ from $S(C[-1])$ to $S(H[-1])$ which respects decompositions over $A$. 
\item $l^H_{\le k}$ is a coderivation of degree $-1$ from $S(H[-1])$ to $S(H[-1])$ which respects decompositions over $A$. 
\item $l^H_{\le k} \circ l^H_{\le k} = 0$ on $S^{\le k+1} (H[-1]) := \bigoplus_{1 \le i \le k+1} S^i(H[-1])$. 
\item $l^H_{\le k} \circ p_{\le k} - p_{\le k} \circ l = 0$ on $S^{\le k} (C[-1]) := \bigoplus_{1 \le i \le k} S^i(C[-1])$. 
\item $p_{\le k} = p_{\le k+1}$ on $S^{\le k} (C[-1])$. 
\item $l^H_{\le k} = l^H_{\le k+1}$ on $S^{\le k}(H[-1])$. 
\item $p_{\le 1} = \pi$, $l^H_{\le 1} =0$.
\end{itemize} 
Once we obtain these sequences, the limits 
$p:= \lim_{k \to \infty} p_{\le k}$ and 
$l^H:= \lim_{k \to \infty} l^H_{\le k}$
satisfy the conditions in the theorem. 

We can define $(l^H_{\le k})_{k \ge 1}$ and $(p_{\le k})_{k \ge 1}$ by upward induction on $k$. 
For $k=1$, $p_{\le 1}$ and $l^H_{\le 1}$ are defined by the last condition. 
We assume that we have defined $p_{\le k}$, $l^H_{\le k}$ and 
are going to define 
$p_{\le k+1}$, $l^H_{\le k+1}$. 
Let $\Hom_A(S^{k+1}C, H)$ denote the space of linear maps from $S^{k+1}C$ to $H$ preserving decompositions over $A$. Namely: 
\begin{align*} 
\Hom_A(S^{k+1}C, H)&:= \{ f \in \Hom(S^{k+1}C, H) \mid  \\ 
&f(C(a_1) \cdots C(a_{k+1} )) \subset H(a_1+\cdots+a_{k+1} ) \, (a_1, \ldots, a_{k+1} \in A)\}. 
\end{align*} 
We define a boundary operator $\partial$ on $\Hom(S^{k+1}C, H)$ by $\partial f:= f \circ l_1$, 
then the homology is isomorphic to $\Hom_A(S^{k+1}H, H)$. 
Our induction assumption shows that $l^H_{\le k} \circ p_{\le k} - p_{\le k} \circ l$ is a cycle in $\Hom_A(S^{k+1}C, H)$, thus one can define 
\[ 
l^H_{k+1}:= [p_{\le k} \circ l - l^H_{\le k} \circ p_{\le k}] \in \Hom_A(S^{k+1} H, H). 
\] 
Then 
$ l^H_{k+1} \circ p_{\le 1} + l^H_{\le k} \circ p_{\le k} - p_{\le k} \circ l$
is a null-homologous cycle, thus there exists
$p_{k+1} \in \Hom_A(S^{k+1}C, H)$ such that 
\[ 
p_{k+1} \circ l_1 = l^H_{k+1} \circ p_{\le 1} + l^H_{\le k} \circ p_{\le k} - p_{\le k} \circ l. 
\] 
Then we define a coderivation $l^H_{\le k+1}$ so that $l^H_{\le k+1} = l^H_{\le k}$ on $S^{\le k}H$, and 
\[ 
l^H_{\le k+1}|_{S^iH \to H} = \begin{cases} l^H_{k+1} &(i=k+1) \\ 0 &(i>k+1). \end{cases} 
\] 
Similarly, we define a coalgebra map $p_{\le k+1}$ so that $p_{\le k+1} = p_{\le k}$ on $S^{\le k}C$, and
\[ 
p_{\le k+1}|_{S^i C \to H} = \begin{cases}  p_{k+1} &(i=k+1) \\ 0 &(i>k+1). \end{cases}
\]
It is easy to check $l^H_{\le k+1} \circ l^H_{\le k+1} =0$ on $S^{\le k+2}H$. 
\end{proof} 

\section{Main result} 

We state the main result (Theorem \ref{161011_1}) and 
explain a few applications to symplectic topology of Lagrangian submanifolds. 
Let $\omega_n$ denote the standard symplectic form on $\C^n$, namely $\omega_n:=\sum_{j=1}^n dx_j \wedge dy_j$, 
and $L$ be a Lagrangian submanifold in $(\C^n, \omega_n)$. 
We assume that $L$ is closed (compact and $\partial L = \emptyset$), 
connected, oriented and spin. 
Let $\mu \in H^1(L: \Z)$ denote the Maslov class. 
Since $L$ is oriented, $\mu (H_1(L:\Z)) \subset 2\Z$. 

\begin{rem} 
Let us explicitly define the Maslov class $\mu$ as follows. 
Let $\Lambda(n):= U(n)/O(n)$ denote the unoriented Lagrangian Grassmannian, 
and consider maps  
\[ 
\tau: L \to \Lambda(n); \, x \mapsto T_x L, \qquad 
{\det}^2: \Lambda(n) = U(n)/O(n) \to U(1). 
\] 
Then we define $\mu: =(\text{det}^2 \circ \tau)^*[U(1)]$, 
where $U(1)$ is identified with $\{ e^{\sqrt{-1} \theta}| \theta \in \R/2\pi \Z\}$, 
and $[U(1)]$ is defined as $[U(1)] : = [d\theta]/2\pi$. 
\end{rem} 

Let $S^1:= \R/\Z$, and $\mca{L}L := C^\infty(S^1, L)$. 
We will often abbreviate $\mca{L}L$ by $\mca{L}$. 
For every $a \in H_1(L: \Z)$, we set
$\mca{L}(a):= \{ \gamma \in \mca{L} \mid [\gamma] = a\}$. 
Obviously $\mca{L} = \bigsqcup_{a \in H_1(L: \Z)} \mca{L}(a)$. 
For each $a \in H_1(L: \Z)$, 
we consider the $C^\infty$-topology on $\mca{L}(a)$ and set 
\[
H^{\mca{L}}(a)_*: = H^\sing_{*+n+\mu(a)-1}(\mca{L}(a): \R), 
\]
where the RHS is the singular homology with respect to the $C^\infty$-topology on $\mca{L}(a)$
(in the following we often omit the superscript ``$\sing$'' ). 
Now we consider the direct product
\[ 
H^\mca{L}_*:= \bigoplus_{a \in H_1(L: \Z)} H^\mca{L}(a)_* 
\]
equipped with the energy filtration; 
for each $E \in \R$, we set
\[ 
F^E H^\mca{L}_*: = \bigoplus_{\omega_n(\bar{a}) >E} H^\mca{L}(a)_* 
\] 
where $\bar{a}$ denotes the unique element in $H_2(\C^n, L)$ satisfying $\partial \bar{a}=a$.
Finally $\wh{H}^\mca{L}_*$ denotes the completion by the energy filtration: 
\[ 
\wh{H}^\mca{L}_*:= \varprojlim_{E \to \infty} H^\mca{L}_* / F^E H^\mca{L}_*. 
\]

Now let us state the main result of this paper: 

\begin{thm}\label{161011_1} 
Let $L$ be a Lagrangian submanifold in $(\C^n, \omega_n)$ 
which is closed, oriented and spin. 
Then, there exist
an $L_\infty$-structure $(l^H_k)_{k \ge 1}$ on $H^\mca{L}$ and 
$X \in \wh{H}^\mca{L}_{-1}$, 
$Y \in \wh{H}^\mca{L}_2$, 
satisfying the following conditions: 
\begin{enumerate}
\item[(i):] $l^H_1=0$. 
\item[(ii):] The $L_\infty$-structure $(l^H_k)_{k \ge 1}$ respects the decomposition of $H^\mca{L}$ over $H_1(L: \Z)$. 
In particular, the $L_\infty$-structure extends to the completion $\wh{H}^\mca{L}$. 
\item[(iii):]  There exists $c>0$ such that $X \in F^c \wh{H}^\mca{L}_{-1}$. 
\item[(iv):]
$X$ and $Y$ satisfy the following equations: 
\begin{equation}\label{170827_1} 
\sum_{k \ge 2}  \frac{1}{k!} l^H_k(X, \ldots, X) = 0, 
\end{equation}
\begin{equation}\label{170827_2} 
\biggl( \sum_{k \ge 2} \frac{1} {(k-1)!} l^H_k(Y, X, \ldots, X) \biggr)_{a=0} = (-1)^{n+1} [L]. 
\end{equation} 
Note that infinite sums in the LHS make sense by the condition (iii). 
$[L]$ in the RHS of (\ref{170827_2}) denotes the image of the fundamental class $[L] \in H_n(L: \R)$ 
by the embedding map $H_*(L: \R) \to H_*(\mca{L}(0): \R)$
which is induced by 
\[ 
L \to \mca{L}(0); \quad x \mapsto \text{constant loop at $x$}. 
\]
\end{enumerate}
\end{thm}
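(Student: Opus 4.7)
The plan is to deduce Theorem \ref{161011_1} from a chain-level Maurer-Cartan statement via the homotopy transfer theorem (Theorem \ref{170623_2}). The two ingredients are a strict dg Lie model for chains on $\mca{L}L$ that is compatible with virtual fundamental chains, and the pseudo-holomorphic disk moduli spaces $\mca{M}$ and $\mca{N}$ together with their Kuranishi structures.

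First, I would build upon \cite{Irie_17} to obtain a dg Lie algebra $(\mca{C}_*(\mca{L}L),\partial,[\,,\,])$ modelled on de Rham chains over the space of Moore loops on $L$ with arbitrarily many marked points, with small modifications so that virtual fundamental chains along evaluation maps can be pulled back into this complex. This complex carries a natural decomposition
\[
\mca{C}_*(\mca{L}L) = \bigoplus_{a \in H_1(L:\Z)} \mca{C}_*(\mca{L}(a))
\]
over $H_1(L:\Z)$ with respect to which $\partial$ and $[\,,\,]$ are homogeneous, and its homology computes $H^\mca{L}_*$ after the degree shift $*+n+\mu(a)-1$ on each summand. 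Completing with respect to the energy filtration induced by $\omega_n$ then gives a dg Lie structure on the chain-level analogue of $\wh{H}^\mca{L}_*$.

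Second, I would use the Kuranishi machinery of \cite{FOOO_Kuranishi} to equip the compactified moduli space $\mca{M}$ of nonconstant $J$-holomorphic disks $(D,\partial D)\to(\C^n,L)$ modulo $\Aut(D,1)$, and the analogous moduli $\mca{N}$ for the Cauchy-Riemann equation perturbed by a time-dependent Hamiltonian $H$ displacing $L$, with Kuranishi structures and CF-perturbations that are consistent at every codimension-one boundary stratum. Pushing forward the virtual fundamental chains along the boundary-evaluation maps $\ev_{\mca{M}},\ev_{\mca{N}}$ would produce chains $x,y,z \in \mca{C}_*(\mca{L}L)$. Since the codimension-one boundary of $\mca{M}$ consists of two-disk configurations glued at a boundary point, and the codimension-one boundary of $\mca{N}$ consists of such gluings together with a configuration whose symplectic area is zero and whose trace on $\mca{L}(0)$ represents the fundamental class of $L$, a careful boundary analysis would give the chain-level equations
\[
\partial x - \frac{1}{2}[x,x] = 0, \qquad \partial y - [x,y] = z,
\]
together with the fact that the $a=0$ component of $z$ is a cycle representing $(-1)^{n+1}[L]$. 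This is the chain-level Theorem \ref{161215_1} referred to in the introduction.

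Finally, I would apply Theorem \ref{170623_2} to transfer the $L_\infty$-structure from $\mca{C}_*(\mca{L}L)$ to $H^\mca{L}$, obtaining $(l^H_k)_{k\ge 1}$ with $l^H_1=0$ and an $L_\infty$-homomorphism $p:\mca{C}_*(\mca{L}L)\to H^\mca{L}$ respecting the decomposition over $H_1(L:\Z)$. Setting
\[
X := \sum_{k \ge 1} \frac{1}{k!}\,p_k(x,\ldots,x), \qquad Y := \sum_{k \ge 1} \frac{1}{(k-1)!}\,p_k(y,x,\ldots,x),
\]
positivity of symplectic area for nonconstant holomorphic disks would give $X \in F^c\wh{H}^\mca{L}_{-1}$ for some $c > 0$, making these infinite sums convergent in the completion. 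The standard $L_\infty$-morphism identities then convert the chain-level Maurer-Cartan equations into (\ref{170827_1}) and (\ref{170827_2}). The hardest step will be the second one: producing the chains $x,y,z$ as actual elements of a strict dg Lie chain model, with the boundary relations holding on the nose rather than only up to chain homotopy. This demands a system of Kuranishi structures and CF-perturbations on $\mca{M}$, $\mca{N}$, and their fibered-product strata that is simultaneously compatible with the gluing maps realizing the loop bracket on the Moore-loop side, with the marked-point insertions underlying the de Rham chain construction of \cite{Irie_17}, and with the parametrization ambiguity coming from the $\Aut(D,1)$ quotient; the remaining reductions, including the homotopy transfer to the completed homology, should be comparatively formal once this chain-level machinery is in place.
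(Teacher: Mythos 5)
Your proposal follows the paper's overall architecture---a chain-level Maurer--Cartan element plus homotopy transfer---and correctly identifies the production of $x,y,z$ via compatible Kuranishi structures and CF-perturbations as the technical core. But there is a genuine gap in the transfer step. You assert that the homology of your chain model $\mca{C}_*(\mca{L}L)$ equals $H^{\mca{L}}_*$, and then set $X := \sum_{k\ge 1}\frac{1}{k!}\,p_k(x,\ldots,x)$. Neither is correct as stated. The complex in question is $C^{\mca{L}}_* = \bigoplus_{a,k}C^{\dR}_{*+n+\mu(a)+k-1}(\mca{L}_{k+1}(a))$, and for each fixed $k$ the summand already has homology $H^{\sing}_*(\mca{L}(a))$ (Lemma \ref{170619_2}); so untwisted, $H_*(C^{\mca{L}}(a))$ is a large direct sum over $k$, not $H_{*+n+\mu(a)-1}(\mca{L}(a))$. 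One cannot apply homotopy transfer to this untwisted complex and land on $H^{\mca{L}}$.

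What repairs this---and what your plan omits entirely---is the decomposition of $x$ into its energy-zero part $x^0 := \sum_{k\ge 2}x(0,k)$ and the remainder $x^+ := x - x^0$. Condition (iii) of the chain-level statement (Theorem \ref{161214_2}) forces $x^0$ itself to be a Maurer--Cartan element with $[x(0,2)] = (-1)^{n+1}[L]$, so one twists to $\partial_{x^0} := \partial - [x^0,\cdot]$; Lemma \ref{161225_1} then proves, via a spectral sequence whose $d_1$ is the simplicial differential coming from loop product with $[L]$, that $H_*(C^{\mca{L}}(a),\partial_{x^0}) \cong H_{*+n+\mu(a)-1}(\mca{L}(a))$. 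Homotopy transfer is applied to the \emph{twisted} complex, with $x^+$ satisfying $\partial_{x^0}x^+ - \frac{1}{2}[x^+,x^+]=0$, and the correct formulae use $x^+$ in place of $x$: $X := \sum_{k\ge 1}\frac{1}{k!}\,p_k(x^+,\ldots,x^+)$ and $Y := \sum_{k\ge 1}\frac{1}{(k-1)!}\,p_k(y,x^+,\ldots,x^+)$. Your formula with the full $x$ breaks down concretely: for every $k$ the term $p_k(x^0,\ldots,x^0)$ lands in the energy-zero piece $H^{\mca{L}}_{-1}(0)$, so the sum has infinitely many contributions at fixed energy and does not converge in $\wh{H}^{\mca{L}}$; and even formally the result would have a nonzero energy-zero component, contradicting the required $X\in F^c\wh{H}^{\mca{L}}_{-1}$ with $c>0$. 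A secondary omission: the chains $x,y,z$ cannot be produced in one step, since that would require simultaneously perturbing infinitely many Kuranishi maps; the paper handles this via the sequence of approximate solutions connected by gauge equivalences (Theorem \ref{161215_1}), which your sketch glosses over.
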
 
\begin{rem} 
It will be possible to show that $l^H_2$ coincides (up to sign) with the Chas-Sullivan loop bracket \cite{ChSu_99}, 
and the full $L_\infty$-algebra structure is homotopy equivalent to 
the dg Lie algebra defined by the chain level loop bracket in \cite{Irie_17}. 
However, we do not give complete proofs of these claims in this paper. 
\end{rem} 

\begin{rem} 
If $l^H_k=0$ for $k \ge 3$, assuming that $l^H_2$ coincides with the loop bracket up to sign, 
(\ref{170827_2}) implies $[Y(-a), X(a)] \ne 0$ for some $a \in H_1(L: \Z)$. 
When $L$ is diffeomorphic to $S^1 \times S^2$, this equation implies nonvanishing of the Maslov class $\mu$, 
contradicting a result by Ekholm-Eliashberg-Murphy-Smith (Corollary 1.6 in \cite{EEMS}). 
Therefore, if $L$ is diffeomorphic to $S^1 \times S^2$, there exists at least one nonvanishing higher term in the LHS of (\ref{170827_2}). 
\end{rem}  

Let us quickly recall two applications of Theorem \ref{161011_1} from \cite{Fukaya_06}. 
For further results, consult the original paper \cite{Fukaya_06}. 

\begin{cor}\label{170421_1} 
Suppose that $L$ is aspherical. Then there exists $a \in H_1(L: \Z)$ such that $\mu(a)=2$, 
$\omega_n(\bar{a})>0$ and $H_n(\mca{L}(a): \R) \ne 0$. 
\end{cor}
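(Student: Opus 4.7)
The plan is to project equation (\ref{170827_2}) onto its $H_1(L;\Z)$-components via the decomposition $H^\mca{L} = \bigoplus_{a \in H_1(L;\Z)} H^\mca{L}(a)$, which is respected by $(l^H_k)_k$ by property (ii) of Theorem \ref{161011_1}. Doing so, the $a=0$ part of (\ref{170827_2}) reads
\[
\sum_{k \ge 2} \frac{1}{(k-1)!} \sum_{\substack{ a_0, a_1, \ldots, a_{k-1} \in H_1(L;\Z) \\ a_0 + a_1 + \cdots + a_{k-1} = 0 }} l^H_k(Y(a_0), X(a_1), \ldots, X(a_{k-1})) = (-1)^{n+1}[L].
\]
The RHS is nonzero: evaluation at any point of $S^1$ gives a retraction $\mca{L}(0) \to L$ of the constant-loop embedding, so $[L]$ remains nonzero in $H_n(\mca{L}(0);\R)$. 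Therefore at least one summand on the LHS is nonzero; fix such $k$ and $(a_0, a_1, \ldots, a_{k-1})$ with $a_0 + \sum_{i \ge 1} a_i = 0$, and in particular $Y(a_0) \ne 0$ and $X(a_i) \ne 0$ for $1 \le i \le k-1$.

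Next I would use asphericity to bound Maslov indices through cohomological dimension. Since $L$ is a closed, oriented, aspherical $n$-manifold, $L = B\pi$ with $\pi = \pi_1(L)$ a Poincar\'e duality group of dimension $n$, so $\text{cd}_\R \pi = n$; Shapiro's lemma gives $\text{cd}_\R C \le \text{cd}_\R \pi$ for any subgroup $C$. The classical identification of the free loop space of a $K(\pi,1)$,
\[
\mca{L}L \simeq \bigsqcup_{[\gamma]} BC_\pi(\gamma)
\]
over conjugacy classes $[\gamma]$ in $\pi$, therefore implies $H_k(\mca{L}(a);\R)=0$ for all $k>n$ and all $a \in H_1(L;\Z)$. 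Applied to the nonzero classes above, $X(a_i) \in H^\mca{L}(a_i)_{-1} = H_{n+\mu(a_i)-2}(\mca{L}(a_i);\R)$ forces $\mu(a_i) \le 2$, and $Y(a_0) \in H^\mca{L}(a_0)_{2} = H_{n+\mu(a_0)+1}(\mca{L}(a_0);\R)$ together with $\mu(H_1(L;\Z)) \subset 2\Z$ forces $\mu(a_0) \le -2$.

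Now I would use the linearity of $\mu$ to extract the desired $a$. Since $a_0 + \sum_{i \ge 1} a_i = 0$, we have
\[
\sum_{i=1}^{k-1} \mu(a_i) = -\mu(a_0) \ge 2 > 0,
\]
so some $\mu(a_i)$ with $i \ge 1$ is strictly positive; being even it satisfies $\mu(a_i) \ge 2$, and combined with $\mu(a_i) \le 2$ this gives $\mu(a_i) = 2$ for this $i$. Setting $a := a_i$: condition (iii) of Theorem \ref{161011_1}, applied to $X(a) \ne 0$, yields $\omega_n(\bar{a}) > c > 0$, and $H_n(\mca{L}(a);\R) = H_{n+\mu(a)-2}(\mca{L}(a);\R) \ni X(a) \ne 0$, as required.

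The main step is the cohomological dimension bound in the middle paragraph; this is where asphericity is genuinely used, and once the vanishing $H_{>n}(\mca{L}(a);\R)=0$ is in place, everything else reduces to formal manipulation of (\ref{170827_2}) together with linearity of $\mu$. Both the identification of components of $\mca{L}L$ with $BC_\pi(\gamma)$ and the Shapiro-type bound on $\text{cd}_\R$ of subgroups are classical, so no serious technical obstacle is expected.
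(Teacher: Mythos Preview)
Your proof is correct and follows essentially the same route as the paper's: both extract a nonzero summand from the $a=0$ component of (\ref{170827_2}), use asphericity to bound the singular homology of $\mca{L}(a)$ to degrees $\le n$, translate this into Maslov index inequalities $\mu(a_i)\le 2$ and $\mu(a_0)\le -1$, and then use additivity of $\mu$ together with evenness to force some $\mu(a_i)=2$. The only difference is cosmetic: the paper cites the vanishing $H_{>n}(\mca{L}(a):\R)=0$ from \cite{Fukaya_06} Lemma 12.11, whereas you supply the standard argument via $\mca{L}L \simeq \bigsqcup_{[\gamma]} BC_\pi(\gamma)$ and the subgroup bound on cohomological dimension.
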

\begin{proof} 
By the equation (\ref{170827_2}), 
there exist $a_1, \ldots, a_{k-1} \in H_1(L: \Z)$ such that 
\[ 
l^H_k ( Y(- (a_1+ \cdots + a_{k-1})), X(a_1), \ldots, X(a_{k-1})) \ne 0. 
\] 
On the other hand, the assumption that $L$ is aspherical implies that 
$H_i(\mca{L}) \ne 0$ only if $0 \le i \le n $ (Lemma 12.11 in \cite{Fukaya_06}). 
Therefore we obtain  
\[ 
1 \le \mu(a_1 + \cdots + a_{k-1}) \le n+1, \qquad
2-n \le \mu(a_j) \le 2 \quad (1 \le \forall j  \le k-1). 
\] 
Since $\mu(a_1+ \cdots + a_{k-1})>0$, there exists $j$ such that $\mu(a_j)>0$. 
Since $\mu$ takes values in $2\Z$, we obtain $\mu(a_j)=2$. 
Since $X(a_j) \ne 0$ we obtain $\omega_n(\bar{a_j})>0$ and $H_n(\mca{L}(a_j): \R) \ne 0$. 
\end{proof}

Corollary \ref{170421_1} in particular confirms Audin's conjecture: 
every Lagrangian torus in $\C^n$ bounds a disk with positive symplectic area and Maslov index $2$. 
Note that Cieliebak-Mohnke \cite{Cieliebak_Mohnke}
proved Audin's conjecture 
by an approach different from ours. 
For other previous results on this conjecture see \cite{Cieliebak_Mohnke}. 

Another important application is 
a complete classification of 
orientable, closed, prime three-manifolds admitting Lagrangian embeddings into $\C^3$: 

\begin{cor}\label{170622_1} 
A closed, connected, orientable and prime three-manifold $M$ admits a Lagrangian embedding into $(\C^3, \omega_3)$ 
if and only if $M$ is diffeomorphic to $S^1 \times \Sigma$ 
where $\Sigma$ is a closed orientable two-manifold. 
\end{cor}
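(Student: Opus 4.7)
The plan is to prove the two directions separately; the \emph{if} direction is essentially classical, whereas the \emph{only if} direction combines Theorem \ref{161011_1} (via Corollary \ref{170421_1}) with three-manifold topology. For the \emph{if} direction, given a closed orientable surface $\Sigma$, I would embed $\Sigma$ into $\R^3$ (its normal bundle is trivial since $\Sigma$ is orientable), identify a tubular neighborhood of $\Sigma$ with $\Sigma \times (-\ep,\ep)$, and then extend this to a symplectic embedding of a neighborhood of the zero section of $T^*\Sigma \times T^*S^1$ into $(\C^3,\omega_3)$. The image of $\Sigma \times S^1$ is then a Lagrangian embedding of $S^1 \times \Sigma$.

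For the \emph{only if} direction, let $M$ be a closed, connected, orientable, prime three-manifold with a Lagrangian embedding into $\C^3$; since such $M$ is automatically spin, Theorem \ref{161011_1} applies. By Gromov's theorem the restriction of the Liouville form to $M$ is not exact, so $H^1(M;\R)\ne 0$ and $\pi_1(M)$ is infinite. By primeness, either $M \cong S^1\times S^2$ (in which case we are done with $\Sigma=S^2$) or $M$ is irreducible. In the irreducible case, the sphere theorem gives $\pi_2(M)=0$, and combined with infinite $\pi_1$ this shows that the universal cover of $M$ is contractible, so $M$ is aspherical.

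I would then apply Corollary \ref{170421_1} to obtain $a\in H_1(M;\Z)$ with $\mu(a)=2$, $\omega_3(\bar a)>0$, and $H_3(\mca{L}(a);\R)\ne 0$. Since $M$ is aspherical, $\mca{L}(a)$ decomposes as a disjoint union of $K(C(g),1)$'s, where $g$ runs over conjugacy classes in $\pi_1(M)$ whose image in $H_1$ equals $a$ and $C(g)$ is the centralizer. The nonvanishing of $H_3$, together with the fact that $\pi_1(M)$ is a three-dimensional orientable Poincar\'e duality group, forces some such $C(g)$ to have finite index in $\pi_1(M)$ (otherwise the corresponding covering of $M$ would be a noncompact aspherical three-manifold, whose top homology vanishes). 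Thus $\pi_1(M)$ has nontrivial virtual center, and by the Seifert fibered space theorem of Casson--Jungreis and Gabai, $M$ is Seifert fibered over a two-orbifold.

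The main obstacle is the final step: promoting this Seifert structure into a \emph{trivial} circle bundle over an orientable surface. Concretely, one must rule out exceptional fibers and show that the Euler number of the fibration vanishes. For this I expect to need the full strength of equation (\ref{170827_2}), not merely the existence statement used in Corollary \ref{170421_1}. The Seifert invariants (genus of the base two-orbifold, multiplicities of exceptional fibers, and Euler number) enter the equation through the decomposition $H^{\mca{L}}=\bigoplus_a H^{\mca{L}}(a)$ and through the Maslov indices of loops in the various components $\mca{L}(a)$; the requirement that the $a=0$ part of $\sum_{k\ge 2}\frac{1}{(k-1)!}l^H_k(Y,X,\ldots,X)$ reproduce $(-1)^{n+1}[L]$, combined with $\mu$ equal to $2$ on the regular fiber, should force all exceptional multiplicities to be trivial and the Euler number to be zero, yielding $M\cong S^1\times\Sigma$. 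Turning this expectation into an actual numerical argument --- in particular pinning down all Seifert invariants from the identity in $\wh{H}^{\mca{L}}$ --- is the delicate heart of the proof, and is where I expect the principal difficulty to lie.
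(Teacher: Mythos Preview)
Your outline is essentially correct and follows the argument of \cite{Fukaya_06}~Section~11, to which the paper defers (the paper does not prove this corollary in detail, stating only that the ``only if'' direction follows from Corollary~\ref{170421_1} plus classical three-manifold topology; see also \cite{Latschev_15}~Section~5).

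Where you go astray is in the final step: you do \emph{not} need to return to equation~(\ref{170827_2}). Corollary~\ref{170421_1} already suffices, and the remaining work is three-manifold topology together with an elementary divisibility constraint coming from the Maslov class. Once $M$ is known to be Seifert fibered (apply the Seifert fiber space theorem of Casson--Jungreis and Gabai to the finite cover with fundamental group $C(g)$, then descend to $M$ using that closed irreducible $3$-manifolds are geometric), assume first that the base orbifold is hyperbolic. In a Seifert group over a hyperbolic base only powers of the regular fiber $f$ have centralizer of finite index, so $g=f^m$; then $\mu(g)=2$ and $\mu(f)\in 2\Z$ force $m=\pm 1$ and $\mu(f)=\pm 2$. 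An exceptional fiber $c$ of multiplicity $\alpha>1$ satisfies $\alpha[c]+\beta[f]=0$ in $H_1$ with $\gcd(\alpha,\beta)=1$, hence $\mu(c)=\mp 2\beta/\alpha$; since $\mu(c)\in 2\Z$ this forces $\alpha\mid\beta$, contradicting $\alpha>1$. Thus there are no exceptional fibers, and the surface relation $\prod_i[a_i,b_i]=f^{-e}$ gives $e[f]=0$ in $H_1$, so $e=0$ because $[f]$ is non-torsion. The remaining cases (base orbifold with $\chi^{\text{orb}}\ge 0$, i.e.\ flat or nil geometry) are disposed of by a finite check. None of this requires symplectic input beyond Corollary~\ref{170421_1}.
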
 

The ``if'' part in Corollary \ref{170622_1} is elementary and classically known. 
The ``only if'' part follows from Corollary \ref{170421_1} and some classical results in three-dimensional topology. 
See \cite{Fukaya_06} Section 11 or \cite{Latschev_15} Section 5 for details. 
Note that Evans-K\c{e}dra \cite{Evans_Kedra} and Damian \cite{Damian} proved the same conclusion for \textit{monotone} Lagrangian submanifolds in $\C^3$ which are orientable but not necessarily prime. 

The proof of Theorem \ref{161011_1} occupies the rest of this paper. 
In Sections 4--6 we reduce Theorem \ref{161011_1} to Theorem \ref{161215_1} (see Section 6), 
which will be proved in Sections 7--9 using the pseudo-holomorphic curve theory. 
In Section 4 we introduce the space of Moore loops with marked points, and the notion of de Rham chains on these spaces, 
following \cite{Irie_17} with minor modifications. 
Then we define the chain complex of de Rham chains and study its basic properties. 
In Section 5, we reduce Theorem \ref{161011_1} to Theorem \ref{161214_2}, which asserts the existence of a solution of (\ref{170827_1}), (\ref{170827_2}) at chain level. 
In Section 6, we reduce Theorem \ref{161214_2} to Theorem \ref{161215_1}, 
which asserts the existence of a sequence of approximate solutions connected by ``gauge equivalences''.

\section{de Rham chains on the space of loops with marked points} 

In Section 4.1, we introduce the space of Moore loops with $k+1$ marked points (where $k \in \Z_{\ge 0}$) which we denote by $\mca{L}_{k+1}$. 
In Section 4.2 we fix our conventions on signs. 
In Section 4.3, we define the chain complex $C^\dR_*(\mca{L}_{k+1})$ which consists of ``de Rham chains'' on $\mca{L}_{k+1}$. 
In Section 4.4, we introduce a chain model of $[-1, 1] \times \mca{L}_{k+1}$. 
In Section 4.5, we introduce a natural dg Lie algebra by taking direct products of de Rham chain complexes introduced in Sections 4.3 and 4.4. 

\subsection{Space of Moore loops with marked points} 

First we consider the space of Moore paths 
\[
\Pi:= \{ (T, \gamma) \mid T \in \R_{>0}, \, \gamma \in C^\infty([0, T], L), \, 
\partial_t^m \gamma(0) = \partial_t^m \gamma(T)=0 \, (\forall m \ge 1) \}.
\]
We define evaluation maps $\ev_0, \ev_1: \Pi \to L$ by 
\[ 
\ev_0(T, \gamma):= \gamma(0), \qquad  
\ev_1(T, \gamma):= \gamma(T)
\] 
and a concatenation map 
\[ 
\Pi \fbp{\ev_1}{\ev_0} \Pi \to \Pi ;\quad (\Gamma_0, \Gamma_1) \mapsto \Gamma_0 * \Gamma_1
\] 
by 
\[ 
(T_0, \gamma_0) * (T_1, \gamma_1) := (T_0 + T_1, \gamma_0 * \gamma_1)
\] 
where 
\[
(\gamma_0 * \gamma_1) (t) := \begin{cases} \gamma_0(t)  &(0 \le t \le T_0), \\ \gamma_1(t-T_0) &(T_0 \le t \le T_0+T_1). \end{cases}
\]
Next we consider the space of Moore loops with marked points. 
For every $k \in \Z_{\ge 0}$, 
we define the space $\mca{L}_{k+1}$ 
which consists of $(T, \gamma,  t_1, \ldots, t_k)$ such that 
\begin{itemize}
\item $T>0$ and $\gamma \in C^\infty(\R/T \Z, L)$. 
\item $0 < t_1 < \cdots < t_k < T$. We set $t_0:= 0 = T \in \R/T\Z$. 
\item $\partial_t^m \gamma(t_j)=0$ for every $m \in \Z_{\ge 1}$ and $j \in \{0, \ldots, k\}$. 
\end{itemize}
For every $j \in \{0, \ldots, k\}$, we define $\evl_j: \mca{L}_{k+1}  \to L$ by 
\[ 
\evl_j (T, \gamma, t_1, \ldots, t_k) := \gamma(t_j). 
\]
$\evl_j$ will be abbreviated as $\ev_j$ when there is no risk of confusion. 

For $k \in \Z_{\ge 1}$, $k' \in \Z_{\ge 0}$ and $j \in \{1, \ldots, k\}$, 
we define a concatenation map 
\[ 
\con_j:   \mca{L}_{k+1} \fbp{\ev^{\mca{L}}_j}{\ev^{\mca{L}}_0} \mca{L}_{k'+1}  \to \mca{L}_{k+k'}
\] 
as follows. 
Notice that one can identify $\mca{L}_{k+1}$ with 
\[ 
\{ (\Gamma_0, \ldots, \Gamma_k) \in  \Pi^{k+1} \mid \ev_1(\Gamma_i) = \ev_0(\Gamma_{i+1}) \,(0 \le i \le k-1), \, \ev_1(\Gamma_k) = \ev_0(\Gamma_0)\}.
\]
Then we define $\con_j $ by 
\begin{align*} 
&\con_j ((\Gamma_0, \ldots, \Gamma_k), (\Gamma'_0, \ldots, \Gamma'_{k'})) \\
&:= \begin{cases} 
(\Gamma_0, \ldots, \Gamma_{j-2}, \Gamma_{j-1}*\Gamma'_0, \Gamma'_1, \ldots, \Gamma'_{k'-1}, \Gamma'_{k'}*\Gamma_j, \Gamma_{j+1}, \ldots, \Gamma_k) &(k' \ge 1) \\
(\Gamma_0, \ldots, \Gamma_{j-2}, \Gamma_{j-1}*\Gamma'_0*\Gamma_j, \Gamma_{j+1}, \ldots, \Gamma_k ) &(k' = 0).
\end{cases}
\end{align*} 

For every $a \in H_1(L: \Z)$, 
let $\mca{L}_{k+1}(a)$ denote the subset of $\mca{L}_{k+1}$ 
which consists of $(T, \gamma, t_1, \ldots, t_k)$ such that 
$[\gamma]=a$.
Obviously $\mca{L}_{k+1} = \bigsqcup_{a \in H_1(L: \Z)} \mca{L}_{k+1}(a)$, 
and the concatenation map $\con_j$ satisfies 
\[
\con_j  ( \mca{L}_{k+1}(a)  \fbp{\ev^{\mca{L}}_j}{\ev^{\mca{L}}_0} \mca{L}_{k'+1}(a')) \subset \mca{L}_{k+k'}(a+a'). 
\]

\subsection{Signs} 
Here we summarize some conventions on signs. 
Our sign conventions for direct/fiber products follow \cite{FOOO_09} Section 8.2, 
and for pushout of differntial forms we follow \cite{FOOO_Kuranishi} Section 7.1. 
Note that these conventions are different from those in \cite{Irie_17}. 

\textbf{Direct and fiber products of manifolds} 

Let $X_1$ and $X_2$ be oriented manifolds. 
Their direct product $X_1 \times X_2$ is oriented so that 
\[ 
T (X_1 \times X_2) \cong TX_1 \oplus TX_2
\] 
preserves orientations. 

Next we consider fiber product. 
Let $M$ be an oriented manifold and 
$\pi_i: X_i \to M\,(i=1,2)$ be $C^\infty$-maps. 
We assume that $\pi_2$ is a submersion. 
$\ker d\pi_2$ is oriented so that the isomorphism 
\[ 
T X_2 \cong TM \oplus \ker d \pi_2
\] 
preserves orientations. Then we orient $X_1 \fbp{\pi_1}{\pi_2} X_2$ so that
\[ 
T (X_1 \fbp{\pi_1}{\pi_2} X_2) \cong T X_1 \oplus \ker d\pi_2
\] 
preserves orientations. 

\textbf{Direct and fiber products of K-spaces} 

For later use we also fix sign conventions for direct and fiber products of K-spaces
(see Section 10 for basic notions in the theory of Kuranishi structures). 
Let $X_1$, $X_2$ be topological spaces with K-structures, 
and $\mca{U}_i = (U_i, \mca{E}_i, s_i, \psi_i)$ be a K-chart on $X_i$, for each $i=1, 2$. 

Then the direct product $\mca{U}_1 \times \mca{U}_2$, 
which is a K-chart of $X_1 \times X_2$, 
is oriented by 
\[ 
\mca{U}_1 \times \mca{U}_2 := (-1)^{\rk \mca{E}_2 (\dim U_1 - \rk \mca{E}_1)}  (U_1 \times U_2, \mca{E}_1 \times \mca{E}_2, s_1 \times s_2, \psi_1 \times \psi_2), 
\] 
where $U_1 \times U_2$ is oriented as before, and $\mca{E}_1 \times \mca{E}_2$
is oriented so that the isomorphism 
\[ 
(\mca{E}_1 \times \mca{E}_2)_{(x_1, x_2)} \cong (\mca{E}_1)_{x_1} \oplus (\mca{E}_2)_{x_2}  \qquad  (x_1 \in U_1, \, x_2 \in U_2)
\] 
preserves orientations. 

Next we consider the fiber product. 
Let $M$ be an oriented $C^\infty$-manifold and 
$\pi_i: X_i \to M\,(i=1,2)$ be strongly smooth maps. 
We assume that $\pi_2$ is weakly submersive. 
Then the fiber product 
$\mca{U}_1 \fbp{\pi_1}{\pi_2} \mca{U}_2$, 
which is a K-chart of 
$X \fbp{\pi_1}{\pi_2} X_2$, is oriented by 
\[ 
\mca{U}_1 \fbp{\pi_1}{\pi_2} \mca{U}_2:= (-1)^{\rk \mca{E}_2 (\dim U_1 - \dim M - \rk \mca{E}_1)} 
(U_1 \fbp{\pi_1}{\pi_2} U_2, \mca{E}_1 \times \mca{E}_2, s_1 \times s_2, \psi_1 \times \psi_2), 
\] 
where $U_1 \fbp{\pi_1}{\pi_2} U_2$ is oriented as before. 

\textbf{Pushout of differential forms} 

For any manifold $X$ and $j \in \Z$, 
let $\mca{A}^j(X)$ denote the space of degree $j$ differential forms on $X$, and 
$\mca{A}^j_c(X)$ denote its subspace which consists of compactly supported differential forms. 
We set $\mca{A}^j(X) = 0$ when $j<0$ or $j > \dim X$. 

Suppose $X$ and $Y$ are oriented manifolds 
and $\pi: X \to Y$ is a $C^\infty$-submersion. 
We define a pushout (or integration along fibers) 
\[ 
\pi_!: \mca{A}^*_c(X) \to \mca{A}^{*- \dim \pi}_c(Y)
\] 
(here $\dim \pi:= \dim X - \dim Y$) so that the formula 
\[ 
\int_Y \pi_! \omega \wedge \eta = \int_X \omega \wedge \pi^* \eta
\] 
holds for any $\omega \in \mca{A}^*_c(X)$ and $\eta \in \mca{A}^*(Y)$. 
Simple computations show 
\begin{align*} 
&d (\pi_! \omega) = (-1)^{\dim \pi} \pi_!(d\omega) \qquad\qquad ( \omega \in \mca{A}^*_c(X)),  \\
&\pi_! (\omega \wedge \pi^* \omega') = \pi_! \omega \wedge \omega' \qquad\qquad\,  ( \omega \in \mca{A}^*_c(X), \, \omega' \in \mca{A}^*(Y)). 
\end{align*}

\subsection{de Rham chain complex of $\mca{L}_{k+1}$}

Let us define the ``de Rham chain complex'' of $\mca{L}_{k+1}(a)$. 
First we need the following definition. 

\begin{defn}\label{171205_1} 
Let $U$ be a $C^\infty$-manifold and 
$\ph: U \to \mca{L}_{k+1}$. 
We set 
\[
\ph(u)= (T(u), \gamma(u), t_1(u), \ldots, t_k(u)). 
\]
We say that $\ph$ is of $C^\infty$, if the map 
\[ 
U \to \R^{k+1}; \, u \mapsto (T(u), t_1(u), \ldots, t_k(u))
\]
is of $C^\infty$ and 
\[
\{ (u,t) \mid u \in U, \, 0 \le t \le T(u) \} \to L; \, (u,t) \mapsto \gamma(u)(t)
\]
is of $C^\infty$, namely it extends to a $C^\infty$-map from an open neighborhood of the LHS in $U \times \R$ to $L$. 
We say that $\ph$ is \textit{smooth}, if $\ph$ is of $C^\infty$ and 
$\ev^{\mca{L}}_0 \circ \ph: U \to L$ is a submersion. 
\end{defn} 

For every $N \in \Z_{\ge 1}$, let $\mf{U}_N$ denote the set of  oriented submanifolds in $\R^N$, and let $\mf{U}:= \bigsqcup_{N \ge 1} \mf{U}_N$. 
Let $\mca{P}(\mca{L}_{k+1}(a))$ denote the set of pairs $(U, \ph)$ such that 
$U \in \mf{U}$ and $\ph: U \to \mca{L}_{k+1}(a)$ is a smooth map. 

For every $N \in \Z$, let us consider the vector space 
\begin{equation}\label{170619_1} 
\bigoplus_{(U,\ph) \in \mca{P}(\mca{L}_{k+1}(a))}  \mca{A}^{\dim U -N}_c(U). 
\end{equation} 
For any $(U,\ph) \in \mca{P}(\mca{L}_{k+1}(a))$ and $\omega \in \mca{A}^{\dim U- N}_c(U)$, 
let $(U, \ph, \omega)$ denote the vector in (\ref{170619_1})
such that its $(U,\ph)$-component is $\omega$ and the other components are $0$. 

Let $Z_N$ denote the subspace of (\ref{170619_1}) which is generated by 
\begin{align*} 
&\{ (U, \ph, \pi_! \omega) - (U', \ph \circ \pi, \omega) \mid (U, \ph) \in \mca{P}(\mca{L}_{k+1}(a)), \quad U' \in \mf{U}, \\
& \quad  \omega \in \mca{A}_c^{\dim U' -N} (U'), \quad \pi: U' \to U \, \text{is a $C^\infty$-submersion} \}. 
\end{align*} 
Then we define 
\[ 
C^\dR_N(\mca{L}_{k+1}(a)) :=  \biggl( \bigoplus_{(U,\ph) \in \mca{P}(\mca{L}_{k+1}(a))}  \mca{A}^{\dim U -N}_c(U) \biggr) / Z_N.
\] 
We often abbreviate $[(U, \ph, \omega)] \in C^\dR_N(\mca{L}_{k+1}(a))$ by $(U, \ph, \omega)$. 

We define a boundary operator 
$\partial: C^\dR_*(\mca{L}_{k+1}(a)) \to C^\dR_{*-1}(\mca{L}_{k+1}(a))$ by  
\begin{equation}\label{170828_1} 
\partial (U, \ph, \omega):= (-1)^{|\omega|+1} (U, \ph, d\omega).
\end{equation} 
It is easy to check that $\partial$ is well-defined and $\partial^2=0$. 
We call this chain complex the \textit{de Rham chain complex} of $\mca{L}_{k+1}(a)$, 
and denote its homology by $H^\dR_*(\mca{L}_{k+1}(a))$. 

\begin{rem} 
Here are slight differences between the presentation in this section and that in \cite{Irie_17}. 
\begin{itemize} 
\item 
The sign for the boundary operator in (\ref{170828_1}) is different from that in \cite{Irie_17}. 
\item 
In the definition of $\mca{P}(\mca{L}_{k+1}(a))$ we only require that 
$\ev^{\mca{L}}_0 \circ \ph: U \to L$ is a submersion. 
On the other hand, 
in Section 7.2 in \cite{Irie_17}, 
we consider maps $\ph: U \to \mca{L}_{k+1}$ such that 
$\ev^{\mca{L}}_j \circ \ph: U \to L$ are submersions for all $j \in \{0, \ldots, k\}$. 
However the resulting chain complexes are quasi-isomorphic. 
\end{itemize} 
\end{rem}

\begin{lem}\label{170619_2} 
\begin{enumerate} 
\item[(i):] The forgetting map 
\[ 
\mca{L}_{k+1}(a) \to \mca{L}_1(a); \qquad (T, \gamma, t_1, \ldots, t_k) \mapsto (T, \gamma) 
\] 
induces an isomorphism $H^\dR_*(\mca{L}_{k+1}(a)) \cong H^\dR_*(\mca{L}_1(a))$. 
In particular $H^\dR_*(\mca{L}_{k+1}(a))$ does not depend on $k$. 
\item[(ii):] $H^\dR_*(\mca{L}_1(a) ) \cong H^\sing_*(\mca{L}(a): \R)$, where the RHS denotes the singular homology with respect to the $C^\infty$-topology on $\mca{L}(a)$. 
\end{enumerate}
\end{lem}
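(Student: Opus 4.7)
Both parts follow the template of the author's earlier paper \cite{Irie_17}, with minor adaptations for the sign conventions and for the weaker submersion requirement in the definition of $\mca{P}(\mca{L}_{k+1}(a))$ adopted here.

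For (i), I would argue by induction on $k$, reducing to the case of forgetting a single marked point via the map $f: \mca{L}_{k+1}(a) \to \mca{L}_k(a)$. The forward chain map $F(U,\ph,\omega) = (U, f \circ \ph, \omega)$ is clearly well defined, since the submersion condition on $\evl_0$ is preserved. To construct a quasi-inverse $G: C^\dR_*(\mca{L}_k(a)) \to C^\dR_*(\mca{L}_{k+1}(a))$, given a representative $(V,\psi,\omega)$, I would enlarge the parameter space by a factor $(0,1)$ with coordinate $s$, insert a new marked point at $t = sT(v)$, and reparametrize the loop $\gamma(v)$ by a smoothly varying family of non-decreasing surjections $\phi_{v,s}: [0,T(v)] \to [0,T(v)]$ that fix $sT(v)$ and have all derivatives of order $\ge 1$ vanishing there, so that the reparametrized loop acquires the required flatness at the new marked point. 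Pairing the resulting map $V \times (0,1) \to \mca{L}_{k+1}(a)$ with the form $\omega \wedge \rho(s)\,ds$ for a compactly supported bump $\rho$ of integral one defines $G$. The compositions $FG$ and $GF$ are then chain homotopic to the identity by interpolating $\phi_{v,s}$ with the identity reparametrization and, for $GF$, by also sliding the newly inserted marked point onto an old one.

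For (ii), the plan is to compare $C^\dR_*(\mca{L}_1(a))$ with the smooth singular chain complex of $\mca{L}(a)$. A smooth singular simplex $\sigma: \Delta^n \to \mca{L}(a)$ can be converted into a de Rham chain $(\Delta^n, \tilde\sigma, \omega)$ by first precomposing the loops with a reparametrization achieving flatness at $t=0$ (yielding a map to $\mca{L}_1(a)$), then applying a small perturbation to make $\evl_0 \circ \tilde\sigma$ a submersion, and finally choosing $\omega$ to be a compactly supported top form of integral one on the interior of $\Delta^n$. This assembles into a chain map from the smooth singular complex to $C^\dR_*(\mca{L}_1(a))$, which should be a quasi-isomorphism by a standard de Rham-type argument (integration of forms pairs with smooth singular cochains). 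Combined with the equivalence between smooth singular and singular homology for the $C^\infty$-topology on $\mca{L}(a)$, the conclusion follows.

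The main obstacle I anticipate is bookkeeping rather than anything conceptual: one must verify compatibility of each construction with the relation $Z_N$ defining de Rham chains and independence, up to chain homotopy, of all the auxiliary choices (the bump function $\rho$, the reparametrization family $\phi_{v,s}$, and the perturbation ensuring submersivity). All such choices form contractible spaces, so no genuine obstruction arises, but carefully constructing the relevant chain homotopies will occupy the bulk of the proof.
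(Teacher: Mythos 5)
Your approach for part (i) takes a genuinely different route from the paper. The paper reduces everything to a zig-zag of quasi-isomorphisms, established in the last part of Section~7.2 of \cite{Irie_17}, that identifies $C^\dR_*(\mca{L}_{k+1}(a))$ with $C^\dR_*(\mca{L}(a) \times \Delta^k)$ (the de Rham chain complex built from maps into the actual loop space times the simplex), and then invokes homotopy invariance (Proposition~4.7 of \cite{Irie_17}) to contract $\Delta^k$ to a point. You instead directly build a quasi-inverse to the forgetting map by inserting a marked point with a reparametrization achieving flatness. In spirit this is the same underlying idea (the paper's zig-zag in \cite{Irie_17} already carries out these reparametrizations once and for all), so you are re-deriving the hard part rather than citing it. If you do carry your version through, watch that the inserted marked point is always strictly between $t_{k-1}$ and $T$ (parametrizing by $s \in (0,1)$ times $T(v)$ alone does not guarantee this) and that the reparametrization $\phi_{v,s}$ also respects the flatness conditions at $t_0=0$ so that the result really lies in $C^\infty(\R/T\Z,L)$.

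Part (ii) has a genuine gap. The paper cites Theorem~6.1 of \cite{Irie_17} for the isomorphism $H^\dR_*(\mca{L}(a)) \cong H^\sing_*(\mca{L}(a):\R)$, whereas you attempt to build the comparison directly. The chain map you propose is not a chain map, for two reasons. First, the degree does not match: with $\omega$ a compactly supported top form of integral one on $\interior\Delta^n$, the de Rham chain $(\interior\Delta^n, \tilde\sigma, \omega)$ has degree $\dim(\interior\Delta^n) - |\omega| = n - n = 0$, not $n$. Second, the boundaries do not correspond: a top form satisfies $d\omega=0$, so $\partial(\interior\Delta^n, \tilde\sigma,\omega)=0$, while $\partial\sigma$ is generally a nontrivial singular $(n-1)$-chain. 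If you instead try $\omega$ of degree $0$ to fix the degree count, then $\omega$ cannot be compactly supported on $\interior\Delta^n$ and still equal $1$ everywhere. A naive simplex-by-simplex map of this kind does not assemble into a chain map; the isomorphism $H^\dR_*(\mca{L}(a)) \cong H^\sing_*(\mca{L}(a):\R)$ requires a more global mechanism (as developed in \cite{Irie_17}), and the paper avoids reconstructing it by citing that result directly.
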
 
\begin{proof}
For each $k \in \Z_{\ge 0}$, let $\Delta^k$ denote the $k$-dimensional simplex: 
\[ 
\Delta^k:= \begin{cases} \R^0 &(k=0), \\ \{ (t_1,\ldots, t_k) \in \R^k \mid 0 \le t_1 \le \cdots \le t_k \le 1\} &(k \ge 1).  \end{cases} 
\] 
Then we define $\mca{P}(\mca{L}(a) \times \Delta^k)$ to be the set
which consists of $(U, \ph)$ such that 
$U \in \mf{U}$ and $\ph: U \to \mca{L}(a) \times \Delta^k$ is of $C^\infty$ (i.e. projections to each components are of $C^\infty$). 
Then we can define a chain complex $C^\dR_*(\mca{L}(a) \times \Delta^k)$ in exactly the same manner as $C^\dR_*(\mca{L}_{k+1}(a))$. 
Moreover, there exists a zig-zag of quasi-isomorphisms connecting $C^\dR_*(\mca{L}_{k+1}(a))$ and $C^\dR_*(\mca{L}(a) \times \Delta^k)$
(see the last part of Section 7.2 in \cite{Irie_17}, where $C^\dR_*(\mca{L}_{k+1}(a))$ is denoted by 
$C^\dR_*(\bar{\mca{L}}^a_{k, \text{reg}})$). 
Then, to prove (i) it is sufficient to show that the map 
\begin{equation}\label{170828_2} 
\mca{L}(a) \times \Delta^k \to \mca{L}(a) ; \qquad (\gamma, t_1, \ldots,t_k) \mapsto \gamma 
\end{equation} 
induces an isomorphism on $H^\dR_*$, 
which follows from homotopy invariance of $H^\dR_*$ (see Proposition 4.7 in \cite{Irie_17}). 
(ii) follows from $H^\dR_*(\mca{L}_1(a)) \cong H^\dR_*(\mca{L}(a))$ 
(apply the zig-zag mentioned above for $k=0$), 
and $H^\dR_*(\mca{L}(a)) \cong H^\sing_*(\mca{L}(a): \R)$, 
which follows from Theorem 6.1 in \cite{Irie_17}. 
\end{proof} 

Next we define the fiber product on de Rham chain complexes. 
For every $k \in \Z_{\ge 1}$, $k' \in \Z_{\ge 0}$, $j \in \{1,\ldots, k\}$ and $a, a' \in H_1(L:\Z)$, 
we define a linear map 
\begin{equation}\label{170619_3} 
\circ_j:  C^\dR_{n+d}(\mca{L}_{k+1}(a)) \otimes C^\dR_{n+d'}(\mca{L}_{k'+1}(a')) \to C^\dR_{n+d+d'}(\mca{L}_{k+k'}(a+a')) ; \, x \otimes y \mapsto x \circ_j y
\end{equation}
in the following way. 
Setting 
\[ 
x := (U, \ph, \omega), \qquad 
y:= (U', \ph', \omega')
\] 
let 
$\ph_j: = \evl_j \circ \ph$ and 
$\ph'_0:= \evl_0 \circ \ph'$. 
Then we define 
\[ 
x \circ_j y:= (-1)^{(\dim U - |\omega|-n)|\omega'|}  (U \fbp{\ph_j}{\ph'_0} U', \con_j  \circ (\ph_j \times \ph'_0), \omega \times \omega'), 
\] 
where 
the fiber product $U \fbp{\ph_j}{\ph'_0} U'$ is oriented as in Section 4.2, and 
$\con_j$ denotes the concatenation map defined in Section 4.1. 
It is straightforward to check that the fiber product (\ref{170619_3}) is a chain map, 
i.e. it satisfies the Leibniz rule 
\[
\partial (x \circ_j y) = \partial x \circ_j y + (-1)^d x \circ_j \partial y. 
\] 

It is also easy to check the associativity: given $x_i \in C^\dR_{n+d_i} (\mca{L}_{k_i+1}(a_i)) \,(i=1,2,3)$, 
there holds 
\begin{align*} 
 (x_1 \circ_{i_1} x_2) \circ_{k_2 + i_2 - 1} x_3 &= (-1)^{d_2d_3} (x_1 \circ_{i_2} x_3) \circ_{i_1} x_2 \quad\,   ( 1 \le i_1 < i_2 \le k_1), \\
 (x_1 \circ_{i_1} x_2) \circ_{i_1 + i_2 -1} x_3  &= x_1 \circ_{i_1} (x_2 \circ_{i_2} x_3) \qquad\qquad\qquad  (1 \le i_1 \le k_1,\, 1 \le i_2 \le k_2).
\end{align*} 

On homology level, 
the fiber product corresponds to the Chas-Sullivan loop product, 
which was originally defined in \cite{ChSu_99}: 

\begin{lem}\label{170623_1} 
The fiber product (\ref{170619_3}) induces a linear map
\[ 
H^\dR_{n+d} ( \mca{L}_{k+1}(a) ) \otimes H^\dR_{n+d'} (\mca{L}_{k'+1}(a') ) \to H^\dR_{n+d+d'}(\mca{L}_{k+k'} (a+a')). 
\] 
Via isomorphisms in Lemma \ref{170619_2}, this map corresponds to the loop product 
\[
H_{n+d}(\mca{L}(a)) \otimes H_{n+d'} (\mca{L}(a')) \to H_{n+d+d'}(\mca{L}(a+a')). 
\]
\end{lem}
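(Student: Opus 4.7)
The plan is to establish the two assertions separately, reducing the identification with the loop product to a comparison through the de Rham/singular zig-zag from \cite{Irie_17}. The first assertion is immediate: the Leibniz rule $\partial(x \circ_j y) = \partial x \circ_j y + (-1)^d x \circ_j \partial y$ verified just before the lemma shows that $\circ_j$ is a chain map, hence descends to a bilinear operation on homology.

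For the identification with the Chas-Sullivan product, I first reduce to a canonical case. By Lemma \ref{170619_2}(i) the forgetting maps $\mca{L}_{k+1}(a) \to \mca{L}_1(a)$ induce isomorphisms on $H^\dR_*$, and a short inspection shows that all operations $\circ_j$, once transported along these isomorphisms, produce the same operation $H^\dR_*(\mca{L}_1(a)) \otimes H^\dR_*(\mca{L}_1(a')) \to H^\dR_*(\mca{L}_1(a+a'))$; this uses that concatenation of Moore loops at different marked points agrees up to reparametrization, which is a homotopy after forgetting marked points. Thus it suffices to treat $k=1$, $k'=0$, $j=1$, where $\circ_1$ is built directly from the fiber product over $L$ at the single paired marked point, followed by concatenation.

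Next I identify the resulting operation with the Chas-Sullivan loop product. The latter is defined on $H^\sing_*(\mca{L}(a); \R)$ by applying the umkehr map along the diagonal $L \hookrightarrow L \times L$ to a pair of singular cycles meeting transversely under $\ev_0 \times \ev_0$, and then pushing forward along concatenation. In the de Rham model the umkehr map is implemented directly by the geometric fiber product $U \fbp{\ph_1}{\ph'_0} U'$ together with the exterior product $\omega \times \omega'$ of forms, which encodes integration along fibers of the basepoint evaluation. Under the chain of quasi-isomorphisms in Theorem 6.1 of \cite{Irie_17} used to prove Lemma \ref{170619_2}(ii), pushout of compactly supported forms along submersions corresponds to the shriek map on singular chains, so the two constructions agree in homology. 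The fact that we work with Moore (rather than parametrized) loops is harmless because the concatenation on $\mca{L}_{k+1}$ induces the standard concatenation on $H^\sing_*(\mca{L})$ via the trivialization of the length coordinate up to reparametrization.

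The main obstacle will be verifying sign and zig-zag compatibility, since the de Rham/singular comparison of \cite{Irie_17} factors through several intermediate complexes. One must check that the orientation convention for fiber products in Section 4.2, the explicit factor $(-1)^{(\dim U - |\omega| - n)|\omega'|}$ in the definition of $\circ_j$, and the boundary sign $(-1)^{|\omega|+1}$ in (\ref{170828_1}) together reproduce the Koszul signs appearing in the standard formulation of the Chas-Sullivan product under the grading convention $H^\dR_{n+d}(\mca{L}_1(a)) \cong H^\sing_{n+d}(\mca{L}(a); \R)$. This reduces to a model computation on a transverse pair of cycles represented by smooth maps $\ph \colon U \to \mca{L}_1(a)$, $\ph' \colon U' \to \mca{L}_1(a')$ with $\omega, \omega'$ top degree forms of compact support, in which both products reduce to the transverse intersection followed by concatenation.
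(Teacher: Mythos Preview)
Your reduction to the case $k=1$, $k'=0$ via Lemma \ref{170619_2}(i) is exactly what the paper does. At that point, however, the paper simply invokes Proposition 8.7 of \cite{Irie_17}, which carries out precisely the comparison you sketch: matching the fiber product of de Rham chains against the umkehr-plus-concatenation description of the Chas--Sullivan product through the zig-zag of quasi-isomorphisms. Your outline of that comparison is correct in spirit, but you yourself flag the sign and zig-zag compatibilities as ``the main obstacle'' and do not actually carry them out; that verification is the substance of the cited proposition, and it is not a triviality --- the zig-zag passes through several intermediate models, and one must check the product is respected at each stage. So your proposal is not wrong, but it stops short of a complete argument exactly where the paper defers to existing work: you would either need to cite Proposition 8.7 of \cite{Irie_17} as the paper does, or reproduce its proof in full rather than asserting that ``pushout of compactly supported forms along submersions corresponds to the shriek map on singular chains'' and that the signs work out.
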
 
\begin{proof}
By Lemma \ref{170619_2} (i), it is sufficient to prove the case $k=1$ and $k'=0$, 
which follows from Proposition 8.7 in \cite{Irie_17}. 
\end{proof} 

\subsection{Chain model of $[-1, 1] \times \mca{L}_{k+1}$}

In this subsection, we define another chain complex $\bar{C}^\dR_*(\mca{L}_{k+1}(a))$. 
Roughly speaking, it consists of chains on $[-1,1] \times \mca{L}_{k+1}(a)$ relative to $\{-1,1\} \times \mca{L}_{k+1}(a)$. 
In Section 6, we use this chain complex to define ``gauge-equivalence'' of (approximate) solutions of the Maurer-Cartan equation of loop bracket. 

Let $\bar{\mca{P}}$ denote the set consists of tuple $(U, \ph, \tau_+, \tau_-)$ such that the following conditions are satisfied: 

\begin{itemize} 
\item $U \in \mf{U}$ and $\ph: U \to \R \times \mca{L}_{k+1}(a)$. We denote $\ph:= (\ph_\R, \ph_{\mca{L}})$, 
and for every interval $I \subset \R$ we denote $U_I:= (\ph_\R)^{-1}(I)$.
\item 
$\ph_\R$ and $\ph_{\mca{L}}$ are of $C^\infty$. Moreover, 
$U \to \R \times L; \, u \mapsto (\ph_\R(u), \ev_0 \circ \ph_{\mca{L}}(u))$ is a submersion. 

\item $\tau_+: U_{\ge 1} \to \R_{\ge 1} \times U_1$ is a diffeomorphism ($U_{\ge 1}$ is an abbreviation of $U_{\R_{\ge 1}}$) such that 
\[
\ph|_{U_{\ge 1}} = (i _{\ge 1} \times \ph_{\mca{L}}|_{U_1}) \circ \tau_+
\]
where $i_{\ge 1}: \R_{\ge 1} \to \R$ is the inclusion map. 
\item $\tau_-: U_{\le -1} \to \R_{\le -1} \times U_{-1}$ is a diffeomorphism $(U_{\le -1}$ is an abbreviation of $U_{\R_{\le -1}}$) such that 
\[
\ph|_{U_{\le -1}} = (i _{\le -1} \times \ph_{\mca{L}}|_{U_{-1}}) \circ \tau_-
\]
where $i_{\le -1}: \R_{\le -1} \to \R$ is the inclusion map. 
\end{itemize} 
\begin{rem}
$U_{\ge 1}$ and $U_{\le -1}$ may be the empty set. 
\end{rem} 

For any $(U, \ph, \tau_+, \tau_-) \in \bar{\mca{P}}$ and $N \in \Z$, 
let $\mca{A}^N(U, \ph, \tau_+, \tau_-)$ 
denote the vector space which consists of $\omega \in \mca{A}^N(U)$ satisfying the following conditions: 
\begin{itemize} 
\item $\omega|_{U_{[-1, 1]}}$ is compactly supported. 
\item $\omega|_{U_{\ge 1}} = (\tau_+)^* (1 \times  \omega|_{U_1})$. 
\item $\omega|_{U_{\le -1}} = (\tau_-)^* (1 \times \omega|_{U_{-1}})$. 
\end{itemize} 

For $N \in \Z$, let us define 
\[
\bar{C}^\dR_N (\mca{L}_{k+1}(a))  := \biggl( \bigoplus_{(U,\ph, \tau_+, \tau_-) \in \bar{\mca{P}}} \mca{A}^{\dim U-N-1} (U, \ph, \tau_+, \tau_-) \biggr)/Z_N
\] 
where $Z_N$ is a subspace generated by vectors 
\[ 
(U, \ph, \tau_+, \tau_-, \omega) - 
(U', \ph', \tau'_+, \tau'_-,  \omega')
\] 
such that there exists a submersion $\pi: U' \to U$ satisfying 
\begin{align*} 
\ph' &= \ph \circ \pi, \\ 
\omega &= \pi_! \omega', \\ 
\tau_+ \circ \pi|_{U'_{\ge 1}} &= (\id_{\R_{\ge 1}} \times \pi|_{U'_1}) \circ \tau'_+, \\ 
\tau_- \circ \pi|_{U'_{\le -1}} &= (\id_{\R_{\le -1}} \times \pi|_{U'_{-1}}) \circ \tau'_-.
\end{align*} 
Let us define $\partial: \bar{C}^\dR_*(\mca{L}_{k+1}(a)) \to \bar{C}^\dR_{*-1}(\mca{L}_{k+1}(a))$
by 
\[ 
\partial(U, \ph, \tau_+, \tau_-, \omega): = (-1)^{|\omega|+1} (U, \ph, \tau_+, \tau_-, d\omega).
\] 
It is easy to check that $\partial$ is well-defined and $\partial^2=0$, thus we obtain a chain complex. 

In the following argument of this subsection, 
we abbreviate $\bar{C}^\dR_*(\mca{L}_{k+1}(a))$ and $C^\dR_*(\mca{L}_{k+1}(a))$ by 
$\bar{C}_*$ and $C_*$, respectively. 
Let us define $i: C_* \to \bar{C}_*$ by
\[ 
i( U, \ph, \omega) := (-1)^{\dim U}  (\R \times U, \id_\R \times \ph, \tau_+, \tau_-, 1 \times \omega)
\] 
where $\tau_+$ and $\tau_-$ are defined in the obvious way. 
Also, we define 
$e_+: \bar{C}_* \to C_*$ and $e_-: \bar{C}_* \to C_*$ by 
\begin{align*} 
e_+ (U, \ph, \tau_+, \tau_-, \omega) &:= (-1)^{\dim U - 1} (U_1, \ph|_{U_1}, \omega|_{U_1}), \\ 
e_- (U, \ph, \tau_+, \tau_-, \omega) &:=  (-1)^{\dim U - 1}(U_{-1}, \ph|_{U_{-1}}, \omega|_{U_{-1}}), 
\end{align*} 
where $U_1$ (resp. $U_{-1}$) is oriented so that 
$\tau_+: U_{\ge 1} \to \R_{\ge 1} \times U_1$ 
(resp. $\tau_-: U_{\le -1} \to \R_{\le -1} \times U_{-1}$)
is orientation-preserving, where 
$\R_{\ge 1}$ (resp. $\R_{\le -1}$)
is oriented so that 
$\partial/\partial t$ 
is of positive direction 
($t$ denotes the standard coordinate on $\R$). 
Then it is easy to see that $i$, $e_+$, $e_-$ are well-defined chain maps, 
and there holds $e_+ \circ i  = e_- \circ i = \id_C$. 

\begin{lem}
$(e_+, e_-): \bar{C}_* \to C_* \oplus C_*$ is surjective.
\end{lem}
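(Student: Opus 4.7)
The plan is to build a right inverse of $(e_+, e_-)$ at the level of generators via a cylinder-with-cutoff construction. By linearity of the target $C_* \oplus C_*$, it suffices to produce, for each generator $x_\pm = (U_\pm, \ph_\pm, \omega_\pm) \in C_*$, preimages of the pairs $(x_+, 0)$ and $(0, x_-)$.

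Fix once and for all a cutoff $\chi \in C^\infty(\R, [0,1])$ with $\chi \equiv 0$ on $\R_{\le -1}$ and $\chi \equiv 1$ on $\R_{\ge 1}$. Given $x_+ = (U_+, \ph_+, \omega_+)$, I would set $U := \R \times U_+$ with the product orientation, $\ph := \id_\R \times \ph_+ : U \to \R \times \mca{L}_{k+1}(a)$, take $\tau_+$ (resp.\ $\tau_-$) to be the identity on $\R_{\ge 1} \times U_+$ (resp.\ $\R_{\le -1} \times U_+$), and define $\omega := \chi \cdot \pi^* \omega_+$, where $\pi: \R \times U_+ \to U_+$ is the projection. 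Then $z_+ := (U, \ph, \tau_+, \tau_-, \omega)$ should define an element of $\bar{C}_*(\mca{L}_{k+1}(a))$: the submersion clause in the definition of $\bar{\mca{P}}$ follows at once from the corresponding condition for $\ph_+$ built into $\mca{P}(\mca{L}_{k+1}(a))$; the restriction $\omega|_{U_{[-1,1]}}$ is compactly supported because $\omega_+$ is and $[-1,1]$ is compact; and $\omega$ is manifestly translation-invariant on both ends, equaling $\pi^* \omega_+$ on $U_{\ge 1}$ and vanishing on $U_{\le -1}$, as required of elements of $\mca{A}^N(U, \ph, \tau_+, \tau_-)$.

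Unpacking the formula for $e_\pm$ then gives $e_+(z_+) = (-1)^{\dim U_+} x_+$, because $\tau_+$ identifies $U_1$ with $U_+$ orientation-preservingly, $\ph_{\mca{L}}|_{U_1}$ pulls back to $\ph_+$, $\omega|_{U_1} = \chi(1)\omega_+ = \omega_+$, and $\dim U - 1 = \dim U_+$; meanwhile $e_-(z_+) = 0$ since $\omega|_{U_{-1}} = \chi(-1)\omega_+ = 0$. The entirely analogous construction with $\chi$ replaced by $1 - \chi$ would produce $z_-$ satisfying $e_+(z_-) = 0$ and $e_-(z_-) = (-1)^{\dim U_-} x_-$. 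Consequently $(-1)^{\dim U_+} z_+ + (-1)^{\dim U_-} z_-$ maps to $(x_+, x_-)$ under $(e_+, e_-)$, proving surjectivity.

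The only real ``obstacle'' is bookkeeping: checking that the cylinder-with-cutoff meets every clause of the definitions of $\bar{\mca{P}}$ and of the admissible forms, and tracking the sign $(-1)^{\dim U - 1}$ in the formulas for $e_\pm$ together with the orientation convention that makes $\tau_\pm$ orientation-preserving. No analytic or homological input beyond the given definitions is needed.
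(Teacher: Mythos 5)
Your proof is correct and follows essentially the same route as the paper: introducing a cutoff $\chi$ with $\chi|_{\R_{\le -1}}\equiv 0$, $\chi|_{\R_{\ge 1}}\equiv 1$, forming $(\R\times U_+,\ \id_\R\times\ph_+,\ \tau_\pm,\ \chi\cdot\pi^*\omega_+)$, and tracking the overall sign so that it hits $(x_+,0)$ (with the paper absorbing the $(-1)^{\dim U}$ into the definition of the lift rather than applying it afterward, which is purely cosmetic). The sign bookkeeping and the verification of the clauses of $\bar{\mca{P}}$ and $\mca{A}^N(U,\ph,\tau_+,\tau_-)$ are all handled correctly.
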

\begin{proof} 
Let us take $\chi \in C^\infty(\R, [0, 1])$ so that 
$\chi(t) = 1$ for every $t \ge 1$ and $\chi(t)=0$ for every $t \le -1$. 
Given $x = (U, \ph, \omega) \in C_*$, let 
\[ 
\bar{x} := (-1)^{\dim U} (\R \times U, \id_\R \times \ph, \tau_+, \tau_-, \chi \times \omega), 
\]
where $\tau_+$ and $\tau_-$ are defined in the obvious manner. 
Then $e_+(\bar{x}) = x$ and $e_-(\bar{x})=0$, thus we have proved 
that the image of $(e_+, e_-)$ contains $C_* \oplus 0$. 
A similar argument shows that the image contains $0 \oplus C_*$. 
This completes the proof. 
\end{proof}

\begin{lem} 
$i \circ e_+$ and $i \circ e_-$ are chain homotopic to $\id_{\bar{C}}$. 
\end{lem}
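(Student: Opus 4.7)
My plan is to construct an explicit chain homotopy $h_+: \bar{C}_* \to \bar{C}_{*+1}$ satisfying
\[
\partial \circ h_+ + h_+ \circ \partial = \id_{\bar{C}} - i \circ e_+;
\]
the analogous construction for $e_-$ is symmetric. The guiding geometric observation is that, by construction of $i$, the chain $i \circ e_+(x)$ and $x$ have identical cylindrical ends at $+1$, so the homotopy need only modify $x$ on the region where $\ph_\R \le 1$. This can be achieved by a one-parameter family of ``cylindrical translations'' in the $\R$-direction, made possible by $\tau_+$.

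Construction. Given a representative $x = (U, \ph, \tau_+, \tau_-, \omega)$, I would set
\[
h_+(x) = \pm \bigl(\R_s \times U,\; \Phi,\; \sigma_+,\; \sigma_-,\; \Omega\bigr),
\]
where the $\R_s$-factor plays the role of the homotopy parameter, $\Phi(s,u) = (\ph_\R(u) + s\cdot \beta(\ph_\R(u)), \ph_{\mca{L}}(u))$ for a chosen cutoff $\beta : \R \to [0,1]$ with $\beta \equiv 0$ on $[1,\infty)$ and $\beta \equiv 1$ on $(-\infty,0]$, and $\Omega$ is engineered so that, roughly, $\Omega|_{s \text{ near } 0}$ pulls back $\omega$ while $\Omega|_{s \text{ large}}$ agrees with the cylindrical extension $1 \times \omega|_{U_1}$. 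The cylindrical-end diffeomorphisms $\sigma_\pm$ are built from $\tau_\pm$ combined with the $\R_s$-direction; because $\beta\equiv 0$ on $[1,\infty)$, the $+1$-end of $\Phi$ is literally $(i_{\ge 1} \times \ph_{\mca{L}}|_{U_1}) \circ \tau_+$ in the first factor, producing a valid cylindrical end at $+1$. At the translated $-1$-end, one reintroduces a cylinder of length $s$ using $\tau_-$ to restore the cylindrical end at $-1$.

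Verification. Applying Stokes' theorem to $d\Omega$ on $\R_s \times U$, the $s=0$ boundary contributes $x$, the $s\to +\infty$ limit contributes $i\circ e_+(x)$ (using the $+1$-cylindrical end structure), and the interior term precisely reproduces $h_+(\partial x)$; together these give the desired identity. Well-definedness of $h_+$ on the quotient $\bar{C}_*$ follows from naturality: for any submersion $\pi: U' \to U$, the construction commutes with $\id_{\R_s} \times \pi$, and hence preserves the defining relation $(U,\ph,\pi_!\omega) \sim (U',\ph\circ\pi,\omega)$.

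The main obstacle is making the $s\to +\infty$ limit into a bona fide element of $\bar{\mca{P}}$: one must check that the limiting chain and form data actually satisfy the cylindrical-end axioms at both $\pm 1$, and that the limit equals $i\circ e_+(x)$ as an element of $\bar{C}_*$ rather than only pointwise. This forces a careful choice of the transition $\beta$ and of the reinsertion of a cylinder at the $-1$-end. The sign computation, while routine, also requires care because of the shifted grading $\dim U - N -1$ used in $\bar{C}_*$ and the various sign conventions set up in Section 4.2.
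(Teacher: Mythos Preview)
Your intuition—add an extra $\R$-parameter and build the homotopy from a cutoff whose differential produces the two endpoint terms—is the same as the paper's. But the execution has a genuine gap. In $\bar{C}_*$ the boundary operator is just $d$ on the form; $\R_s \times U$ has no ``$s=0$ boundary'' and no ``$s\to+\infty$ limit''. The identity $\partial h_+ + h_+\partial = \id - i\circ e_+$ must come from writing $(\partial h_+ + h_+\partial)(x)$ as a chain with form $d(\text{cutoff})\wedge\pr_U^*\omega$ and then identifying the pieces of that chain, \emph{via the defining submersion relation of $\bar{C}_*$}, with $x$ and $-i\circ e_+(x)$. For such a pushforward to exist, $\Phi$ must be constant along the direction you are collapsing. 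With your $\Phi_\R(s,u)=\ph_\R(u)+s\beta(\ph_\R(u))$ this happens only where $\beta=0$, i.e.\ on $\R_s\times U_{\ge 1}$, which is not enough to recover $x$ itself; and the region $\{\Phi_\R\ge 1\}$ is strictly larger than $\R_s\times U_{\ge 1}$, so the $+1$ cylindrical end is not what you claim. The condition ``$\Omega|_{s\text{ large}}$ agrees with $1\times\omega|_{U_1}$'' also does not typecheck on $\R_s\times U$.

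The paper fixes exactly this by replacing your one-variable translation with a two-variable ``fold'' $\alpha:\R^2\to\R$ satisfying $\alpha(x,y)=y$ for $x\le 0$ and $\alpha(x,y)=-x$ for $x\ge 1,\,y\ge -1$, and setting $\bar{\ph}_\R(r,u)=\alpha(r,\ph_\R(u))$. The first clause makes $\bar{\ph}$ independent of $r$ on $\{r<0\}$, so pushforward along $r$ there gives back $x$; the second lets one push forward along the level sets of $\alpha$ on $\{r>0,\,\ph_\R(u)>1\}$ to land on $\R\times U_1$, yielding $i\circ e_+(x)$. The cutoff $\chi$ is then chosen on $\R^2$ (not just in $s$) so that $d\chi$ splits cleanly into these two supports. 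If you try to repair your scheme you will be forced into a reparametrization of this kind, at which point you have essentially reproduced the paper's construction.
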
 
\begin{proof}
We only prove that $i \circ e_+$ is chain homotopic to $\id_{\bar{C}}$
by explicitly defining a linear map $K: \bar{C}_* \to \bar{C}_{*+1}$ which satisfies 
\[ 
K \partial + \partial K =  \id_{\bar{C}} - i \circ e_+. 
\]
The proof for $e_-$ is completely parallel.

\textbf{Step 1.} 
Let us take $C^\infty$-functions $\alpha: \R^2 \to \R$ and $\chi: \R^2 \to [0,1]$
so that the following conditions are satisfied: 
\begin{itemize} 
\item $x \le 0 \implies \alpha(x,y)=y$. 
\item $x \ge 1, \, y \ge -1 \implies \alpha(x,y)=-x$. 
\item $\nabla \alpha(x,y) \ne 0$ for every $(x,y) \in \R^2$.
\item $\nu_\alpha:= \nabla \alpha/|\nabla \alpha|$ is proper. 
Namely, for any $p \in \R^2$, there exists $c: \R \to \R^2$ such that
$c(0)=p$ and $\dot{c}(t) = \nu_\alpha(c(t))$ for any $t \in \R$. 
\item $\{ \alpha \ge 1\} \subset \{ y \ge 1\}$. 
\item $\{ \alpha \le -1\} \subset \{y \le -1\} \cup \{ x \ge 1\}$. 
\item $\chi \equiv 1$ on a neighborhood of $\{x=0\} \cup \{x \ge 0, y \le 1\}$. 
\item $d\chi(\nabla \alpha)=0$ on $\{ \alpha \ge 1\}  \cup \{ \alpha \le -1\}$. 
\item $\supp \chi \cap \{ -1 \le \alpha \le 1\}$ is compact. 
\end{itemize}

\begin{center}
\input{alpha.tpc}
\end{center} 

\textbf{Step 2.} 
We define a linear map $K: \bar{C}_* \to \bar{C}_{*+1}$ by 
\[ 
K (U, \ph, \tau_+, \tau_-, \omega):= (-1)^{|\omega|+1} (\R \times U, \bar{\ph}, \bar{\tau}_+, \bar{\tau}_-, \bar{\omega})
\] 
where $\bar{\ph}$, $\bar{\tau}_{\pm}$ and $\bar{\omega}$ are defined as follows. 
$\bar{C}_*$ is defined by taking a quotient, however well-definedness of $K$ is easy to check. 

\begin{itemize}
\item Let us denote $\ph: U \to \R \times \mca{L}_{k+1}(a)$ by $\ph = (\ph_\R, \ph_{\mca{L}})$. 
We define 
$\tilde{\alpha}: \R \times U \to \R$ and $\bar{\ph}: \R \times U \to \R \times \mca{L}_{k+1}(a)$ by 
\[ 
\tilde{\alpha}(r,u):=\alpha(r, \ph_\R(u)), \qquad 
\bar{\ph}(r,u):= (\tilde{\alpha}(r,u), \ph_{\mca{L}}(u)). 
\]
We have to check that
\[ 
\R \times U \to \R \times L; \quad (r,u) \mapsto (\tilde{\alpha}(r,u) , \ev_0 \circ \ph_{\mca{L}}(u))
\]
is a submersion. 
This is because 
$U \to \R \times L; \, u \mapsto (\ph_\R(u), \ev_0 \circ \ph_{\mca{L}}(u))$ is a submersion, 
and $d\alpha(x,y) \ne 0$ for every $(x,y) \in \R^2$. 

\item To define $\bar{\tau}_+$ and $\bar{\tau}_-$, 
we first define a vector field
\[
V \in \mf{X}(\R \times (U_{\ge 1} \cup U_{\le -1}) \cup \R_{\ge 1} \times U_{[-1,1]})
\]
as follows (recall $\nu_\alpha:= \nabla \alpha/|\nabla \alpha|$ from Step 1): 
\begin{itemize}
\item On $\R \times U_{\ge 1} \cong \R \times \R_{\ge 1} \times U_1$, we set $V(x,y,u):= (\nu_\alpha(x,y), 0)$.
\item On $\R \times U_{\le -1} \cong \R \times \R_{\le -1} \times U_{-1}$, we set  $V(x,y,u):= (\nu_\alpha(x,y), 0)$. 
\item On $\R_{\ge 1} \times U_{[-1,1]}$, we set $V(x,u) := (-1,0)$.
\end{itemize} 
In particular, $V$ is defined on $\{ \tilde{\alpha} \ge 1\} \cup \{ \tilde{\alpha} \le -1\}$, and 
$V$ is forward (resp. backward) complete on $\{ \tilde{\alpha} \ge 1\}$ (resp. $\{ \tilde{\alpha} \le -1\}$). 
Then we define a diffeomorphism $\bar{\tau}_+: \{ \tilde{\alpha} \ge 1\} \cong \R_{\ge 1} \times \{\tilde{\alpha} = 1\}$ by 
$\bar{\tau}_+(p) := (\tilde{\alpha}(p), q)$, where $q$ is a unique point in $\{\tilde{\alpha}=1\}$ which is connected to $p$ by 
an integral curve of $V$. 
Similarly, we define a diffeomorphism $\bar{\tau}_-: \{ \tilde{\alpha} \le -1\} \cong \R_{\le -1} \times \{\tilde{\alpha} = -1\}$ by 
$\bar{\tau}_-(p) := (\tilde{\alpha}(p), q)$, where $q$ is a unique point in $\{\tilde{\alpha}= -1\}$ which is connected to $p$ by 
an integral curve of $V$. 
\item We define $\bar{\omega}$ by 
$\bar{\omega}(r,u): = \chi(r, \ph_\R(u)) \cdot  (\text{pr}_U)^* \omega$. 
\end{itemize}

\textbf{Step 3.}
We prove that $K \partial + \partial K = \id_{\bar{C}} - i \circ e_+$. 
It is easy to see that 
\[ 
(K \partial + \partial K) (U, \ph, \tau_+, \tau_-, \omega) 
= 
(\R \times U, \bar{\ph}, \bar{\tau}_+, \bar{\tau}_-, d \chi (r, \ph_\R(u)) \wedge (\pr_U)^*\omega). 
\] 
Let us denote $d\chi:= (d\chi)_+ + (d\chi)_-$ where 
$(d\chi)_+$ is supported on $\{x>0, y>1\}$ and $(d\chi)_-$ is supported on $\{x<0\}$. 
Then 
\begin{equation}\label{170622_2} 
(\R \times U, \bar{\ph}, \bar{\tau}_+, \bar{\tau}_-, (d\chi)_+ \wedge (\text{pr}_U)^* \omega) = -  (i \circ e_+)( U, \ph, \tau_+, \tau_-, \omega). 
\end{equation}
To check (\ref{170622_2}), let us consider a submersion 
\[ 
\R_{>0} \times U_{>1} \cong \R_{>0} \times (\R_{>1} \times U_1) \to \R \times U_1
\]
where the first map (diffeomorphism) is $\id_{\R_{>0}} \times \tau_+|_{U_{>1}}$, and the second map is $(x,y,u) \mapsto (\alpha(x,y), u)$. 
Then $\pi: \R_{>0} \times \R_{>1} \to \R; \, (x,y) \mapsto \alpha(x,y)$ satisfies $\pi_!((d\chi)_+) = -1$, 
and this shows (\ref{170622_2}). 
On the other hand
\[ 
(\R \times U, \bar{\ph}, \bar{\tau}_+, \bar{\tau}_-, (d\chi)_- \wedge  \text{pr}_U^* \omega) = (U, \ph, \tau_+, \tau_-, \omega)
\]
can be proved by the projection $\R_{<0} \times U \to U$, 
which pushes $(d\chi)_-$ to $1$. 
\end{proof} 

Let us define the fiber product on $\bar{C}_*$. 
For every $k \in \Z_{\ge 1}$, $k' \in \Z_{\ge 0}$, $j \in \{1, \ldots, k\}$ and $a, a' \in H_1(L:\Z)$, we define 
\begin{equation}\label{170928_1} 
\circ_j: \bar{C}^\dR_{n+d}(\mca{L}_{k+1}(a)) \otimes \bar{C}^\dR_{n+d'}(\mca{L}_{k'+1}(a')) \to \bar{C}^\dR_{n+d+d'}(\mca{L}_{k+k'}(a+a')); \, 
 x \otimes y \mapsto x \circ_j y
\end{equation}
in the following way. Setting 
\[
x = (U, \ph, \tau_+, \tau_-, \omega), \qquad
y = (U', \ph', \tau'_+, \tau'_-, \omega'), 
\] 
let us define $C^\infty$-maps 
$\ph_j, \, \ph'_0: U \to \R \times L$ by 
\[ 
\ph_j: = (\ph_\R, \ev^{\mca{L}}_j \circ \ph_{\mca{L}}), \qquad
\ph'_0:= (\ph'_\R, \ev^{\mca{L}}_0 \circ \ph'_{\mca{L}}). 
\]
Note that $\ph'_0$ is a submersion, thus the fiber product
$U \fbp{\ph_j}{\ph'_0} U'$ is a $C^\infty$-manifold. 
Then we define 
\[
x \circ_j y:= (-1)^{(\dim U - |\omega| - n-1) |\omega'| + n}
(U \fbp{\ph_j}{\ph'_0} U', \ph'', \tau''_+, \tau''_-, \omega \times \omega')
\]
where $\ph''$ is defined by 
\[ 
\ph''(u, u') := (\ph_\R(u), \con_j(\ph_{\mca{L}}(u), \ph'_{\mca{L}}(u'))), 
\] 
and $\tau''_+$, $\tau''_-$ are defined as follows: 
\begin{align*} 
\rho_+(u,u')&:= \pr_{\R_{\ge 1}} \circ \tau_+(u) = \pr_{\R_{\ge 1}} \circ \tau'_+(u'), \\
\tau''_+(u, u')&:= (\rho_+(u, u'),  ( \pr_{U_1} \circ \tau_+(u), \pr_{U'_1} \circ \tau'_+(u'))), \\
\rho_-(u,u')&:=\pr_{\R_{\le -1}} \circ \tau_-(u) = \pr_{\R_{\le -1}} \circ \tau'_-(u'), \\ 
\tau''_-(u, u')&:= (\rho_-(u,u') , ( \pr_{U_{-1}} \circ \tau_-(u), \pr_{U'_{-1}} \circ \tau'_-(u'))). 
\end{align*} 
It is easy to see that the fiber product (\ref{170928_1}) is a chain map, 
i.e. it satisfies the Leibniz rule: 
\[ 
\partial (x \circ_j y) = \partial x \circ_j y + (-1)^d x \circ_j \partial y. 
\] 
Moreover, chain maps $i$, $e_+$, $e_-$ intertwine fiber products on $C_*$ and $\bar{C}_*$.  

\subsection{dg Lie algebras $C^{\mca{L}}$, $\bar{C}^{\mca{L}}$ and their completions}

For every $a \in H_1(L:\Z)$ and $k \in \Z_{\ge 0}$, we define
\[ 
C^{\mca{L}}(a, k)_*:= C^\dR_{*+n+\mu(a)+k-1} (\mca{L}_{k+1}(a))
\] 
and set 
\[ 
C^{\mca{L}}_*: = \bigoplus_{\substack{a \in H_1(L: \Z) \\ k \in \Z_{\ge 0}}} C^{\mca{L}}(a, k)_*. 
\]
To define the action filatration on $C^{\mca{L}}_*$, 
for each $E \in \R$ we set 
\[
F^E C^{\mca{L}}_* := \bigoplus_{\substack{\omega_n(\bar{a}) > E \\ k \in \Z_{\ge 0}}} C^{\mca{L}}(a, k)_* 
\]
and take completion: 
\[
\wh{C}^{\mca{L}}_*:= \varprojlim_{E \to \infty} C^{\mca{L}}_* / F^E C^{\mca{L}}_*. 
\]
$C^{\mca{L}}_*$ has a dg Lie algebra structure
(see Remark \ref{170829_1} for our convention) 
defined as follows: 
\begin{align*} 
(\partial x)(a,k) &:= \partial  (x(a,k)),  \\ 
(x \circ y)(a,k) &:= \sum_{\substack{k'+k'' = k+1 \\ 1 \le i \le k' \\ a'+a''=a}}(-1)^{(i-1)(k''-1)+(k'-1)(|y|+1+k'')}
x(a', k') \circ_i  y(a'', k''),  \\
[x, y]&:= x \circ y - (-1)^{|x||y|}  y \circ x. 
\end{align*} 
For the sign in the RHS of the second formula, see Theorem 2.8 (ii) in \cite{Irie_17}. 
The Jacobi identity follows from the associativity of the fiber product.
Since this dg Lie algebra structure respects the decomposition $C^{\mca{L}}_* \cong \bigoplus_{(a,k) \in H_1(L: \Z) \times \Z_{\ge 0} } C^{\mca{L}}(a,k)_*$, 
it extends to the completion $\wh{C}^{\mca{L}}_*$. 

We also consider $\bar{C}^{\mca{L}}_* := \bigoplus_{(a,k) \in H_1(L: \Z) \times \Z_{\ge 0}}  \bar{C}^\dR_{*+n+\mu(a)+k-1} (\mca{L}_{k+1}(a))$
and its completion $\wh{\bar{C}}^{\mca{L}}_*$. 
One can naturally define a dg Lie algebra structure on $\bar{C}^{\mca{L}}_*$, 
$C^{\mca{L}}_*$, and it extends to the completion. 
One can define morphisms of dg Lie algebras 
\[ 
i: C ^{\mca{L}}_* \to \bar{C}^{\mca{L}}_*, \quad
e_+: \bar{C}^{\mca{L}}_* \to C^{\mca{L}}_*, \quad
e_-: \bar{C}^{\mca{L}}_* \to C^{\mca{L}}_*, 
\] 
so that the following properties hold: 
\begin{itemize} 
\item $e_+ \circ i = e_- \circ i = \id_{C^{\mca{L}}}$.
\item $i \circ e_+$ and $i \circ e_-$ are chain homotopic to $\id_{\bar{C}^{\mca{L}}}$.
One can take chain homotopies to respect decompositions over $H_1(L: \Z) \times \Z_{\ge 0}$. 
\item $(e_+, e_-): \bar{C}^{\mca{L}}_* \to C^{\mca{L}}_* \oplus  C^{\mca{L}}_*$ is surjective. 
\end{itemize} 

\section{Chain level statement} 

The goal of this section is to reduce Theorem \ref{161011_1} to Theorem \ref{161214_2}, 
which is formulated on the dg Lie algebra $\wh{C}^{\mca{L}}$. 
Throughout this section, $\R$-coefficient singular homology $H^\sing_*(\, \cdot \, : \R)$ is abbreviated as $H_*(\, \cdot \,)$. 

\begin{thm}\label{161214_2} 
There exist $x \in \wh{C}^{\mca{L}}_{-1}$, $y \in \wh{C}^{\mca{L}}_2$, $z \in \wh{C}^{\mca{L}}_1$ 
and $\ep>0$ such that: 
\begin{enumerate} 
\item[(i):] $\partial x  - \frac{1}{2} [x,x] = 0$, namely $x$ is a Maurer-Cartan element of $\wh{C}^{\mca{L}}$. 
\item[(ii):] $\partial y - [x,y] = z$. 
\item[(iii):] $x(a,k) \ne 0$ only if $\omega(\bar{a}) \ge 2\ep$ or $a=0$, $k \ge 2$. 
Moreover, $x(0, 2)  \in C^\dR_n(\mca{L}_3(0) )$ is a cycle such that 
$[x(0, 2) ] \in H^\dR_n(\mca{L}_3(0) ) \cong H_n(\mca{L}(0))$ corresponds to $(-1)^{n+1} [L]$. 
\item[(iv):] 
$z(a, k) \ne 0$ only if $\omega(\bar{a}) \ge 2\ep $ or $a=0$. 
Moreover, $z(0, 0) \in C^\dR_n(\mca{L}_1(0) )$ is a cycle such that 
$[z(0, 0) ] \in H^\dR_n(\mca{L}_1(0) ) \cong H_n(\mca{L}(0))$ corresponds to $(-1)^{n+1} [L]$. 
\end{enumerate}
In (iii) and (iv), $[L] \in H_n(\mca{L}(0))$ is defined as in Theorem \ref{161011_1}. 
\end{thm}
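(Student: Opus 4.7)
The plan is to prove Theorem \ref{161214_2} as a reduction to Theorem \ref{161215_1} (to be proved in Sections 7--9 via Kuranishi structures on moduli of pseudo-holomorphic disks), followed by a filtered patching argument in the completion $\wh{C}^{\mca{L}}$. I expect Theorem \ref{161215_1} to supply, for an exhausting sequence of energy thresholds $\ep \le E_1 < E_2 < \cdots \to \infty$, approximate triples $(x_n, y_n, z_n) \in C^{\mca{L}}$ satisfying conditions (i)--(iv) of Theorem \ref{161214_2} modulo $F^{E_n}$, together with gauge-equivalence data connecting the $n$-th triple to the $(n{+}1)$-st inside the cylinder complex $\bar{C}^{\mca{L}}$ built in Section 4.4. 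The key structural inputs from Section 4.5 are that $(e_+, e_-): \bar{C}^{\mca{L}} \to C^{\mca{L}} \oplus C^{\mca{L}}$ is surjective, that $i \circ e_\pm$ is chain-homotopic to $\id_{\bar{C}^{\mca{L}}}$ via a homotopy respecting the decomposition over $H_1(L;\Z) \times \Z_{\ge 0}$, and that $i$, $e_+$, $e_-$ are morphisms of dg Lie algebras intertwining the fiber-product brackets.

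First I would combine the gauge-equivalence chains with the chain homotopy $i \circ e_+ \simeq \id_{\bar{C}^{\mca{L}}}$ to extract explicit primitives $\delta x_n, \delta y_n, \delta z_n \in F^{E_n} C^{\mca{L}}$ realising the differences between consecutive approximate solutions as coboundaries modulo the appropriate $[x,x]$ and $[x,y]$ quadratic corrections. Replacing $(x_{n+1}, y_{n+1}, z_{n+1})$ by $(x_{n+1} - \delta x_n, y_{n+1} - \delta y_n, z_{n+1} - \delta z_n)$ and iterating produces Cauchy sequences in the filtered topology, whose limits
\[
x := \lim_{n \to \infty} x_n, \qquad y := \lim_{n \to \infty} y_n, \qquad z := \lim_{n \to \infty} z_n
\]
in $\wh{C}^{\mca{L}}$ satisfy (i) and (ii) exactly, since the defect of the Maurer-Cartan equation at stage $n$ lies in $F^{E_n}$ and therefore vanishes in the completion. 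Conditions (iii) and (iv) — that $x(0,2) \in C^\dR_n(\mca{L}_3(0))$ and $z(0,0) \in C^\dR_n(\mca{L}_1(0))$ are cycles representing $(-1)^{n+1}[L]$ — are arranged already at the first stage, using the specific form of the constant-disk contributions built into Theorem \ref{161215_1}, and are preserved by all subsequent corrections because the successive $\delta x_n, \delta z_n$ lie in $F^{E_n} \subset F^\ep$ while the prescribed low-energy pieces sit in the $a = 0$ stratum on which $\omega_n$ vanishes.

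The hard part of this reduction will be the nonlinear bookkeeping for the iterative correction. Because each gauge-equivalence in $\bar{C}^{\mca{L}}$ satisfies a Maurer-Cartan-type equation only modulo higher energy, the replacement $x_{n+1} \mapsto x_{n+1} - \delta x_n$ introduces defects in (i) quadratic in the primitive $\delta x_n$, which then feed into the defect of (ii) through the $[x,y]$ term at the next stage. Controlling these cascading defects requires exploiting the filtered dg Lie structure of $\wh{C}^{\mca{L}}$ together with the compatibility of $i, e_+, e_-$ with fiber products (Section 4.5), so that each successive defect lands in a strictly higher piece of the energy filtration and the telescoping series is summable in the completion. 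Once this is in place the proof of Theorem \ref{161214_2} is complete; the genuinely geometric content — coherent Kuranishi perturbations on moduli spaces of (perturbed) pseudo-holomorphic disks that make the boundary decomposition hold at chain level — is deferred to Theorem \ref{161215_1}.
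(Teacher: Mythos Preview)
Your proposal is correct and matches the paper's approach: reduce to Theorem \ref{161215_1}, then iteratively use the gauge data in $\bar{C}^{\mca{L}}$ together with the fact that $e_+:\bar{C}^{\mca{L}}\to C^{\mca{L}}$ is a surjective quasi-isomorphism to make the approximate solutions Cauchy in the completion, and take the limit. The paper organizes the ``nonlinear bookkeeping'' you anticipate via a double-indexed family $(x_{i,j},\bar{x}_{i,j},\ldots)_{i,j}$ (Lemma \ref{161216_1}) built from an elementary lifting lemma for surjective quasi-isomorphisms (Lemma \ref{161221_1}), then fixes one index $i$ and takes $j\to\infty$; note in particular that the corrections are lifted to $\bar{C}^{\mca{L}}$ (not merely found in $C^{\mca{L}}$) so that the gauge data connecting consecutive stages is updated simultaneously.
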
 

Conditions (i) and (iii) imply that 
\[ 
x^0:= \sum_{k \ge 2} x(0, k) \in \wh{C}^{\mca{L}}_{-1} 
\] 
is a Maurer-Cartan element of $\wh{C}^{\mca{L}}$. 
For every $a \in H_1(L: \Z)$, let 
$C^{\mca{L}}(a)_*:= \prod_{k \ge 0} C^\mca{L}(a,k)_*$. 
We define 
$\partial_{x^0}: C^{\mca{L}}(a)_* \to C^{\mca{L}}(a)_{*-1}$ by 
$\partial_{x^0}(u) := \partial u - [x^0 , u]$. 

\begin{lem}\label{161225_1} 
$H_*(C^{\mca{L}}(a)_*, \partial_{x^0}) \cong H_{*+n+\mu(a)-1}(\mca{L}(a))$.
\end{lem}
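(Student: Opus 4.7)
The plan is to show that the projection
\[
\pi_0 : (C^{\mca{L}}(a)_*, \partial_{x^0}) \longrightarrow (C^{\mca{L}}(a, 0)_*, \partial) = (C^\dR_{*+n+\mu(a)-1}(\mca{L}_1(a)), \partial)
\]
onto the $k = 0$ slot is a quasi-isomorphism; the conclusion then follows from Lemma \ref{170619_2}(ii). That $\pi_0$ is a chain map is immediate: $\partial$ preserves $k$, and by the formula for $\circ$ in Section 4.5 every component of $[x^0, u]$ lives in $\prod_{k \ge 1} C^{\mca{L}}(a, k)$, since $x^0$ is supported in $k' \ge 2$ and the composition $\circ_i$ yields index $k = k' + k'' - 1 \ge 1$. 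It therefore suffices to prove that the kernel $K_* := \prod_{k \ge 1} C^{\mca{L}}(a, k)_*$, with the restricted differential $\partial_{x^0}|_K$, is acyclic.

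For acyclicity I would use the decreasing filtration $F^p K := \prod_{k \ge p} C^{\mca{L}}(a, k)$ ($p \ge 1$), which is complete and Hausdorff. Since bracketing with $x^0$ strictly increases $k$, the filtration is preserved and the associated graded is $(C^{\mca{L}}(a, p), \partial)$. Lemma \ref{170619_2} then identifies the $E_1$-page as
\[
E_1^{p, q} = H_{q + n + \mu(a) + p - 1}(\mca{L}(a)), \qquad p \ge 1.
\]

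The crucial step is to determine $d_1$, which is induced by the shift-by-one component $-[x(0, 2), \cdot]$ of $\partial_{x^0}$ (the higher components $-[x(0, l), \cdot]$ with $l \ge 3$ contribute only to later differentials $d_{l-1}$). By Lemma \ref{170623_1}, every fiber product $\circ_i$ descends on homology to the Chas--Sullivan loop product, and since $[x(0, 2)] = (-1)^{n+1}[L]$ is $(-1)^{n+1}$ times the unit of that product, each instance of $x(0, 2) \circ_i u$ or $u \circ_i x(0, 2)$ represents $(-1)^{n+1}[u]$ in $E_1$. Inserting the operadic sign $(-1)^{(i-1)(k''-1) + (k'-1)(|y| + 1 + k'')}$ from Section 4.5 with $(k', k'') = (2, p)$ and then with $(k', k'') = (p, 2)$, and combining with the sign in $[x, y] = x \circ y - (-1)^{|x||y|} y \circ x$, a direct sign count yields
\[
d_1|_{E_1^p} = \begin{cases} 0, & p \text{ even,} \\ \pm\, \mathrm{id}, & p \text{ odd.} \end{cases}
\]
Indeed, the two summands in $x(0, 2) \circ u$ cancel when $p$ is even, as does the alternating sum $\sum_{i=1}^{p}(-1)^{i-1}$ in $u \circ x(0, 2)$; for $p$ odd both sums survive and combine with the bracket sign to produce $\pm[u]$.

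Consequently $E_2 = 0$: for every odd $p \ge 1$ the isomorphism $d_1 : E_1^p \to E_1^{p+1}$ kills $E_1^p$ as its kernel and $E_1^{p+1}$ as its cokernel. Since the filtration is complete and Hausdorff, standard convergence for complete filtered complexes forces $H_*(K, \partial_{x^0}|_K) = 0$, and hence $\pi_0$ is a quasi-isomorphism. The main obstacle is the sign bookkeeping: the alternating cancellation of $d_1$ rests on an exact match between the operadic signs in $\circ$ and the graded commutator sign, and any miscount would break the argument. A secondary issue is convergence of the spectral sequence for the infinite product $K$, but this is settled by the vanishing of $E_\infty$ together with completeness of the filtration.
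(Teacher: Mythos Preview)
Your proof is correct and follows essentially the same strategy as the paper: reduce to acyclicity of $\prod_{k \ge 1} C^{\mca{L}}(a,k)$, filter by $k$, identify $E_1$ via Lemma~\ref{170619_2}, and compute $d_1$ as the alternating sum coming from the loop product with $[x(0,2)] = (-1)^{n+1}[L]$, obtaining $E_2 = 0$. The only difference is in the convergence step: the paper first truncates to $\prod_{1 \le k \le 2N}$ (a finite filtration, so convergence is automatic) and then passes to the inverse limit via Weibel's Theorem 3.5.8, whereas you work directly with the complete Hausdorff filtration on the infinite product and invoke strong convergence from $E_2 = 0$; both are valid, though the paper's truncation sidesteps any worry about $\varprojlim^1$ phenomena.
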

\begin{proof}
Let us consider the exact sequence 
\[ 
0 \to \prod_{k \ge 1} C^{\mca{L}}(a,k)_* \to C^{\mca{L}}(a)_* \to C^{\mca{L}}(a,0)_* \to 0 
\] 
where the first map is inclusion and the second map is projection. 
Since $H_*(C^{\mca{L}}(a,0)) \cong H_{*+n+\mu(a)-1}(\mca{L}(a))$ by Lemma \ref{170619_2}, 
it is sufficient to show that $\prod_{k \ge 1} C^{\mca{L}}(a,k)_*$ is acyclic
with the boundary operator $\partial_{x^0}$. 

For every positive integer $N$, 
the operator $\partial_{x^0}$ preserves $\prod_{k > 2N} C^{\mca{L}}(a,k)_*$. 
Thus $\partial_{x^0}$ acts on $\prod_{1 \le k \le 2N} C^{\mca{L}}(a,k)_*$, 
preserving the filtration 
$\biggl( \prod_{i \le k \le 2N} C^{\mca{L}}(a,k)_* \biggr)_{1 \le i \le 2N}$. 

Then the $E_1$-term is 
\[ 
H_*(C^{\mca{L}}(a,k)) \cong H^\dR_{*+n+\mu(a)+k-1}(\mca{L}_{k+1}(a)) \cong H_{*+n+\mu(a)+k-1}(\mca{L}(a)). 
\] 
Let us compute $d_1: H_*(C^{\mca{L}}(a,k)) \to H_{*-1}(C^{\mca{L}}(a,k+1))$. 
By direct computations 
\begin{align*} 
&[x^0,y](a, k+1) = \\ 
&\qquad  (-1)^{|y|} \biggl (x(0,2) \circ_2 y(a,k) + \sum_{1 \le i \le k} (-1)^i y(a,k) \circ_i x(0,2) + (-1)^{k+1} x(0,2) \circ_1 y(a,k) \biggr). 
\end{align*} 
Since $[x(0,2)] \in H^\dR_n(\mca{L}_3(0))$ corresponds to $(-1)^{n+1} [L] \in H_n(\mca{L})$
and the fiber product on homology level corresponds to the loop product (Lemma \ref{170623_1}), 
$d_1$ coincides with $\pm \sum_{0 \le i \le k+1} (-1)^i$. 
Then all $E_2$-terms vanish, thus $\prod_{1 \le k \le 2N} C^{\mca{L}}(a,k)_*$ is acyclic. 
Finally, by taking an inverse limit (Theorem 3.5.8 in \cite{Weibel_94}) we have shown that 
$\prod_{k \ge 1} C^{\mca{L}}(a,k)_*$ is acyclic. 
\end{proof} 

\begin{lem} 
$x^+:= x - x^0$ satisfies equations 
\[ 
\partial_{x^0} x^+ - \frac{1}{2}[x^+, x^+] = 0, \qquad \partial_{x^0} y - [x^+, y] = z.
\]
\end{lem}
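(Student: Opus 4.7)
The plan is to reduce the lemma to a single substantive verification, namely that $x^0$ is itself a Maurer-Cartan element of $\wh{C}^{\mca{L}}$, after which both identities follow from a routine bilinear expansion.

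The first step is to show that $\partial x^0 - \frac{1}{2}[x^0, x^0] = 0$. The idea is to project the Maurer-Cartan equation $\partial x - \frac{1}{2}[x,x] = 0$ onto its $a=0$ component; this is well-defined because the dg Lie structure on $\wh{C}^{\mca{L}}$ respects the decomposition over $H_1(L:\Z)$. The crucial point is a no-cancellation argument: if $a', a'' \in H_1(L:\Z)$ satisfy $a'+a'' = 0$ with both $x(a'), x(a'')$ nonzero, then by condition (iii) either $a'=a''=0$, or both $\omega_n(\bar{a'}), \omega_n(\bar{a''}) \ge 2\ep$. Since $\omega_n(\bar{\,\cdot\,})$ is additive with $\omega_n(\bar 0)=0$, the latter alternative gives $0 = \omega_n(\bar{a'}) + \omega_n(\bar{a''}) \ge 4\ep > 0$, a contradiction. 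Combined with the fact that $x(0,0) = x(0,1) = 0$ (again by condition (iii)), this shows $x(0,\cdot) = x^0$ and $[x,x](0,\cdot) = [x^0, x^0]$, so the $a=0$ part of the Maurer-Cartan equation for $x$ is precisely $\partial x^0 - \frac{1}{2}[x^0, x^0] = 0$.

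With this in hand, the two identities in the lemma are immediate. Writing $x = x^0 + x^+$ and noting that $|x^0| = |x^+| = -1$, the graded antisymmetry of the bracket gives $[x^+, x^0] = [x^0, x^+]$, hence
\begin{align*}
0 \,=\, \partial x - \frac{1}{2}[x,x]
  &= \bigl(\partial x^0 - \tfrac{1}{2}[x^0, x^0]\bigr) + \partial x^+ - [x^0, x^+] - \tfrac{1}{2}[x^+, x^+] \\
  &= \partial_{x^0} x^+ - \tfrac{1}{2}[x^+, x^+].
\end{align*}
Similarly, expanding the second equation of Theorem \ref{161214_2},
\[
z \,=\, \partial y - [x, y] \,=\, \partial y - [x^0, y] - [x^+, y] \,=\, \partial_{x^0} y - [x^+, y].
\]

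The only delicate step is the energy-linearity argument establishing the Maurer-Cartan property of $x^0$; the rest is a direct bilinear expansion using the Maurer-Cartan equation for $x$ and the definition $\partial_{x^0} = \partial - [x^0, \,\cdot\,]$.
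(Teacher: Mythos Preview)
Your proof is correct and follows essentially the same route as the paper: expand $x = x^0 + x^+$ in the two equations of Theorem \ref{161214_2} and use that $x^0$ is itself Maurer-Cartan. The paper records the latter fact just before Lemma \ref{161225_1} (``Conditions (i) and (iii) imply that $x^0$ \ldots\ is a Maurer-Cartan element''), so its proof of the present lemma simply says ``straightforward''; you have additionally written out the energy-positivity argument that justifies this preliminary claim, which is a welcome elaboration rather than a departure.
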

\begin{proof}
Straightforward from $\partial x - \frac{1}{2}[x,x]=0$, $\partial y - [x,y]=z$ and $x= x^0 + x^+$. 
\end{proof} 

Let $H^{\mca{L}}_* = \bigoplus_{a \in H_1(L: \Z)} H_{*+n+\mu(a)-1}(\mca{L}(a))$
as in Section 3. Then there exist linear maps 
\[ 
\iota: H^{\mca{L}}_* \to C^{\mca{L}}_*, \quad
\pi: C^{\mca{L}}_* \to H^{\mca{L}}_*, \quad
\kappa: C^{\mca{L}}_* \to C^{\mca{L}}_{*+1}
\]
such that 
\[
\partial_{x^0} \circ \iota = 0, \quad
\pi \circ \partial_{x^0} = 0, \quad
\pi \circ \iota = \id_{H^{\mca{L}}}, \quad
\kappa \circ \partial_{x^0}  + \partial_{x^0} \circ  \kappa = \id_{C^{\mca{L}}} - \iota \circ \pi.
\]
One can take these maps so that they preserve decompositions of $H^{\mca{L}}$ and $C^{\mca{L}}$ 
over $H_1(L: \Z)$, thus they extend to the completions. 
Also, one can take $\pi$ so that it maps $\sum_{k \ge 0} z(0,k)$ to $(-1)^{n+1} [L] \in H_n(\mca{L}(0))$.  

Using homotopy transfer theorem (Thoerem \ref{170623_2}), 
one can define an $L_\infty$-structure $l^H$ on $H^{\mca{L}}_*$ 
and an $L_\infty$-homomorphism
$p$ from $(C^{\mca{L}}_*, \partial_{x^0}, [\, , \,])$ to 
$(H^{\mca{L}}_*, l^H)$ so that $l^H_1=0$ and $p_1 = \pi$. 
We can take $l^H$ and $p$ so that they respect decompositions over $H_1(L:\Z)$, 
thus they extend to the completions. 
Applying Proposition 4.9 in \cite{Latschev_15}, 
elements in $\wh{H}^{\mca{L}}$ 
\begin{align*} 
X&:= \sum_{k=1}^\infty \frac{1}{k!} p_k(x^+, \ldots, x^+) \in  \wh{H}^{\mca{L}}_{-1} \\
Y&:= \sum_{k=1}^\infty \frac{1}{(k-1)!} p_k(y, x^+, \ldots, x^+) \in \wh{H}^{\mca{L}}_2, \\
Z&:= \sum_{k=1}^\infty  \frac{1}{(k-1)!} p_k(z, x^+, \ldots, x^+) \in \wh{H}^{\mca{L}}_1, 
\end{align*} 
satisfy 
\begin{align*} 
&\sum_{k=2}^\infty \frac{1}{k!} l^H_k(X, \ldots, X) = 0, \\
&\sum_{k=2}^\infty \frac{1}{(k-1)!} l^H_k(Y, X, \ldots, X) = Z. 
\end{align*} 
Notice that infinite sums in the definitions of $X$, $Y$, $Z$ make sense
since $x^+(a) \ne 0$ only if $\omega(\bar{a}) \ge 2\ep$ and 
$p$ preserves decompositions over $H_1(L: \Z)$. 
Moreover $X(a) \ne 0$ only if $\omega(\bar{a}) \ge 2\ep$, 
thus Theorem \ref{161011_1} (iii) holds with $c:= 2\ep$. 
To complete the proof of Theorem \ref{161011_1}, 
we need to show that $Z(0)  = (-1)^{n+1}  [L]$.
Since $p$ respects decompositions over $H_1(L: \Z)$ 
and $z(a,k) \ne 0$ only if $\omega(\bar{a}) \ge 2\ep$ or $a=0$, we obtain 
\[ 
Z(0) = \pi \biggl( \sum_{k=0}^\infty z(0,k) \biggr) = (-1)^{n+1} [L]. 
\]

\section{Sequence of approximate solutions} 

To prove Theorem \ref{161214_2}, we have to define chains $x$, $y$, $z$ satisfying the equations
\begin{equation}\label{170830_1} 
\partial x - \frac{1}{2} [x,x] = 0, \qquad \partial y - [x,y] = z.
\end{equation} 
These chains are defined from virtual fundamental chains of moduli spaces of pseudo-holomorphic disks with boundaries on $L$. 
However, it is difficult to get such chains in \textit{one step} since it will involve simultaneous perturbations of Kuranishi maps of infinitely many moduli spaces. 
Due to this difficulty, we first define a sequence $(x_i, y_i, z_i)_i$ such that the following conditions hold for every $i$: 
\begin{itemize}
\item The tuple $(x_i, y_i, z_i)$ satisfies equations (\ref{170830_1}) up to certain energy level which goes to $\infty$ as $i \to \infty$. 
\item Tuples $(x_i, y_i, z_i)$ and $(x_{i+1}, y_{i+1}, z_{i+1})$ are ``gauge equivalent'' up to certain energy level which  goes to $\infty$ as $i \to \infty$. 
\end{itemize} 
By the second condition, limits $x:= \lim_{i \to \infty} x_i$, $y:= \lim_{i \to \infty} y_i$, $z:= \lim_{i \to \infty} z_i$ exist, 
and by the first condition these limits satisfy (\ref{170830_1}). 
This procedure is similar to the construction of the $A_\infty$-structure in Lagrangian Floer theory as a limit of $A_K$-structures; 
see Section 7.2.3 in \cite{FOOO_09}, and Remark 22.27 in \cite{FOOO_Kuranishi}. 

The goal of this section is to explain details of the algebraic procedure sketched above; 
we reduce Theorem \ref{161214_2} to Theorem \ref{161215_1}, 
which asserts the existence of a sequence of  approximate solutions connected by gauge equivalences. 
The proof of Theorem \ref{161215_1} involves construction of virtual fundamental chains using the theory of Kuranishi structures, 
which is carried out in Sections 7--9. 

Let $J$ be the standard complex structure on $\C^n$, 
and take $\ep>0$ so that 
$2 \ep$ is less than the minimal symplectic area of nonconstant $J$-holomorphic disks with boundaries on $L$. 
We fix such $\ep$ in the following arguments. 

For each $m \in \Z$, we set 
\[ 
F^m C^{\mca{L}}_*:= \bigoplus_{\substack{ a \in H_1(L:\Z) \\ k \in \Z_{\ge 0} \\ \omega_n(\bar{a}) \ge \ep(m+1-k)}} C^{\mca{L}}(a,k)_*. 
\] 
Obviously $\partial F^m \subset F^m$. 
We can also show $[F^m, F^{m'}] \subset F^{m+m'}$ since 
\[ 
\omega_n(\bar{a}) \ge \ep (m+1-k) , \, 
\omega_n(\bar{a'}) \ge \ep (m'+1-k') \implies 
\omega_n(\bar{a} + \bar{a'}) \ge \ep (m+ m' + 1 - (k+k'-1)). 
\] 
The filtration on $\bar{C}^{\mca{L}}$ (see Section 4.5) 
is defined in a similar manner. 
These filtrations extend to the completions. 
In the following we abbreviate $C^{\mca{L}}$ and $\bar{C}^{\mca{L}}$ by $C$ and $\bar{C}$. 

\begin{thm}\label{161215_1} 
There exist integers $I, U \ge 2$ 
and a sequence $(x_i, y_i, z_i, \bar{x}_i, \bar{y}_i, \bar{z}_i)_{i \ge I}$
satisfying the following conditions for every $i \ge I$: 
\[ 
x_i \in F^1 C_{-1}, \, 
\bar{x}_i \in F^1 \bar{C}_{-1}, \, 
y_i \in F^{-U} C_2, \, 
\bar{y}_i \in F^{-U} \bar{C}_2, \, 
z_i \in F^{-1} C_1, \, 
\bar{z}_i \in F^{-1} \bar{C}_1.
\]
\[ 
x_i = e_-(\bar{x}_i), \quad
y_i = e_-(\bar{y}_i), \quad
z_i = e_-(\bar{z}_i). 
\] 
\[ 
\partial \bar{x}_i - \frac{1}{2} [\bar{x}_i, \bar{x}_i] \in F^i \bar{C}_{-2}, \quad
\partial \bar{y}_i - [\bar{x}_i, \bar{y}_i] - \bar{z}_i \in F^{i-U-1}\bar{C}_1, \quad
\partial \bar{z}_i - [\bar{x}_i, \bar{z}_i] \in F^{i-2} \bar{C}_0. 
\] 
\[ 
x_{i+1} - e_+(\bar{x}_i) \in F^iC_{-1}, \quad 
y_{i+1} - e_+(\bar{y}_i) \in F^{i-U-1}C_2, \quad 
z_{i+1} - e_+(\bar{z}_i) \in F^{i-2}C_1. 
\] 
\begin{itemize} 
\item 
$x_i(a,k) \ne 0$ only if $\omega(\bar{a}) \ge 2\ep$ or $a=0$, $k \ge 2$. 
Moreover, 
$x_i(0,2)$ is a cycle in $C^\dR_n(\mca{L}_3(0))$ 
such that $[x_i(0,2)]$ corresponds to $(-1)^{n+1}  [L]$
via the isomorphism $H^\dR_n(\mca{L}_3(0)) \cong H_n(\mca{L})$. 
\item
$z_i(a,k) \ne 0$ only if $\omega(\bar{a}) \ge 2\ep$ or $a=0$. 
Moreover, 
$z_i(0,0)$ is a cycle in $C^\dR_n(\mca{L}_1(0))$ 
such that $[z_i(0,0)]$ corresponds to $(-1)^{n+1} [L]$ 
via the isomorphism $H^\dR_n(\mca{L}_1(0)) \cong H_n(\mca{L})$. 
\end{itemize} 
\end{thm}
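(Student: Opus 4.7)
The plan is to construct the sextuple sequence $(x_i,y_i,z_i,\bar x_i,\bar y_i,\bar z_i)_{i \ge I}$ by induction on $i$, realizing each entry as a de Rham chain obtained by pushing forward a CF-perturbed virtual fundamental chain of a moduli space of (perturbed) $J$-holomorphic disks along its boundary-evaluation map into $\mca{L}_{k+1}(a)$. The key structural point is that, modulo the filtration $F^i$, only finitely many $(a,k)$ contribute: the condition $\omega_n(\bar a) < \ep(i+1-k)$ together with $\omega_n(\bar a) \ge 0$ forces $k \le i$, and for each fixed $k$ Gromov compactness leaves only finitely many classes $a$. Hence at every stage one only has to perturb finitely many Kuranishi structures simultaneously. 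Once the sequence is built, passage to the limit $i \to \infty$ furnishes the $x,y,z \in \wh C^{\mca{L}}$ required by Theorem \ref{161214_2}.

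First I would introduce, for $a \in H_1(L;\Z)$ and $k \ge 0$, the compactified moduli space $\mca{M}_{k+1}(a)$ of nonconstant $J$-holomorphic disks $(D,\partial D) \to (\C^n,L)$ of class $\bar a$ with $k+1$ boundary marked points (one fixed at $1 \in \partial D$), and the analogous space $\mca{N}_{k+1}(a)$ for the Cauchy--Riemann equation perturbed by a Hamiltonian $H$ displacing $L$. By \cite{FOOO_Kuranishi} each carries a Kuranishi structure with corners, oriented by the spin structure of $L$, whose codimension-one boundary decomposes as fiber products of two $\mca{M}$'s (resp.\ of an $\mca{N}$ and an $\mca{M}$) along boundary evaluations. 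Pushing a CF-perturbed virtual fundamental chain forward by the Moore-reparametrized boundary evaluation $\ev^{\mca{M}}\colon \mca{M}_{k+1}(a) \to \mca{L}_{k+1}(a)$ yields an element of $C^{\mca{L}}(a,k)$, and similarly for $\mca{N}$; these are the building blocks of $x_i$ and $y_i$ respectively. The exceptional contributions $x_i(0,2)$ and $z_i(0,0)$ are inserted by hand from the evaluations $L \to \mca{L}_3(0)$ and $L \to \mca{L}_1(0)$ at constant loops, weighted by a top form on $L$ of integral $(-1)^{n+1}$, so that the cohomology conditions in the last two bullets of the theorem are enforced at every stage.

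For the inductive step, suppose $(x_i,y_i,z_i)$ has been realized from a compatible system of CF-perturbations on those K-spaces with $\omega_n(\bar a) \le \ep i$. I would first extend this system to all K-spaces with $\omega_n(\bar a) \le \ep(i+1)$, respecting the product decomposition of the codimension-one boundary; this gives stage-$(i+1)$ chains $x_{i+1},y_{i+1},z_{i+1}$. Then I would choose on $\R \times \mca{M}_{k+1}(a)$ and $\R \times \mca{N}_{k+1}(a)$ a one-parameter family of CF-perturbations that restricts to the stage-$i$ choice on $\R_{\le -1}$ and to the stage-$(i+1)$ choice on $\R_{\ge 1}$, and push the resulting virtual fundamental chains forward by $\id_\R \times \ev$ into $\R \times \mca{L}_{k+1}(a)$; the output, packaged via the chain model of Section 4.4, is $\bar x_i,\bar y_i,\bar z_i \in \bar C^{\mca{L}}$. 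The identities $e_-(\bar x_i)=x_i$, $e_-(\bar y_i)=y_i$, $e_-(\bar z_i)=z_i$ and the compatibilities $x_{i+1}-e_+(\bar x_i),\, y_{i+1}-e_+(\bar y_i),\, z_{i+1}-e_+(\bar z_i)$ lying in high filtration are then tautological from the construction, up to the unperturbed contributions of K-spaces with $\omega_n(\bar a) > \ep(i+1-k)$, which by definition lie in $F^i$. The approximate Maurer--Cartan identities $\partial \bar x_i - \tfrac12[\bar x_i,\bar x_i] \in F^i\bar C_{-2}$ etc.\ follow by applying Stokes' formula for pushforward of CF-perturbed chains: the $\partial$ term records the boundary of the perturbed chain, the bracket absorbs the fiber-product splitting of the codimension-one strata of $\mca{M}$ and $\mca{N}$, and the residual defect contains only strata whose total energy exceeds $\ep i$.

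The hard part will be the coherent, simultaneous choice of CF-perturbations on the entire tower of Kuranishi spaces, satisfying three compatibilities at once: (i) boundary restrictions along $\partial \mca{M}_{k+1}(a)$ and $\partial \mca{N}_{k+1}(a)$ must factor through the product perturbations on the fiber product boundary strata, so that Stokes exactly reproduces the bracket; (ii) the perturbation data must be $\sym_k$-equivariant so that pushforward lands in the quotient defining the de Rham chain complex of Section 4.3; and (iii) the $\R$-parametrized perturbations used to build $\bar x_i,\bar y_i,\bar z_i$ must interpolate the two stages consistently with (i)--(ii). This is the de Rham $L_\infty$ analogue of the filtered $A_\infty$ inductive construction in \cite{FOOO_09} Section 7.2.3 and \cite{FOOO_Kuranishi} Remark 22.27; threading it through the chain model of Section 4 and through the parametrized moduli spaces is the technical content of Sections 7--9.
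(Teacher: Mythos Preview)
Your overall strategy---realize each $x_i,y_i,z_i$ as virtual fundamental chains of moduli spaces of (perturbed) disks, and the barred versions as one-parameter interpolations---is the same as the paper's.  But there are two genuine gaps in the proposal, and one misconception.

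\textbf{Smoothness of the boundary evaluation.}  You write ``pushing a CF-perturbed virtual fundamental chain forward by the Moore-reparametrized boundary evaluation $\ev^{\mca{M}}:\mca{M}_{k+1}(a)\to\mca{L}_{k+1}(a)$''.  This map is \emph{not} strongly smooth on the compactified moduli space: near a nodal boundary stratum, the boundary loop of a glued disk depends only continuously on the gluing parameter.  The paper handles this in two steps (Sections~7.3--7.5 and~9): first one defines a strongly \emph{continuous} map to the space $\mca{L}_{k+1}^{\mathrm{con}}$ of continuous loops, using length-parametrization so that concatenation is continuous across bubbling; then one invokes a $C^0$-approximation lemma (Theorem~\ref{170430_2}) to replace this by a strongly smooth, admissible map to $\mca{L}_{k+1}$, compatibly across all boundary strata.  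Without this step the de~Rham chain is not defined, so this is not a detail that can be deferred.

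\textbf{Where $z$ comes from.}  You say $z_i(0,0)$ is ``inserted by hand'', but you do not say what $z_i(a,k)$ is for general $(a,k)$, nor why $\partial\bar y_i-[\bar x_i,\bar y_i]-\bar z_i$ lies in high filtration.  In the paper, $y_i$ comes from a one-parameter family $\mca{N}^{\ge 0}_{k+1}(\beta)$ of moduli spaces of $H$-perturbed disks with parameter $r\ge 0$, and $z_i$ comes from the $r=0$ slice $\mca{N}^0_{k+1}(\beta)$; the relation $\partial y-[x,y]=z$ then reflects the decomposition $\partial\mca{N}^{\ge 0}\cong \mca{N}^0\sqcup(\text{fiber products})$ in (\ref{170903_3}).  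The special component $z_i(0,0)$ is not inserted by hand: it is the class of the constant-disk locus $\mca{N}^0_1(0)\cong L$.  Your single space $\mca{N}$ does not capture this.

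\textbf{A misconception.}  Item~(ii) in your ``hard part''---that perturbations must be $\sym_k$-equivariant so that pushforward lands in the quotient defining $C^\dR_*$---is wrong.  The quotient in Section~4.3 is by the pushforward relation $(U,\ph,\pi_!\omega)\sim(U',\ph\circ\pi,\omega)$, not by any symmetric-group action on marked points; no $\sym_k$-equivariance is required anywhere in the construction.
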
 

\begin{rem} 
$\partial z - [x,z]=0$ follows from $\partial x - \frac{1}{2}[x,x]=0$ and $\partial y - [x,y]=z$, 
thus the assumption $\partial \bar{z}_i - [\bar{x}_i, \bar{z}_i] \in F^{i-2}$
in Theorem \ref{161215_1} may seem redundant. 
However, this assumption is necessary to carry out the induction argument. 
\end{rem} 

The rest of this section is devoted to 
the proof of Theorem \ref{161214_2} assuming Theorem \ref{161215_1}. 
We first need the following elementary lemma. 

\begin{lem}\label{161221_1} 
Let $V_*$, $W_*$ be chain complexes and $e: V_* \to W_*$ be a surjective quasi-isomorphism.
For any $x \in V_*$ and $y \in W_{*+1}$ such that 
\[ 
\partial x = 0, \qquad e(x)= \partial y 
\] 
there exists $\bar{y} \in V_{*+1}$ such that $e(\bar{y}) = y$ and $\partial \bar{y}=x$. 
\end{lem}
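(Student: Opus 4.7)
The plan is to produce $\bar{y}$ by first choosing an arbitrary lift of $y$ via surjectivity and then correcting it by an element of $\ker e$. Concretely, since $e$ is surjective I would pick some $y' \in V_{*+1}$ with $e(y') = y$. The candidate $\bar{y}$ will have the form $y' - k$ for a suitable $k \in \ker e$, which automatically keeps $e(\bar{y}) = y$, and then I only need to arrange $\partial \bar{y} = x$.

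Next I would examine the element $\partial y' - x \in V_*$. It is a cycle, since $\partial^2 y' = 0$ and $\partial x = 0$ by hypothesis, and moreover $e(\partial y' - x) = \partial y - e(x) = 0$ using $e(x) = \partial y$. So $\partial y' - x$ is a cycle lying in the subcomplex $K_* := \ker e \subset V_*$.

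The key input is that $K_*$ is acyclic. This follows from the short exact sequence of chain complexes
\[
0 \to K_* \to V_* \xrightarrow{e} W_* \to 0,
\]
whose associated long exact sequence in homology, together with the hypothesis that $e$ is a quasi-isomorphism, forces $H_*(K) = 0$. Hence there exists $k \in K_{*+1}$ with $\partial k = \partial y' - x$. Setting $\bar{y} := y' - k$, one has $e(\bar{y}) = e(y') - 0 = y$ and $\partial \bar{y} = \partial y' - (\partial y' - x) = x$, as required.

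The argument is entirely standard homological algebra and I do not anticipate any real obstacle; the only mildly nontrivial ingredient is the acyclicity of $\ker e$, but this is a direct consequence of the long exact sequence applied to a surjective quasi-isomorphism. No further properties of the specific chain complexes $\bar{C}^{\mca{L}}$, $C^{\mca{L}}$ enter here, which is why the lemma can be stated at this level of generality before being applied in the inductive construction of Theorem \ref{161214_2}.
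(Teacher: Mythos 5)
Your proof is correct and follows essentially the same argument as the paper: lift $y$ to some $y' \in V_{*+1}$ by surjectivity, observe that $\partial y' - x$ is a cycle in $\ker e$, use acyclicity of $\ker e$ (from the long exact sequence of the short exact sequence $0 \to \ker e \to V \to W \to 0$) to correct $y'$ by an element of $\ker e$. The paper writes the correction as $\bar{y} = z + w$ with $\partial w = x - \partial z$; you write $\bar{y} = y' - k$ with $\partial k = \partial y' - x$, which is the identical computation.
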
 
\begin{proof} 
Since $e$ is surjective, there exists $z \in V_{*+1}$ such that $e(z)=y$. 
Then $e(x - \partial z) = 0$ and $\partial (x- \partial z) = 0$. 
Thus $x - \partial z$ is a cycle in $\Ker e$. 
Since $\Ker e$ is acyclic, there exists $w \in \Ker e$ such that 
$\partial w = x - \partial z$. 
Then, $\bar{y}:= z+w$ satisfies the condition of the lemma. 
\end{proof}

Key arguments are summarized in Lemma \ref{161216_1} below. 

\begin{lem}\label{161216_1} 
Let $I$, $U$ be as in Theorem \ref{161215_1}. 
There exists a sequence 
\[ 
(x_{i,j}, y_{i,j}, z_{i,j}, \bar{x}_{i,j}, \bar{y}_{i,j}, \bar{z}_{i,j})_{i \ge I, \, j \ge 0}
\] 
satisfying the following conditions for every $i \ge I$ and $j \ge 0$: 
\begin{equation}\label{170627_1} 
x_{i,0} = x_i, \quad
y_{i,0} = y_i, \quad
z_{i,0} = z_i, \quad
\bar{x}_{i,0} = \bar{x}_i, \quad
\bar{y}_{i,0} = \bar{y}_i, \quad 
\bar{z}_{i,0} = \bar{z}_i. 
\end{equation}
\begin{eqnarray}
&x_{i,j} \in F^1 C_{-1}, \quad \bar{x}_{i,j} \in F^1 \bar{C}_{-1}, \quad y_{i,j} \in F^{-U} C_2,  \\
&\bar{y}_{i,j} \in F^{-U} \bar{C}_2, \quad z_{i,j} \in F^{-1} C_1, \quad  \bar{z}_{i,j} \in F^{-1} \bar{C}_1. \nonumber
\end{eqnarray} 
\begin{equation}\label{170627_2} 
x_{i,j} = e_-(\bar{x}_{i,j}), \quad
y_{i,j} = e_-(\bar{y}_{i,j}), \quad
z_{i,j} = e_-(\bar{z}_{i,j}). 
\end{equation}
\begin{eqnarray}\label{170627_3} 
&\partial \bar{x}_{i,j} - \frac{1}{2} [ \bar{x}_{i,j}, \bar{x}_{i,j} ] \in F^{i+j}\bar{C}_{-2}, \qquad \partial \bar{y}_{i,j} - [ \bar{x}_{i,j} , \bar{y}_{i,j} ] - \bar{z}_{i,j} \in F^{i+j-U-1}\bar{C}_1. \\
&\partial \bar{z}_{i,j} - [ \bar{x}_{i,j} , \bar{z}_{i,j}] \in F^{i+j-2}\bar{C}_0.   \nonumber
\end{eqnarray} 
\begin{eqnarray}
&x_{i+1, j} - e_+(\bar{x}_{i,j}) \in F^{i+j}C_{-1}, \qquad y_{i+1, j} - e_+(\bar{y}_{i,j}) \in F^{i+j-U-1}C_2,  \\
&z_{i+1, j} - e_+(\bar{z}_{i,j}) \in F^{i+j-2}C_1. \nonumber
\end{eqnarray} 
\begin{equation} 
\bar{x}_{i, j+1} - \bar{x}_{i,j} \in F^{i+j}\bar{C}_{-1}, \quad
\bar{y}_{i, j+1} - \bar{y}_{i,j} \in F^{i+j-U-1}\bar{C}_2, \quad 
\bar{z}_{i, j+1} - \bar{z}_{i,j} \in F^{i+j-2}\bar{C}_1. 
\end{equation}
Moreover we require the following conditions: 
\begin{itemize} 
\item 
$\bar{x}_{i,j}(a,k) \ne 0$ only if $\omega(\bar{a}) \ge 2\ep$ or $a=0$, $k \ge 2$. 
Moreover, $\bar{x}_{i,j}(0,2)$ is a cycle in $C^\dR_n(\mca{L}_3(0))$ 
such that $[\bar{x}_{i,j}(0,2)]$ corresponds to $(-1)^{n+1}  [L]$. 
\item 
$\bar{z}_{i,j}(a,k) \ne 0$ only if $\omega(\bar{a}) \ge 2\ep$ or $a=0$. 
Moreover, $\bar{z}_{i,j}(0,0)$ is a cycle in $C^\dR_n(\mca{L}_1(0))$ 
such that $[\bar{z}_{i,j}(0,0)]$ corresponds to $(-1)^{n+1} [L]$. 
\end{itemize} 
\end{lem}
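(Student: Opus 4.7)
The proof is by induction on $j$. The base case $j = 0$ is immediate from Theorem \ref{161215_1} via the identifications in (\ref{170627_1}). For the inductive step, assuming the level-$j$ data has been constructed, we seek corrections $\delta^x_i \in F^{i+j}\bar{C}_{-1}$, $\delta^y_i \in F^{i+j-U-1}\bar{C}_2$, $\delta^z_i \in F^{i+j-2}\bar{C}_1$ so that $\bar{x}_{i,j+1} := \bar{x}_{i,j} + \delta^x_i$ and its analogues satisfy the level-$(j+1)$ requirements.

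Since $\bar{x}_{i,j} \in F^1$ and $[F^m, F^{m'}] \subset F^{m+m'}$, every nonlinear cross-term appearing in the level-$(j+1)$ defects---terms of the form $[\bar{x}_{i,j}, \delta^x_i]$, $[\delta^x_i, \bar{y}_{i,j}]$, and brackets of the $\delta$'s among themselves---already lies in the next filtration level. The level-$(j+1)$ equations therefore reduce, modulo the next filtration level, to the linear system $\partial \delta^x_i \equiv -\alpha^x_i$, $\partial \delta^y_i - \delta^z_i \equiv -\alpha^y_i$, $\partial \delta^z_i \equiv -\alpha^z_i$, together with matching conditions between $e_+(\delta^\bullet_i)$ and $e_-(\delta^\bullet_{i+1})$. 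Here $\alpha^\bullet_i$ denotes the level-$j$ defect for each of the three equations, and these are cycles modulo the next filtration level by the graded Jacobi identity applied to the level-$j$ equations themselves.

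To solve this system, I would proceed in the order $z$, then $y$, then $x$ (so that $\delta^z$ is available when solving the $y$-equation). For each $i$, first select the $C$-level endpoints $e_\pm(\delta^\bullet_i)$: their existence reduces, in each graded piece $F^m / F^{m+1}$, to a cohomology question handled by Lemma \ref{161225_1}, whose computation of $H_*(C^{\mca{L}}(a), \partial_{x^0})$ gives vanishing in the relevant degrees for $a \ne 0$; the $a = 0$ components are dealt with by the normalization conditions on $x_{i,j}(0, 2)$ and $z_{i,j}(0, 0)$, which we arrange to preserve by choosing $\delta^x_i$ (resp.\ $\delta^z_i$) to vanish on those distinguished components. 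Next, lift the prescribed endpoint data to a $\bar{C}$-level correction via Lemma \ref{161221_1} applied to the surjective quasi-isomorphism $(e_+, e_-) : \bar{C}^{\mca{L}} \to C^{\mca{L}} \oplus C^{\mca{L}}$ provided by the last bullets of Section 4.5.

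The main obstacle is the simultaneous compatibility across all $i \ge I$: the $e_+$-endpoint of $\delta^\bullet_i$ must match the $e_-$-endpoint of $\delta^\bullet_{i+1}$ modulo the next filtration level, so the endpoint choices cannot be made independently for each $i$. This is precisely what Lemma \ref{161221_1} addresses by letting us independently prescribe both endpoints of $\delta^\bullet_i$ in $C^{\mca{L}}$ subject only to the cycle conditions that the Jacobi identity already guarantees; the deep filtration location of the errors on the right-hand side then ensures all chosen endpoint chains can be selected in the prescribed filtration pieces.
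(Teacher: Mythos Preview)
Your proposal has two genuine gaps that make the argument fail as written.

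First, the map $(e_+, e_-) : \bar{C} \to C \oplus C$ is surjective but \emph{not} a quasi-isomorphism: each of $e_+$, $e_-$ separately is a quasi-isomorphism (Section~4.5), so $H_*(\bar C)\cong H_*(C)$, whereas the target has homology $H_*(C)\oplus H_*(C)$. Hence Lemma~\ref{161221_1} cannot be applied to the pair $(e_+,e_-)$.

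Second, your appeal to Lemma~\ref{161225_1} is misplaced. On a graded piece $F^m/F^{m+1}$ the induced differential is the plain $\partial$, not $\partial_{x^0}$; and in any case Lemma~\ref{161225_1} asserts $H_*(C^{\mca L}(a),\partial_{x^0})\cong H_{*+n+\mu(a)-1}(\mca L(a))$, which is generally nonzero, so it does not give the vanishing you claim. No input from loop-space homology is needed in this proof.

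The paper's argument avoids both problems by lifting through $e_+$ \emph{alone}, which is a surjective quasi-isomorphism on each $F^D\bar C/F^{D+1}\bar C$. The point you are missing is that the $e_+$-endpoint is already handed to you by the induction hypothesis: set
\[
\Delta^i_x := x_{i+1,j} - e_+(\bar x_{i,j}) \in F^{i+j}C_{-1},
\]
and similarly for $y,z$. One checks (using that $e_\pm$ are dg~Lie maps and the level-$j$ hypotheses) that the defect $\alpha^x_i := \partial\bar x_{i,j} - \tfrac12[\bar x_{i,j},\bar x_{i,j}]$ is a $\partial$-cycle modulo $F^{i+j+1}$ and that $e_+(\alpha^x_i) \equiv -\partial\Delta^i_x$ there. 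Lemma~\ref{161221_1} applied to $e_+$ then yields $\bar\Delta^i_x \in F^{i+j}\bar C_{-1}$ with $e_+(\bar\Delta^i_x)\equiv\Delta^i_x$ and $\partial\bar\Delta^i_x\equiv -\alpha^x_i$, and one sets $\bar x_{i,j+1}:=\bar x_{i,j}+\bar\Delta^i_x$. The $e_-$-endpoint is not prescribed at all; the coupling you worry about dissolves because
\[
x_{i+1,j+1} - e_+(\bar x_{i,j+1}) = e_-(\bar\Delta^{i+1}_x) + \bigl(\Delta^i_x - e_+(\bar\Delta^i_x)\bigr),
\]
and $\bar\Delta^{i+1}_x \in F^{i+1+j}\subset F^{i+j+1}$ automatically. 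The $y$- and $z$-corrections are handled in parallel (no ordering is needed since $\bar\Delta^i_z\in F^{i+j-2}\subset F^{i+j-U}$), and the support and normalization conditions on $\bar x_{i,j}(0,2)$, $\bar z_{i,j}(0,0)$ are preserved because the $\bar\Delta$'s can be chosen componentwise to vanish there (Remark~\ref{170304_1}).
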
 
\begin{proof}
The proof is by induction on $j$. 
Let us define $x_{i,0}, \ldots, \bar{z}_{i,0}$ by (\ref{170627_1}). 

Assuming that we have defined a sequence $(x_{i,j}, \ldots, \bar{z}_{i,j})_{i \ge I}$
which satisfies the required conditions, 
we are going to define a sequence $(x_{i,j+1}, \ldots, \bar{z}_{i, j+1})_{i \ge I}$. 

Let us set 
\begin{align*} 
\Delta^i_x &:= x_{i+1, j} - e_+(\bar{x}_{i,j})  \in F^{i+j} C_{-1},  \\ 
\Delta^i_y &:= y_{i+1, j} - e_+(\bar{y}_{i,j}) \in F^{i+j-U-1} C_2,  \\
\Delta^i_z &:= z_{i+1, j}  - e_+(\bar{z}_{i,j}) \in F^{i+j-2} C_1. 
\end{align*} 
Since $e_-$ preserves $\partial$, $[\, , \,]$ and filtrations, 
(\ref{170627_2}) and (\ref{170627_3}) show that 
\begin{align*} 
&\partial x_{i+1, j} - \frac{1}{2} [ x_{i+1, j}, x_{i+1, j} ] \in F^{i+j+1}C_{-2},  \\ 
&\partial y_{i+1, j} - [ x_{i+1, j}, y_{i+1, j}] - z_{i+1, j}  \in F^{i+j-U}C_1,  \\ 
&\partial z_{i+1, j} - [x_{i+1, j} , z_{i+1, j}] \in F^{i+j-1}C_{-2}. 
\end{align*} 
Then we obtain 
\begin{align*} 
&\partial \Delta^i_x + e_+(\partial \bar{x}_{i,j} - \frac{1}{2}[\bar{x}_{i,j} , \bar{x}_{i,j}] ) \in F^{i+j+1}C_{-2}, \\ 
&\partial \Delta^i_y + e_+(\partial \bar{y}_{i,j} - [\bar{x}_{i,j}, \bar{y}_{i,j}] - \bar{z}_{i,j}) \in F^{i+j-U}C_1,  \\
&\partial \Delta^i_z + e_+(\partial \bar{z}_{i,j} - [\bar{x}_{i,j} , \bar{z}_{i,j}]) \in F^{i+j-1}C_0. 
\end{align*} 
On the other hand, we obtain 
\begin{align*} 
\partial \bigg( \partial \bar{x}_{i,j} - \frac{1}{2}[\bar{x}_{i,j}, \bar{x}_{i,j}] \bigg)  &= 
-\frac{1}{2} \bigg( \bigg[\partial \bar{x}_{i,j} - \frac{1}{2}[\bar{x}_{i,j}, \bar{x}_{i,j} ] , \bar{x}_{i,j} \bigg] - \bigg[ \bar{x}_{i,j}, \partial \bar{x}_{i,j} - \frac{1}{2}[\bar{x}_{i,j}, \bar{x}_{i,j} ] \bigg] \bigg) \in F^{i+j+1}\bar{C}_{-3}, \\ 
\partial (\partial \bar{y}_{i,j} - [ \bar{x}_{i,j} , \bar{y}_{i,j}] - \bar{z}_{i,j})
&= - \bigg[ \partial \bar{x}_{i,j} - \frac{1}{2}[\bar{x}_{i,j}, \bar{x}_{i,j}] , \bar{y}_{i,j} \bigg] + [\bar{x}_{i,j} , \partial \bar{y}_{i,j}  - [\bar{x}_{i,j}, \bar{y}_{i,j}] - \bar{z}_{i,j} ] \\ 
&\qquad - (\partial \bar{z}_{i,j}  - [\bar{x}_{i,j} , \bar{z}_{i,j}]) \in F^{i+j-U} \bar{C}_0, \\
\partial (\partial \bar{z}_{i,j} - [ \bar{x}_{i,j} , \bar{z}_{i,j}]) 
&= - \bigg[ \partial \bar{x}_{i,j} - \frac{1}{2}[\bar{x}_{i,j}, \bar{x}_{i,j}], \bar{z}_{i,j} \bigg] 
+ [\bar{x}_{i,j}, \partial \bar{z}_{i,j} - [\bar{x}_{i,j}, \bar{z}_{i,j}]]  \in F^{i+j-1}\bar{C}_{-1}. 
\end{align*} 
Applying Lemma \ref{161221_1} for 
\[ 
e_+ :  F^D \bar{C}/ F^{D+1} \bar{C} \to F^D C / F^{D+1} C
\] 
where $D = i+j, i+j-U-1, i+j-2$
(recall that $e_+$ is a surjective quasi-isomorphism, as we have seen in Section 4.5), 
we show that there exist
\[ 
\bar{\Delta}^i_x \in F^{i+j} \bar{C}_{-1}, \quad
\bar{\Delta}^i_y \in F^{i+j-U-1} \bar{C}_2, \quad 
\bar{\Delta}^i_z \in F^{i+j-2} \bar{C}_1 
\] 
such that
\[ 
e_+(\bar{\Delta}^i_x) - \Delta^i_x \in F^{i+j+1}C_{-1}, \quad
e_+(\bar{\Delta}^i_y) - \Delta^i_y \in  F^{i+j-U}C_2, \quad
e_+(\bar{\Delta}^i_z) - \Delta^i_z \in F^{i+j-1}C_1 
\] 
and 
\begin{align*} 
&\partial \bar{\Delta}^i_x + ( \partial \bar{x}_{i,j} - \frac{1}{2} [\bar{x}_{i,j}, \bar{x}_{i,j}] ) \in F^{i+j+1}\bar{C}_{-2},  \\
&\partial \bar{\Delta}^i_y + (\partial \bar{y}_{i,j} - [\bar{x}_{i,j}, \bar{y}_{i,j}] - \bar{z}_{i,j}) \in F^{i+j-U}\bar{C}_1, \\
&\partial \bar{\Delta}^i_z + ( \partial \bar{z}_{i,j} - [\bar{x}_{i,j}, \bar{z}_{i,j}]) \in F^{i+j-1}\bar{C}_0.  
\end{align*} 

\begin{rem}\label{170304_1}
\begin{itemize}
\item 
The $(a,k)$-components of $\Delta^i_x$ and $\partial \bar{x}_{i,j} + \frac{1}{2}[\bar{x}_{i,j}, \bar{x}_{i,j}]$ are nonzero only if $\omega(\bar{a}) \ge 2\ep$ or $a=0$. 
Then we may take $\bar{\Delta}^i_x$ so that its $(a,k)$-component is nonzero only if $\omega(\bar{a}) \ge 2 \ep$ or $a=0$. 
\item 
Similarly, we can take $\bar{\Delta}^i_z$ so that 
$\bar{\Delta}^i_z(a,k) \ne 0$ only if $\omega(\bar{a}) \ge 2\ep$ or $a=0$. 
\item
Since $\bar{\Delta}^i_x \in F^2 \bar{C}_{-1}$ and $\bar{\Delta}^i_z \in F^0 \bar{C}_1$, 
it follows that 
$\bar{\Delta}^i_x(0,k)=0$ if $k=0,1,2$ and 
$\bar{\Delta}^i_z(0,k)=0$ if $k=0$. 
\end{itemize} 
\end{rem} 

Finally, let us set 
\[ 
\bar{x}_{i,j+1} := \bar{x}_{i,j} + \bar{\Delta}^i_x, \quad
\bar{y}_{i,j+1} := \bar{y}_{i,j} + \bar{\Delta}^i_y, \quad
\bar{z}_{i,j+1} := \bar{z}_{i,j} + \bar{\Delta}^i_z 
\] 
and 
\[ 
x_{i,j+1}:= e_-(\bar{x}_{i,j+1}), \quad
y_{i,j+1}:= e_-(\bar{y}_{i,j+1}), \quad
z_{i,j+1}:= e_-(\bar{z}_{i,j+1}). 
\] 
Now we have to check that, for every $i \ge I$ there holds 
\begin{align*} 
&\partial \bar{x}_{i, j+1} - \frac{1}{2} [\bar{x}_{i, j+1} , \bar{x}_{i,j+1}] \in F^{i+j+1} \bar{C}_{-2}, \\ 
&\partial \bar{y}_{i, j+1} - [\bar{x}_{i, j+1}, \bar{y}_{i, j+1}] - \bar{z}_{i, j+1} \in F^{i+j-U} \bar{C}_1, \\ 
&\partial \bar{z}_{i, j+1} - [\bar{x}_{i, j+1}, \bar{z}_{i, j+1}]  \in F^{i+j-1} \bar{C}_0, \\ 
&x_{i+1, j+1} - e_+(\bar{x}_{i, j+1})\in F^{i+j+1} C_{-1}, \\ 
&y_{i+1, j+1} - e_+(\bar{y}_{i, j+1}) \in F^{i+j-U}C_{-2}, \\ 
&z_{i+1, j+1} - e_+(\bar{z}_{i, j+1}) \in F^{i+j-1}C_1.
\end{align*} 
The first formula holds since 
\[ 
\partial \bar{x}_{i, j+1} - \frac{1}{2} [\bar{x}_{i, j+1} , \bar{x}_{i,j+1}]
= \biggl( \partial \bar{x}_{i,j} + \partial \bar{\Delta}^i_x - \frac{1}{2}[\bar{x}_{i,j}, \bar{x}_{i,j}] \biggr) - \frac{1}{2} [\bar{\Delta}^i_x, \bar{\Delta}^i_x] - [\bar{x}_{i,j} , \bar{\Delta}^i_x]
\] 
and all three terms in the RHS are in $F^{i+j+1} \bar{C}_{-2}$. 
Similarly, the second and the third formulas follow from 
\begin{align*} 
&\partial \bar{y}_{i, j+1} - [\bar{x}_{i, j+1}, \bar{y}_{i, j+1}] - \bar{z}_{i, j+1}
= \\
&\quad  (\partial \bar{\Delta}^i_y + \partial \bar{y}_{i,j} - [\bar{x}_{i,j}, \bar{y}_{i,j}] - \bar{z}_{i,j}) - [\bar{\Delta}^i_x, \bar{y}_{i,j}] - [\bar{x}_{i,j}, \bar{\Delta}^i_y] - [\bar{\Delta}^i_x, \bar{\Delta}^i_y] - \bar{\Delta}^i_z, \\
&\partial \bar{z}_{i, j+1} - [\bar{x}_{i, j+1} , \bar{z}_{i, j+1}] = \\ 
&\quad (\partial \bar{z}_{i, j} + \partial \bar{\Delta}^i_z - [\bar{x}_{i, j} , \bar{z}_{i, j}]) 
- [\bar{\Delta}^i_x, \bar{z}_{i, j}] - [\bar{x}_{i, j} , \bar{\Delta}^i_z] - [\bar{\Delta}^i_x, \bar{\Delta}^i_z]. 
\end{align*} 
The fourth formula holds since 
\begin{align*}
x_{i+1, j+1} - e_+(\bar{x}_{i,j+1}) &= (x_{i+1,j+1} - x_{i+1,j}) + (x_{i+1,j} - e_+(\bar{x}_{i,j})) + e_+(\bar{x}_{i,j} - \bar{x}_{i, j+1}) \\
&= e_-(\bar{\Delta}^{i+1}_x) + (\Delta^i_x - e_+(\bar{\Delta}^i_x)).
\end{align*} 
Similarly, the fifth and the sixth formulas follow from 
\begin{align*} 
y_{i+1, j+1} - e_+(\bar{y}_{i, j+1})
&=(y_{i+1, j+1} - y_{i+1, j}) + (y_{i+1, j} - e_+(\bar{y}_{i,j})) + e_+(\bar{y}_{i,j} - \bar{y}_{i, j+1}) \\
&=e_-(\bar{\Delta}^{i+1}_y) + \Delta^i_y - e_+(\bar{\Delta}^i_y), \\
z_{i+1, j+1} - e_+(\bar{z}_{i, j+1}) 
&=(z_{i+1, j+1} - z_{i+1, j}) + (z_{i+1, j} - e_+(\bar{z}_{i,j})) + e_+(\bar{z}_{i,j} - \bar{z}_{i, j+1}) \\
&=e_-(\bar{\Delta}^{i+1}_z) + \Delta^i_z - e_+(\bar{\Delta}^i_z). 
\end{align*} 
Finally, by the induction hypothesis, $\bar{x}_{i,j}$ and $\bar{z}_{i,j}$ satisfy the last two conditions in the statement.
Then Remark \ref{170304_1} shows that $\bar{x}_{i, j+1}$ and $\bar{z}_{i, j+1}$ also satisfy these conditions. 
\end{proof}

\begin{proof}
[\textbf{Proof of Theorem \ref{161214_2} assuming Theorem \ref{161215_1}}]

Let us fix an integer $i \ge I$. 
Then for every $j \ge 0$, there holds 
\[ 
x_{i, j+1} - x_{i,j} \in F^{i+j} C_{-1}, \quad
y_{i, j+1} - y_{i,j} \in F^{i+j-U-1} C_2, \quad
z_{i, j+1} - z_{i,j} \in F^{i+j-2} C_1.
\] 
Then, the limits 
\[ 
x:= \lim_{j \to \infty} x_{i,j} \in \wh{C}_{-1}, \quad
y:= \lim_{j \to \infty} y_{i,j} \in \wh{C}_2, \quad
z:= \lim_{j \to \infty} z_{i,j} \in \wh{C}_1
\] 
exist, and satisfy 
\[ 
\partial x - \frac{1}{2} [x,x] = 0, \qquad \partial y - [x,y] = z. 
\]
Conditions (iii), (iv) in Theorem \ref{161214_2} follow from the last two conditions in Lemma \ref{161216_1}. 
\end{proof} 

\section{Proof of Theorem \ref{161215_1} modulo technical results}

Our arguments in the rest of this paper are based on the theory of Kuranishi structures (abbreviated as K-structures), 
in particular we heavily rely on \cite{FOOO_Kuranishi} by Fukaya-Oh-Ohta-Ono. 
Section 10 very briefly explains some notions in the theory of Kuranishi structures. 

The goal of this section is to prove Theorem \ref{161215_1} modulo technical results, 
which are proved in Sections 8 and 9. 
Our proof of Theorem \ref{161215_1} is based on the following principle: 
given a compact K-space $(X, \wh{\mca{U}})$
with a CF-perturbation $\wh{\mf{S}} = (\wh{\mf{S}}^\ep)_{0 < \ep \le 1}$ and a strongly smooth map $\wh{f}: (X, \wh{\mca{U}}) \to \mca{L}_{k+1}$, 
one can define a de Rham chain $\wh{f}_*(X, \wh{\mca{U}}, \wh{\mf{S}}^\ep) \in C^\dR_*(\mca{L}_{k+1})$ for sufficiently small $\ep$. 
In Section 7.1, we state this principle and its variants in rigorous forms. 
Proofs of these results will be carried out in Section 8. 
In Sections 7.2--7.6, we apply this principle to prove Theorem \ref{161215_1}. 

In Section 7.2, we introduce compactified moduli spaces of (perturbed) pseudo-holomorphic disks, 
and equip K-structures (with boundaries and corners) on these spaces
so that the (normalized) boundary of each moduli space is naturally isomorphic to a disjoint union 
of fiber products of lower (in terms of symplectic area) moduli spaces. 
We call this relation ``boundary $\cong$ fiber product'' relation. 

To apply the principle in Section 7.1 to moduli spaces defined in Section 7.2, 
we have to define \textit{strongly smooth} maps from these moduli spaces to spaces of \textit{smooth} loops with marked points, 
so that these maps are compatible with the ``boundary $\cong$ fiber product'' relation. 
The idea is to assign the boundary loop to each pseudo-holomorphic disk, however we have to be careful to achieve smoothness, 
which can be very subtle on boundary of moduli spaces. 
In this paper, we achieve smoothness in the following two steps. 
The first step is to define \textit{strongly continuous} maps from these moduli spaces to spaces of \textit{continuous} loops with marked points. 
This is much easier and we explain details in Sections 7.3 and 7.4. 
The second step is to approximate (with respect to the $C^0$-topology) these continuous maps by smooth maps. 
In this step we use an abstract approximation result ($C^0$-approximation lemma) which we state in Section 7.5 and prove in Section 9. 
In Section 7.5, we also introduce CF-perturbations of K-structures on these moduli spaces. 
Finally in Section 7.6 we complete the proof assuming technical results presented in Sections 7.1 and 7.5, 
which are proved in Sections 8 and 9, respectively. 

\subsection{Strongly smooth map from a K-space with a CF-perturbation gives a de Rham chain} 

Let $(X, \wh{\mca{U}})$ be a K-space equipped with a differential form $\wh{\omega}$ and a CF-perturbation $\wh{\mf{S}} = (\wh{\mf{S}}^\ep)_{0 < \ep \le 1}$. 
(The notion of CF-perturbation is explained in Section 7 in \cite{FOOO_Kuranishi}). 
Given a strongly smooth map (see Definition \ref{170902_2} below) 
$\wh{f} : (X, \wh{\mca{U}}) \to \mca{L}_{k+1}$, we define 
\[
\wh{f}_*(X, \wh{\mca{U}}, \wh{\omega}, \wh{\mf{S}}^\ep ) \in C^\dR_* (\mca{L}_{k+1})
\]
for sufficiently small $\ep>0$, 
and state Stokes' formula and fiber product formula. 
We also consider the version that $X$ is an admissible K-space with boundaries and corners. 
The goal of this subsection is to state these results in a formal manner so that we can use them to complete the proof of Theorem \ref{161215_1}. 
Proofs of these results are explained in Section 8. 

\subsubsection{K-space without boundary} 

Let us start from the following definition: 

\begin{defn}\label{170902_2} 
Let $(X, \wh{\mca{U}})$ be a K-space without boundary. 
A \textit{strongly smooth map} from $(X, \wh{\mca{U}})$ to $\mca{L}_{k+1}$ is a family 
$\wh{f} = (f_p)_{p \in X}$ such that the following conditions hold: 
\begin{itemize}
\item $f_p$ is a smooth map (in the sense of Definition \ref{171205_1}) from $U_p$ to $\mca{L}_{k+1}$ for every $p \in X$.
\item For every $p \in X$ and $q \in \Image \psi_p$, 
there holds $f_p \circ \ph_{pq} = f_q|_{U_{pq}}$. 
\end{itemize} 
The underlying set-theoretic map $X \to \mca{L}_{k+1}$ is denoted by $f$. 
\end{defn}

\begin{thm}\label{170628_1} 
Let $(X, \wh{\mca{U}})$ be a compact, oriented K-space of dimension $d$ 
with a 
strongly smooth map $\wh{f} : (X, \wh{\mca{U}}) \to \mca{L}_{k+1}$, 
a differential form $\wh{\omega}$, 
and a CF-perturbation $\wh{\mf{S}} = (\wh{\mf{S}}^\ep)_{0<\ep \le 1}$. 
We assume that $\wh{\mf{S}}$ is transversal to $0$, 
and $\ev_0 \circ \wh{f}: (X, \wh{\mca{U}}) \to L$ is strongly submersive with respect to $\wh{\mf{S}}$ (see Definition 9.2 in \cite{FOOO_Kuranishi}). 
Under these assumptions, one can define a de Rham chain 
\begin{equation}\label{170918_2} 
\wh{f}_* (X, \wh{\mca{U}}, \wh{\omega}, \wh{\mf{S}}^\ep) \in C^\dR_{d- |\hat{\omega}|} (\mca{L}_{k+1})
\end{equation}
for sufficiently small $\ep$, 
so that Stokes' formula (Theorem \ref{170628_2}) and 
the fiber product formula (Theorem \ref{170628_3}) hold. 
\end{thm}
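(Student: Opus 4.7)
The plan is to reduce the problem, via a good coordinate system for $(X, \wh{\mca{U}})$ and a subordinate partition of unity, to a construction on each single K-chart. The key conceptual point is that, although $\mca{L}_{k+1}$ is infinite dimensional, the zero locus of a transversal perturbation of a Kuranishi chart is a finite dimensional (orbi-)manifold of dimension $d$, which is exactly the kind of ambient space appearing in the definition of $\mca{P}(\mca{L}_{k+1})$. Hence I would realize $\wh{f}_*(X, \wh{\mca{U}}, \wh{\omega}, \wh{\mf{S}}^\ep)$ as a sum over charts of triples $(U, \ph, \omega)$ in which $U$ is the perturbed zero locus of the chart (parametrized also by the CF-perturbation direction), $\ph$ is the restriction of the local component of $\wh{f}$ followed by the natural projection, and $\omega$ is the restriction of $\wh{\omega}$ weighted by the partition of unity and by the density along the CF-parameter directions.

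More explicitly, for each chart $\mca{U}_p = (U_p, \mca{E}_p, s_p, \psi_p)$ with isotropy group $\Gamma_p$, the CF-perturbation $\wh{\mf{S}}^\ep$ provides an auxiliary parameter space $W_p$ together with a section $\mf{s}^\ep_p$ on $U_p \times W_p$ and a compactly supported density $\theta^\ep_p$ on $W_p$; for $\ep$ small enough, $\mf{s}^\ep_p$ is transverse to zero, so its zero locus $Z^\ep_p$ is a smooth orbi-manifold of dimension $d + \dim W_p$, and the composition $f_p \circ \pi^\ep_p : Z^\ep_p \to \mca{L}_{k+1}$ (where $\pi^\ep_p : Z^\ep_p \to U_p$ is the projection) is of $C^\infty$ in the sense of Definition \ref{171205_1}. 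The strong submersivity of $\ev_0 \circ \wh{f}$ with respect to $\wh{\mf{S}}$ is \emph{precisely} the hypothesis that $\ev_0 \circ f_p \circ \pi^\ep_p : Z^\ep_p \to L$ is a submersion, so $(Z^\ep_p, f_p \circ \pi^\ep_p)$ lies in $\mca{P}(\mca{L}_{k+1})$. Choosing a partition of unity $\{\chi_p\}$ subordinate to a finite good coordinate system, the local contribution is then
\[
\frac{1}{|\Gamma_p|} \bigl( Z^\ep_p, \, f_p \circ \pi^\ep_p, \, (\pi^\ep_p)^*(\chi_p \cdot \wh{\omega}|_{U_p}) \wedge \theta^\ep_p \bigr),
\]
with the sign prescribed by the orientation conventions of Section 4.2, and (\ref{170918_2}) is defined as the sum of these contributions.

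To verify that this sum is well-defined in $C^\dR_{d-|\wh{\omega}|}(\mca{L}_{k+1})$, i.e.\ independent of the good coordinate system, the partition of unity, and the representative of the CF-perturbation, I would use the defining relation $(U, \ph, \pi_!\omega) \sim (U', \ph \circ \pi, \omega)$ for any $C^\infty$-submersion $\pi : U' \to U$. For the coordinate changes $\ph_{pq}$ of the K-structure, which embed one chart into another along the fiber product with the obstruction bundle, this identity is exactly what equates the contributions from two overlapping charts; for the auxiliary parameter direction $W_p$, it reduces the dependence on $W_p$ to the integral of $\theta^\ep_p$, which is normalized. Interpolation between two choices, combined with homotopy invariance of $C^\dR_*$ (Proposition 4.7 in \cite{Irie_17}), then yields independence of choices.

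The main obstacle is the simultaneous bookkeeping of (a) orbifold isotropy and the $1/|\Gamma_p|$ factors, (b) integration along the auxiliary parameter directions $W_p$ and its compatibility with the coordinate changes $\ph_{pq}$, and (c) the orientation signs imposed by the direct and fiber product conventions of Section 4.2. Each ingredient is by now standard in \cite{FOOO_Kuranishi}, but arranging all three so that the output is a \emph{single} well-defined class in $C^\dR_*(\mca{L}_{k+1})$, which moreover depends on $\ep$ only up to exact terms, is the technical heart of the statement. Once the construction is set up this way, Stokes' formula (Theorem \ref{170628_2}) will follow by differentiating the local forms $(\pi^\ep_p)^*(\chi_p \wh{\omega}|_{U_p}) \wedge \theta^\ep_p$ and collecting the boundary contributions coming from $\wh{\omega}$, from $d\chi_p$ (which cancel after summing over $p$), and from the parameter densities $\theta^\ep_p$; the fiber product formula (Theorem \ref{170628_3}) will follow from the fact that a CF-perturbation on a fiber product of K-spaces is the product of CF-perturbations on the factors, so that the local contributions match term by term with the fiber product $\circ_j$ on $C^\dR_*$.
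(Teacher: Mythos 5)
Your proposal captures the same architecture as the paper's Section~8: reduce, by passing to a good coordinate system with a support system and subordinate partition of unity, to single K-charts; realize each local contribution as a triple $\bigl((\mf{s}^\ep)^{-1}(0),\ f\circ\phi\circ\pr_V,\ (\phi\circ\pr_V)^*(\chi\,\omega)\wedge\pr_W^*\eta\bigr)$ whose underlying manifold is the zero set of the perturbed section on the thickened chart $V\times W$; and then verify well-definedness, Stokes, and the fiber product formula at the GCS level before passing to the K-space via a KG-embedding (\cite{FOOO_Kuranishi}, Sections 3, 7, 9). The identification of strong submersivity of $\ev_0\circ\wh{f}$ as exactly what makes the pushed-forward map land in $\mca{P}(\mca{L}_{k+1})$ is also the right observation.

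Two details are worth correcting, though neither is a fatal gap. First, the paper deliberately works without orbifold isotropy (Remark~\ref{170912_1}): since the moduli spaces carry no sphere bubbles, Kuranishi charts are genuine manifolds with vector bundles, so no $1/|\Gamma_p|$ weights enter the construction at any point. Second, your mechanism for independence of choices is weaker than what the paper proves and what the later sections require. The well-definedness statement is the $\spadesuit$-property of Definition~\ref{180301_1}: for any two auxiliary choices (support system, $\delta$, partition of unity) there is $\ep_0>0$ such that the two de~Rham \emph{chains} agree exactly for $\ep<\ep_0$ (Lemma~\ref{170330_1.5}), not merely up to an exact term or a homotopy. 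This is obtained by inserting $\sum_{\mf{p}}\chi_{\mf{p}}=1$ on the support set $\Pi((\wt{\mf{S}}^\ep)^{-1}(0))$ (valid for small $\ep$ by Lemma~\ref{170329_1}) and comparing the resulting doubly-indexed sums term by term; the relation $(U,\ph,\pi_!\omega)\sim(U',\ph\circ\pi,\omega)$ is used only to absorb changes of CF-perturbation \emph{representative} via a linear projection $\wh{W}_2\to\wh{W}_1$ with $\Pi_!\eta_2=\eta_1$ (Lemma~\ref{170323_1}(ii)). Coordinate changes $\ph_{\mf{pq}}$ are embeddings, not submersions, so the $\pi_!$-relation does not apply to them directly; instead, compatibility of the CF-perturbations with the embeddings makes the two contributions literally equal on the common restriction $\mca{U}_{21}$. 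Getting strict chain-level equality for small $\ep$, rather than only cohomological agreement, is what keeps the GG-embedding invariance (Lemma~\ref{170330_3}), the passage to K-structures, and the energy-filtration limit in Section~6 free of accumulated homotopies.
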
 

\begin{rem}\label{170918_3} 
When $\wh{\omega} \equiv 1$, we abbreviate
the LHS of  (\ref{170918_2}) 
as $\wh{f}_*(X, \wh{\mca{U}}, \wh{\mf{S}}^\ep)$. 
\end{rem} 

\begin{rem}
In the statement of Theorem \ref{170628_1}, 
``for sufficiently small $\ep$'' is used slightly loosely. 
Strictly speaking, it means the following: 
the definition of $\wh{f}_*(X, \wh{\mca{U}}, \wh{\omega}, \wh{\mf{S}}^\ep)$ involves 
some auxiliary choices (good coordinate system and partition of unity), 
however it is well-defined in the sense of $\spadesuit$ (see Definition \ref{180301_1}). 
Namely, for any choices  $c_1$ and $c_2$, there exists $\ep(c_1, c_2)>0$ such that, 
the definition with $c_1$ coincides with the definition with $c_2$ when $\ep \in (0, \ep(c_1, c_2))$. 
A similar remark applies to all places in Section 7.1 where we use 
``for sufficiently small $\ep$''. 
\end{rem} 

Stokes' formula is easy to state: 

\begin{thm}\label{170628_2} 
For sufficiently small $\ep >0$, there holds 
\[ 
\partial 
(\wh{f}_* (X, \wh{\mca{U}}, \wh{\omega}, \wh{\mf{S}}^\ep)) = 
(-1)^{|\wh{\omega}|+1} \wh{f}_*(X, \wh{\mca{U}}, d \wh{\omega}, \wh{\mf{S}}^\ep). 
\]
\end{thm}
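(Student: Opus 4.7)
The plan is to reduce the identity to classical Stokes applied chart-by-chart, together with the definition of the de Rham boundary $\partial(U,\ph,\omega) = (-1)^{|\omega|+1}(U,\ph,d\omega)$ and the fact that $X$ has no boundary. First I would unpack the construction of $\wh{f}_*(X,\wh{\mca{U}},\wh{\omega},\wh{\mf{S}}^\ep)$ promised by Theorem \ref{170628_1}: choose a good coordinate system $\{\mca{U}_\alpha = (U_\alpha,\E_\alpha,s_\alpha,\psi_\alpha)\}$ subordinate to $\wh{\mca{U}}$ together with a strongly smooth partition of unity $\{\chi_\alpha\}$, and represent $\wh{f}_*(X,\wh{\mca{U}},\wh{\omega},\wh{\mf{S}}^\ep)$ as a finite sum
\[
\sum_\alpha (U_\alpha^\ep,\, f_\alpha^\ep,\, \chi_\alpha\,\wh{\omega}_\alpha\wedge\theta_\alpha^\ep),
\]
where $U_\alpha^\ep$ is the thickened domain on which the CF-perturbation $\wh{\mf{S}}^\ep$ is supported, $\theta_\alpha^\ep$ is the pushout of a Thom-like form along the fiber of the perturbation (concentrated near $(s_\alpha+\mf{S}_\alpha^\ep)^{-1}(0)$), and $f_\alpha^\ep\colon U_\alpha^\ep\to\mca{L}_{k+1}$ is the induced smooth map. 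Submersivity of $\ev_0\circ\wh{f}$ relative to $\wh{\mf{S}}$ is precisely what guarantees $(U_\alpha^\ep,f_\alpha^\ep)\in\mca{P}(\mca{L}_{k+1})$ so that each summand is a legal representative.

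Next I would apply the boundary operator directly. By definition
\[
\partial\bigl(U_\alpha^\ep,f_\alpha^\ep,\chi_\alpha\wh{\omega}_\alpha\wedge\theta_\alpha^\ep\bigr) = (-1)^{|\chi_\alpha\wh{\omega}_\alpha\wedge\theta_\alpha^\ep|+1}\bigl(U_\alpha^\ep,f_\alpha^\ep,d(\chi_\alpha\wh{\omega}_\alpha\wedge\theta_\alpha^\ep)\bigr),
\]
and I would expand the exterior derivative into three pieces: $d\chi_\alpha\wedge\wh{\omega}_\alpha\wedge\theta_\alpha^\ep$, $\pm\chi_\alpha\,d\wh{\omega}_\alpha\wedge\theta_\alpha^\ep$, and $\pm\chi_\alpha\wh{\omega}_\alpha\wedge d\theta_\alpha^\ep$. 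The third piece vanishes because $\theta_\alpha^\ep$, being built from a Thom form of the obstruction bundle $\E_\alpha$, is closed in the fiber direction, and the coordinate changes of $\wh{\mca{U}}$ send Thom forms to Thom forms; this is the point where the compatibility conditions built into the notion of a CF-perturbation enter.

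The first piece, summed over $\alpha$, cancels. More carefully, on the overlap of two charts of the good coordinate system one uses the coordinate change (a submersion between the enlargements of $U_\alpha^\ep$) to rewrite the $d\chi_\alpha$ contributions in a common chart and then invoke the relation in $Z_N$ together with $\sum_\alpha \chi_\alpha \equiv 1$ on $X$, so that $\sum_\alpha d\chi_\alpha \equiv 0$ on the relevant overlaps; this is the one place where we genuinely use that $X$ is boundaryless, because otherwise $\sum_\alpha\chi_\alpha$ equals $1$ only in the interior and a collar term would survive. What remains is exactly the second piece, which by the same local recipe equals $\wh{f}_*(X,\wh{\mca{U}},d\wh{\omega},\wh{\mf{S}}^\ep)$, with an overall sign $(-1)^{|\wh{\omega}|+1}$ coming from $|\chi_\alpha\wh{\omega}_\alpha\wedge\theta_\alpha^\ep|+1 \equiv |\wh{\omega}|+|\theta_\alpha^\ep|+1$ and the sign produced when moving $d$ past $\theta_\alpha^\ep$ in the definition of $\partial$ on the target chain.

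The main obstacle, and the reason this is relegated to Section 8, is the bookkeeping making the cancellation of $\sum_\alpha d\chi_\alpha$ rigorous in the CF-perturbation framework: one has to verify, for each pair of overlapping charts, that the two expressions $(U_\alpha^\ep,f_\alpha^\ep,d\chi_\alpha\wedge\wh{\omega}_\alpha\wedge\theta_\alpha^\ep)$ and $(U_\beta^\ep,f_\beta^\ep,d\chi_\beta\wedge\wh{\omega}_\beta\wedge\theta_\beta^\ep)$, once identified via the pushout relation defining $Z_N$, combine into a total that vanishes up to $\spadesuit$ for $\ep$ sufficiently small. This is a standard but delicate verification and is exactly the content one inherits from the good coordinate system machinery of \cite{FOOO_Kuranishi}; aside from sign checks, no new ideas are required.
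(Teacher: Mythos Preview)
Your overall strategy matches the paper's proof: establish Stokes chart-by-chart (Lemma \ref{170323_1}(iii)), then at the good-coordinate-system level show that the $d\chi_{\mf{p}}$ terms cancel via $\sum_{\mf{p}} \chi_{\mf{p}} \equiv 1$ on the support set of the perturbation (Proposition \ref{171109_1}), and finally transport to the K-space by the definition (\ref{171208_1}). The cancellation on overlaps is exactly the computation in the proof of Proposition \ref{171109_1}, which in turn reuses the overlap comparison from Lemma \ref{170330_1.5}.

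One correction to your description of the construction: in this paper the local chain is \emph{not} built from a Thom form of the obstruction bundle $\mca{E}_\alpha$ on a thickened domain. Rather (see (\ref{180112_1})), the chain lives on the submanifold $(\mf{s}^\ep_{\mf{r}})^{-1}(0) \subset V_{\mf{r}} \times W_{\mf{r}}$, and the auxiliary form is the pullback of a compactly supported top-degree form $\eta_{\mf{r}}$ on the parameter space $W_{\mf{r}}$. Your ``third piece'' $d\theta$ still vanishes, but for the simpler reason that $\eta_{\mf{r}}$ is top-degree on $W_{\mf{r}}$, not because of any Thom-form closedness. This does not affect the logic of your argument, but you should align your write-up with the actual CF-perturbation formalism used here.
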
 

To state the fiber product formula, 
we need some notations. 
Suppose, for $i=1,2$, we have the following data: 
\begin{itemize}
\item $(X_i, \wh{\mca{U}}_i)$: a compact oriented K-space of dimension $d_i$, 
\item a strongly smooth map $\wh{f}_i:  (X_i, \wh{\mca{U}_i}) \to \mca{L}_{k_i+1}$, 
\item a differential form $\wh{\omega}_i$ on $(X_i, \wh{\mca{U}}_i )$, 
\item a CF-perturbation $\wh{\mf{S}}_i $ on $(X_i, \wh{\mca{U}}_i )$ such that $\ev_0 \circ \wh{f}_i: (X_i, \wh{\mca{U}}_i) \to L$ is strongly submersive with respect to $\wh{\mf{S}}_i$.  
\end{itemize} 
Due to Theorem \ref{170628_1}, one can define 
$(\wh{f}_i)_*(X_i, \wh{\mca{U}}_i, \wh{\omega}_i, \wh{\mf{S}}^\ep_i) \in C^\dR_{d_i - |\wh{\omega}_i|} (\mca{L}_{k_i+1})$ 
for sufficiently small $\ep$. 
On the other hand, for every $j \in \{1, \ldots, k_1\}$, 
one can take a fiber product of K-spaces and define 
\[ 
(X_{12}, \wh{\mca{U}}_{12}) := (X_1, \wh{\mca{U}}_1) \fbp{\ev_j \circ \wh{f}_1}{\ev_0 \circ \wh{f}_2} (X_2, \wh{\mca{U}}_2).
\]

For the definition of fiber product of K-spaces, see Section 4.1 in \cite{FOOO_Kuranishi}. 
Our sign convention for the fiber product is explained in Section 4.2. 
One can also define fiber product of CF-perturbations  $\wh{\mf{S}}_1 \times \wh{\mf{S}}_2$
on $(X_{12}, \wh{\mca{U}}_{12})$ (see Definition 10.13 in \cite{FOOO_Kuranishi}). 
Finally we define a differential form $\wh{\omega}_{12}$ on $(X_{12}, \wh{\mca{U}}_{12})$ by 
\[ 
\wh{\omega}_{12} := (-1)^{(d_1- |\hat{\omega}_1| - n)|\hat{\omega}_2|}  \cdot  \wh{\omega}_1 \times \wh{\omega}_2 
\] 
and a strongly smooth map $\wh{f}_{12}: (X_{12}, \wh{\mca{U}}_{12}) \to \mca{L}_{k_1+k_2}$ by 
\begin{eqnarray}\label{171018_1} 
&(f_{12})_{(p_1, p_2)}(x_1, x_2) := \con_j ( (f_1)_{p_1}(x_1) , (f_2)_{p_2}(x_2)), \\
&(x_1 \in U_{p_1}, \, x_2 \in U_{p_2}, \, \ev_j \circ f_{p_1}(x_1) = \ev_0 \circ f_{p_2}(x_2)).  \nonumber 
\end{eqnarray} 
Then one can state the fiber product formula
(the proof will be sketched in Section 8): 

\begin{thm}\label{170628_3}
In the situation described above, there holds 
\[
(\wh{f}_{12})_*(X_{12}, \wh{\mca{U}}_{12}, \wh{\omega}_{12}, \wh{\mf{S}}_{12}^\ep) = 
(\wh{f}_1)_*(X_1, \wh{\mca{U}}_1, \wh{\omega}_1, \wh{\mf{S}}_1^\ep) \circ_j
(\wh{f}_2)_*(X_2, \wh{\mca{U}}_2, \wh{\omega}_2, \wh{\mf{S}}_2^\ep)
\]
for sufficiently small $\ep>0$, where the RHS is the fiber product of de Rham chains (see Section 4.3). 
\end{thm}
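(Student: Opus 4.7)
The plan is to reduce the identity to a chart-level computation, after choosing compatible good coordinate systems on both sides. Following the construction promised in Theorem~\ref{170628_1} (whose details will be supplied in Section~8), the chain $\wh{f}_*(X, \wh{\mca{U}}, \wh{\omega}, \wh{\mf{S}}^\ep)$ is built from a good coordinate system of $(X, \wh{\mca{U}})$ with charts $\mca{U}_\alpha = (U_\alpha, \mca{E}_\alpha, s_\alpha, \psi_\alpha)$ and a subordinate partition of unity $\{\chi_\alpha\}$, as a sum of local contributions
\[
\bigl( Z_\alpha^\ep,\ f_\alpha|_{Z_\alpha^\ep},\ (\chi_\alpha \wh{\omega})|_{Z_\alpha^\ep} \bigr), \qquad Z_\alpha^\ep := (s_\alpha + \mf{s}_\alpha^\ep)^{-1}(0).
\]
For $\ep$ sufficiently small, transversality of $\wh{\mf{S}}$ makes each $Z_\alpha^\ep$ a submanifold, and strong submersivity of $\ev_0 \circ \wh{f}$ makes $\ev_0 \circ f_\alpha|_{Z_\alpha^\ep} : Z_\alpha^\ep \to L$ a submersion, so the triple is a legitimate representative in $C^\dR_*(\mca{L}_{k+1})$.

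The first step is to exhibit a good coordinate system for $(X_{12}, \wh{\mca{U}}_{12})$ whose charts are the fiber products $\mca{U}_{1,\alpha} \fbp{\ev_j \circ f_{1,\alpha}}{\ev_0 \circ f_{2,\beta}} \mca{U}_{2,\beta}$, oriented as in Section~4.2, with partition of unity $\{\chi_{1,\alpha}\chi_{2,\beta}\}$ and local CF-perturbation $\mf{s}_{1,\alpha}^\ep \times \mf{s}_{2,\beta}^\ep$. This is essentially the content of the fiber-product construction for K-spaces and their CF-perturbations in \cite{FOOO_Kuranishi} (Section~4.1 and Definition~10.13). Strong submersivity of $\ev_0 \circ \wh{f}_2$ guarantees that the relevant fiber products are transverse for all sufficiently small $\ep$.

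The heart of the argument is then a chart-level identity: on a fiber-product chart the perturbed zero locus is
\[
Z_{12,\alpha\beta}^\ep \;=\; Z_{1,\alpha}^\ep \fbp{\ev_j \circ f_{1,\alpha}}{\ev_0 \circ f_{2,\beta}} Z_{2,\beta}^\ep,
\]
the evaluation to $\mca{L}_{k_1+k_2}$ is $\con_j \circ (f_{1,\alpha} \times f_{2,\beta})$ by the definition (\ref{171018_1}) of $\wh{f}_{12}$, and the partition-weighted form is the product $(\chi_{1,\alpha}\wh{\omega}_1) \times (\chi_{2,\beta}\wh{\omega}_2)$. Comparing this triple with the fiber product $\circ_j$ on de Rham chains from Section~4.3 and summing over $(\alpha, \beta)$, using bilinearity of $\circ_j$ together with $\sum_{\alpha,\beta} \chi_{1,\alpha}\chi_{2,\beta} \equiv 1$ on the relevant zero loci, yields the claimed equality modulo signs.

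The main obstacle I foresee is the sign bookkeeping. Three conventions interact: the orientation sign $(-1)^{\rk \mca{E}_2(\dim U_1 - n - \rk \mca{E}_1)}$ attached to the fiber product of K-charts in Section~4.2; the sign $(-1)^{(\dim U - |\omega| - n)|\omega'|}$ built into $\circ_j$; and the compensating factor $(-1)^{(d_1 - |\wh{\omega}_1| - n)|\wh{\omega}_2|}$ that has been absorbed into $\wh{\omega}_{12}$. Using $\dim Z_\alpha^\ep = \dim U_\alpha - \rk \mca{E}_\alpha$ together with the fact that restriction to $Z_\alpha^\ep$ preserves form degree, a direct parity computation should show that these three signs combine to give matching orientations on the two sides. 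Granting this, the remaining content reduces to the standard pushout-product identity for a fiber product of submersions, which is already needed in Section~8 to establish Theorem~\ref{170628_1} itself.
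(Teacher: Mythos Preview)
Your approach is essentially the same as the paper's: reduce via good coordinate systems and partitions of unity to a single-chart fiber product identity (Lemma~\ref{171110_3} in Section~8.1), then verify signs. The paper's sketch in Section~8.3 differs only in that it does not explicitly build a fiber-product GCS on $X_{12}$; instead it pulls the partition functions $\chi^i_{\mf{p}_i}$ back to strongly smooth functions $\wh{\chi}^i_{\mf{p}_i}$ on the K-structures and uses the identity $(\wh{f}_i)_*(X_i, (\wh{\mca{U}}_i)_0, \wh{\chi}^i_{\mf{p}_i}\wh{\omega}_i, \wh{\mf{S}}^\ep_i) = ((f_i)_{\mf{p}_i})_*((\mca{U}_i)_{\mf{p}_i}, (\chi_i)_{\mf{p}_i}(\omega_i)_{\mf{p}_i}, (\mf{S}_i)^\ep_{\mf{p}_i})$ to reduce directly to single charts, but this is a minor packaging difference. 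One small caveat: your description of the local contribution as $(Z_\alpha^\ep, f_\alpha|_{Z_\alpha^\ep}, (\chi_\alpha\wh{\omega})|_{Z_\alpha^\ep})$ with $Z_\alpha^\ep = (s_\alpha + \mf{s}_\alpha^\ep)^{-1}(0)$ suppresses the auxiliary parameter space $W_{\mf{r}}$ and the form $\eta_{\mf{r}}$ that enter the actual CF-perturbation formalism (see (\ref{180112_1}) and the sign $\dagger$ in (\ref{171105_2})); these contribute additional sign factors that must be tracked in the computation you defer, but they cancel in pairs just as you anticipate.
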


\subsubsection{Admissible K-space} 

Next we consider the case that $X$ is an admissible K-space, 
which roughly means that $X$ is a K-space with boundaries and corners, 
and all coordinate change data decay exponentially near boundaries. 
For rigorous definitions of 
admissible manifolds, vector bundles, and K-structures etc., 
see Section 25 in \cite{FOOO_Kuranishi}. 

\begin{defn}\label{170701_1} 
\begin{enumerate}
\item[(i):] Let $U$ be an admissible manifold.
A $C^\infty$-map  
\[ 
f: U \to \mca{L}_{k+1}; \quad u \mapsto (T(u), \gamma(u), t_1(u), \ldots, t_k(u))
\]
is called \textit{admissible} if the following conditions hold: 
\begin{itemize}
\item $T, t_1, \ldots, t_k: U \to \R$ are admissible (see Definition 25.3 in \cite{FOOO_Kuranishi}). 
\item $U \times S^1 \to L; \, (u, \theta) \mapsto \gamma(u)(T(u) \theta)$ is admissible (see Remark \ref{171205_2} below). 
\item $\ev_0 \circ f: U \to L$ is strata-wise submersive
(i.e. the restriction of $\ev_0 \circ f$ to each open strata of $U$ is a submersion). 
\end{itemize} 
\item[(ii):] Let $(X, \wh{\mca{U}})$ be an admissible K-space. 
An admissible map from $(X, \wh{\mca{U}})$ to $\mca{L}_{k+1}$ is a family $\wh{f} = (f_p)_{p \in X}$ such that 
\begin{itemize}
\item $f_p$ is an admissible map from $U_p$ to $\mca{L}_{k+1}$ for every $p \in X$. 
\item For every $p \in X$ and $q \in \Image \psi_p$, there holds $f_p \circ \ph_{pq} = f_q|_{U_{pq}}$. 
\end{itemize} 
The underlying set-theoretic map $X \to \mca{L}_{k+1}$
is denoted by $f$. 
\end{enumerate} 
\end{defn} 
\begin{rem}\label{171205_2} 
In the second bullet of the definition (i) above, 
the admissible structure on $U \times S^1$ is defined as follows: 
let $(x_1, \ldots, x_l,  y_1, \ldots, y_k) \,(x_i \in \R, y_i \in \R_{\ge 0})$ 
be an admissible chart on $U$ near a point on the codimension $k$ corner, 
and $z$ be any chart on $S^1$. 
Then we define the admissible structure on $U \times S^1$ so that 
$(x_1, \ldots, x_l, y_1, \ldots, y_k, z)$ is an admissible chart on $U \times S^1$. 
\end{rem}

The next result is a version of Theorem \ref{170628_1}
for admissible K-spaces. 

\begin{thm}\label{170628_4}
Let $(X, \wh{\mca{U}})$ be a compact, oriented, admissible K-space of dimension $d$, 
and 
$\wh{f}: (X, \wh{\mca{U}}) \to \mca{L}_{k+1}$ be an admissible map, 
$\wh{\omega}$ be an admissible differential form on $(X, \wh{\mca{U}})$, and 
$\wh{\mf{S}}$ be an admissible CF-perturbation of $(X, \wh{\mca{U}})$. 
We assume that $\wh{\mf{S}}$ is transversal to $0$, and 
$\ev_0 \circ \wh{f}:  (X, \wh{\mca{U}}) \to L$ is strata-wise strongly submersive with respect to $\wh{\mf{S}}$. 
Then one can define
\[ 
\wh{f}_*( X, \wh{\mca{U}}, \wh{\omega}, \wh{\mf{S}}^\ep) \in C^\dR_{d- |\hat{\omega}|} (\mca{L}_{k+1})
\]
for sufficiently small $\ep>0$, so that Stokes' formula (Theorem \ref{170628_5}) and 
the fiber product formula (Theorem \ref{170628_6}) hold. 
\end{thm}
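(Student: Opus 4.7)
\textbf{Proof proposal for Theorem \ref{170628_4}.} The plan is to mimic the construction used to prove Theorem \ref{170628_1}, replacing ordinary good coordinate systems, partitions of unity, differential forms and CF-perturbations by their admissible counterparts from Section 25 of \cite{FOOO_Kuranishi}. The admissibility hypotheses ensure that all local data match smoothly across the boundary/corner strata of $X$, and the strata-wise strong submersivity hypothesis plays the role that ordinary strong submersivity plays in the closed case.

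First I would fix a good coordinate system $\wh{\mca{U}}'$ refining $\wh{\mca{U}}$ (in the admissible category) together with a subordinate partition of unity $\{\chi_p\}$, using the admissible versions of these objects established in \cite{FOOO_Kuranishi}. For each chart $\mca{U}_p = (U_p, \mca{E}_p, s_p, \psi_p)$ of the good coordinate system, the CF-perturbation $\mf{S}_p^\ep$ gives a transversal section of $\mca{E}_p$, and the zero locus $Z_p^\ep := (\mf{S}_p^\ep)^{-1}(0)$ is an admissible submanifold of $U_p$ (with boundaries and corners). The strata-wise strong submersivity of $\ev_0 \circ \wh{f}$ with respect to $\wh{\mf{S}}$ guarantees that $\ev_0 \circ f_p|_{Z_p^\ep}: Z_p^\ep \to L$ is strata-wise a submersion, so $f_p|_{Z_p^\ep}: Z_p^\ep \to \mca{L}_{k+1}$ satisfies the submersion condition required to belong to $\mca{P}(\mca{L}_{k+1}(a))$. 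I would then define the local contribution to be
\[
(Z_p^\ep,\, f_p|_{Z_p^\ep},\, \chi_p \cdot \wh{\omega}|_{Z_p^\ep}) \in C^\dR_{d-|\hat\omega|}(\mca{L}_{k+1}),
\]
weighted by the multiplicity coming from $\mf{S}_p^\ep$, and set $\wh{f}_*(X, \wh{\mca{U}}, \wh{\omega}, \wh{\mf{S}}^\ep)$ to be the sum over $p$. As in the closed case, the overlap relations in the good coordinate system combined with the coordinate-change compatibility in Definition \ref{170701_1}(ii) ensure that the partition-of-unity sum is well-defined modulo the pushforward relations defining $C^\dR_*$; independence of the auxiliary choices then holds in the $\spadesuit$-sense of Definition \ref{180301_1}.

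Stokes' formula follows by differentiating the formula above: since $\mf{S}_p^\ep$ is transversal to $0$ and $f_p|_{Z_p^\ep}$ is strata-wise submersive, one applies the ordinary Stokes formula on each $Z_p^\ep$. The admissibility of $\wh{\omega}$ and of $\wh{f}$ localizes the contributions to the codimension-one boundary stratum of $X$, giving the form of Theorem \ref{170628_5} (which will include a boundary term for admissible $X$). The fiber product formula is proved exactly as in the closed case: a fiber product of good coordinate systems on $(X_1, \wh{\mca{U}}_1)$ and $(X_2, \wh{\mca{U}}_2)$ is a good coordinate system on the fiber product $(X_{12}, \wh{\mca{U}}_{12})$, the fiber product of transversal CF-perturbations is transversal, and the sign $(-1)^{(d_1-|\hat\omega_1|-n)|\hat\omega_2|}$ in the definition of $\wh{\omega}_{12}$ matches exactly the sign appearing in the de Rham fiber product $\circ_j$ from Section 4.3.

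The main obstacle, as in the closed case, is the $\spadesuit$-well-definedness: showing that two different choices of good coordinate system and partition of unity produce the same chain for all sufficiently small $\ep$, up to the pushforward relations generating $Z_N$. In the admissible setting this requires that one can construct admissible homotopies between any two such choices which still satisfy the strata-wise submersivity hypothesis, and that the exponential-decay structure of admissible coordinate changes is preserved under such homotopies. These points, together with the compatibility of admissible CF-perturbations with admissible fiber products, are the technical heart of the argument, and are precisely what will be worked out in Section 8.
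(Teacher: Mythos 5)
Your proposal overlooks the central technical issue that Section 8.1.2 of the paper is built around: the zero set $Z_p^\ep = (\mf{s}_\mf{r}^\ep)^{-1}(0)$ on a K-chart with boundary is an admissible manifold \emph{with boundaries and corners}, but the de Rham chain complex $C^\dR_*(\mca{L}_{k+1})$ is defined over the class $\mf{U}$ of (boundaryless) submanifolds of Euclidean space --- otherwise $\partial^2 = 0$ and the pushout relations defining $Z_N$ would already fail. So one cannot simply take $(Z_p^\ep, f_p|_{Z_p^\ep}, \chi_p\wh\omega|_{Z_p^\ep})$ as a de Rham chain; that triple is not in the allowed domain. The paper resolves this by a reflection/collaring device: near a boundary chart $V_{\mf r}\subset(\R_{\ge 0})^D$ it pulls everything back along the retraction $\mca{R}:\R^D\to(\R_{\ge 0})^D$ (setting $\bar V_{\mf r}:=\mca{R}^{-1}(V_{\mf r})$, $\bar{\mf s}^\ep_{\mf r}:=(\mca{R}\times\id)^*\mf s^\ep_{\mf r}$, $\bar f_{\mf r}:=f\circ\phi_{\mf r}\circ\mca{R}$), so that $(\bar{\mf s}^\ep_{\mf r})^{-1}(0)$ is an honest manifold without boundary, and then inserts an auxiliary cut-off $\kappa(t_1)\cdots\kappa(t_D)$ into the differential form to recover compact support and the correct localization near the boundary. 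This is not a cosmetic variant of the closed case but the essential new idea, and your construction cannot produce a well-defined element of $C^\dR_*$ without it.

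Two consequences of this omission also affect your remarks about Stokes' formula and well-definedness. First, the boundary term $(-1)^{|\wh\omega|}\wh f_*(\partial X,\dots)$ in Theorem~\ref{170628_5} arises concretely from the $d\kappa(t_i)$ contributions when differentiating the cut-off factor --- you gesture at a ``boundary term'' but supply no mechanism, and without the reflection-plus-cutoff construction there is none. Second, you frame the main remaining difficulty as establishing $\spadesuit$-independence of all auxiliary choices, but the paper explicitly warns (in the remark following Theorem~\ref{170628_4}) that the resulting chain \emph{does} depend on the choice of $\kappa$; the cut-off is simply fixed once and carried along. Proving independence from $\kappa$ would therefore be both unnecessary and, as stated, likely false. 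Your sign-matching observation for the fiber product formula and the general GCS/partition-of-unity scaffolding are fine, but the core of the admissible case is the collaring construction you left out.
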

\begin{rem}
To define the de Rham chain $\wh{f}_*( X, \wh{\mca{U}}, \wh{\omega}, \wh{\mf{S}}^\ep)$, 
we put a collar on $X$ and take an auxiliary cut-off function on the collar. 
It seems that the chain $\wh{f}_*( X, \wh{\mca{U}}, \wh{\omega}, \wh{\mf{S}}^\ep)$
depends on choice of the cut-off function. 
See Section 8.1.2 for more details. 
\end{rem}

Now we state Stokes' formula: 

\begin{thm}\label{170628_5} 
In the situation of Theorem \ref{170628_4}, for sufficiently small $\ep >0$ there holds
\[ 
\partial (\wh{f}_*(X, \wh{\mca{U}}, \wh{\omega}, \wh{\mf{S}}^\ep)) = 
(-1)^{|\wh{\omega}|}
\wh{f}_*(\partial X , \wh{\mca{U}}|_{\partial X}, \wh{\omega}|_{\partial X}, \wh{\mf{S}}^\ep|_{\partial X}) + (-1)^{|\wh{\omega}|+1}\wh{f}_*(X, \wh{\mca{U}}, d \wh{\omega}, \wh{\mf{S}}^\ep),
\]
where $\partial X$ denotes the normalized boundary of $X$,
which is again an admissible K-space. 
\end{thm}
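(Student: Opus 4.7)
The plan is to reduce Theorem \ref{170628_5} to the closed-case Stokes formula (Theorem \ref{170628_2}) by a cut-off argument in the admissible collar of $\partial X$. According to the remark following Theorem \ref{170628_4}, the chain $\wh{f}_*(X, \wh{\mca{U}}, \wh{\omega}, \wh{\mf{S}}^\ep)$ is constructed by fixing an admissible collar $\Psi \colon [0, 1) \times \partial X \hookrightarrow X$ of $\partial X$ and a cut-off function $\chi \colon X \to [0, 1]$ which depends only on the collar coordinate $t$, vanishes on a neighborhood of $\partial X$ (say on $\Psi([0, 1/4] \times \partial X)$), and equals $1$ outside $\Psi([0, 3/4) \times \partial X)$. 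Since $X$ is compact, the form $\chi \cdot \wh{\omega}$ has support in the interior of $X$, so the closed-case construction of Theorem \ref{170628_1} applies to $\chi \cdot \wh{\omega}$ and defines $\wh{f}_*(X, \wh{\mca{U}}, \wh{\omega}, \wh{\mf{S}}^\ep)$.

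With this definition, Theorem \ref{170628_2} yields
\[
\partial \wh{f}_*(X, \wh{\mca{U}}, \wh{\omega}, \wh{\mf{S}}^\ep) = (-1)^{|\wh{\omega}|+1} \wh{f}_*(X, \wh{\mca{U}}, d(\chi \cdot \wh{\omega}), \wh{\mf{S}}^\ep).
\]
Since $\chi$ is a function, $d(\chi \cdot \wh{\omega}) = d\chi \wedge \wh{\omega} + \chi \cdot d\wh{\omega}$. The $\chi \cdot d\wh{\omega}$ term produces, by the very same defining procedure applied now to $d\wh{\omega}$, the chain $\wh{f}_*(X, \wh{\mca{U}}, d\wh{\omega}, \wh{\mf{S}}^\ep)$, which gives the second term on the right-hand side of Theorem \ref{170628_5}. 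The remaining task is therefore to identify the contribution of $d\chi \wedge \wh{\omega}$ with $-\wh{f}_*(\partial X, \wh{\mca{U}}|_{\partial X}, \wh{\omega}|_{\partial X}, \wh{\mf{S}}^\ep|_{\partial X})$.

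This is a local computation on the collar. Admissibility together with the exponential decay of the coordinate change data (Definition \ref{170701_1} and Section 25 of \cite{FOOO_Kuranishi}) implies that, after possibly shrinking the collar, $\wh{\omega}$, $\wh{f}$ and $\wh{\mf{S}}$ are all pulled back from $\partial X$ via the collar projection $\pi \colon [0, 1) \times \partial X \to \partial X$ on the support of $d\chi$. Hence on the collar $d\chi \wedge \wh{\omega} = \chi'(t) \, dt \wedge \pi^*(\wh{\omega}|_{\partial X})$, and fibre integration in the $t$-direction contributes the factor $\int_0^1 \chi'(t)\,dt = 1$. The normalized-boundary orientation on $\partial X$ (outward normal $-\partial_t$ placed first, as in Section 4.2) differs by a sign from the product orientation on the collar, and this supplies the required overall minus sign.

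The principal obstacle is sign bookkeeping, requiring one to compose three conventions: the outward-normal orientation on the normalized boundary, the fibre-product and pushforward signs from Section 4.2, and the de Rham boundary sign $(-1)^{|\omega|+1}$ in (\ref{170828_1}). A secondary technical point, already noted in the remark after Theorem \ref{170628_4}, is that the chain $\wh{f}_*(X, \wh{\mca{U}}, \wh{\omega}, \wh{\mf{S}}^\ep)$ depends \emph{a priori} on the choice of collar and of $\chi$; one checks that any two admissible choices are joined by a smooth one-parameter family whose derivative in the parameter lies in the subspace $Z_N$ defining $C^\dR_*$, so that the resulting chains agree in the $\spadesuit$-sense for sufficiently small $\ep$.
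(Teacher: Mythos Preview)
Your overall strategy --- reduce Stokes' formula to the closed case via a cut-off, then identify the $d\chi$ term with the boundary contribution --- matches the paper's in spirit, but the construction you describe is \emph{not} the one the paper actually uses, and this leads to two concrete problems.

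First, you have misread the remark after Theorem \ref{170628_4}. The paper does \emph{not} cut off $\wh\omega$ inward so that it vanishes near $\partial X$. Instead (see Section 8.1.2), it works in a local chart $V_{\mf r}\subset(\R_{\ge 0})^D$, \emph{extends outward} via the retraction $\mca R:\R^D\to(\R_{\ge 0})^D$ to $\bar V_{\mf r}=\mca R^{-1}(V_{\mf r})$ (a manifold without boundary), and multiplies the pulled-back form by $\kappa(t_1)\cdots\kappa(t_D)$, where $\kappa\equiv 1$ on $\R_{\ge 0}$ and $\kappa\equiv 0$ near $-\infty$. Admissibility is exactly what makes the pullbacks $f\circ\phi_{\mf r}\circ\mca R$, $(\phi_{\mf r}\circ\mca R)^*\omega$, etc.\ smooth despite $\mca R$ not being smooth. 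The boundary term in Stokes then arises from $\sum_i \kappa(t_1)\cdots d\kappa(t_i)\cdots\kappa(t_D)$, and the remaining product of $\kappa$'s on the $i$-th face is precisely the cut-off used to define the chain on $\partial X$. So the paper's de Rham chain and yours are genuinely different objects, and your argument proves Stokes for your chain, not for the one in Theorem \ref{170628_4}.

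Second, your treatment breaks down at corners. A single global collar coordinate $t$ with $\chi=\chi(t)$ cannot be smooth near a codimension-$2$ corner of $X$, where two faces of the normalized boundary meet; one needs a product cut-off $\chi(t_1)\chi(t_2)$ in the two normal directions. The paper's local-chart approach with $\kappa(t_1)\cdots\kappa(t_D)$ handles all corner strata uniformly, and moreover automatically ensures that the boundary term carries exactly the cut-off needed to define $\wh f_*(\partial X,\dots)$ by the same recipe. Your argument, as written, does not explain why the $d\chi\wedge\wh\omega$ contribution coincides with the chain on $\partial X$ \emph{with its own boundary cut-off} near $\wh S_2 X$.

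Finally, your last paragraph asserts that the chain is independent of the choice of $\chi$ (in the $\spadesuit$-sense). The paper explicitly declines to claim this (``It seems that the chain depends on choice of the cut-off function''), and simply fixes $\kappa$ once and for all. Your homotopy argument is plausible but is an extra claim not needed for, and not established by, the paper.
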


Next we state the fiber product formula. 
Suppose, for $i=1, 2$, we have the following data: 
\begin{itemize}
\item An admissible K-space $(X_i, \wh{\mca{U}}_i)$. 
\item An admissible map $\wh{f}_i: (X_i, \wh{\mca{U}}_i) \to \mca{L}_{k_i+1}$. 
\item An admissible differential form $\wh{\omega}_i$ on $(X_i, \wh{\mca{U}}_i)$. 
\item An admissible CF-perturbation $\wh{\mf{S}}_i$ on $(X_i, \wh{\mca{U}}_i)$ such that 
$\ev_0 \circ \wh{f}_i: (X_i, \wh{\mca{U}}_i) \to L$ is strata-wise strongly submersive with respect to $\wh{\mf{S}}_i$.
\end{itemize} 
Under these assumptions, for every $j \in \{1, \ldots, k_1 \}$, 
the fiber product
\[
(X_{12}, \wh{\mca{U}}_{12}) := (X_1, \wh{\mca{U}}_1) \fbp{\ev_j \circ f_1}{\ev_0 \circ f_2} (X_2, \wh{\mca{U}}_2)
\]
has an admissible K-structure. 
Moreover, both $\wh{\omega}_1 \times \wh{\omega}_2$ and $\wh{\mf{S}}_1 \times \wh{\mf{S}}_2$ are admissible. 
Finally, a strongly smooth map $\wh{f}_{12}: (X_{12}, \wh{\mca{U}}_{12}) \to \mca{L}_{k_1+k_2}$ 
which is defined by the same formula as (\ref{171018_1}) is also admissible. 

\begin{thm}\label{170628_6} 
In the situation described above, there holds
\[
(\wh{f}_{12})_*(X_{12}, \wh{\mca{U}}_{12}, \wh{\omega}_{12}, \wh{\mf{S}}_{12}^\ep) = 
(\wh{f}_1)_*(X_1, \wh{\mca{U}}_1, \wh{\omega}_1, \wh{\mf{S}}_1^\ep) \circ_j
(\wh{f}_2)_*(X_2, \wh{\mca{U}}_2, \wh{\omega}_2, \wh{\mf{S}}_2^\ep)
\]
for sufficiently small $\ep>0$.
\end{thm}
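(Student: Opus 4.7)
The plan is to reduce Theorem \ref{170628_6} to Theorem \ref{170628_3}, the fiber product formula for K-spaces without boundary, by unwinding the collar construction used to define the de Rham chain in the admissible setting. Recall (as referenced in the remark after Theorem \ref{170628_4} and to be spelled out in Section 8.1.2) that $\wh{f}_*(X, \wh{\mca{U}}, \wh{\omega}, \wh{\mf{S}}^\ep)$ for an admissible K-space $(X, \wh{\mca{U}})$ is built by attaching a collar $\wh{\mca{U}}^{\text{col}}$ along $\partial X$, extending $\wh{f}$, $\wh{\omega}$ and $\wh{\mf{S}}$ admissibly, and multiplying by a cut-off function $\chi$ supported on the collar which equals $1$ on the interior side, so that the result descends to a compactly supported de Rham chain.

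First I would show that the collared extension is compatible with fiber product. Choose collar neighborhoods $\wh{\mca{U}}^{\text{col}}_i$ of $\partial X_i$ for $i=1,2$, together with admissible extensions $\wh{f}_i^{\text{ext}}$, $\wh{\omega}_i^{\text{ext}}$, $\wh{\mf{S}}_i^{\text{ext}}$. Since $\ev_0 \circ \wh{f}_i$ is strata-wise strongly submersive with respect to $\wh{\mf{S}}_i$, the extension to the collar can be taken so that the same property holds on the extended K-space (shrinking the collar if necessary). The fiber product $(X_1^{\text{ext}}, \wh{\mca{U}}_1^{\text{ext}}) \fbp{\ev_j \circ \wh{f}_1^{\text{ext}}}{\ev_0 \circ \wh{f}_2^{\text{ext}}} (X_2^{\text{ext}}, \wh{\mca{U}}_2^{\text{ext}})$ then defines an admissible K-space serving as a collar extension of $(X_{12}, \wh{\mca{U}}_{12})$, into which $(X_{12}, \wh{\mca{U}}_{12})$ embeds as the codimension-zero piece where both collar coordinates are non-negative.

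Next, take cut-off functions $\chi_1, \chi_2$ on the respective collars. The key point is that the product $\chi_1 \cdot \chi_2$, pulled back to the extended fiber product, is an admissible cut-off function adapted to the collar of the fiber product, and hence can be used to compute $(\wh{f}_{12})_*(X_{12}, \wh{\mca{U}}_{12}, \wh{\omega}_{12}, \wh{\mf{S}}_{12}^\ep)$. Under the identification above, the extended data on the fiber product are precisely the fiber product of the extended data on each factor, with differential form $\chi_1 \chi_2 \cdot (\wh{\omega}_1^{\text{ext}} \times \wh{\omega}_2^{\text{ext}})$ (up to the sign $(-1)^{(d_1 - |\wh{\omega}_1| - n)|\wh{\omega}_2|}$ prescribed in the statement). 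Applying Theorem \ref{170628_3} to these extended (boundaryless) K-spaces gives the desired equality.

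The main obstacle I anticipate is sign bookkeeping. The sign $(-1)^{(d_1 - |\wh{\omega}_1| - n)|\wh{\omega}_2|}$ entering the definition of $\wh{\omega}_{12}$ must match the $(-1)^{(\dim U - |\omega| - n)|\omega'|}$ appearing in the definition of $\circ_j$ in Section 4.3, and both must be consistent with the orientation conventions for fiber products of K-spaces fixed in Section 4.2. A subsidiary technical point is to arrange that the good coordinate systems and partitions of unity implicit in the $\spadesuit$-sense definition of $\wh{f}_*(-,-,-,\wh{\mf{S}}^\ep)$ can be chosen compatibly on the two factors and on the fiber product so that the equality holds on the nose (not merely modulo a chain homotopy) for all sufficiently small $\ep$; this should follow from the corresponding compatibility already used to prove Theorem \ref{170628_3}, since the cut-off $\chi_1\chi_2$ does not interfere with the submersiveness of $\ev_0 \circ \wh{f}_{12}^{\text{ext}}$.
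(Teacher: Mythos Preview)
Your approach is essentially correct and aligned with the paper's, which does not give an explicit proof: Section 8.1.2 says ``the fiber product formula holds in the obvious manner and [is] omitted'' at the single-chart level, and the introduction to Section 8 says generalizations to spaces with boundaries are ``straightforward.'' The key multiplicativity observation you make---that the cutoff on the fiber product is the product of the cutoffs on each factor---is exactly why this works.

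One small correction worth noting: the paper's collar construction in Section 8.1.2 is carried out chart-by-chart via the retraction $\mca{R}: \R^D \to (\R_{\ge 0})^D$ and the cutoff $\kappa(t_1)\cdots\kappa(t_D)$; it does not assemble these into a global boundaryless K-space $X^{\text{ext}}$. The local extensions depend on coordinate choices and need not glue, so Theorem \ref{170628_3} cannot literally be applied to a global extended K-space as you phrase it. Rather, the passage from K-space to GCS to single charts (Section 8.3) goes through verbatim in the admissible case, and at the single-chart level the fiber product formula follows from Lemma \ref{171110_3} applied to the $\mca{R}$-extended charts, using that both $\mca{R}$ and the product cutoff $\kappa(t_1)\cdots\kappa(t_D)$ are multiplicative under fiber product over $L$ (since $L$ has no boundary, the corner coordinates of the fiber product chart are the disjoint union of those of the factors). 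The sign bookkeeping you anticipate is indeed the remaining content.
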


\subsubsection{Admissible K-space over an interval} 

Finally we consider the case that $X$ is an admissible K-space and 
the target of the map $f$ is $[a,b] \times  \mca{L}_{k+1}$. 

\begin{defn}\label{170701_2} 
\begin{enumerate}
\item[(i):] Let $U$ be an admissible manifold. 
A $C^\infty$-map $f: U \to [a, b]  \times \mca{L}_{k+1}$
is called \textit{admissible} if the following conditions hold: 
\begin{itemize} 
\item $\pr_{\mca{L}_{k+1}} \circ f: U \to \mca{L}_{k+1}$ is admissible in the sense of Definition \ref{170701_1}. 
\item Let us denote $\tau:= \pr_{[a,b ]} \circ f$, and suppose that $\tau(p)=a$ (resp. $\tau(p)=b$) for $p \in U$. 
Then $p$ is on codimension $k \ge 1$ 
corner of $U$, 
and there exists an admissible chart 
$(y, t_1, \ldots, t_k) \,(t_i \in \R_{\ge 0})$ defined near $p$, 
such that 
$p$ corresponds to $(y_0, 0, \ldots, 0)$ and 
$\tau(y, t_1, \ldots, t_k) = t_1 + a$ (resp. $b-t_1$). 
\item $(\tau , \ev_0 \circ \pr_{\mca{L}_{k+1}} \circ f): U \to [a,b] \times L$ is a corner stratified submersion (see Definition 26.3 in \cite{FOOO_Kuranishi}). 
\end{itemize} 

\item[(ii):] 
Let $(X, \wh{\mca{U}})$ be an admissible K-space. 
An admissible map from $(X, \wh{\mca{U}})$ to $[a, b] \times \mca{L}_{k+1}$
is a family $\wh{f} = (f_p)_{p \in X}$ such that 
\begin{itemize}
\item $f_p$ is an admissible map from $U_p$ to $[a, b] \times \mca{L}_{k+1}$ for every $p \in X$. 
\item For any $p \in X$ and $q \in \Image \psi_p$, there holds $f_p \circ \ph_{pq} = f_q|_{U_{pq}}$.
\end{itemize} 
The underlying set-theoretic map
$X \to [a,b] \times \mca{L}_{k+1}$ 
is denoted by $f$. 
\end{enumerate}
\end{defn}

In the situation of Definition \ref{170701_2} (ii), the normalized boundary $\partial X$ is decomposed as
$\partial X = \partial_v X \sqcup \partial_h X$, where 
$\partial_v X$ (resp. $\partial_h X$) denotes the \textit{vertical} (resp. \textit{horizontal}) boundary
(see Definition 26.10 in \cite{FOOO_Kuranishi}). 
Moreover, $\partial_v X$ is decomposed into 
$\part_- X:= f^{-1}( \{a\} \times \mca{L}_{k+1})$ and
$\part_+ X:= f^{-1}(\{b\} \times \mca{L}_{k+1})$. 

\begin{thm}\label{170701_3} 
Let $(X, \wh{\mca{U}})$ be a compact, oriented, admissible K-space of dimension $d$, 
$\wh{f}$ be an admissible map (in the sense of Definition \ref{170701_2}) 
from $(X, \wh{\mca{U}})$ to $[a, b] \times \mca{L}_{k+1}$, 
$\wh{\omega}$ be an admissible differential form  on $(X, \wh{\mca{U}})$, 
$\wh{\mf{S}}$ be an admissible CF-perturbation of $(X, \wh{\mca{U}})$.
Assume that $\wh{\mf{S}}$ is transversal to $0$, and 
\[ 
(\pr_{[a,b]} \circ \wh{f} , \ev_0 \circ \pr_{\mca{L}_{k+1}} \circ \wh{f}): (X, \wh{\mca{U}}) \to [a,b]  \times L
\]
is a stratified submersion with respect to $\wh{\mf{S}}$. 
Then one can define 
\[ 
\wh{f}_*(X, \wh{\mca{U}}, \wh{\omega}, \wh{\mf{S}}^\ep) \in \bar{C}^\dR_{d- |\wh{\omega}| - 1} (\mca{L}_{k+1})
\]
for sufficiently small $\ep>0$, so that there holds 
\begin{equation}
e_+(\wh{f}_*(X, \wh{\mca{U}}, \wh{\omega}, \wh{\mf{S}}^\ep)) =  (\wh{f}|_{\partial_+ X})_* (\partial_+ X, \wh{\mca{U}}|_{\partial_+ X}, \wh{\omega}|_{\partial_+ X}, \wh{\mf{S}}^\ep|_{\partial_+ X}), 
\end{equation} 
\begin{equation}\label{171206_1}
e_-(\wh{f}_*(X, \wh{\mca{U}}, \wh{\omega}, \wh{\mf{S}}^\ep)) =  (\wh{f}|_{\partial_- X})_* (\partial_- X, \wh{\mca{U}}|_{\partial_- X}, \wh{\omega}|_{\partial_- X}, \wh{\mf{S}}^\ep|_{\partial_- X}). 
\end{equation} 
Moreover Stokes' formula and the fiber product formula hold. 
\end{thm}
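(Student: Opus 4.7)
I adapt the construction of Theorem \ref{170628_4} to the interval target by extending $X$ along its vertical boundary to infinite cylindrical ends, producing a representative $(U,\ph,\tau_+,\tau_-,\omega)\in\bar{C}^\dR_*(\mca{L}_{k+1})$.

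First fix a diffeomorphism $\sigma:[a,b]\to[-1,1]$ so that the target sits inside $\R\times\mca{L}_{k+1}$. By Definition \ref{170701_2}, at each point of $\partial_\pm X$ there is an admissible chart in which $\tau$ is a single corner coordinate ($t_1+a$ near $\partial_- X$, $b-t_1$ near $\partial_+ X$). Gluing on $(-\infty,-1]\times\partial_- X$ and $[1,\infty)\times\partial_+ X$ via these charts produces an admissible, non-compact K-space $(\tilde{X},\wh{\tilde{\mca{U}}})$ with a map $\tilde{f}:\tilde{X}\to\R\times\mca{L}_{k+1}$ that is a pure translation in the $\R$-factor on each collar. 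The form $\wh{\omega}$ and the CF-perturbation $\wh{\mf{S}}$ extend cylindrically to $\tilde{X}$, preserving admissibility, transversality, and strata-wise strong submersivity of the composition with $\ev_0$.

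Next I run the construction of Theorem \ref{170628_4} on $\tilde{X}$: take a good coordinate system adapted to the cylindrical ends, a subordinate partition of unity, transverse zero loci of $\wh{\mf{S}}^\ep$ on each chart, and assemble via embeddings into $\R^N$ to obtain an oriented submanifold $U$ carrying a form $\omega$ and a smooth map $\ph=(\sigma\circ\pr_{[a,b]},\pr_{\mca{L}_{k+1}})\circ\tilde{f}:U\to\R\times\mca{L}_{k+1}$. The product diffeomorphisms $\tau_\pm$ on $U_{\ge 1}$ and $U_{\le -1}$ are read off from the cylindrical ends of $\tilde{X}$, and compact support of $\omega|_{U_{[-1,1]}}$ is arranged by an auxiliary cut-off function on the collar, as in the remark following Theorem \ref{170628_4}. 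This defines $\wh{f}_*(X,\wh{\mca{U}},\wh{\omega},\wh{\mf{S}}^\ep)\in\bar{C}^\dR_{d-|\wh{\omega}|-1}(\mca{L}_{k+1})$; well-definedness modulo the equivalence relation in $\bar{C}^\dR_*$ follows by the same submersion interpolation argument used for Theorem \ref{170628_4}. The identities $e_\pm(\wh{f}_*(X,\ldots))=(\wh{f}|_{\partial_\pm X})_*(\partial_\pm X,\ldots)$ are then immediate from the definitions of $e_\pm$ together with the cylindrical structure: slicing at $t=\pm 1$ returns exactly the CF-perturbed zero set over $\partial_\pm X$, i.e.\ the chain produced by Theorem \ref{170628_4}. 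Stokes' formula and the fiber product formula reduce to Theorems \ref{170628_5} and \ref{170628_6} applied on $\tilde{X}$, using that the horizontal boundary of the collar has cylindrical product structure and hence contributes trivially in $\bar{C}^\dR_*$.

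The main obstacle is the technical coherence of the cylindrical extension with the CF-perturbation and the auxiliary cut-off: one must verify that the resulting data really lies in $\bar{\mca{P}}$ and in a well-defined class in $\bar{C}^\dR_*$ independently of the admissible chart choices at $\partial_\pm X$, the collar gluing, and the cut-off. This calls for a $\spadesuit$-style argument in the sense of the remark following Theorem \ref{170628_1}, interpolating between different choices by submersions in parallel with the well-definedness part of Theorem \ref{170628_4}.
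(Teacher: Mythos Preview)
Your approach is essentially the same as the paper's, differing only in organization. The paper carries out the construction chart-by-chart in Section 8.1.3: for each manifold chart $V_{\mf{r}}\subset(\R_{\ge 0})^D$ with $f_\R(t_1,\ldots,t_D)=t_1-1$ near $\partial_-X$, it extends to $\bar V_{\mf{r}}=\mca{R}^{-1}(V_{\mf{r}})$ via the retraction $\mca{R}$, pulls back all data by $\mca{R}$ except the $\R$-coordinate (which is kept as $t_1-1$, so it runs to $-\infty$), and omits the cutoff $\kappa$ in the $t_1$-direction while retaining it in the horizontal directions $t_2,\ldots,t_D$. This is precisely your cylindrical extension, implemented locally rather than by a global gluing of $(-\infty,-1]\times\partial_-X$.

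One small caution: you phrase the construction as ``run the construction of Theorem \ref{170628_4} on $\tilde X$'', but $\tilde X$ is non-compact and Theorem \ref{170628_4} produces chains in $C^\dR_*(\mca{L}_{k+1})$, not $\bar C^\dR_*(\mca{L}_{k+1})$. What you actually mean (and what the paper does) is to run the \emph{underlying chart-level construction} on the extended charts and assemble directly into $\bar{\mca{P}}$; the global passage through GCS and K-space (Sections 8.2--8.3) then goes through verbatim, as the paper notes that the generalization to boundaries is straightforward. Your final paragraph correctly identifies the $\spadesuit$-style well-definedness check that is needed.
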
 

Let us state Stokes' formula: 

\begin{prop}
For sufficiently small $\ep>0$, there holds
\[ 
\partial \wh{f}_*(X, \wh{\mca{U}}, \wh{\omega}, \wh{\mf{S}}^\ep) = 
(-1)^{|\hat{\omega}|}
(\wh{f}|_{\part_h X})_*(\part_h X, \wh{\mca{U}}|_{\part_h X}, \wh{\omega}|_{\part_h X}, \wh{\mf{S}}^\ep|_{\part_h X})  
+ (-1)^{|\hat{\omega}| + 1}
\wh{f}_*(X, \wh{\mca{U}}, d \wh{\omega}, \wh{\mf{S}}^\ep). 
\]
\end{prop}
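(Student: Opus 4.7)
The plan is to represent $\wh{f}_*(X,\wh{\mca{U}},\wh{\omega},\wh{\mf{S}}^\ep)\in\bar{C}^\dR_*$ by an explicit quintuple $(\tilde{U},\tilde{\ph},\tilde{\tau}_+,\tilde{\tau}_-,\tilde{\omega})$ and then reduce $\partial$ on it to Stokes' theorem for integration along fibers, essentially as in the proof of Theorem \ref{170628_5} but with one extra $\R$-coordinate present throughout. Concretely, I would extend $(X,\wh{\mca{U}})$ to an admissible K-space $(\bar{X},\wh{\bar{\mca{U}}})$ by gluing cylindrical ends $\R_{\le -1}\times\partial_- X$ and $\R_{\ge 1}\times\partial_+ X$ using the collar coordinates supplied by the second bullet of Definition \ref{170701_2} (i), and extend $\wh{f}$, $\wh{\omega}$, $\wh{\mf{S}}$ to $\bar{X}$ by pullback so that the resulting extended map lands in $\R\times\mca{L}_{k+1}$ with strict product structure on the ends. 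Taking a good coordinate system of $\bar{X}$ adapted to this collar (so that charts on the cylindrical ends are themselves products) together with a subordinate partition of unity, the pushout of $\wh{\omega}\wedge\wh{\mf{S}}^\ep$ along the fibers of the Kuranishi map yields a form $\tilde{\omega}$ on the resulting chart $\tilde{U}$ satisfying the cylindrical-end conditions defining $\mca{A}^N(\tilde{U},\tilde{\ph},\tilde{\tau}_+,\tilde{\tau}_-)$. This quintuple represents $\wh{f}_*(X,\wh{\mca{U}},\wh{\omega},\wh{\mf{S}}^\ep)$, and by construction it is compatible with the identifications of $e_\pm$ with the restrictions to $\partial_\pm X$ asserted in Theorem \ref{170701_3}.

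Next I would apply $\partial(\tilde{U},\tilde{\ph},\tilde{\tau}_+,\tilde{\tau}_-,\tilde{\omega})=(-1)^{|\tilde{\omega}|+1}(\tilde{U},\tilde{\ph},\tilde{\tau}_+,\tilde{\tau}_-,d\tilde{\omega})$ and invoke the pushout-Stokes identity $d(\pi_{!}\eta)=(-1)^{\dim\pi}\pi_{!}(d\eta)\pm(\partial\pi)_{!}(\eta|_{\partial})$ to decompose $d\tilde{\omega}$ into an interior piece and a fiber-boundary piece. Combined with the Leibniz rule applied to $\wh{\omega}\wedge\wh{\mf{S}}^\ep$ and the closedness of the averaged CF-perturbation form, the interior piece yields exactly $(-1)^{|\hat{\omega}|+1}\wh{f}_*(X,\wh{\mca{U}},d\wh{\omega},\wh{\mf{S}}^\ep)$. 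The fiber-boundary piece is supported on the normalized boundary $\partial\bar{X}=\partial_h\bar{X}\sqcup\partial_v\bar{X}$. On $\partial_v\bar{X}$, which sits inside the cylindrical pieces $\tilde{U}_{\ge 1}\cup\tilde{U}_{\le -1}$ where $\tilde{\omega}$ has strict product form, the resulting representative is the pullback through a product-projection submersion of exactly the shape defining the generators of $Z_N$, and hence it vanishes in $\bar{C}^\dR_*$. The surviving piece corresponds to $\partial_h\bar{X}=\partial_h X$ and produces the first term of the formula with sign $(-1)^{|\hat{\omega}|}$, matching the sign analysis of Theorem \ref{170628_5}.

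The main obstacle is the bookkeeping needed to verify that the $\partial_v\bar{X}$-contribution is genuinely zero in $\bar{C}^\dR_*$ rather than merely small: one needs both the good coordinate system of $\bar{X}$ and the CF-perturbation $\wh{\mf{S}}$ to respect the strict product structure on the cylindrical ends, which relies on the admissibility assumptions on $\wh{\mca{U}}$ and $\wh{\mf{S}}$ and on a careful choice of partition of unity compatible with the collar. Once this is arranged, the identification of the $\partial_v$-contribution with a $Z_N$-generator is immediate, and the remaining sign and orientation check goes through as in Theorem \ref{170628_5}, modulo the degree shift built into the definition $\bar{C}^\dR_N=\bigl(\bigoplus\mca{A}^{\dim U-N-1}(\tilde U,\tilde\ph,\tilde\tau_+,\tilde\tau_-)\bigr)/Z_N$.
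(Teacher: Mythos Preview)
The paper does not spell out a proof; it remarks in Section~8.1.3 that ``Stokes' formula \ldots can be checked directly and proofs are omitted,'' and in Sections~8.2--8.3 that the passage from single K-charts to GCS to K-spaces is straightforward and parallel to the boundary-free case already written out. The implicit argument is considerably more direct than what you outline. The chain $\wh{f}_*(X,\wh{\mca{U}},\wh{\omega},\wh{\mf{S}}^\ep)$ is not a single quintuple but a \emph{sum} over charts of a GCS and over a partition of unity, each summand being (in the notation of Section~8.1.3, case~(ii))
\[
(-1)^\dagger\bigl((\bar{\mf{s}}^\ep_{\mf{r}})^{-1}(0),\ \bar{f}_{\mf{r}}\circ\pr_{\bar{V}_{\mf{r}}},\ \tau_+,\ \tau_-,\ \pr^*_{\bar{V}_{\mf{r}}}(\overline{\chi_{\mf{r}}\omega})\wedge\pr^*_{W_{\mf{r}}}\eta_{\mf{r}}\bigr).
\]
The crucial point is that, by construction, the cutoff $\kappa$ is applied only to the horizontal coordinates $t_2,\ldots,t_D$ and \emph{not} to the vertical coordinate $t_1$; the $t_1$-direction is instead extended to all of $\R$ and recorded by $\tau_\pm$. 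Hence $d(\overline{\chi_{\mf{r}}\omega})$ contains only the terms $d\kappa(t_i)$ for $i\ge 2$ (producing the $\partial_h$-contribution, exactly as in the proof of the Proposition in Section~8.1.2) and the pullback of $d(\chi_{\mf{r}}\omega)$ (producing the $d\wh{\omega}$ term after summing, via the telescoping argument of Proposition~\ref{171109_1}). No vertical boundary term ever arises: there is nothing to cancel.

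Your plan builds the same cylindrical extension but at the K-space level, and then tries to argue that a ``$\partial_v\bar{X}$-contribution'' dies in $Z_N$. Two points of confusion make this harder than necessary. First, once you have glued $\R_{\le -1}\times\partial_-X$ and $\R_{\ge 1}\times\partial_+X$ onto $X$, the resulting $\bar{X}$ has \emph{no} vertical boundary, so the decomposition $\partial\bar{X}=\partial_h\bar{X}\sqcup\partial_v\bar{X}$ you invoke does not exist. Second, the mechanism you propose for the vanishing (that the residual term is a $Z_N$-generator via a product-projection submersion) is not how it works: a single term is never a $Z_N$-generator, and in any case the correct construction simply produces no such term. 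What you are really observing is that the cylindrical-end extension is already built into the $(\tau_+,\tau_-)$ data defining $\bar{C}^\dR_*$, and the paper exploits this at the chart level rather than globalizing first.
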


Finally we state the fiber product formula.
Suppose, for $i \in \{1, 2\}$, we have 
$(X_i, \wh{\mca{U}}_i)$, 
$\wh{\omega}_i$, 
$\wh{\mf{S}}_i$, 
$k_i \in \Z_{\ge 0}$ and 
$\wh{f}_i: (X_i, \wh{\mca{U}}_i) \to [a,b] \times \mca{L}_{k_i+1}$ satisfying the assumptions 
in Theorem \ref{170701_3}.  
For every $j \in \{1, \ldots, k_1 \}$, 
the fiber product
\[
(X_{12}, \wh{\mca{U}}_{12}) := (X_1, \wh{\mca{U}}_1) \fbp{\ev_j \circ \wh{f}_1}{\ev_0 \circ \wh{f}_2} (X_2, \wh{\mca{U}}_2)
\]
has an admissible K-structure. 
Moreover, both $\wh{\omega}_1 \times \wh{\omega}_2$ and $\wh{\mf{S}}_1 \times \wh{\mf{S}}_2$ are admissible. 
Finally, a smooth map $\wh{f}_{12}: (X_{12}, \wh{\mca{U}}_{12}) \to [a, b] \times \mca{L}_{k_1+k_2}$ 
which is defined by the same formula as (\ref{171018_1}) is also admissible. 

\begin{thm}\label{171018_2} 
In the situation described above, there holds
\[
(\wh{f}_{12})_*(X_{12}, \wh{\mca{U}}_{12}, \wh{\omega}_{12}, \wh{\mf{S}}_{12}^\ep)  = 
(\wh{f}_1)_*(X_1, \wh{\mca{U}}_1, \wh{\omega}_1, \wh{\mf{S}}_1^\ep) \circ_j
(\wh{f}_2)_*(X_2, \wh{\mca{U}}_2, \wh{\omega}_2, \wh{\mf{S}}_2^\ep)
\]
for sufficiently small $\ep>0$.
\end{thm}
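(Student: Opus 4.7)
My plan is to prove Theorem \ref{171018_2} as a direct extension of Theorem \ref{170628_6}, with the only genuinely new content being the compatibility with the collar trivializations $\tau''_\pm$ that the element of $\bar{C}^\dR_*(\mca{L}_{k_1+k_2})$ carries. The chain $\wh{f}_*(X, \wh{\mca{U}}, \wh{\omega}, \wh{\mf{S}}^\ep)$ is built from exactly the same local data (good coordinate system, subordinate partition of unity, CF-perturbation) as the chain in Theorem \ref{170628_6}; the difference is purely that the presence of $\pr_{[a,b]} \circ \wh{f}$ furnishes an extra real coordinate which, together with the admissibility structure, supplies the trivializations $\tau_\pm$ near the vertical boundaries $\partial_\pm X$.

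Concretely, I would first choose good coordinate systems on $(X_1, \wh{\mca{U}}_1)$ and $(X_2, \wh{\mca{U}}_2)$ which are compatible over the weak submersion $\ev_j \circ \wh{f}_1$ (resp.\ strongly submersive $\ev_0 \circ \wh{f}_2$ direction), and take their fiber product chart-by-chart to get a good coordinate system on $(X_{12}, \wh{\mca{U}}_{12})$. On each such chart the claim reduces to the standard fiber-product--pushforward identity for compactly supported differential forms along submersions between oriented manifolds, exactly as in the proofs of Theorems \ref{170628_3} and \ref{170628_6}; the only modification is that the submersion is now to $[a,b] \times L$ rather than to $L$, which simply carries the $[a,b]$-factor along. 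Summing over charts using the partition of unity and passing to sufficiently small $\ep$ so that the well-definedness in the sense of $\spadesuit$ holds, I obtain the desired identity in $\bar{C}^\dR_*$.

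To certify that the resulting element of $\bar{C}^\dR_*(\mca{L}_{k_1+k_2})$ genuinely has the trivializations $\tau''_\pm$ described in Section 4.4, I use the second bullet of Definition \ref{170701_2} (i): near $\tau = a$ or $\tau = b$, each $\wh{f}_i$ is locally of the form $t_1 + a$ (resp.\ $b - t_1$) in an admissible chart, and this product form is preserved under fiber product over $[a,b] \times L$. Thus the collar structure on $X_{12}$ is the fiber product of the collars on $X_i$, and the induced $\tau''_\pm$ match the definition of $\circ_j$ on $\bar{C}^\dR$ given in Section 4.4 (including the sign $(-1)^{(\dim U - |\omega| - n - 1)|\omega'| + n}$, which differs from the $C^\dR$ sign by $(-1)^{|\omega'| + n}$ precisely because of the degree-one shift in $\bar{C}^\dR$ and the orientation convention for the $\R$-factor).

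I expect the main obstacle to be verifying that the fiber product CF-perturbation $\wh{\mf{S}}_1 \times \wh{\mf{S}}_2$ itself respects the collar structure on $X_{12}$; equivalently, that it is admissible as a CF-perturbation on the admissible fiber product K-space. This requires unwinding Definition 10.13 of \cite{FOOO_Kuranishi} in the admissible setting and checking that the exponential-decay conditions near corners are stable under fiber products. Once this compatibility is in hand, applying $e_\pm$ to both sides of the claimed identity and invoking (\ref{171206_1}) reduces the boundary content of the identity to the non-interval fiber product formula (Theorem \ref{170628_6}) applied on $\partial_\pm X_1 \fbp{}{} \partial_\pm X_2$, providing a consistency check. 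The remainder of the argument is then parallel to Theorem \ref{170628_6} and requires only sign bookkeeping along the lines of Section 4.2.
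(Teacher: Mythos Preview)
Your proposal is correct and aligns with the paper's approach. The paper does not give an explicit proof of Theorem \ref{171018_2}: at the single-chart level (Section 8.1.3) it states that ``the fiber product formula is stated and proved in the obvious way,'' and for GCS and K-spaces (Sections 8.2--8.3) it restricts to the boundaryless case while noting that the extension to admissible K-spaces with boundaries is straightforward; your plan to reduce to chart-level fiber products via a compatible good coordinate system and partition of unity, check the collar trivializations $\tau''_\pm$ via the admissible local form of $\tau$ in Definition \ref{170701_2}, and track the sign shift from $C^\dR$ to $\bar{C}^\dR$, is exactly the intended elaboration of this omitted argument.
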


\subsection{Moduli spaces of (perturbed) pseudo-holomorphic disks with boundary marked points} 

\subsubsection{Uncompactified moduli spaces} 

Let $D$ denote the unit holomorphic disk, namely $D:= \{ z \in \C \mid |z| \le 1\}$. 
For every $k \in \Z_{\ge 0}$ and $\beta \in H_2(\C^n, L)$, we define a set $\mtrg{\mca{M}}_{k+1}(\beta)$ 
in the following way. 
When $\beta=0$ and $k=0$ or $k=1$, we define 
both 
$\mtrg{\mca{M}}_1(0)$ and $\mtrg{\mca{M}}_2(0)$ to be empty set. 
In the other cases, namely $\beta \ne 0$ or $k \ne 0, 1$, we define 
\[ 
\mtrg{\mca{M}}_{k+1}(\beta):= \{(u,z_0, \ldots, z_k)\}/\Aut (D)
\]
where $u: (D, \partial D) \to (\C^n, L)$ satisfies $\bar{\partial} u = 0$, $[u] = \beta$, and $z_0, \ldots, z_k$ are distinct points on $\partial D$ aligned in anti-clockwise order. 
The right action of $\Aut (D)$ is defined so that 
\[ 
(u, z_0, \ldots, z_k)^\rho:= (u \circ \rho, \rho^{-1}(z_0), \ldots, \rho^{-1}(z_k)). 
\] 
For each $j \in \{0, \ldots, k\}$, we define an evaluation map 
$\ev_j: \mtrg{\mca{M}}_{k+1}(\beta) \to L$ by 
\begin{equation}\label{170610_1} 
\ev_j [(u, z_0, \ldots, z_k)] := u(z_j). 
\end{equation}

Next we take a Hamiltonian $H \in C^\infty_c([0,1] \times \C^n)$ which displaces $L$. 
For every $t \in [0,1]$, let us set $H_t(z):= H(t,z)$, 
define a Hamiltonian vector field $X_{H_t}$ by 
$\omega_n( \, \cdot \, , X_{H_t}) = dH_t( \, \cdot \,)$, 
and define an isotopy $(\ph^t_H)_{t \in [0,1]}$ on $\C^n$ by 
\[
\ph^0_H = \id_{\C^n}, \qquad 
\partial_t \ph^t_H = X_{H_t}(\ph^t_H) \quad (\forall t \in [0,1]). 
\]
Then ``$H$ displaces $L$'' means that $\ph^1_H(L) \cap L = \emptyset$. 
We also assume that $H_t \equiv 0$ when $t \in [0,1/3] \cup [2/3,1]$. 

Let us define a family of perturbed Cauchy-Riemann operators $(\bar{\partial}_r)_{r \ge 0}$ in the following way. 
We first take a $C^\infty$-function $\chi: \R \to [0,1]$ such that 
$\chi \equiv 0$ on $\R_{\le 0}$ 
and $\chi \equiv 1$ on $\R_{\ge 1}$. 
For every $r \in \R_{\ge 0}$, we define 
$\chi_r(s):= \chi(r+s)\chi(-r-s)$. 
In particular $\chi_0 \equiv 0$. 

Let us take a complex structure $J$ on $\R \times [0,1]$ so that
$J(\partial_s) = \partial_t$, where $s$ denotes the coordinate on $\R$ and $t$ denotes the coordinate on $[0,1]$. 
Next we take a biholomorphic map from $D \setminus \{-1, 1\}$ to $\R \times [0,1]$. 
This defines $C^\infty$-functions 
$s: D \setminus \{-1, 1\} \to \R$
and $t: D \setminus \{-1, 1\} \to [0,1]$. 
For every $r \in \R_{\ge 0}$
and $u: (D, \partial D) \to (\C^n, L)$, we define
\[ 
\bar{\partial}_r u:= (du - X_{\chi_r(s) H_t}(u) \otimes dt)^{0,1}. 
\] 
Obviously $\bar{\partial}_0 = \bar{\partial}$. 
Now let us define 
\[
\mtrg{\mca{N}}^r_{k+1}(\beta):= \{(u, z_0=1, z_1, \ldots, z_k)\}
\] 
where $u: (D, \partial D) \to (\C^n, L)$ satisfies $\bar{\partial}_r u = 0$, $[u] = \beta$, and $1, z_1, \ldots, z_k$ are distinct points on $\partial D$ aligned in anti-clockwise order. 
For each $j \in \{0,\ldots, k\}$, 
the evaluation map $\ev_j: \mtrg{\mca{N}}^r_{k+1}(\beta) \to L$ is defined in the same formula as (\ref{170610_1}). 
Finally we define 
\[
\mtrg{\mca{N}}^{\ge 0}_{k+1}(\beta):= \bigcup_{r \ge 0} \mtrg{\mca{N}}^r_{k+1}(\beta).
\]
Let us summarize basic properties of these moduli spaces: 

\begin{lem}\label{170625_1} 
\begin{enumerate} 
\item[(i):] If $\omega(\beta)<0$ or $\omega(\beta)=0$ and $\beta \ne 0$, then $\mtrg{\mca{M}}_{k+1}(\beta) = \mtrg{\mca{N}}^0_{k+1}(\beta)=\emptyset$.
\item[(ii):] Consider the case $\beta=0$: 
\begin{itemize}
\item $\mtrg{\mca{M}}_{k+1}(0)$ consists of constant maps for every $k \ge 2$.
\item $\mtrg{\mca{N}}^0_{k+1}(0)$ consists of constant maps for every $k \ge 0$. 
\end{itemize} 
\item[(iii):] Let $\| \, \cdot \, \|$ denote the Hofer's norm, namely 
\[ 
\| H \| := \int_0^1 \, (\max H_t - \min H_t) \, dt. 
\] 
If $\omega(\beta) + 2 \| H\| < 0$, then $\mtrg{\mca{N}}^{\ge 0} _{k+1}(\beta)  = \emptyset$. 
\item[(iv):] First notice that, for every $k \in \Z_{\ge 0}$ and $r \in \R_{\ge 0}$, 
\[
\mtrg{\mca{M}}_{k+1}(\beta) = \emptyset \iff \mtrg{\mca{M}}_1(\beta) = \emptyset, \qquad
\mtrg{\mca{N}}^r_{k+1}(\beta) = \emptyset \iff \mtrg{\mca{N}}^r_1(\beta)=\emptyset.
\]
Now for every $c \in \R$, sets 
\begin{align*} 
&\{ \beta \in H_2(\C^n, L) \mid \mtrg{\mca{M}}_1(\beta) \ne \emptyset, \, \omega(\beta)  < c\},  \\ 
&\{ \beta \in H_2(\C^n, L) \mid \mtrg{\mca{N}}^r_1(\beta) \ne \emptyset, \, \omega(\beta) < c\}
\end{align*} 
are both finite. 
\item[(v):] 
For every $\beta \in H_2(\C^n, L)$, 
there exists $r(\beta)>0$ such that
$\bigcup_{r \ge r(\beta)} \mtrg{\mca{N}}^r_1(\beta) = \emptyset$. 
\end{enumerate} 
\end{lem}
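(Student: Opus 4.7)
The plan is to establish the five parts by standard arguments from the theory of (perturbed) pseudo-holomorphic disks: an energy identity handles (i)--(iii), Gromov--Floer compactness handles (iv) and (v), and the displacement property of $H$ is the crucial additional input for (v).

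Parts (i) and (ii) follow from the classical energy identity for $J$-holomorphic maps: for $u:(D,\partial D)\to(\C^n,L)$ with $\bar\partial u=0$ and $[u]=\beta$, one has $\frac{1}{2}\int_D|du|^2=\omega_n(\beta)$. Hence $\omega_n(\beta)\ge 0$, with equality iff $u$ is constant (and therefore $[u]=0$). This rules out $\mtrg{\mca{M}}_{k+1}(\beta)\ne\emptyset$ when $\omega_n(\beta)<0$, or when $\omega_n(\beta)=0$ with $\beta\ne 0$. Because $\chi_0\equiv 0$, the operator $\bar\partial_0$ coincides with $\bar\partial$, so the same argument handles $\mtrg{\mca{N}}^0_{k+1}(\beta)$. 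Part (ii) then follows by noting that constant maps together with $k+1\ge 3$ boundary marked points do represent well-defined elements of the quotient by $\Aut(D)$, so the moduli space is indeed $\{\text{constant maps}\}$ and not empty.

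For (iii), I would transport $u$ to the strip $\R\times[0,1]\cong D\setminus\{\pm 1\}$ and apply the standard Floer-type energy identity for $\bar\partial_r$: a direct calculation yields
\[
0\le E(u) = \omega_n(\beta)-\int_\R\!\!\int_0^1 \chi_r'(s)\,H_t(u(s,t))\,dt\,ds.
\]
Since $\chi_r$ is first non-decreasing and then non-increasing in $s$, with each monotone piece of total variation at most $1$, the last integral is bounded in absolute value by $2\|H\|$, giving $\omega_n(\beta)+2\|H\|\ge 0$. Part (iv) then follows from Gromov compactness: any infinite family of pairwise distinct classes $\beta_i$ with $\omega_n(\beta_i)<c$ and nonempty moduli would admit representatives of uniformly bounded energy, hence a convergent subsequence whose stable limit realizes one fixed total homology class, contradicting distinctness for large $i$; the perturbed case is identical, replacing Gromov by Floer--Gromov compactness (and using (iii) and, if necessary, (v) to confine $r$ to a bounded interval).

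Finally, for (v) I argue by contradiction: given a sequence $r_i\to\infty$ with $u_i\in\mtrg{\mca{N}}^{r_i}_1(\beta)$, the energies of $u_i$ are bounded by (iii). After recentering in the strip coordinate $s$ the cutoffs $\chi_{r_i}$ converge to the constant $1$ on every compact subset of $\R$, and Floer--Gromov compactness produces a finite-energy solution of the genuine Floer equation with Hamiltonian $H$ on $\R\times[0,1]$ and boundary on $L$; its asymptotic limits at $s\to\pm\infty$ would lie in $L\cap\varphi^1_H(L)$, which is empty by the displacement hypothesis, giving the desired contradiction. The main obstacle is this last step: arranging the recentering so that a non-trivial finite-energy limit is guaranteed and no energy escapes by bubbling, and invoking the correct compactness statement for the one-parameter family of equations indexed by $r$. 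I would appeal to the Floer--Gromov compactness framework from \cite{FOOO_Kuranishi}, which is already being used elsewhere in this paper.
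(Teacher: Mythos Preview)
Your argument is correct and follows exactly the line taken in the paper, only with more detail: the paper simply says that (i) and (ii) are ``straightforward from definitions,'' (iii) ``follows from standard computations,'' (iv) ``follows from Gromov compactness theorem,'' and for (v) it produces by contradiction a finite-energy Floer strip on $\R\times[0,1]$ whose asymptotic limit would yield a Hamiltonian chord from $L$ to $L$, contradicting displacement. One small remark: in (iv) the class set for $\mtrg{\mca{N}}^r_1$ is at a \emph{fixed} $r$, so no appeal to (v) is needed to bound $r$; your parenthetical ``if necessary, (v)'' is harmless but unnecessary and would otherwise risk circularity.
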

\begin{proof}
(i), (ii) are straightforward from definitions.
(iii) follows from standard computations. 
(iv) follows from Gromov compactness theorem.  
If (v) is not the case, there exists 
$v: (\R \times [0,1], \R \times \{0,1\}) \to (\C^n, L)$ satisfying 
\[ 
\int_{\R \times [0,1]} | \partial_s v|^2 \, dsdt < \infty, \qquad 
\partial_s v - J( \partial_t v - X_{H_t}(v)) = 0, 
\] 
where $J$ denotes the standard complex structure on $\C^n$.
Then $\gamma(t):= \lim_{s \to \infty} v(s,t)$ satisfies 
$\gamma(0), \gamma(1) \in L$ and $\partial_t \gamma(t)= X_{H_t}(\gamma(t))$, 
contradicting the assumption that $H$ displaces $L$. 
\end{proof}

\subsubsection{Compactified moduli spaces} 

In this subsubsection, we define compactified moduli spaces 
$\mca{M}_{k+1}(\beta)$, $\mca{N}^0_{k+1}(\beta)$ and $\mca{N}^{\ge 0}_{k+1}(\beta)$,
taking fiber products of uncompactified moduli spaces along decorated rooted ribbon trees
(see Definition \ref{170902_1} below). 
K-structures on these spaces are defined in the next subsubsection. 

The next definition is a modified version 
of Definition 21.2 in \cite{FOOO_Kuranishi}. 

\begin{defn}\label{170902_1} 
A \textit{decorated rooted ribbon tree} is a pair $(T, B)$ such that: 
\begin{itemize} 
\item $T$ is a connected tree. Let $C_0(T)$ and $C_1(T)$ be the set of all vertices and edges of $T$, respectively. 
\item For each $v \in C_0(T)$ we fix a cyclic order of the set of edges containing $v$ (ribbon structure). 
\item $C_0(T)$ is divided into the set of \textit{exterior} vertices $C_{0,\exterior}(T)$ and the set of \textit{interior} vertices $C_{0, \interior}(T)$. 
For every $v \in C_{0, \interior}(T)$, we define $k_v$ to be the valency of $v$ minus $1$. 
\item We fix one element of $C_{0, \exterior}(T)$, which we call the \textit{root}. 
\item The valency of every exterior vertex is $1$. 
\item $B$ is a map from $C_{0, \interior}(T)$ to $H_2(\C^n, L)$. 
For every vertex $v$, either $\omega_n(B(v))>0$ or $B(v)=0$ holds. 
\item Every vertex $v$ with $B(v)=0$ has valency at least $3$.
\end{itemize} 

For every $k \in \Z_{\ge 0}$ and $\beta \in H_2(\C^n, L)$, 
we denote by $\mca{G}(k+1, \beta)$
the set of decorated rooted ribbon trees $(T, B)$ such that:
\begin{itemize}
\item[(I):] $\# C_{0, \exterior}(T) = k+1$.
\item[(II):] $\sum_{v \in C_{0, \interior}(T)} B(v)= \beta$.
\end{itemize} 
An edge is called \textit{exterior} if it contains an exterior vertex. 
Otherwise it is called \textit{interior}. 
$C_{1, \exterior}(T)$ (resp. $C_{1,\interior}(T)$)
denotes the set of exterior (resp. interior) edges. 
\end{defn} 

We also define a natural notion of \textit{reduction} on $\mca{G}(k+1: \beta)$. 

\begin{defn}\label{171014_2} 
Let $(T, B) \in \mca{G}(k+1: \beta)$ and $e \in C_{1, \interior}(T)$, and $v_0$, $v_1$ be vertices of $e$. 
By collapsing $e$ to a new vertex $v_{01}$, we get $(T', B') \in \mca{G}(k+1: \beta)$ such that
\begin{align*} 
C_0(T') &:= (C_0(T) \setminus \{v_0, v_1\}) \cup \{v_{01}\}, \\ 
C_1(T') &:= C_1(T) \setminus \{e\}, \\ 
B'(v)&:= \begin{cases} B(v) &(v \ne v_{01}) \\ B(v_0) + B(v_1) &(v = v_{01}). \end{cases}
\end{align*} 
An element of $\mca{G}(k+1: \beta)$ which can be obtained from $(T, B)$ by repeating 
this process is called a \textit{reduction} of $(T, B)$. 
\end{defn} 

Now let us define moduli spaces $\mca{M}_{k+1}(\beta)$, $\mca{N}^0_{k+1}(\beta)$ and $\mca{N}^{\ge 0}_{k+1}(\beta)$. 
For every $(T, B) \in \mca{G}(k+1, \beta)$, 
one can define 
\begin{equation}\label{170611_1} 
\ev_{\interior}: \prod_{v \in C_{0, \interior}(T)} \mtrg{\mca{M}}_{k_v+1}(B(v)) \to \prod_{e \in C_{1,\interior}(T)} L^2
\end{equation}
in the same manner as the definition of the
map (21.2) in \cite{FOOO_Kuranishi}. 
We also consider 
\begin{equation}\label{170611_3}
\ev_{\exterior}: \prod_{v \in C_{0, \interior}(T)} \mtrg{\mca{M}}_{k_v+1}(B(v)) \to \prod_{e \in C_{1,\exterior}(T)} L \cong L^{k+1}
\end{equation}
where the isomorphism on the right is defined
by labeling exterior vertices by $\{0, \ldots, k\}$ in positive cyclic order so that 
the root is labeled by $0$. 

We also consider the diagonal map: 
\begin{equation}\label{170611_2}
\Delta: \prod_{e \in C_{1,\interior}(T)} L  \to \prod_{e \in C_{1,\interior}(T)} L^2; \quad (x_e)_e \mapsto (x_e, x_e)_e. 
\end{equation} 
Then we define $\mca{M}_{k+1}(\beta)$
by taking fiber products of (\ref{170611_1}) and (\ref{170611_2}). 
Namely 
\[
\mca{M}_{k+1}(\beta) := \bigsqcup_{(T, B) \in \mca{G}(k+1, \beta)} 
\bigg(\prod_{e \in C_{1,\interior}(T)} L \bigg)
 \fbp{\Delta}{\ev_{\interior}}
\bigg(\prod_{v \in C_{0, \interior}(T)} \mtrg{\mca{M}}_{k_v+1}(B(v)) \bigg). 
\]
We define $\ev^{\mca{M}} = (\ev^{\mca{M}}_0, \ldots, \ev^{\mca{M}}_k): \mca{M}_{k+1}(\beta )\to L^{k+1}$
by restricting $\ev_\exterior$. 

The definition of  $\mca{N}^0_{k+1}(\beta)$ is similar. 
For any $(T,B)  \in \mca{G}(k+1, \beta)$ 
and $v_0 \in C_{0, \interior}(T)$, 
we define 
\begin{equation}\label{170702_1} 
\ev_{\interior}: 
\prod_{v \in C_{0, \interior}(T) \setminus \{v_0\}} \mtrg{\mca{M}}_{k_v+1}(B(v)) \times \mtrg{\mca{N}}^0_{k_{v_0}+1} (B(v_0))  \to \prod_{e \in C_{1, \interior}(T)} L^2 
\end{equation}
and 
\begin{equation}\label{170702_2}
\ev_{\exterior}: 
\prod_{v \in C_{0, \interior}(T) \setminus \{v_0\}} \mtrg{\mca{M}}_{k_v+1}(B(v)) \times \mtrg{\mca{N}}^0_{k_{v_0}+1} (B(v_0))  \to L^{k+1}
\end{equation}
in manners similar to (\ref{170611_1}) and (\ref{170611_3}). 
Then we define: 
\begin{align*} 
\mca{N}^0_{k+1}(\beta)&:= \bigsqcup_{\substack{(T, B) \in \mca{G}(k+1, \beta) \\ v_0 \in C_{0, \interior}(T)}} \\
&\bigg(\prod_{e \in C_{1,\interior}(T)} L \bigg)
\fbp{\Delta}{\ev_\interior}
\bigg(
\prod_{v \in C_{0, \interior}(T) \setminus \{v_0\}} \mtrg{\mca{M}}_{k_v+1}(B(v)) \times \mtrg{\mca{N}}^0_{k_{v_0}+1} (B(v_0))
\bigg). 
\end{align*} 
We define $\ev^{\mca{N}^0} = (\ev^{\mca{N}^0}_0, \ldots, \ev^{\mca{N}^0}_k): \mca{N}^0_{k+1}(\beta) \to L^{k+1}$ 
by restring $\ev_\exterior$ defined in  (\ref{170702_2}). 
$\mca{N}^{\ge 0}_{k+1}(\beta)$ 
and
$\ev^{\mca{N}^{\ge 0}}: \mca{N}^{\ge 0}_{k+1}(\beta) \to L^{k+1}$
are defined in a similar way. 

Now we have defined \textit{sets}
$\mca{M}_{k+1}(\beta)$, 
$\mca{N}^0_{k+1}(\beta)$, and 
$\mca{N}^{\ge 0}_{k+1}(\beta)$. 
These sets have natural topologies: 
the topology on $\mca{M}_{k+1}(\beta)$ is described in Section 7.1.4 \cite{FOOO_09}
(actually our situation is much simpler, since we have neither interior marked points nor sphere bubbles). 
The topologies on 
$\mca{N}^0_{k+1}(\beta)$ and 
$\mca{N}^{\ge 0}_{k+1}(\beta)$ are defined in similar ways, 
and details are omitted. 

\subsubsection{K-structures on compactified moduli spaces}

In this subsubsection, we consider a system of K-structures on compactified moduli spaces defined in the previous subsubsection. 
More accurately, we consider admissible K-structures (see Section 25 in \cite{FOOO_Kuranishi}), 
and define a certain inductive system of such admissible K-spaces (this is a variant of ``inductive system of $A_\infty$ correspondence'' in Definition 21.17 in \cite{FOOO_Kuranishi}). 
Our goal in this subsubsection is to spell out an accurate statement in Theorem \ref{170611_5}. 

Recall that we took $\ep>0$ so that 
every nonconstant holomorphic disk with boundary on $L$ has area at least $2\ep$. 
We take $U \in \Z_{>0}$ so that 
$\ep (U-1) \ge 2 \| H \|$. 

\begin{thm}\label{170611_5} 
For every $k \in \Z_{\ge 0}$, $m \in \Z_{\ge 0}$ and $P \in \{ \{m\}, [m, m+1]\}$, 
there exist the following data: 
\begin{enumerate}
\item[(i):] \textbf{(Moduli spaces)}
Compact, oriented, admissible K-spaces 
\begin{align*} 
&\mca{M}_{k+1}(\beta: P)  \qquad (\beta \in H_2(\C^n, L), \, \omega_n(\beta) < \ep(m+1-k)), \\
&\mca{N}^0_{k+1}(\beta: P) \qquad (\beta \in H_2(\C^n, L), \, \omega_n(\beta) <\ep(m-1-k)),  \\
&\mca{N}^{\ge 0}_{k+1}(\beta: P)  \qquad (\beta \in H_2(\C^n, L), \, \omega_n(\beta) < \ep(m-k-U)), 
\end{align*}
whose underlying topological spaces are 
\[ 
P \times \mca{M}_{k+1}(\beta), \quad 
P \times \mca{N}^0_{k+1}(\beta), \quad
P \times \mca{N}^{\ge 0}_{k+1}(\beta), 
\] 
respectively. 
Dimensions of these K-spaces are given by 
\begin{align*} 
&\dim \mca{M}_{k+1}(\beta: P) = \mu(\beta) + n+k-2 + \dim P,  \\
&\dim \mca{N}^0_{k+1}(\beta: P) = \mu(\beta) + n+k + \dim P, \\
&\dim \mca{N}^{\ge 0}_{k+1}(\beta: P) = \mu(\beta) + n+k+1 + \dim P. 
\end{align*} 
\item[(ii):]
\textbf{(Evaluation maps)} 
Corner stratified strongly smooth maps 
(from K-spaces to manifolds with corners, see Definition 26.6 (1) in \cite{FOOO_Kuranishi})
\begin{align*}
\ev^{\mca{M}, P}&: \mca{M}_{k+1}(\beta: P) \to P \times L^{k+1},                 \\
\ev^{\mca{N}^0, P}&: \mca{N}^0_{k+1}(\beta: P) \to P \times L^{k+1},           \\
\ev^{\mca{N}^{\ge 0}, P}&:\mca{N}^{\ge 0}_{k+1}(\beta: P) \to P \times L^{k+1}, 
\end{align*} 
such that their underlying set-theoretic maps are: 
\[ 
\id_P \times \ev^{\mca{M}}, \quad
\id_P \times \ev^{\mca{N}^0}, \quad 
\id_P \times \ev^{\mca{N}^{\ge 0}}. 
\]
We require that the following maps to $P \times L$ 
\[ 
(\id_P \times \pr_0) \circ \ev^{\mca{M}, P}, \quad
(\id_P \times \pr_0) \circ \ev^{\mca{N}^0, P}, \quad
(\id_P \times \pr_0) \circ \ev^{\mca{N}^{\ge 0}, P}
\]
are corner stratified weak submersions (see Definition 26.6 (2) in \cite{FOOO_Kuranishi}). 
Here $\pr_0: L^{k+1} \to L$ is defined by $(p_0, \ldots, p_k) \mapsto p_0$. 

\item[(iii):] \textbf{(Energy zero part)}
An isomorphism of admissible K-structures 
\[
\mca{M}_{k+1}(0: P) \cong P \times L \times D^{k-2}
\]
for every $k \ge 2$, 
so that $\ev^{\mca{M}, P}: \mca{M}_{k+1}(0: P) \to P \times L^{k+1}$ 
coincides with $\pr_P \times (\pr_L)^{k+1}$. 
Here $D^{k-2}$ in the RHS is identified with the Stasheff cell; see \cite{Fukaya_Oh} Section 10. 
The coordinate near boundary is 
$1/\log T$, 
where $T$ denotes the length of neck region; 
see Remark 25.45 in \cite{FOOO_Kuranishi}. 

\item[(iv):] \textbf{(Compatibility at boundaries)}
The following isomorphisms of admissible K-spaces preserving orientations
($\partial$ in the LHS denotes normalized boundaries, 
and the fiber product 
$\fbp{\ev_i}{\ev_0}$ is abbreviated as $\fbp{i}{0}$): 
\begin{equation}\label{170903_1} 
\partial \mca{M}_{k+1}(\beta: m) \cong \bigsqcup_{\substack{k_1+k_2=k+1 \\ 1 \le i \le k_1 \\ \beta_1 + \beta_2 = \beta}} (-1)^{\ep_0} 
\mca{M}_{k_1+1}(\beta_1: m) \fbp{i}{0}  \mca{M}_{k_2+1}(\beta_2: m), 
\end{equation}
\begin{eqnarray}\label{170903_2} 
&\partial \mca{N}^0_{k+1}(\beta: m) \cong  \\  
&\bigsqcup_{\substack{k_1+k_2=k+1 \\ 1 \le i \le k_1 \\ \beta_1 + \beta_2 = \beta}}
\biggl( (-1)^{\ep_1}  \mca{N}^0_{k_1+1}(\beta_1: m) \fbp{i}{0} \mca{M}_{k_2+1}(\beta_2: m)  \nonumber \\ 
&\qquad\qquad \sqcup  (-1)^{\ep_2} \mca{M}_{k_1+1}(\beta_1: m) \fbp{i}{0} \mca{N}^0_{k_2+1}(\beta_2: m)  \biggr), \nonumber 
\end{eqnarray} 
\begin{eqnarray}\label{170903_3} 
&\partial \mca{N}^{\ge 0} _{k+1}(\beta: m ) \cong   \mca{N}^0_{k+1}(\beta:m) \sqcup \\ 
&\bigsqcup_{\substack{k_1+k_2=k+1 \\ 1 \le i \le k_1 \\ \beta_1 + \beta_2 = \beta}} 
\biggl( (-1)^{\ep_3} \mca{N}^{\ge 0}_{k_1+1}(\beta_1: m) \fbp{i}{0}  \mca{M}_{k_2+1}(\beta_2: m) \nonumber \\ 
&\qquad\qquad \sqcup  (-1)^{\ep_4} \mca{M}_{k_1+1}(\beta_1: m) \fbp{i}{0}  \mca{N}^{\ge 0}_{k_2+1}(\beta_2: m)  \biggr), \nonumber 
\end{eqnarray} 
\begin{eqnarray}\label{170903_4} 
&\partial \mca{M}_{k+1}(\beta: [m,m+1]) \cong (-1)^{\ep_5} \mca{M}_{k+1}(\beta:m)  \sqcup (-1)^{\ep_6} \mca{M}_{k+1}(\beta: m+1) \, \sqcup \\ 
&\bigsqcup_{\substack{k_1+k_2=k+1 \\ 1 \le i \le k_1 \\ \beta_1 + \beta_2 = \beta}} (-1)^{\ep_7} 
\mca{M}_{k_1+1}(\beta_1: [m, m+1]) \fbp{i}{0}  \mca{M}_{k_2+1}(\beta_2: [m, m+1]),  \nonumber 
\end{eqnarray}
\begin{eqnarray}\label{170903_5} 
&\partial \mca{N}^0_{k+1}(\beta:  [m,m+1] ) \cong (-1)^{\ep_8}  \mca{N}^0_{k+1}(\beta: m) \sqcup (-1)^{\ep_9} \mca{N}^0_{k+1}(\beta: m+1) \, \sqcup \\  
&\bigsqcup_{\substack{k_1+k_2=k+1 \\ 1 \le i \le k_1 \\ \beta_1 + \beta_2 = \beta}}
\biggl( (-1)^{\ep_{10}} \mca{N}^0_{k_1+1}(\beta_1: [m, m+1]) \fbp{i}{0} \mca{M}_{k_2+1}(\beta_2: [m,m+1])  \nonumber \\
&\qquad\qquad \sqcup  (-1)^{\ep_{11}} \mca{M}_{k_1+1}(\beta_1: [m, m+1]) \fbp{i}{0} \mca{N}^0_{k_2+1}(\beta_2: [m, m+1])  \biggr), \nonumber 
\end{eqnarray} 
\begin{eqnarray}\label{170903_6} 
&\partial \mca{N}^{\ge 0} _{k+1}(\beta: [m,m+1]) \cong 
(-1)^{\ep_{12}} \mca{N}^{\ge 0}_{k+1}(\beta: m) \sqcup (-1)^{\ep_{13}} \mca{N}^{\ge 0}_{k+1}(\beta: m+1)  \\ 
&\sqcup (-1)^{\ep_{14}} \mca{N}^0_{k+1}(\beta: [m, m+1]) \sqcup  \nonumber \\   
&\bigsqcup_{\substack{k_1+k_2=k+1 \\ 1 \le i \le k_1 \\ \beta_1 + \beta_2 = \beta}}
\biggl( (-1)^{\ep_{15}} \mca{N}^{\ge 0}_{k_1+1}(\beta_1: [m,m+1] ) \fbp{i}{0} \mca{M}_{k_2+1}(\beta_2: [m,m+1])  \nonumber \\
&\qquad\qquad   \sqcup  (-1)^{\ep_{16}} \mca{M}_{k_1+1}(\beta_1: [m, m+1] ) \fbp{i}{0}  \mca{N}^{\ge 0}_{k_2+1}(\beta_2: [m, m+1])  \biggr). \nonumber 
\end{eqnarray} 
The signs $\ep_0, \ldots, \ep_{16}$ are given as follows: 
\begin{align*} 
\ep_0 &= (k_1-i)(k_2-1) + n+ k_1,  \\
\ep_1 &= \ep_0 +k+ k_1, \, \ep_2 = \ep_0 + k + k_2,  \\
\ep_3 &= \ep_1 + 1, \, \ep_4 = \ep_2 +k_1 + 1, \\ 
\ep_5 &= 1,\, \ep_6 = 0, \, \ep_7 = \ep_0 + 1, \\ 
\ep_8 &= 1, \, \ep_9 = 0, \, \ep_{10} = \ep_1 + 1, \, \ep_{11} = \ep_2 + 1, \\ 
\ep_{12}&= 1, \, \ep_{13} = 0, \, \ep_{14}=1, \, \ep_{15} = \ep_3 + 1, \, \ep_{16} = \ep_4 + 1. 
\end{align*} 
\item[(v):]\textbf{(Compatibility at corners I)}
First we introduce the following notations: 
\begin{itemize} 
\item 
For any admissible K-space $X$ and $l \in \Z_{\ge 1}$, 
let $\wh{S}_l X$ denote the normalized codimension $l$ corner of $X$; 
see \cite{FOOO_Kuranishi} Definition 24.17. 
\item 
For every nonnegative integers $d$ and $m$, we denote 
\[ 
\wh{S}_d \{m\} := \begin{cases} \{m \} &(d=0) \\ \emptyset &(d \ge 1), \end{cases} \qquad 
\wh{S}_d [m, m+1] := \begin{cases} [m, m+1]  &(d=0) \\ \{m, m+1\} &(d=1) \\ \emptyset &(d \ge 2). \end{cases}
\]
\end{itemize} 
Then, there are the following isomorphisms of admissible K-spaces
(\ref{170614_1}), (\ref{170614_2}), and (\ref{170614_3}): 
here we do not consider orientations of moduli spaces. 
\begin{align}\label{170614_1} 
&\wh{S}_l \mca{M}_{k+1}(\beta: P) \cong  \\
&\qquad \bigsqcup_{\substack{(T, B) \in \mca{G}(k+1,\beta) \\ \# C_{1, \interior}(T)  + d = l}}   
\bigg( \prod_{e \in C_{1, \interior}(T)} \wh{S}_d P \times L \bigg) \fbp{\Delta}{\ev_\interior}
\bigg( \prod_{v \in C_{0, \interior}(T)} \mca{M}_{k_v+1}(B(v): \wh{S}_d P) \bigg) \nonumber
\end{align} 
where the fiber product in the RHS is taken over $\prod_{e \in C_{1, \interior}(T)} (\wh{S}_d P \times L)^2$. 
Notice that the fiber product makes sense, since 
\[
(\id_P \times \pr_0) \circ \ev^{\mca{M}, P}: \mca{M}_{k+1}(\beta: P) \to P \times L 
\]
is a corner stratified weak submersion, as we assumed in (ii). 
\begin{align}\label{170614_2} 
&\wh{S}_l \mca{N}^0_{k+1}(\beta: P) \cong 
\bigsqcup_{\substack{(T, B) \in \mca{G}(k+1,\beta) \\ \# C_{1, \interior}(T) + d = l \\ v_0 \in C_{0, \interior}(T)}}
\bigg( \prod_{e \in C_{1, \interior}(T)} \wh{S}_d P \times L \bigg) \fbp{\Delta}{\ev_\interior} \\ 
&\qquad \bigg( \prod_{v \in C_{0, \interior}(T)\setminus \{v_0\}} \mca{M}_{k_v+1}(B(v): \wh{S}_d P)  \times \mca{N}_{k_{v_0}+1}^0(B(v_0): \wh{S}_d P) \bigg).  \nonumber
\end{align} 
\begin{align}\label{170614_3}
&\wh{S}_l \mca{N}^{\ge 0}_{k+1}(\beta: P) \cong 
\bigsqcup_{\substack{(T, B) \in \mca{G}(k+1,\beta) \\ \# C_{1, \interior}(T) + d  =  l \\ v_0 \in C_{0, \interior}(T)}}
\bigg( \prod_{e \in C_{1, \interior}(T)} \wh{S}_d P \times L \bigg) \fbp{\Delta}{\ev_\interior} \\ 
&\qquad \bigg( \prod_{v \in C_{0, \interior}(T)\setminus \{v_0\}} \mca{M}_{k_v+1}(B(v): \wh{S}_d P)  \times \mca{N}_{k_{v_0}+1}^{\ge 0}(B(v_0): \wh{S}_d P) \bigg)   \nonumber  \\
&\qquad \sqcup \, \bigsqcup_{\substack{(T, B) \in \mca{G}(k+1,\beta) \\ \# C_{1, \interior}(T)+d = l-1  \\ v_0 \in C_{0, \interior}(T)}}
\bigg( \prod_{e \in C_{1, \interior}(T)} \wh{S}_d P \times L \bigg) \fbp{\Delta}{\ev_\interior} \nonumber \\ 
&\qquad \bigg( \prod_{v \in C_{0, \interior}(T)\setminus \{v_0\}} \mca{M}_{k_v+1}(B(v): \wh{S}_d P)  \times \mca{N}_{k_{v_0}+1}^0(B(v_0): \wh{S}_d P) \bigg).  \nonumber
\end{align} 
\item[(vi):]\textbf{(Compatibility at corners II)}
Let $X$ be either $\mca{M}_{k+1}(\beta: P)$, $\mca{N}^0_{k+1}(\beta: P)$ or $\mca{N}^{\ge 0}_{k+1}(\beta: P)$. 
Then, for every $l, l' \in \Z_{\ge 1}$, 
the canonical covering map 
$\pi_{l', l}: \wh{S}_{l'}(\wh{S}_l X) \to \wh{S}_{l+l'} X$
(see Proposition 24.16 in \cite{FOOO_Kuranishi})
coincides with the map defined from the fiber product presentation in (v). 
\end{enumerate} 
\end{thm}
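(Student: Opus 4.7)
The plan is to mimic the inductive construction of an ``inductive system of $A_\infty$ correspondences'' (Definition 21.17 of \cite{FOOO_Kuranishi}), enlarged so that it simultaneously handles the moduli spaces of the three flavors $\mca{M}$, $\mca{N}^0$, $\mca{N}^{\ge 0}$, and the two choices $P=\{m\}$ and $P=[m,m+1]$ of the parameter, while producing \textit{admissible} K-structures in the sense of \cite{FOOO_Kuranishi} Section 25. The induction will be performed on the triple $(m,\omega_n(\beta),k)$ with a suitable lexicographic order, further refined so that any space appearing as a boundary or corner stratum of a given space has been constructed at a strictly earlier step. Lemma \ref{170625_1} (i), (iii), (iv) guarantees that at each energy level only finitely many moduli spaces are nonempty, so the induction is well-founded; (ii) will force the base case at $\beta=0$ to be literally the product space appearing in item (iii) of the theorem, which we fix once and for all with its evident admissible K-structure coming from the Stasheff cell structure on $D^{k-2}$ (coordinate $1/\log T$ near the corners, as prescribed).

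The inductive step is carried out in three substeps. First, assuming that all K-structures on smaller moduli spaces have already been constructed together with evaluation maps satisfying (ii), we use (v) to define a K-structure on the normalized corner $\wh{S}_l X$ of the space $X$ to be constructed: namely, the fiber product on the right-hand side of (\ref{170614_1})--(\ref{170614_3}) makes sense because the zero-th evaluation map was assumed to be a corner stratified weak submersion, and the fiber products of admissible K-spaces along admissible corner stratified submersions are again admissible (\cite{FOOO_Kuranishi} Chapter 8, extended to the admissible setting in Section 25). At this stage one checks the compatibility (vi), which amounts to the statement that collapsing two edges of a tree in either order yields the same reduced tree (in the sense of Definition \ref{171014_2}); this follows from the associativity of fiber products and the functoriality of the FOOO corner covering map $\pi_{l',l}$. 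The signs $\ep_0,\ldots,\ep_{16}$ are imposed here and verified using the fiber product sign conventions fixed in Section 4.2 together with the orientation conventions of \cite{FOOO_09} Chapter 8; the explicit signs mirror those in \cite{Fukaya_06, FOOO_Kuranishi} for $\mca{M}$, and the extra $+1$'s in $\ep_3,\ep_4$ and in $\ep_{14}$ record the insertion of the displacing Hamiltonian (one extra $\R$-factor of direction) or of the $r\in\R_{\ge 0}$ parameter respectively.

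Second, we extend this corner K-structure to an admissible K-structure on all of the topological space $P\times\mca{M}_{k+1}(\beta)$ (and analogously for $\mca{N}^0$, $\mca{N}^{\ge 0}$). For $P=\{m\}$ this is the standard Kuranishi extension from the corner to the interior, carried out as in \cite{FOOO_Kuranishi} Theorem 21.35; the only new point is the requirement of admissibility (exponential decay of the coordinate change maps near the corner strata), which is handled by the collar construction of \cite{FOOO_Kuranishi} Section 25, using that all fiber product pieces coming from the inductive hypothesis already carry admissible structures and that the gluing parameter of pseudo-holomorphic disks transforms correctly under $T\mapsto 1/\log T$. For $P=[m,m+1]$ we proceed by a further induction on codimension of the corner, interpolating the K-structures at the two endpoints; the new ``vertical'' boundary pieces in (\ref{170903_4})--(\ref{170903_6}) are then forced by construction. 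During this extension we simultaneously build the corner stratified strongly smooth evaluation map $\ev^{\bullet,P}$ so that $(\id_P\times\pr_0)\circ\ev^{\bullet,P}$ is a corner stratified weak submersion, which is possible because the relevant map is submersive at each previously constructed corner stratum by the inductive hypothesis and weak submersivity is an open property that can be propagated to the interior.

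Third, for $\mca{N}^{\ge 0}$ the extra boundary summand $\mca{N}^0_{k+1}(\beta:m)$ in (\ref{170903_3}) (and its analogue in (\ref{170903_6})) comes from the locus $r=0$ of the parameter family, while Lemma \ref{170625_1} (v) ensures that the other end of the parameter family $r\to\infty$ does not contribute to the compactified boundary. This is the delicate point, since it is what allows the three types of moduli spaces to be combined in a single coherent inductive system. The main obstacle I foresee is not any single step but the bookkeeping of signs and compatibilities across the three families and the interval parameter $P$ simultaneously; in particular, verifying all of (\ref{170903_1})--(\ref{170903_6}) with the precise signs $\ep_0,\ldots,\ep_{16}$ requires orienting every fiber product in the same convention used for the de Rham chain operations in Section 4.3, so that Stokes' formula in Section 7.1 will later combine with (iv) to produce the Maurer--Cartan-type identities of Theorem \ref{161215_1}. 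Once the inductive construction is completed, items (i)--(vi) hold by construction, and the theorem is proved.
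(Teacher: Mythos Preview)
Your plan is essentially correct and matches the paper's approach: the paper likewise does not spell out a full proof but defers to the construction in \cite{FOOO_09} Section 7.1 and \cite{FOOO_Kuranishi} Section 25, while explaining (a) an explicit description of K-charts, (b) the total order on moduli spaces needed to run the induction, and (c) the computation of the signs $\ep_0,\ldots,\ep_{16}$. Your inductive scheme and your appeal to the FOOO machinery for admissibility and for extending corner K-structures to the interior are exactly what the paper has in mind.

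Two small points worth sharpening. First, your lexicographic order on $(m,\omega_n(\beta),k)$, even ``further refined,'' should be made explicit: the paper's order first separates $P=\{m\}$ from $P=[m',m'+1]$ (all point parameters before all intervals), then within a fixed $P$ puts $\mca{M}<\mca{N}^0<\mca{N}^{\ge 0}$, and only then orders by $\omega_n(\beta)+\ep(k-1)$; this is what guarantees finitely many predecessors and that every boundary stratum is strictly earlier. Second, your informal attribution of the extra $+1$'s is slightly off: $\ep_3=\ep_1+1$ and $\ep_4=\ep_2+k_1+1$ come from writing $\mca{N}^{\ge 0}$ locally as $[0,1)\times\mca{N}^0$ (with $[0,1)$ oriented negatively), so the $+1$ in $\ep_3$ is the boundary sign of that interval and the $k_1$ in $\ep_4$ is the Koszul sign from commuting $[0,1)$ past $\mca{M}_{k_1+1}$; and $\ep_{14}=1$ comes not from the $r$-parameter but from the $[m,m+1]$ factor via $\partial([m,m+1]\times X)\supset(-1)\cdot[m,m+1]\times\partial X$.
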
 

Construction of these K-structures are mostly the same as the 
construction of K-structure on $\mca{M}_{k+1}(\beta)$, 
which is explained in \cite{FOOO_09} Section 7.1. 
Evaluation maps in (ii) 
and isomorphisms in (iv), (v) naturally follow from the construction of these K-structures. 
Admissibility of this K-structure is a consequence of the exponential decay estimate in \cite{FOOO_Gluing_2016}; see Section 25.5 in \cite{FOOO_Kuranishi}. 
We do not spell out a detailed proof of Theorem \ref{170611_5}. 
Nevertheless, in the rest of this subsubsection, we explain: 
\begin{itemize} 
\item Explicit description of Kuranishi charts of $\mca{M}_{k+1}(\beta: P)$, which we use in Section 7.4. 
\item Total order on the set of moduli spaces, which we need to carry out inductive argument. 
\item Orientations: how signs $\ep_0, \ldots, \ep_{16}$ are computed. 
\end{itemize} 

\textbf{Explicit description of Kuranishi charts of $\mca{M}_{k+1}(\beta: P)$.}

Let $k \in \Z_{\ge 0}$ and $\beta \in H_2(\C^n, L)$. 
Let $\mca{MM}_{k+1}(\beta)$ denote the set consists of tuples $(u, z_0, z_1, \ldots, z_k)$ such that 
$u: (D, \partial D) \to (\C^n, L)$ is a $C^\infty$-map such that $\bar{\partial} u = 0$ on a \textit{neighborhood of $\partial D$}
and $[u] = \beta$, and $z_0, z_1, \ldots, z_k$ are distinct points on $\partial D$ aligned in the anti-clockwise order. 
\begin{rem} 
To define K-structure on $\mca{M}_{k+1}(\beta)$ we consider an obstruction bundle $E$ and a perturbed Cauchy-Riemann equation $\bar{\partial}u \in E$. 
One can take $E$ so that every section of $E$ is supported on a compact set of $\text{int} D$, 
thus these perturbed holomorphic maps (with boundary marked points) are elements in $\mca{MM}_{k+1}(\beta)$. 
\end{rem} 
Now we can state the explicit description of Kuranishi chart in Lemma \ref{171014_1} below. 

\begin{lem}\label{171014_1} 
Let $p \in \mca{M}_{k+1}(\beta: P)$ and $\mca{U}_p = (U_p, \mca{E}_p, s_p, \psi_p)$ be a K-chart at $p$. 
Let $(T, B)$ be an element in $\mca{G}(k+1: \beta)$ such that 
\[ 
p \in  P \times \bigg(\prod_{e \in C_{1,\interior}(T)} L \bigg)
 \fbp{\Delta}{\ev_{\interior}}
\bigg(\prod_{v \in C_{0, \interior}(T)} \mtrg{\mca{M}}_{k_v+1}(B(v)) \bigg). 
\] 
Then, $U_p$ can be (set theoretically) embedded into 
\[ 
\bigsqcup_{(T', B')}
P \times \bigg(\prod_{e \in C_{1,\interior}(T')} L \bigg)
 \fbp{\Delta}{\ev_{\interior}}
\bigg(\prod_{v \in C_{0, \interior}(T')} \mca{MM}_{k_v+1}(B'(v)) \bigg) 
\] 
where $(T', B')$ runs over all reductions (see Definition \ref{171014_2})  of $(T, B)$. 
\end{lem}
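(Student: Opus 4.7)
The plan is to exploit the explicit description of the Kuranishi chart $U_p$ that emerges from the Fukaya-Oh-Ohta-Ono gluing construction for $\mca{M}_{k+1}(\beta: P)$. Recall that $p$ corresponds to a stable map whose combinatorial type is exactly $(T, B)$: a collection of $J$-holomorphic disks $(u_v)_{v \in C_{0, \interior}(T)}$ with $[u_v] = B(v)$, boundary marked points, and matching conditions at the nodes indexed by $C_{1, \interior}(T)$. The chart $\mca{U}_p = (U_p, \mca{E}_p, s_p, \psi_p)$ is built by (i) choosing, for each vertex $v$, a finite-dimensional obstruction space $E_v$ whose sections are compactly supported in $\interior D$; (ii) considering the local moduli of nearby perturbed disks $u_v'$ with $\bar{\partial} u_v' \in E_v$; and (iii) glueing these disks along the interior edges of $T$ using standard gluing parameters.

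The key step is to read off, from each point of $U_p$, which interior edges of $T$ are actually glued. Labelling each edge $e \in C_{1, \interior}(T)$ by a gluing parameter $\rho_e$ ranging over a neighborhood of $0$, a point of $U_p$ has the form $(t', (u_v')_v, (\rho_e)_e, \text{marked points})$ with $t' \in P$ close to the $P$-component of $p$. Let $S \subseteq C_{1, \interior}(T)$ be the set of edges with $\rho_e \ne 0$. Collapsing the edges in $S$ in the sense of Definition \ref{171014_2} yields a reduction $(T', B')$ of $(T, B)$ whose interior vertices $v'$ correspond bijectively to the maximal $S$-subtrees of $T$. For each such $v'$, the pieces $u_v'$ with $v$ in the corresponding subtree glue together via pregluing-plus-Newton-iteration to a map $u_{v'}': (D, \partial D) \to (\C^n, L)$ representing $B'(v') = \sum_v B(v)$. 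Because every $E_v$ is supported away from $\partial D$ and the gluing occurs at interior nodes, the glued map $u_{v'}'$ is genuinely $J$-holomorphic on a neighborhood of $\partial D$, and hence lies in $\mca{MM}_{k_{v'}+1}(B'(v'))$. The interior edges of $T'$ are precisely the edges of $T$ not in $S$, and the matching conditions at these remaining nodes produce the fiber product over $\Delta$ in the target.

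Assembling these pieces yields a set-theoretic map from $U_p$ into the disjoint union over reductions $(T', B')$ of $(T, B)$. Injectivity is clear because the combinatorial type $(T', B')$ is recovered from the image as the nodal type of the underlying configuration (unglued edges correspond to honest nodes), and once $(T', B')$ is fixed, the tuple $(t', (u_{v'}')_{v'})$ modulo the fiber product identifications determines the configuration uniquely, by the gluing theorem of \cite{FOOO_09} and the exponential decay analysis of \cite{FOOO_Gluing_2016}. I expect the main subtlety to be bookkeeping when several edges of $T$ are collapsed simultaneously, since a priori one could worry that the outcome depends on the order of collapses or on the particular gluing scheme used; however, the reduction $(T', B')$ is determined purely by the set $S$, and the glued map at each $v'$ is canonical up to the ambiguities already absorbed into the fiber product, so this is a notational rather than substantive issue. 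Since only a set-theoretic embedding is demanded, no further verification of smoothness, admissibility, or compatibility with evaluation maps is needed at this stage.
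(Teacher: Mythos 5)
Your proposal is correct and takes essentially the approach the paper has in mind. The paper itself gives no proof of Lemma 7.14 — it is stated as part of an ``explicit description'' of the Kuranishi charts, with the remark that the construction follows \cite{FOOO_09} Section 7.1 and that obstruction spaces can be chosen with support in the interior of the disk; your argument (read off the set $S$ of non-degenerate gluing parameters, collapse $S$ to obtain the reduction $(T',B')$, note that the glued maps are honestly holomorphic near $\partial D$ because the obstruction is supported away from the boundary, and invoke injectivity of the gluing map from the Newton iteration/exponential decay analysis) is precisely the fleshing-out that the paper leaves implicit.
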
 

\textbf{Total order on the set of moduli spaces.}
To achieve compatibility conditions (iv), (v) and (vi) in Theorem \ref{170611_5}, 
we need a total order on the set of moduli spaces, 
so that the following property is satisfied: 
if a moduli space $X$ is a part of the normalized boundary $\partial Y$ of 
another moduli space $Y$, then $X<Y$. 
To achieve this property we take a total order 
which is described as follows: 
\begin{itemize} 
\item If the $P$-part of a moduli space $X$ is $\{m\}$ and the $P$-part of a moduli space $Y$ is $[m', m'+1]$, then $X<Y$ 
($m$ and $m'$ are arbitrary elements in $\Z_{\ge 0}$). 
\item If the $P$-part of $X$ is $\{m\}$ and the $P$-part of $Y$ is $\{m+1\}$, then $X<Y$. 
\item If the $P$-part of $X$ is $[m, m+1]$ and the $P$-part of $Y$ is $[m+1, m+2]$, then $X<Y$. 
\item For each $m \in \Z_{\ge 0}$, we take a total order on moduli spaces with $P$-parts equal to $\{m\}$, in the following manner: 
\begin{itemize} 
\item For any $k, k', k'' \in \Z_{\ge 0}$ and $\beta, \beta', \beta'' \in H_2(\C^n, L)$, 
\[ 
\mca{M}_{k+1}(\beta: m) < \mca{N}^0_{k'+1}(\beta': m) < \mca{N}^{\ge 0}_{k''+1}(\beta'': m). 
\]
\item If $k, k' \in \Z_{\ge 0}$ and $\beta, \beta' \in H_2(\C^n, L)$ satisfy 
$\omega_n(\beta) + \ep(k-1) < \omega_n(\beta') + \ep(k'-1)$, then 
$\mca{M}_{k+1}(\beta: m) < \mca{M}_{k'+1}(\beta': m)$, 
$\mca{N}^0_{k+1}(\beta: m) < \mca{N}^0_{k'+1}(\beta': m)$, and 
$\mca{N}^{\ge 0}_{k+1}(\beta: m) < \mca{N}^{\ge 0}_{k'+1}(\beta': m)$. 
\item For each $c \in \R$, 
we choose arbitrary total orders on sets 
\begin{align*} 
&\{ \mca{M}_{k+1}(\beta: m) \mid \omega_n(\beta) + \ep(k-1) = c\},  \\ 
&\{ \mca{N}^0_{k+1}(\beta: m) \mid \omega_n(\beta) + \ep(k-1) = c\}, \\ 
&\{ \mca{N}^{\ge 0}_{k+1}(\beta: m) \mid \omega_n(\beta) + \ep(k-1) = c\}. \\ 
\end{align*} 
\end{itemize} 
\item For each $m \in \Z_{\ge 0}$, we take a total order on moduli spaces with $P$-parts equal to $[m, m+1] $, 
in a manner similar to above. 
\end{itemize} 

\begin{rem}\label{171206_2} 
For each moduli space $X$, 
there are only finitely many moduli spaces which are smaller than $X$. 
Therefore, we can assign $\tau(X) \in (1/2, 1)$ for each moduli space $X$, 
so that $Y<X \implies \tau(Y) > \tau(X)$. 
We use these numbers in Section 7.6. 
\end{rem} 

\textbf{Orientations.}
We explain how signs $\ep_0, \ldots, \ep_{16}$ are computed. 

$\ep_0$: First we orient $\mca{M}_{k+1}(\beta)$ following Section 8.3 in \cite{FOOO_09}. 
It is sufficient to orient its interior $\mtrg{\mca{M}}_{k+1}(\beta) = \{(u, z_0, \ldots, z_k)\}/\Aut(D)$. 
We consider the set $\{(u, z_0, \ldots, z_k)\}$ as an open subset of 
$\mtrg{\mca{M}}(\beta) \times (\partial D)^{k+1}$, where 
\[
\mtrg{\mca{M}}(\beta):= \{ u: (D, \partial D) \to (\C^n, L) \mid \bar{\partial} u=0, \, [u]=\beta\}
\]
is canonically oriented by Theorem 8.1.1 in \cite{FOOO_09} and 
$\partial D = \{ e^{i \theta}  \mid \theta \in \R/2\pi \Z\}$ is oriented so that $\partial_\theta$ is of positive direction
(anti-clockwise orientation). 
On the other hand, 
following Convention 8.3.1 in \cite{FOOO_09}, 
$\Aut (D)$ is oriented so that the diffeomorphism 
\[ 
\Aut (D) \to (\partial D)^3; \, \rho \mapsto (\rho^{-1}(1), \rho^{-1}(e^{2\pi i/3}) , \rho^{-1}(e^{4 \pi i/3}))
\] 
is orientation preserving. 
Finally, the quotient $\mtrg{\mca{M}}_{k+1}(\beta)$ is oriented so that a natural isomorphism (determined uniquely up to homotopy) 
\[ 
T(\mca{M}_{k+1}(\beta)) \oplus T(\Aut(D)) \cong T(\mca{M}(\beta)) \oplus  T(\partial D)^{\oplus  k+1}
\]
preserves orientations (see page 692 in \cite{FOOO_09}). 

Now let us compute $\ep_0$. 
When $i=1$, Proposition 8.3.3 in \cite{FOOO_09} shows $\ep_0 = (k_1-1)(k_2-1)+n+k_1$. 
For arbitrary $i \in \{1, \ldots, k_1\}$, 
we can compute $\ep_0$ as 
\[
\ep_0 = (k_1-1)(k_2-1) + n+ k_1 + (i-1) + (i-1) k_2 = (k_1-i)(k_2-1) + n+ k_1
\] 
where the term $(i-1)$ comes from exchanging input boundary marked points $1, \ldots, i-1$ and $i$, 
and the term $(i-1)k_2$ comes from exchanging output boundary marked points $1, \ldots, i-1$ and $i, \ldots, i+k_2-1$. 

$\ep_1$ and $\ep_2$: 
We orient $\mtrg{\mca{N}}^0_{k+1}(\beta) = \{ (u, z_0=1, z_1, \ldots, z_k)\}$ 
as an open subset of $\mtrg{\mca{M}}(\beta) \times (\partial D)^k$. 
Then $\ep_1 = \ep_0 + k+k_1$ and $\ep_2 = \ep_0 +k+ k_2$ follow from the next lemma. 
Following \cite{FOOO_09} page 694, 
we say that an isomorphism of K-spaces is $(-1)$-oriented (resp. $(+1)$-oriented) 
if the isomorphism reverses (resp. preserves) orientations. 

\begin{lem} 
Let us orient $\Aut(D, 1)$ so that the embedding 
\[ 
\Aut(D, 1) \to (\partial D)^2 ; \, \rho \mapsto (\rho^{-1}(e^{2\pi i/3}), \rho^{-1}(e^{4\pi i/3})) 
\]
preserves orientations. 
Then, the isomorphism of K-spaces 
\[
\mtrg{\mca{M}}_{k+1}(\beta) \cong \mtrg{\mca{N}}^0_{k+1}(\beta)/\Aut (D,1)
\]
is $(-1)^k$-oriented. 
\end{lem}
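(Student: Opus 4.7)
The plan is to introduce the ambient open set
$\mtrg{\mca{R}} := \{(u, z_0, \ldots, z_k)\} \subset \mtrg{\mca{M}}(\beta) \times (\partial D)^{k+1}$
of holomorphic disks with $k+1$ ordered, distinct, anti-clockwise boundary marked points (before quotienting by $\Aut(D)$), equipped with the product orientation. Both $\mtrg{\mca{M}}_{k+1}(\beta) = \mtrg{\mca{R}}/\Aut(D)$ and $\mtrg{\mca{N}}^0_{k+1}(\beta) = \{z_0 = 1\} \subset \mtrg{\mca{R}}$ are then expressible in terms of $\mtrg{\mca{R}}$, and the claim reduces to a careful comparison of three orientation prescriptions living on the common underlying set.

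First, I will compare the intrinsic orientation of $\mtrg{\mca{N}}^0_{k+1}(\beta)$ (as an open subset of $\mtrg{\mca{M}}(\beta) \times (\partial D)^k$, ordered $\mca{M} \wedge z_1 \wedge \cdots \wedge z_k$) with the slice orientation inherited from $\mtrg{\mca{R}}$ (ordered $\mca{M} \wedge z_0 \wedge z_1 \wedge \cdots \wedge z_k$). Transposing the one-dimensional $z_0$-factor past $z_1, \ldots, z_k$ yields the key sign $(-1)^k$, namely
\[ T\mtrg{\mca{R}}\bigr|_{\{z_0=1\}} \;=\; (-1)^k \bigl(T\mtrg{\mca{N}}^0_{k+1}(\beta) \oplus T_1(\partial D)\bigr). \]
Next, I will observe directly from the definitions that the splitting $T\Aut(D) = T_1(\partial D) \oplus T\Aut(D,1)$ is orientation-preserving: the first of the three coordinates $\rho^{-1}(1), \rho^{-1}(e^{2\pi i/3}), \rho^{-1}(e^{4\pi i/3})$ used to orient $\Aut(D)$ corresponds precisely to the anti-clockwise generator of $T_1(\partial D)$, and the remaining two define the orientation of $\Aut(D,1)$.

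Combining these with the two defining relations
\[ T\mtrg{\mca{M}}_{k+1}(\beta) \oplus T\Aut(D) \;=\; T\mtrg{\mca{R}}, \qquad T(\mtrg{\mca{N}}^0_{k+1}(\beta)/\Aut(D,1)) \oplus T\Aut(D,1) \;=\; T\mtrg{\mca{N}}^0_{k+1}(\beta), \]
and then transposing $T_1(\partial D)$ past $T\Aut(D,1)$ (which produces no sign, since $\dim T_1(\partial D)\cdot\dim T\Aut(D,1) = 1\cdot 2$ is even), one obtains after cancelling $T_1(\partial D)$ and then $T\Aut(D,1)$ from both sides
\[ T\mtrg{\mca{M}}_{k+1}(\beta) \;=\; (-1)^k \, T(\mtrg{\mca{N}}^0_{k+1}(\beta)/\Aut(D,1)), \]
which is the asserted claim. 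I do not anticipate a genuine obstacle: the entire argument is a direct sign computation, and the crucial numerical coincidence that makes $(-1)^k$ appear cleanly, without further corrections, is precisely the fact that $\dim \Aut(D,1) = 2$ is even, which allows the one-dimensional factor $T_1(\partial D)$ to commute past it silently in the final step.
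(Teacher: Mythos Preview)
Your proof is correct and follows essentially the same approach as the paper's. The paper's argument is terser but identical in substance: it notes that $\Aut(D) \cong \partial D \times \Aut(D,1)$ preserves orientations (your Step 2), observes that the $\partial D$ factor acts on the $0$-th marked point $z_0$, and concludes that the sign $(-1)^k$ arises from exchanging $z_0$ past $z_1,\ldots,z_k$ (your Step 1); your writeup simply makes the cancellation of the $T_1(\partial D)$ and $T\Aut(D,1)$ factors explicit.
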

\begin{proof}
Recall that $\mtrg{\mca{M}}_{k+1}(\beta) =\{(u, z_0, \ldots, z_k)\}/\Aut (D)$. 
The natural diffeomorphism $\Aut(D) \cong \partial D \times \Aut(D, 1)$ preserves orientations, 
and the first factor $\partial D$ acts on the $0$-th marked point $z_0$, 
thus the factor $(-1)^k$ comes from exchanging $z_0$ and $z_1, \ldots, z_k$. 
\end{proof} 

$\ep_3$ and $\ep_4$: 
$\mca{N}^{\ge 0}_{k+1}(\beta)$ can be identified (on a neighborhood of $\mca{N}^0_{k+1}(\beta)$) with 
$[0, 1) \times \mca{N}^0_{k+1}(\beta)$.
Following Convention 8.2.1 in \cite{FOOO_09}, 
we orient $[0,1)$ so that $\partial/\partial t$ is of \textit{negative} direction, 
where $t$ denotes the canonical coordinate on $\R$. 
Then 
\[ 
\partial ([0, 1) \times \mca{N}^0) = \{0\} \times \mca{N}^0 \sqcup (-1) \cdot  [0,1) \times \partial \mca{N}^0
\] 
and $\ep_3 = \ep_1 + 1$ immediately follows. 
$\ep_4 = \ep_2 + k_1 +1$ follows from 
\[ 
[0,1) \times (\mca{M}_{k_1+1}(\beta_1) \fbp{i}{0} \mca{N}^0_{k_2+1}(\beta_2))
= (-1)^\delta \mca{M}_{k_1+1}(\beta_1) \fbp{i}{0} ( [0,1) \times \mca{N}^0_{k_2+1}(\beta_2) )
\]
where $\delta = \dim \mca{M}_{k_1+1}(\beta_1) - n \equiv k_1 \,(\text{mod}\, 2)$. 

$\ep_5, \ldots, \ep_{16}$: 
$\mca{M}_{k+1}(\beta:[m, m+1])$ and $[m, m+1] \times \mca{M}_{k+1}(\beta)$ 
are different K-spaces with a common underlying topological space. 
One can take a common open substructure
(see Definition 3.20 in \cite{FOOO_Kuranishi}) 
of these two K-spaces. 
Thus, to fix orientations, one can identify these two K-spaces. 
The same remark applies to $\mca{N}^0_{k+1}(\beta: [m, m+1])$ and $\mca{N}^{\ge 0}_{k+1}(\beta: [m, m+1])$. 
Now let us orient $[m, m+1]$ so that $\partial/\partial t$ is of \textit{positive} direction, 
where $t$ denotes the canonical coordinate on $\R$. 
Then 
\[
\partial ([m, m+1]) = (+1) \cdot \{ m+1\}  \sqcup (-1) \cdot \{m\}. 
\] 
Then $\ep_5, \ldots, \ep_{16}$ can be determined from 
$\ep_0, \ldots, \ep_4$ using the following formulas ($X$ and $Y$ denote arbitrary K-spaces): 
\begin{align*} 
 \partial ([m, m+1] \times X)&= \{m+1\} \times X \sqcup  (-1) \cdot \{m\} \times X \sqcup (-1) \cdot  [m, m+1] \times \partial X, \\ 
[m, m+1] \times ( X \times_L Y) &= ([m, m+1] \times X) \times_{[m, m+1] \times L} ([m, m+1] \times Y). 
\end{align*} 

\subsection{Spaces of continuous paths and loops} 

Let $\Pi^\con$ denote the set of continuous Moore paths on $L$, namely: 
\[
\Pi^{\con}:= \{ (T, \gamma) \mid T \in \R_{\ge 0}, \, \gamma \in C^0([0,T], L) \}.
\]
For each $q \in L$, let $c_q$ denote the constant path at $q$ with length $0$. 
Namely, $c_q = (0, \gamma_q)$
where $\gamma_q$ denotes the unique map from $\{0\}$ to $q$. 

To define a metric $d_\Pi$ on $\Pi^\con$, 
we fix (throughout this paper) an auxiliary Riemannian metric on $L$, 
and $d_L$ denotes the associated metric on $L$. 
For later use we fix a constant $\rho_L \in \R_{>0}$ 
which is smaller than the injectivity radius of $L$
such that for any $r \in (0, \rho_L]$ and $x \in L$ 
the ball with center $x$ and radius $r$ is geodesically convex. 

Now let us define a metric $d_\Pi$ on $\Pi^\con$ by 
\[ 
d_\Pi((T, \gamma), (T', \gamma'))) := \max \big\{ |T-T'|, \,  \max_{0 \le s \le 1} d_L(\gamma(sT), \gamma'(sT')) \big\}. 
\]
We define $\ev_0, \ev_1: \Pi^\con \to L$ by 
\[
\ev_0 (T, \gamma):= \gamma(0), \qquad \ev_1(T, \gamma):= \gamma(T). 
\] 
Then we can define a concatenation map
\[ 
\Pi^\con \fbp{\ev_1}{\ev_0} \Pi^\con \to \Pi^\con; \qquad (\Gamma_0, \Gamma_1) \mapsto \Gamma_0*\Gamma_1
\] 
by $(T, \gamma) * (T', \gamma'):= (T+T', \gamma*\gamma')$, 
where 
$\gamma*\gamma': [0,T+T'] \to L$ is defined by 
\[
(\gamma*\gamma')(t):= \begin{cases} \gamma(t) &(0 \le t \le T), \\ \gamma'(t-T) &(T \le t \le T+T').  \end{cases}
\]

\begin{lem}\label{170611_4} 
The concatenation map defined as above is continuous with respect to $d_\Pi$. 
\end{lem}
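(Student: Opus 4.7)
The plan is to prove continuity at a point $(\bar\Gamma_0, \bar\Gamma_1)$ of the fiber product by a direct $\varepsilon$-$\delta$ argument. Writing $T := T_0 + T_1$ and $\bar T := \bar T_0 + \bar T_1$, the bound $|T - \bar T| \le |T_0 - \bar T_0| + |T_1 - \bar T_1|$ immediately controls the length component of $d_\Pi$. If $\bar T = 0$, both limit paths are constants at a common point $q \in L$, and one checks directly that $\gamma_0 * \gamma_1$ lies uniformly within $\max_i d_\Pi(\Gamma_i, \bar\Gamma_i)$ of $c_q$, so I may assume $\bar T > 0$; the remaining task is to estimate
\[
\sup_{s \in [0,1]} d_L\bigl( (\gamma_0 * \gamma_1)(sT),\, (\bar\gamma_0 * \bar\gamma_1)(s\bar T) \bigr).
\]

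The main device is a piecewise linear reparametrization $\phi : [0,1] \to [0,1]$ determined by $\phi(0) = 0$, $\phi(\alpha) = \bar\alpha$ and $\phi(1) = 1$, where $\alpha := T_0/T$ and $\bar\alpha := \bar T_0/\bar T$. By direct substitution, for $s \in [0, \alpha]$ one has $(\gamma_0 * \gamma_1)(sT) = \gamma_0(\sigma T_0)$ and $(\bar\gamma_0 * \bar\gamma_1)(\phi(s)\bar T) = \bar\gamma_0(\sigma \bar T_0)$ with $\sigma := s/\alpha$, and symmetrically $\gamma_1(\tau T_1)$ versus $\bar\gamma_1(\tau \bar T_1)$ with $\tau := (s-\alpha)/(1-\alpha)$ on $[\alpha, 1]$. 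The definition of $d_\Pi$ applied on each factor therefore gives
\[
\sup_{s} d_L\bigl( (\gamma_0 * \gamma_1)(sT),\, (\bar\gamma_0 * \bar\gamma_1)(\phi(s)\bar T) \bigr) \le \max_i d_\Pi(\Gamma_i, \bar\Gamma_i) =: \delta.
\]
A short computation gives $T\bar T(\bar\alpha - \alpha) = \bar T_0 T_1 - T_0 \bar T_1$, whence $\sup_s |\phi(s) - s| \le |\bar\alpha - \alpha| \le \delta/T$; for $\delta$ small enough to force $T \ge \bar T/2$ this is $O(\delta/\bar T)$. Uniform continuity of $\bar\gamma_0 * \bar\gamma_1$ on the compact interval $[0, \bar T]$ then converts this into smallness of $d_L((\bar\gamma_0 * \bar\gamma_1)(\phi(s)\bar T), (\bar\gamma_0 * \bar\gamma_1)(s\bar T))$, and a triangle inequality finishes the argument.

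The main obstacle is the edge cases $\bar\alpha \in \{0, 1\}$ (i.e.\ $\bar T_0 = 0$ or $\bar T_1 = 0$), where one piece of $\phi$ degenerates. For example when $\bar T_0 = 0$ and $T_0 > 0$, $\phi$ is identically $0$ on $[0, \alpha]$, and the comparison on this piece becomes $d_L(\gamma_0(sT), \bar\gamma_0(0))$; since $\bar\Gamma_0$ is the constant path at $\bar\gamma_0(0)$, the definition of $d_\Pi$ bounds this directly by $\delta$. The matching conditions $\ev_1(\Gamma_0) = \ev_0(\Gamma_1)$ and $\ev_1(\bar\Gamma_0) = \ev_0(\bar\Gamma_1)$ are what keep the two pieces of $\phi$ compatibly glued in each configuration. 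This case-by-case bookkeeping for the boundary configurations of $(\alpha, \bar\alpha)$ is where most of the care is needed; once it is settled, the main estimate above applies uniformly and yields the desired $\varepsilon$-$\delta$ bound.
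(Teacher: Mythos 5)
Your argument is correct and rests on the same two ingredients as the paper's proof: the estimate $d_L(\gamma(\sigma T), \gamma'(\sigma T')) \le d_\Pi((T,\gamma),(T',\gamma'))$ for $\sigma \in [0,1]$, applied factor by factor after a proportional reparametrization, and uniform continuity of the limit concatenation $\bar\gamma_0 * \bar\gamma_1$. The packaging is genuinely different: the paper argues by contradiction, extracting a subsequence $s_j$ on which $s_j \le T_j/(T_j+T'_j)$ always holds, which confines the comparison to a single factor (via an auxiliary $\tau_j = s_j(T_j+T'_j)/T_j \in [0,1]$) and so never has to build the global reparametrization $\phi$ or confront its boundary configurations; the three degenerate cases are handled at the very end when checking $\tau_j T - s_j(T+T') \to 0$. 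Your direct $\varepsilon$-$\delta$ argument instead produces an explicit modulus via $\sup_s|\phi(s)-s| \le |\bar\alpha - \alpha| \le \delta/T$, which is a quantitative gain over the paper's version, at the cost of the case bookkeeping you flag. One point worth spelling out before that bookkeeping: the piecewise-linear $\phi$ is consistently prescribed only when the configuration $\alpha = 1$, $\bar\alpha < 1$ (and its mirror $\alpha = 0$, $\bar\alpha > 0$) is excluded, since then $\phi(\alpha)$ would be assigned two different values. This is automatic for $\delta$ small because $\alpha = 1$ forces $T_1 = 0$, hence $\bar T_1 = |T_1 - \bar T_1| \le \delta$, so $\bar\alpha$ must also be close to $1$; recording this reduces the remaining edge cases to $\bar\alpha \in \{0,1\}$, which you handle correctly via the constancy of the corresponding limit factor.
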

\begin{proof}
We have to show that, if sequences 
$(T_j, \gamma_j)_{j \ge 1}$ and 
$(T'_j, \gamma'_j)_{j \ge 1}$ satisfy 
\[
\lim_{j \to \infty} d_\Pi( (T_j, \gamma_j), (T, \gamma)) = \lim_{j \to \infty} d_\Pi((T'_j, \gamma'_j), (T', \gamma')) = 0
\]
then 
\begin{equation}\label{170414_1} 
\lim_{j \to \infty}  d_\Pi( (T_j + T'_j, \gamma_j * \gamma'_j), (T+T', \gamma*\gamma')) = 0.
\end{equation} 
If (\ref{170414_1}) is not the case, there exists a sequence $(s_j)_{j \ge 1}$ in $[0,1]$ and $\ep>0$ such that 
\begin{equation}\label{170414_2}
d_L((\gamma_j*\gamma'_j)(s_j(T_j+T'_j)),  (\gamma*\gamma')(s_j(T+T')))  \ge \ep
\end{equation}
for every $j$. 
Then $T_j, T'_j >0$ for sufficiently large $j$, thus we may assume that 
$T_j, T'_j>0$ holds for every $j \ge 1$. 

By taking a subsequence, 
we may also assume that either $s_j \le T_j/(T_j+T'_j)$ or $s_j \ge T_j/(T_j+T'_j)$ holds for all $j$. 
In the following we only consider the case $s_j \le T_j/(T_j+T'_j)$, since the proof in the other case is parallel. 
Now we obtain 
\[
(\gamma_j*\gamma'_j) ( s_j (T_j+T'_j)) = \gamma_j \biggl( s_j \cdot \frac{T_j+T'_j}{T_j} \cdot T_j \biggr).
\]
We set $\tau_j:= s_j (T_j+T'_j)/T_j \in [0,1]$. Then the LHS of (\ref{170414_2}) is bounded from above by 
\begin{equation}\label{170422_1}
d_L(\gamma_j(\tau_j T_j), \gamma(\tau_j T)) + d_L(\gamma(\tau_j T), (\gamma*\gamma')(s_j (T+T'))). 
\end{equation} 
Then the first term of (\ref{170422_1}) goes to $0$ as $j \to \infty$, since 
\[
d_L(\gamma_j(\tau_j T), \gamma(\tau_j T)) \le d_\Pi ((T_j, \gamma_j), (T, \gamma)). 
\]
Since $\gamma*\gamma'$ is (uniformly) continuous, 
to show that 
the second term of (\ref{170422_1}) goes to $0$ as $j \to \infty$, 
it is sufficient to check: 
\begin{equation}\label{170422_2} 
\lim_{j \to \infty} \tau_j T - s_j (T+T') = 0.
\end{equation}
Using the obvious identity 
\begin{equation}\label{170613_1} 
\tau_j T - s_j(T+T') = s_j (T_j+T'_j) (T/T_j)  - s_j (T+T'), 
\end{equation} 
we can check (\ref{170422_2}) by considering the following three cases: 
\begin{itemize} 
\item $T>0$: this case $\lim_{j \to \infty} T/T_j = 1$, thus the RHS of (\ref{170613_1}) goes to $0$. 
\item $T=0$ and $T'>0$: this case the first term is $0$, and the second term goes to $0$ because 
 $0 \le s_j \le T_j/(T_j+T'_j)$ implies $\lim_{j \to \infty} s_j = 0$. 
\item $T,T'=0$: this case both the first and second terms are $0$.  
\end{itemize} 
This completes the proof. 
\end{proof}

Next we consider the space of continuous Moore loops with marked points. 
For every $k \in \Z_{\ge 0}$, 
let $\mca{L}_{k+1}^\con$ denote the set consists of 
$(T, \gamma, t_0, \ldots, t_k)$ such that 
\begin{itemize}
\item $T \in \R_{\ge 0}$ and $\gamma \in C^0([0, T], L)$ such that $\gamma(0)=\gamma(T)$. 
\item $0 = t_0 \le t_1 \le \ldots \le t_k \le T$.
\end{itemize} 
For each $a \in H_1(L: \Z)$, 
let $\mca{L}_{k+1}^\con(a)$ denote the subspace of 
$\mca{L}_{k+1}^\con$ which consists of 
$(T, \gamma, t_0, \ldots, t_k)$ such that $[\gamma]=a$.
The set $\mca{L}_{k+1}^\con$ can be identified with the set 
\[ 
\{ (\Gamma_0, \ldots, \Gamma_k)  \in (\Pi^\con)^{k+1} \mid \ev_1(\Gamma_i) = \ev_0(\Gamma_{i+1}) \,(0 \le i \le k-1), \, \ev_1(\Gamma_k) = \ev_0(\Gamma_0)\}. 
\] 
Using this identification, a metric $d_{\mca{L}_{k+1}}$ on $\mca{L}_{k+1}^\con$ is defined by 
\[ 
d_{\mca{L}_{k+1}}((\Gamma_0, \ldots, \Gamma_k), (\Gamma'_0, \ldots, \Gamma'_k)) := \max_{0 \le i \le k} d_{\Pi} (\Gamma_i, \Gamma'_i). 
\] 
We consider the topology on $\mca{L}_{k+1}^\con$ induced from $d_{\mca{L}_{k+1}}$. 

The evaluation map 
\[ 
\evl_j: \mca{L}_{k+1}^\con \to  L \quad (j \in \{0, \ldots, k\})
\] 
and the concatenation map 
\[ 
\con_j: 
\mca{L}_{k+1}^\con \fbp{\evl_j}{\evl_0} \mca{L}_{k'+1}^\con \to \mca{L}_{k+k'}^\con \quad (j \in \{1, \ldots, k\})
\] 
are defined in the same way as in the case of smooth loops (see Section 4.1). 
The evaluation map is obviously continuous, 
and the concatenation map is continuous by Lemma \ref{170611_4}. 
The evaluation map $\evl_j$ will be abbreviated as $\ev_j$ when there is no risk of confusion.

\subsection{Strongly continuous maps to $\mca{L}_{k+1}^\con$}

The goal of this subsection is to define
strongly continuous maps 
from moduli spaces of (perturbed) pseudo-holomorphic disks (with boundary marked points) 
to spaces of continuous loops (with marked points), 
so that natural compatibility conditions (spelled out in Proposition \ref{170613_2}) are satisfied. 
Throughout this subsection, the space $\mca{L}_{k+1}^\con$ is equipped with the topology defined from the metric $d_{\mca{L}_{k+1}}$, 
which is defined in Section 7.3. 

To state Proposition \ref{170613_2} we have to introduce the following notations. 
For any $\beta \in H_2(\C^n, L)$ and $(T, B) \in \mca{G}(k+1: \beta)$, 
one can define a continuous map 
\begin{equation}
\ev_{\interior}: \prod_{v \in C_{0, \interior}(T)} P \times \mca{L}_{k_v+1}^\con (\partial B(v)) \to \prod_{e \in C_{1, \interior}(T)} (P \times L)^2 
\end{equation} 
in a way similar to (\ref{170611_1}). 
Also let us recall the diagonal map (\ref{170611_2}). 
Then one can define a continuous map 
\begin{equation}\label{170615_1} 
\biggl(\prod_{e \in C_{1, \interior}(T)} P \times L \biggr) \fbp{\Delta}{\ev_{\interior}} \biggl( \prod_{v \in C_{0, \interior}(T)} P \times \mca{L}_{k_v+1}^\con (\partial B(v)) \biggr)
\to P \times \mca{L}_{k+1}^\con(\partial \beta) 
\end{equation}
by taking concatenations of loops, 
where the fiber product in the LHS is taken over $\prod_{e \in C_{1,\interior}(T)} (P \times L)^2$. 

\begin{prop}\label{170613_2}
For every $k \in \Z_{\ge 0}$, $m \in \Z_{\ge 0}$, and $P \in \{ \{m\}, [m, m+1]\}$, 
one can define strongly continuous maps
\begin{equation}\label{170705_1} 
\ev^{\mca{M}}:  \mca{M}_{k+1}(\beta : P) \to P \times \mca{L}_{k+1}^\con(\partial \beta)  \qquad (\omega_n(\beta) < \ep (m+1-k)), 
\end{equation}
\begin{equation}\label{170705_2} 
\ev^{\mca{N}^0}: \mca{N}^0_{k+1}(\beta : P) \to P \times \mca{L}_{k+1}^\con(\partial \beta) \qquad(\omega_n(\beta) < \ep(m-1-k)), 
\end{equation}
\begin{equation}\label{170705_3} 
\ev^{\mca{N}^{\ge 0}}: \mca{N}^{\ge 0}_{k+1}(\beta: P) \to P \times \mca{L}_{k+1}^\con(\partial \beta) \qquad(\omega_n(\beta) < \ep(m-k-U)), 
\end{equation}
so that the following diagrams commute 
for every $(T, B) \in \mca{G}(k+1, \beta)$: 
\begin{itemize}
\item 
\[
\xymatrix{
\biggl(\prod_e P \times L \biggr) \fbp{\Delta}{\ev_\interior} \biggl( \prod_v \mca{M}_{k_v+1}(B(v): P) \biggr)
\ar[r]\ar[d] & \mca{M}_{k+1}(\beta: P) 
\ar[d] \\
\biggl(\prod_e P \times L \biggr) \fbp{\Delta}{\ev_\interior}\biggl( \prod_v P \times \mca{L}_{k_v+1}^\con(\partial B(v))  \biggr)
 \ar[r]_-{(\ref{170615_1})}& P \times \mca{L}_{k+1}^\con(\partial \beta) 
\\
}
\]
where the first horizontal map is defined from (\ref{170614_1}) by setting $d=0$, and 
vertical maps are defined by (\ref{170705_1}). 
\item 
\[
\xymatrix{
\biggl(\prod_e P \times L \biggr) \fbp{\Delta}{\ev_\interior}  (\star)
\ar[r] \ar[d] & \mca{N}^0_{k+1}(\beta: P) 
\ar[d] \\
\biggl(\prod_e P \times L \biggr) \fbp{\Delta}{\ev_\interior}\biggl( \prod_v P \times \mca{L}_{k_v+1}^\con(\partial B(v))  \biggr)
 \ar[r]_-{ (\ref{170615_1})} & P \times \mca{L}_{k+1}^\con(\partial \beta) 
\\
}
\]
where $(\star):= \prod_{v \ne v_0} \mca{M}_{k_v+1} (B(v): P)  \times \mca{N}^0_{k_{v_0}+1}(B(v_0): P)$ and 
the first horizontal map is defined from (\ref{170614_2}) by setting $d=0$, and 
vertical maps are defined by (\ref{170705_2}). 
\item 
\[
\xymatrix{
\biggl(\prod_e P \times L \biggr) \fbp{\Delta}{\ev_\interior} (\star)
\ar[r] \ar[d] & \mca{N}^{\ge 0}_{k+1}(\beta: P) 
\ar[d] \\
\biggl(\prod_e P \times L \biggr) \fbp{\Delta}{\ev_\interior}\biggl( \prod_v P \times \mca{L}_{k_v+1}^\con(\partial B(v))  \biggr)
 \ar[r]_-{(\ref{170615_1})} & P \times \mca{L}_{k+1}^\con(\partial \beta)
\\
}
\]
where $(\star) =\prod_{v \ne v_0} \mca{M}_{k_v+1} (B(v): P)  \times \mca{N}^{\ge 0}_{k_{v_0}+1}(B(v_0): P) $ and 
the first horizontal map is defined from (\ref{170614_3}) by setting $d=0$, and 
vertical maps are defined by  (\ref{170705_3}). 
\end{itemize} 
\end{prop}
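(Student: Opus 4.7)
The plan is to construct the three families of evaluation maps simultaneously, by induction along the total order on moduli spaces introduced in Section 7.2.3. At each step I use the explicit description in Lemma \ref{171014_1}: a K-chart of $\mca{M}_{k+1}(\beta : P)$ (and analogously for $\mca{N}^0$, $\mca{N}^{\ge 0}$) embeds set-theoretically into a disjoint union indexed by reductions $(T',B')$ of the stratum tree, of products over vertices of (possibly perturbed) actual disk maps $(u_v, z^v_0, \ldots, z^v_{k_v}) \in \mca{MM}_{k_v+1}(B'(v))$ with matching conditions at interior edges, together with the factor $P$. This lets me work with honest maps $u$ whose boundary restriction $u|_{\partial D}$ is a continuous loop in $L$, and reduces the construction of the global map to (a) a definition on each vertex and (b) an inductive concatenation along the tree.

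For the base case (one interior vertex), I assign to an irreducible component $(u,z_0,\ldots,z_k)$ the Moore loop of a fixed length $T_{\text{std}} = 1$ obtained by pulling back $u|_{\partial D}$ under the Euclidean arc-length parametrization of $\partial D$ starting at $z_0$ and proceeding anti-clockwise, with $t_j$ the normalized arc-length position of $z_j$; the $P$-factor is the obvious projection. Assigning length exactly $1$ per component is forced on us by the requirement that interior-edge gluing be compatible with the concatenation $\con_j$ on $\mca{L}^{\con}_\bullet$, which adds lengths. The arc-length parametrization is not $\Aut(D)$-equivariant, but well-definedness on the quotient is achieved within the K-chart: the slice used in Section 7.2 to construct the Kuranishi structure (fixing three boundary marked points when $k\ge 2$, or the auxiliary interior marked points used for stabilization when $k\le 1$) picks canonical representatives on which the arc-length loop is determined. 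For $\mca{M}_{k+1}(0)$ the same recipe produces a constant loop of length $1$ with $t_j$ given by the Stasheff cell coordinates, which are exactly the boundary parameters of $D^{k-2}$ appearing in Theorem \ref{170611_5} (iii).

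For the inductive step on a stratum indexed by a general decorated tree $(T,B)$, I define the evaluation map by the iterated concatenation $\con_j$ of the vertex-wise loops along $T$, in the order prescribed by the ribbon structure and with the $0$-th boundary marked point of the root component serving as the base point. This is the unique choice consistent with the three commutative diagrams: each diagram simply asserts that one level of fiber-product decomposition is performed by a single $\con_j$, which is true by construction. The perturbed cases $\mca{N}^0$ and $\mca{N}^{\ge 0}$ are handled identically, since the Hamiltonian perturbation alters the Cauchy--Riemann equation only in the interior of the disk and hence does not affect the continuous boundary loop; the parameter $r\in\R_{\ge 0}$ in $\mca{N}^{\ge 0}$ and the parameter of $P\in\{\{m\},[m,m+1]\}$ sit as spectator factors that are simply projected to $P$. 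Strong continuity reduces on each K-chart to the continuity of arc-length parametrization in families, together with continuity of $\con_j$ on $\mca{L}^{\con}_\bullet$ provided by Lemma \ref{170611_4}; compatibility with coordinate changes holds because the construction depends only on the data $(u_v, z^v_j)$, which are transported by coordinate changes.

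The main obstacle is continuity across the nodal strata of the K-structure. When a family of stable maps degenerates by forming a boundary node (or when $r\to\infty$ in $\mca{N}^{\ge 0}$), the gluing parameter reparametrizes the boundary of the glued disk in a way that does not preserve arc length, so one must check that the arc-length Moore loops on the glued domain converge in the metric $d_{\mca{L}_{k+1}}$ to the concatenation of the Moore loops of the limit components. This follows, however, from the standard $C^{0}$-estimates on the neck region used in the construction of the K-structure (the boundary loop on the neck converges to the constant loop at the node), together with Lemma \ref{170611_4}; no new analytic input is required. Once this is verified, the compatibility diagrams of Theorem \ref{170611_5} (iv)--(vi) and the inductive definition ensure the three diagrams of the proposition.
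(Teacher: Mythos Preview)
Your construction has a genuine gap: the assignment of a Moore loop of \emph{fixed} length $1$ to each irreducible component cannot be continuous across the boundary strata of the K-structure. On a smooth disk your recipe produces a loop of total length $1$; after the disk bubbles into two components, the concatenation of the two vertex-wise loops has total length $2$. Since the metric $d_\Pi$ contains the term $|T-T'|$, the resulting map has $d_{\mca{L}_{k+1}} \ge 1$ between points arbitrarily close in the K-chart, so strong continuity fails. Your final paragraph claims this follows from $C^0$-estimates on the neck, but those estimates only control the \emph{image} of the boundary loop, not the domain arc-length you are using as the Moore parameter. In your convention the neck portion of $\partial D$ carries a positive fraction of the fixed length $1$, and this fraction does not go to zero as the neck pinches.

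There is a second gap: the domain arc-length parametrization of $\partial D$ is not $\Aut(D)$-invariant, so your map on a single vertex is not well defined on $\mtrg{\mca{M}}_{k+1}(\beta)=\{(u,z_0,\dots,z_k)\}/\Aut(D)$. You try to fix this by appealing to the slice used to construct each K-chart, but different K-charts $\mca{U}_p$, $\mca{U}_q$ are built from different slices; the coordinate change $\ph_{pq}$ relates them by a nontrivial element of $\Aut(D)$, and your evaluation maps will not satisfy $f_p\circ\ph_{pq}=f_q|_{U_{pq}}$.

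The paper avoids both issues simultaneously by parametrizing each nonconstant boundary loop by its \emph{image} arc-length in $L$ (so the Moore length is $\int_{\partial D}|du|$), and by assigning to a constant component the constant path of length $0$. Image arc-length is manifestly $\Aut(D)$-invariant, and it varies continuously under gluing because the total length of $u|_{\partial D}$ does; this is exactly what is verified in Lemma~\ref{170615_2}. The length-$0$ convention for constant components is what makes the concatenation at a node agree with the limit from the smooth side.
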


In the rest of this subsection, 
we explain the definition of 
the strongly continous map (\ref{170705_1}). 
Definitions of 
(\ref{170705_2}) and (\ref{170705_3}) are similar. 
For each $p \in \mca{M}_{k+1}(\beta: P)$, 
let $\mca{U}_p = (U_p, \mca{E}_p, s_p, \psi_p)$
be a K-chart at $p$. 
To define a strongly continuous map (\ref{170705_1}), 
it is sufficient to define a continuous map 
$\ev^{\mca{M}}_p: U_p \to P \times \mca{L}_{k+1}^\con$
for each $p$, 
so that compatibility conditions with coordinate changes are satisfied. 

By Lemma \ref{171014_1}, every $x \in U_p$ is represented by an element of 
\[ 
P \times \bigg(\prod_{e \in C_{1,\interior}(T')} L \bigg)
 \fbp{\Delta}{\ev_{\interior}}
\bigg(\prod_{v \in C_{0, \interior}(T')} \mca{MM}_{k_v+1}(B'(v)) \bigg) 
\] 
where $(T', B')$ is a \textit{reduction} of $(T, B)$. 
We denote the representative by 
\[
(\pi , (u^v, z^v_0, \ldots, z^v_{k_v})_v). 
\] 
Now we define 
\[ 
\ev^{\mca{M}}_p(x):= (\pi, \ev( (u^v, z^v_0, \ldots, z^v_{k_v})_v)) \in P \times \mca{L}_{k+1}^\con(\partial \beta), 
\] 
where $\ev ((u^v, z^v_0, \ldots, z^v_{k_v})_v)$ is the concatenation (defined by (\ref{170615_1})) 
of $\ev(u^v, z^v_0, \ldots, z^v_{k_v}) \in \mca{L}_{k_v+1}^\con(\partial B'(v))$ (which is defined below) for all $v \in C_{0, \interior}(T')$. 

To define $\ev(u^v, z^v_0, \ldots, z^v_{k_v})$, 
we distinguish the case $u^v$ is constant and the case $u^v$ is nonconstant. 
\begin{itemize}
\item 
The case $u^v$ is a constant map to $q \in L$: we define 
\[ 
\ev(u^v, z^v_0, \ldots, z^v_{k_v}) := (c_q, \ldots, c_q)
\] 
where $c_q$ denotes the constant path at $q$ with length $0$. 
\item 
The case $u^v$ is nonconstant: we use the length parametrization. 
For every $j \in \{0,\ldots, k_v\}$ we define $\theta_j \in [0, 2\pi)$ by $z^v_j / z^v_0= e^{\sqrt{-1}  \theta_j}$, thus $0=\theta_0 < \theta_1 < \cdots < \theta_{k_v} < 2\pi$. 
Then we define
\begin{align*} 
T^{u^v} _j &:= \int_{\theta_j}^{\theta_{j+1}} \bigg\lvert \frac{d}{d\theta} u^v(z^v_0 \cdot e^{\sqrt{-1} \theta}) \bigg\rvert \, d \theta, \\
\lambda^{u^v} _j &:  [\theta_j, \theta_{j+1}] \to [0, T^{u^v} _j] ;\quad \Theta \mapsto \int_{\theta_j}^\Theta \bigg\lvert \frac{d}{d\theta} u^v(z^v_0 \cdot e^{\sqrt{-1} \theta}) \bigg\rvert \, d \theta, \\ 
\gamma^{u^v} _j &: [0, T^{u^v} _j] \to L; \quad  t \mapsto u^v (e^{\sqrt{-1} (\lambda^{u^v} _j)^{-1}(t)}). 
\end{align*} 
$\frac{d}{d \theta} u^v (e^{\sqrt{-1}\theta})$ vanishes at only finitely many $\theta$
since $u^v$ is nonconstant, 
thus $T^{u^v} _j$ is positive and $\lambda^{u^v} _j$ is strictly increasing. 
Hence $(\lambda^{u^v} _j)^{-1}: [0, T^{u^v} _j] \to [\theta_j, \theta_{j+1}]$ is a well-defined continuous map. 
Then we define 
\[
\ev(u^v, z^v_0, \ldots, z^v_{k_v} ):= (T^{u^v}_j, \gamma^{u^v}_j)_{0 \le j \le k_v}. 
\]
This completes the definition of $\ev^{\mca{M}}_p(x)$. 
\end{itemize} 
\begin{rem}\label{171014_3} 
For later purpose (proof of Lemma \ref{170615_2}) we define 
$\Lambda^{u^v}_j: [0,1] \to [0, T^{u^v}_j]$ by 
$\Lambda^{u^v}_j(s) := \lambda^{u^v}_j((1-s)\theta_j + s \theta_{j+1})$. 
\end{rem} 

The above definition of $\ev^{\mca{M}}_p(x)$ does not depend on choices of representatives of $x$. 
In particular, the family of maps $(\ev^{\mca{M}}_p)_p$ is compatible with coordinate changes of K-charts. 
To show that this family defines a strongly continuous map, 
we have to check that $\ev^{\mca{M}}_p$ is continuous for each $p$. 

\begin{lem}\label{170615_2} 
$\ev^{\mca{M}}_p: U_p \to P \times \mca{L}_{k+1}^\con(\partial \beta)$ is continuous for every $p \in \mca{M}_{k+1}(\beta: P)$. 
\end{lem}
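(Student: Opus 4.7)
The plan is to reduce continuity of $\ev^{\mca{M}}_p$ to two ingredients: continuity within a single stratum of the Lemma \ref{171014_1} decomposition of $U_p$, and continuity of the transition between strata arising from the gluing construction. Given a convergent sequence $x_n \to x$ in $U_p$, after passing to a subsequence I may assume that all $x_n$ lie in a single component of the presentation, indexed by a reduction $(T', B')$ of the tree $(T, B)$ whose stratum contains $x$. Either $(T', B') = (T, B)$, or $(T', B')$ is obtained from $(T, B)$ by collapsing a nonempty subset of interior edges $E \subset C_{1, \interior}(T) \setminus C_{1, \interior}(T')$.

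In the first case, representatives $(u^v_n, z^v_{0,n}, \ldots, z^v_{k_v, n})$ converge in $C^\infty$ to representatives of $x$ for each $v \in C_{0, \interior}(T)$, and continuity of $\ev^{\mca{M}}_p$ reduces, via continuity of concatenation (Lemma \ref{170611_4}), to continuity of the vertex-wise map $(u, z_0, \ldots, z_{k_v}) \mapsto \ev(u, z_0, \ldots, z_{k_v})$. The total length $T^u_j$ is continuous in the $C^1$-topology on $u$ directly from its integral definition. When $T^u_j > 0$, the reparametrization $(\Lambda^u_j)^{-1}$ of Remark \ref{171014_3} is a $C^1$-diffeomorphism depending continuously on $u$, which yields $d_\Pi$-convergence of $\gamma^u_j$. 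When $T^u_j \to 0$ in the limit, the arc $u_n(\{e^{\sqrt{-1}\theta} \mid \theta \in [\theta_j, \theta_{j+1}]\})$ has image of vanishing diameter, so its $d_\Pi$-distance from the constant path $c_{u(z_0)}$ tends to zero; the same observation handles vertices at which the limit map is constant, in particular those with $B(v) = 0$.

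The main obstacle is the second case, where the $x_n$ approach $x$ via gluing along the edges in $E$. By the gluing construction underlying Theorem \ref{170611_5}, based on the exponential decay estimates of \cite{FOOO_Gluing_2016} (see also Section 25.5 of \cite{FOOO_Kuranishi}), each $e \in E$ carries a gluing parameter $\rho_{n, e} \to \infty$, and the glued disk $u^{v'}_n$ at each vertex $v' \in C_{0, \interior}(T')$ satisfies two boundary estimates: outside neck regions of $\partial D$ near each gluing point, $u^{v'}_n$ converges in $C^1$ to the corresponding boundary arcs of the unglued disks $u^v$, where $v$ ranges over the vertices of $T$ amalgamated into $v'$; inside the neck regions, both the length and the diameter of the image of the boundary arc are $O(e^{-c \rho_{n, e}})$ for a uniform constant $c > 0$.

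These estimates combine to give the required $d_{\mca{L}_{k+1}}$-convergence. The boundary piece of $u^{v'}_n$ between consecutive exterior marked points decomposes into arcs sitting in unglued-disk regions, whose lengths and length-parametrized paths converge to those of the corresponding arcs of the $u^v$, together with neck arcs whose contribution to length and to image diameter vanishes exponentially. Since $d_\Pi$ is invariant under monotone reparametrization, collapsing the neck arcs in the length parametrization produces no limit error, and the resulting loop is precisely the concatenation prescribed by the formal tree configuration. Continuity of $\con_j$ then delivers $\ev^{\mca{M}}_p(x_n) \to \ev^{\mca{M}}_p(x)$. The core technical difficulty is thus isolated in extracting the exponential decay estimates for boundary arcs in the form stated above; the rest is an elementary reparametrization argument.
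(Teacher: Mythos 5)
Your overall strategy -- reduce to pieces away from degeneration versus arcs in neck regions, and show the neck contributions vanish -- matches the paper's. The detailed route at the crucial step is different, though, and the paper's choice is worth noting. Where you invoke the quantitative exponential decay estimates of \cite{FOOO_Gluing_2016} to get lengths and diameters of neck boundary arcs of size $O(e^{-c\rho_{n,e}})$, the paper instead uses the softer thick-thin decomposition of Gromov compactness (Section 7.1.4 of \cite{FOOO_09}): for each $\ep>0$ and large $l$, the disk $D$ decomposes into a "thin'' region $V_l$ with $\int_{V_l\cap \partial D}|du_l|<\ep$ plus "thick'' regions $U^v_l$ on which $u_l$ converges in $C^\infty$ to $u^v_\infty$. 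One then bounds the $d_{\mca{L}_1}$-discrepancy by $10\ep$ and lets $\ep\to 0$. This avoids the quantitative rate entirely. Your route is not wrong, but the step you flag as the "core technical difficulty'' -- extracting a boundary-arc length estimate of the form $O(e^{-c\rho})$ from the pointwise derivative estimates in \cite{FOOO_Gluing_2016} -- is genuinely nontrivial (pointwise decay alone gives a bounded integral; one must also use that the amplitude at the neck ends decays as $\rho\to\infty$), whereas the $<\ep$ bound on neck boundary length is a direct, soft consequence of Gromov compactness and requires no analysis of the gluing map. In short: same skeleton, but the paper substitutes a qualitative compactness input where you use quantitative gluing analysis, which both shortens the argument and keeps the proof independent of the admissibility/exponential-decay package.

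One smaller remark: the paper also treats separately, and rather simply, the case where the limit has a single interior vertex (your "same stratum'' case, plus the case of a constant limit at a vertex), reducing it to $C^0$-convergence of length-parametrizations and an elementary lemma on inverses of increasing functions; this is essentially what you sketched, and that part of your argument is fine.
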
 
\begin{proof}
\textbf{Step 1.} 
Let $(x_l)_{l \ge 1}$ be a sequence in $U_p$ which converges to $x_\infty  \in U_p$. 
For simplicity, we work under the following assumptions: 
\begin{itemize}
\item $k=0$. 
\item Each $x_l$ is represented by a single holomorphic map; we denote the representative by $(\pi_l, u_l, z_{l,0})$. 
\item $x_\infty$ is represented by an element of 
\[ 
P \times \bigg(\prod_{e \in C_{1,\interior}(T_\infty)} L \bigg)
 \fbp{\Delta}{\ev_{\interior}}
\bigg(\prod_{v \in C_{0, \interior}(T_\infty)} \mca{MM}_{k_v+1}(B_\infty(v)) \bigg) 
\] 
for some $(T_\infty, B_\infty) \in \mca{G}(k+1, \beta)$; see Lemma \ref{171014_1}. 
We denote the representative by
$(\pi_\infty, (u^v_\infty , z^v_{\infty, 0}, \ldots, z^v_{\infty,  k_v})_{v \in C_{0, \interior}(T_\infty)})$. 
\end{itemize} 
Since $\lim_{l \to \infty} \pi_l = \pi_\infty$ is obvious, it is sufficient to show that 
\begin{equation}\label{171013_1} 
\lim_{l \to \infty} d_{\mca{L}_1}(\ev(u_l, z_{l,0}), \ev( (u^v_\infty , z^v_{\infty, 0}, \ldots, z^v_{\infty,  k_v})_v)) = 0. 
\end{equation} 

\textbf{Step 2.} 
We first consider the case $\sharp C_{0, \interior}(T_\infty)=1$, 
and abbreviate $u^v_\infty$ as $u_\infty$. 
We may assume that 
$\lim_{l \to \infty} u_l = u_\infty$ in the $C^\infty$-topology, and 
$z_{l,0} = 1$ for every $l$ (including $l= \infty$). 
When $u_\infty$ is constant, then $u_l|_{\partial D}$ converges to a constant map in $C^\infty$ (in particular $C^0$) topology, 
then (\ref{171013_1}) follows. 
When $u_\infty$ is nonconstant, 
then $u_l$ is nonconstant for sufficiently large $l$. 
Moreover 
$\lim_{l \to \infty} T^{u_l}_0 = T^{u_\infty}_0$
and  
$\lim_{l \to \infty} \Lambda^{u_l}_0 = \Lambda^{u_\infty}_0$ in the $C^0$-topology. 
Then (\ref{171013_1}) follows from the next lemma, 
the proof of which is completely elementary and omitted. 

\begin{lem} 
Let $(\tau_l)_l$ be a sequence in $C^0([0,1], [0,1])$ such that 
\begin{itemize}
\item $(\tau_l)_l$ has a $C^0$-limit $\tau_\infty: [0,1] \to [0,1]$.
\item For every $l$ (including $\infty$), $\tau_l$ is strictly increasing and $\tau_l (0)=0$, $\tau_l(1)=1$. 
\end{itemize}
Then $(\tau_l)^{-1}$ converges to $(\tau_\infty)^{-1}$ in the $C^0$-topology. 
\end{lem} 

\textbf{Step 3.} 
Now we consider the case $\sharp C_{0, \interior}(T_\infty) >1$. 
We fix $\ep>0$, which can be arbitrarily small. 
For sufficiently large every $l$, 
there exists a decomposition (depends on $\ep$ but we drop it from the following notations) 
\[ 
 D= V_l \sqcup \bigsqcup_{v \in C_{0, \interior}(T_\infty) } U^v_l 
\] 
such that the following conditions are satisfied (see \cite{FOOO_09} Section 7.1.4): 
\begin{itemize} 
\item $V_l$ is a compact set
with the number of connected components is $\sharp C_{0, \interior}(T_\infty)$, 
and $U^v_l$ is a connected open set for every $v \in C_{0, \interior}(T_\infty)$. 
\item $V_l \cap  \partial D$ is a disjoint union of $2\sharp C_{0, \interior}(T_\infty) - 1$ closed intervals.  
\item $\int_{V_l \cap \partial D} \bigg\lvert  \frac{d}{d\theta} u_l(e^{\sqrt{-1} \theta}) \bigg\rvert < \ep$. 
\item For every  $v \in C_{0, \interior}(T_\infty)$, 
$\overline{U^v_l} \cap \partial D$ is a disjoint union of $k_v+1$ closed intervals.
We denote them as $I^v_l(0), \ldots, I^v_l(k_v)$ aligned in anti-clockwise order. 
\item There exists a sequence $(\rho^v_l)_l$ in $\Aut(D)$ such that $(\rho^v_l)^{-1}(U^v_l)$ is independent on $l$ (denoted by $U^v$), 
and $u_l \circ \rho^v_l$ converges to $u^v_\infty$ (with respect to the $C^\infty$-topology) on $\overline{U^v}$. 
\item $\overline{U^v} \cap \partial D$ is a disjoint union of $k_v+1$ closed intervals, 
which we denote by $I^v_{\infty}(0), \ldots, I^v_{\infty}(k_v)$ in anti-clockwise order. 
\end{itemize}

\begin{center}
\input{glueddisks.tpc}
\end{center} 

For each $v \in C_{0, \interior}(T_\infty)$ and $j \in \{0, \ldots, k_v\}$, 
our assumption $u_l \circ \rho^v_l \to u^v_\infty$ 
implies that 
\[ 
\lim_{l \to \infty} d_\Pi (u_l|_{I^v_l(j)} ,  u^v_\infty|_{I^v_\infty(j)}) = 0. 
\] 
The proof is same as our argument in Step 2. 
On the other hand, for every $l$, the total lengths of $u_l|_{V_l \cap \partial D}$ is at most $\ep$. 
Since $\ev(u_l, z_{l,0}) $ is obtained as the concatenation of these paths, 
it is not difficult to see that 
\[ 
\limsup_{l \to \infty} \, d_{\mca{L}_1}(\ev(u_l, z_{l,0}), \ev( (u^v_\infty , z^v_{\infty, 0}, \ldots, z^v_{\infty,  k_v})_v)) \le 10 \ep. 
\] 
Since $\ep$ can be arbitrarily small, this implies 
the convergence (\ref{171013_1}). 
\end{proof}

\subsection{$C^0$-approximation lemma and CF-perturbation} 

We first introduce the notion of ``$\ep$-closeness'' 
for strongly continuous maps from a K-space to a metric space: 

\begin{defn}
Let $(X, \wh{\mca{U}})$ be a K-space, 
$(Y, d)$ be a metric space, 
and $\wh{f}, \wh{g}:  (X,\wh{\mca{U}}) \to Y$ be strongly continuous maps.  
For any $\ep>0$, we say that 
$\wh{f}$ and $\wh{g}$ are $\ep$-close, if 
$d(f_p(x), g_p(x)) < \ep$ for every $p \in X$ and $x \in U_p$. 
\end{defn} 

Let us state a key technical result in this subsection, which we call $C^0$-approximation lemma. 
The notion of ``open substrcture'' of a given K-structure is defined in Definition 3.20 in \cite{FOOO_Kuranishi}. 
The constant $\rho_L$ in the statement was introduced in the second paragraph of Section 7.3. 

\begin{thm}\label{170430_2} 
Let $(X, \wh{\mca{U}})$ be a compact K-space and 
$\wh{f}: (X, \wh{\mca{U}}) \to \mca{L}_{k+1}^\con$ be a strongly continuous map such that 
$\ev^{\mca{L}}_j \circ \wh{f}: (X, \wh{\mca{U}}) \to L$ is strongly smooth for every $j \in \{0, \ldots, k\}$. 
Let $Z$ be a closed subset of $X$ and $\wh{g}: (Z, \wh{\mca{U}}|_Z) \to \mca{L}_{k+1}$ 
be a strongly smooth map such that:
\begin{itemize}
\item $\ev^{\mca{L}}_j \circ \wh{g} = \ev^{\mca{L}}_j \circ \wh{f}|_Z$ for every $j \in \{0, \ldots, k\}$.
\item $\wh{g}$ is $\ep$-close to $\wh{f}|_Z$ with respect to $d_{\mca{L}_{k+1}}$. 
\end{itemize} 
If $\ep < \rho_L$, 
there exists an open substructure $\wh{\mca{U}_0}$ of $\wh{\mca{U}}$ 
and a strongly smooth map $\wh{g'}: (X, \wh{\mca{U}_0}) \to \mca{L}_{k+1}$
such that the following conditions hold: 
\begin{itemize} 
\item 
$\wh{g'}$ is $\ep$-close to $\wh{f}|_{\wh{\mca{U}_0}}$. 
\item 
$\evl_j \circ \wh{g'} = \evl_j \circ \wh{f}|_{\wh{\mca{U}_0}}$ for every $j \in \{0, \ldots, k\}$. 
\item 
$\wh{g'} = \wh{g}$ on $\wh{\mca{U}_0}|_Z$. 
\end{itemize} 
\end{thm}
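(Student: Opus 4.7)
The plan is to combine a partition-of-unity argument on a good coordinate system with standard embedding-and-convolution smoothing for the loops, and to patch the resulting local smoothings using a Riemannian center-of-mass construction in $L$, which is well-defined precisely because of the hypothesis $\ep < \rho_L$.

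First, by compactness I would pass to a good coordinate system $\{(U_i, \mca{E}_i, s_i, \psi_i)\}_{i=1}^N$ on $(X, \wh{\mca{U}})$, together with a subordinate partition of unity $\{\chi_i\}$ and a cutoff $\rho \in C^\infty(X, [0,1])$ with $\rho = 1$ on a neighborhood of $Z$. Writing a representative $f_i: U_i \to \mca{L}_{k+1}^\con$ as $f_i(u) = (T(u), \gamma_u, t_1(u), \ldots, t_k(u))$, I would smooth the real parameters $T, t_j$ by ordinary convolution, keeping them equal to the already-smooth values on $Z$ and preserving $C^0$-closeness. For the continuous loops I would fix an isometric embedding $L \hookrightarrow \R^M$ with tubular neighborhood retraction $\pi$, reparameterize each loop to the unit interval by $s \mapsto s/T^\circ(u)$, convolve the resulting $\R^M$-valued family jointly in $u$ and the rescaled time, and then project back by $\pi$. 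A local correction
\[
\sum_{j=0}^k \beta_j\bigl(s/T^\circ(u)\bigr)\bigl(\ev_j \circ f(u) - \tilde{\gamma}_u(t_j^\circ(u))\bigr),
\]
with $\beta_j$ a smooth bump supported near $t_j^\circ/T^\circ$ and equal to $1$ on a smaller neighborhood, enforces $\ev_j \circ g'_i = \ev_j \circ f_i$; the correction has $C^0$-size $O(\ep)$ and therefore stays inside the tubular neighborhood so that $\pi$ can be applied.

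Next, to glue the local smoothings $g'_i$ into a global map, I would use a Riemannian center-of-mass construction. Since each $g'_i(u)$ is pointwise $\ep$-close to $f_i(u)$ and $2\ep < 2\rho_L$, at every time $s$ all the finitely many points $g'_i(u)(s)$ lie in a common geodesically convex ball of radius $\rho_L$. The weighted Karcher mean with weights $\chi_i(u)$ is therefore defined, smooth in its inputs, takes values in $L$, and is the identity on constant inputs, so it preserves the marked-point evaluations. The parameters $T, t_j$ are combined by ordinary real convex combination. To enforce $\wh{g'} = \wh{g}$ exactly on $\wh{\mca{U}_0}|_Z$, I would run the same center-of-mass interpolation with weight $\rho$ between $\wh{g}$ (extended by local trivialization to a small neighborhood of $Z$) and the partition-of-unity output. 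Restricting to the open substructure $\wh{\mca{U}_0}$ on which all auxiliary data (tubular neighborhood, convex balls, extension of $\wh{g}$) are defined supplies the required K-structure.

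The main technical obstacle is ensuring joint smoothness in $u$ and $s$ despite the fact that the domain $[0, T^\circ(u)]$ varies with $u$; this is handled by performing all constructions on the fixed interval $[0,1]$ after rescaling by the smooth $T^\circ(u)$ and pulling back. A more subtle issue is compatibility under the coordinate changes $\ph_{ij}$ of the K-structure: because $\wh{f}$ is strongly continuous and the convolution, correction, and center-of-mass operations are all natural once the isometric embedding, cutoffs, and kernel are chosen globally, the constructions commute with the embeddings $\ph_{ij}$, so the local smoothings glue consistently on overlaps. Continuity in all coderivatives of the Karcher mean on geodesically convex balls, together with the partition-of-unity smoothness, then yields that $\wh{g'}$ is strongly smooth.
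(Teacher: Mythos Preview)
Your overall strategy---pass to a good coordinate system, smooth locally, then glue by a Riemannian center-of-mass---is the same as the paper's, and the center-of-mass step is exactly where the hypothesis $\ep<\rho_L$ is used. The paper also reduces first to the path space $\Pi$ (so that one works with a single path component rather than with $k+1$ of them at once), but that is a convenience, not an essential difference.

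The genuine gap is in your compatibility claim. You assert that ``the convolution, correction, and center-of-mass operations are all natural once the isometric embedding, cutoffs, and kernel are chosen globally, so the constructions commute with the embeddings $\ph_{ij}$.'' This is not true for the convolution in the chart variable $u$. In a good coordinate system the coordinate changes $\ph_{\mf{p}\mf{q}}:U_{\mf{p}\mf{q}}\hookrightarrow U_{\mf{p}}$ are embeddings between manifolds of \emph{different} dimensions (this is the whole point of a Kuranishi structure), and convolving a function on $U_{\mf{q}}$ against a kernel in $U_{\mf{q}}$-coordinates gives a different answer from embedding into $U_{\mf{p}}$ and convolving there. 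So your local smoothings $g'_i$ will simply not agree on overlaps, and there is nothing for the center-of-mass to patch. The same issue arises when you ``extend $\wh{g}$ by local trivialization to a neighborhood of $Z$'': the charts of $\wh{\mca{U}}|_Z$ again sit inside charts of $\wh{\mca{U}}$ of various larger dimensions, and an extension made chart-by-chart need not be coherent.

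The paper confronts exactly this problem and solves it by an induction over \emph{filters} of the partially ordered set $\mf{P}$ indexing the GCS (Lemma~\ref{170507_1}). One builds $g'_{\mf{p}}$ chart by chart, starting from the smallest elements; when a new chart $U_{\mf{p}_0}$ is added, the already-constructed $g'_{\mf{q}}$ on lower charts $U_{\mf{q}}$ (and the given $\wh{g}$ on charts coming from $Z$) determine $g'_{\mf{p}_0}$ on the embedded images $\ph_{\mf{p}_0\mf{q}}(U_{\mf{p}_0\mf{q}})$, and one extends off these submanifolds using a local normal-direction extension lemma (Lemma~\ref{170513_1}). Only \emph{after} this compatible family of local candidates is in hand does one average them with a partition of unity via the center-of-mass map. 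If you want to repair your argument, you should replace the convolution-in-$u$ step by such an inductive extension; the convolution in the time variable and the center-of-mass gluing can stay.
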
 

The proof of Theorem \ref{170430_2} is carried out in Section 9. 
Combining Theorem \ref{170430_2} with results from \cite{FOOO_Kuranishi}, 
we obtain Theorem \ref{170430_1} below. 
In the statement of Theorem \ref{170430_1} we use the following notions from \cite{FOOO_Kuranishi}
without repeating their definitions: 
\begin{itemize} 
\item Thickening of K-structures: see Section 5.2 in \cite{FOOO_Kuranishi}. 
\item Collared versions of K-structures, strongly smooth maps and CF-perturbations: see Section 17.5 in \cite{FOOO_Kuranishi}. 
\end{itemize} 
The assumptions in Theorem \ref{170430_1} 
on K-structures and CF-perturbations are 
similar to Situations 17.43 and 17.57 in \cite{FOOO_Kuranishi}, respectively. 

\begin{thm}\label{170430_1} 
Suppose that we are given the following data: 
\begin{itemize}
\item $k \in \Z_{\ge 0}$, $\tau \in (0, 1)$ and $\ep \in (0, \rho_L)$. 
\item A $\tau$-collared K-space $(X, \wh{\mca{U}})$. 
\item A $\tau$-collared strongly continuous map $\wh{f}: (X, \wh{\mca{U}}) \to \mca{L}_{k+1}^\con$ 
such that $\ev^{\mca{L}}_j \circ \wh{f} : (X, \wh{\mca{U}}) \to L$ is admissible for every $j \in \{0, \ldots, k\}$, 
and $\ev^{\mca{L}}_0 \circ \wh{f}$ is strata-wise weakly submersive. 
\item For every $l \in \Z_{\ge 1}$, 
a $\tau$-collared K-structure $\wh{\mca{U}^+_l}$ on $\wh{S}_l(X)$
which is a thickening of $\wh{\mca{U}}|_{\wh{S}_l(X)}$. 
\item For every $l_1, l_2 \in \Z_{\ge 1}$, 
 $(l_1+l_2)!/(l_1)!(l_2)!$-fold covering of $\tau$-collared $K$-spaces
\[ 
\wh{S}_{l_1}(\wh{S}_{l_2}(X), \wh{\mca{U}^+_{l_2}}) \to 
(\wh{S}_{l_1+l_2}(X), \wh{\mca{U}^+_{l_1+l_2}})
\]
such that the following diagrams commute for every $l_1, l_2, l_3 \in \Z_{\ge 1}$: 
\[
\xymatrix{
\wh{S}_{l_1}(\wh{S}_{l_2}(\wh{S}_{l_3}(X), \wh{\mca{U}^+_{l_3}})) \ar[r] \ar[d] & \wh{S}_{l_1+l_2}(\wh{S}_{l_3}(X), \wh{\mca{U}^+_{l_3}}) \ar[d] \\
\wh{S}_{l_1}(\wh{S}_{l_2+l_3}(X), \wh{\mca{U}^+_{l_2+l_3}})  \ar[r] & (\wh{S}_{l_1+l_2+l_3}(X), \wh{\mca{U}^+_{l_1+l_2+l_3}}), \\
}
\]
\[
\xymatrix{
\wh{S}_{l_1}(\wh{S}_{l_2}(X, \wh{\mca{U}}))  \ar[r] \ar[d] &  \wh{S}_{l_1}(\wh{S}_{l_2}(X), \wh{\mca{U}^+_{l_2}}) \ar[d] \\
\wh{S}_{l_1+l_2}(X, \wh{\mca{U}})  \ar[r] & (\wh{S}_{l_1+l_2}(X), \wh{\mca{U}^+_{l_1+l_2}}).  \\
}
\]
\item A $\tau$-collared CF-perturbation $\wh{\mf{S}^+_l}$ of $(\wh{S}_l(X), \wh{\mca{U}^+_l})$ for every $l \in \Z_{\ge 1}$,
such that the pullback of $\wh{\mf{S}^+_{l_1+l_2}}$ by 
$\wh{S}_{l_1}(\wh{S}_{l_2}(X), \wh{\mca{U}^+_{l_2}}) \to (\wh{S}_{l_1+l_2}(X), \wh{\mca{U}^+_{l_1+l_2}})$ 
coincides with the restriction of $\wh{\mf{S}^+_{l_2}}$
for every $l_1, l_2 \in \Z_{\ge 1}$. 
\item A $\tau$-collared admissible map $\wh{f^+_l}:  (\wh{S}_l(X) , \wh{\mca{U}^+_l}) \to \mca{L}_{k+1}$
for every $l \in \Z_{\ge 1}$ such that: 
\begin{itemize}
\item The pullback of $\wh{f^+_{l_1+l_2}}$ by $\wh{S}_{l_1}(\wh{S}_{l_2}(X), \wh{\mca{U}^+_{l_2}}) \to (\wh{S}_{l_1+l_2}(X), \wh{\mca{U}^+_{l_1+l_2}})$ 
coincides with the restriction of $\wh{f^+_{l_2}}$ 
for every $l_1, l_2 \in \Z_{\ge 1}$. 
\item For every $j \in \{0, \ldots, k\}$, 
$\ev^{\mca{L}}_j \circ \wh{f_l^+}: (\wh{S}_l(X), \wh{\mca{U}^+_l}) \to L$ coincides with 
$\ev^{\mca{L}}_j \circ \wh{f} |_{\wh{S}_l(X)}: (\wh{S}_l(X), \wh{\mca{U}}|_{\wh{S}_l(X)}) \to L$
via the KK-embedding $\wh{\mca{U}}|_{\wh{S}_l(X)} \to \wh{\mca{U}^+_l}$. 
\item $\ev^{\mca{L}}_0 \circ \wh{f_l^+}:   (\wh{S}_l(X), \wh{\mca{U}^+_l}) \to L$ 
is strata-wise strongly submersive with respect to $\wh{\mf{S}^+_l}$. 
\item $\wh{f^+_l}$ is $\ep$-close to $\wh{f}|_{\wh{S}_l(X)}$. 
\end{itemize} 
\end{itemize} 
Then, for any $\tau' \in (0, \tau)$, there exist the following data: 
\begin{itemize}
\item A $\tau'$-collared K-structure $\wh{\mca{U}^+}$ on $X$, which is a thickening of $\wh{\mca{U}}$. 
\item An isomorphism of $\tau'$-collared K-structures $\wh{\mca{U}^+}|_{\wh{S}_l(X)} \cong \wh{\mca{U}^+_l}$ for every $l \in \Z_{\ge 1}$. 
\item A $\tau'$-collared CF-perturbation $\wh{\mf{S}^+}$ of $(X, \wh{\mca{U}^+})$ such that 
$\wh{\mf{S}^+}|_{\wh{S}_l(X)}$ coincides with $\wh{\mf{S}^+_l}$
via the isomorphism of K-spaces $\wh{\mca{U}^+}|_{\wh{S}_l(X)} \cong \wh{\mca{U}^+_l}$. 
\item A $\tau'$-collared admissible map $\wh{f^+}:  (X, \wh{\mca{U}^+}) \to \mca{L}_{k+1}$ such that: 
\begin{itemize}
\item $\wh{f^+}$ is $\ep$-close to $\wh{f}$. 
\item For every $j \in \{0, \ldots, k\}$, $\ev^{\mca{L}}_j \circ \wh{f^+}$ coincides with $\ev^{\mca{L}}_j \circ \wh{f}$ with respect to the KK-embedding $\wh{\mca{U}} \to \wh{\mca{U}^+}$. 
\item $\ev^{\mca{L}}_0 \circ \wh{f^+} : (X, \wh{\mca{U}^+}) \to L$ is strata-wise strongly submersive with respect to $\wh{\mf{S}^+}$. 
\end{itemize} 
\end{itemize}
\end{thm}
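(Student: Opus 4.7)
The plan is to decouple the construction into two independent layers: first extending the compatible systems of thickened K-structures $\{\wh{\mca{U}^+_l}\}_l$ and CF-perturbations $\{\wh{\mf{S}^+_l}\}_l$ from the corners to all of $X$, and then extending the admissible maps $\{\wh{f^+_l}\}_l$ to a global admissible map $\wh{f^+}$ on $X$. The first layer is purely a matter of K-space theory and relies on the collared extension machinery of \cite{FOOO_Kuranishi} (in particular Situations 17.43 and 17.57 and the associated extension theorems), which is designed precisely to handle inductive extension from a compatible system on corners of all codimensions to the interior, preserving all collar and corner-covering compatibilities. The second layer is where the $C^0$-approximation lemma (Theorem \ref{170430_2}) enters.

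For the first layer, applying the K-structure extension theorem from \cite{FOOO_Kuranishi} to the system $\{\wh{\mca{U}^+_l}\}_l$ produces a $\tau'$-collared thickening $\wh{\mca{U}^+}$ of $\wh{\mca{U}}$ on $X$ together with the required isomorphisms $\wh{\mca{U}^+}|_{\wh{S}_l(X)} \cong \wh{\mca{U}^+_l}$. The analogous extension theorem for CF-perturbations, applied to $\{\wh{\mf{S}^+_l}\}_l$, produces a $\tau'$-collared CF-perturbation $\wh{\mf{S}^+}$ on $(X, \wh{\mca{U}^+})$ with the prescribed restrictions to the corners. Because transversality to $0$ and strong submersivity of a fixed weakly submersive map are open and generic conditions, one may arrange inside this extension procedure that $\wh{\mf{S}^+}$ is transversal to $0$ on $X$ and that $\ev^{\mca{L}}_0 \circ \wh{f}$ is strata-wise strongly submersive with respect to $\wh{\mf{S}^+}$ on the interior, the corresponding property on every corner stratum being already part of the input.

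For the second layer, the compatible family $\{\wh{f^+_l}\}_l$ patches into a single strongly smooth admissible map $\wh{g}: (\partial X, \wh{\mca{U}^+}|_{\partial X}) \to \mca{L}_{k+1}$. By the $\ep$-closeness hypothesis and the identity $\ev^{\mca{L}}_j \circ \wh{g} = \ev^{\mca{L}}_j \circ \wh{f}|_{\partial X}$, the pair $(\wh{f}, \wh{g})$ satisfies the hypotheses of Theorem \ref{170430_2} with $Z := \partial X$. Applying that theorem yields an open substructure of $\wh{\mca{U}^+}$ (which, after shrinking and renaming, we continue to denote $\wh{\mca{U}^+}$, with the CF-perturbation restricted accordingly) together with a strongly smooth extension $\wh{f^+}: (X, \wh{\mca{U}^+}) \to \mca{L}_{k+1}$ of $\wh{g}$ that is $\ep$-close to $\wh{f}$ and satisfies $\ev^{\mca{L}}_j \circ \wh{f^+} = \ev^{\mca{L}}_j \circ \wh{f}$ for all $j$. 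Strata-wise strong submersivity of $\ev^{\mca{L}}_0 \circ \wh{f^+}$ with respect to $\wh{\mf{S}^+}$ then follows immediately, since as a set-theoretic map it coincides with $\ev^{\mca{L}}_0 \circ \wh{f}$, for which this property has just been arranged.

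The main obstacle is the transition from ``strongly smooth'' (the output of Theorem \ref{170430_2}) to ``admissible'', i.e.\ compatibility with the exponential-decay collar structure at every corner stratum. To overcome it, one must carry out the $C^0$-approximation inside a collar neighborhood of $\partial X$ in a collar-adapted fashion: smooth only in the directions transverse to the collar, so that the output is automatically admissible within the collar, and then patch with the interior smoothing by means of an admissible cut-off function along the collar direction. The coherence built into the assumed corner covering maps guarantees that this local construction glues consistently across corners of all codimensions, yielding a globally admissible $\wh{f^+}$ and completing the argument.
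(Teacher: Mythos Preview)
Your two-layer decomposition matches the paper's proof exactly: first invoke the extension theorems of \cite{FOOO_Kuranishi} (the paper cites Propositions 17.62 and 17.65 specifically) to produce $\wh{\mca{U}^+}$ and $\wh{\mf{S}^+}$, then apply the $C^0$-approximation lemma (Theorem \ref{170430_2}) to extend the boundary map.

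The one meaningful difference is your choice of $Z$. You take $Z = \partial X$, which forces you to confront the smooth-versus-admissible problem head-on in your final paragraph, and your proposed fix (collar-adapted smoothing plus admissible cut-off) is left rather vague. The paper instead exploits the $\tau$-collared hypothesis directly: since $\wh{f^+_1}$ is $\tau$-collared, it extends canonically (constant in the collar direction) to the entire $\tau'$-collar neighborhood $N(\tau')$ of $\partial X$. The paper then applies Theorem \ref{170430_2} with $Z = N(\tau')$ and $\wh{g}$ equal to this collared extension. The output $\wh{f^+}$ therefore agrees with the collared extension on all of $N(\tau')$, so it is automatically $\tau'$-collared and hence admissible, with no further patching or cut-off argument needed. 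Your ``main obstacle'' simply disappears under this choice of $Z$; what you propose would work, but is an unnecessary detour.
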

\begin{proof}
The $K$-structure $\wh{\mca{U}^+}$ and the CF-perturbation $\wh{\mf{S}^+}$ are defined by 
Propositions 17.62 and 17.65 in \cite{FOOO_Kuranishi}, respectively; 
here we apply Proposition 17.65 (2) to $\ev^{\mca{L}}_0 \circ \wh{f}: (X, \wh{\mca{U}}) \to L$. 
Moreover $\wh{f^+_1}: \wh{S}_1(X) \to \mca{L}_{k+1}$
extends to the $\tau'$-neighborhood of $\partial X$ (denoted by $N(\tau')$). 
Then we can apply Theorem \ref{170430_2} to conclude the proof, 
taking $\mca{N}(\tau')$ as $Z$ and the extension of $\wh{f^+_1}$ as $\wh{g}$. 
\end{proof} 

\begin{rem}\label{180308_1} 
Nextly, we should state and prove a version of Theorem \ref{170430_1} 
such that $(X, \wh{\mca{U}})$ is a $\tau$-collared K-space, and 
\[ 
\wh{f}: (X, \wh{\mca{U}}) \to [a, b]^{\boxplus \tau} \times \mca{L}_{k+1}^\con 
\] 
is a $\tau$-collared strongly continuous map, 
where $a<b$ are real numbers and $[a,b]^{\boxplus \tau}: = [a-\tau, b+\tau]$. 
However, the statement is similar to Theorem \ref{170430_1} 
and involves further notations, 
thus here we choose not to write it down in detail. 
\end{rem} 

\subsection{Wrap-up of the proof}

Now we can complete the proof of Theorem \ref{161215_1} assuming results
presented in Sections 7.1 and 7.5. 

Let $X$ be one of moduli spaces considered in Theorem \ref{170611_5} (i). 
In Section 7.4, we defined a strongly continuous map from $X$ to the space of continuous loops (with marked points). 
This map naturally extends to a $1$-collared strongly continuous map from $X^{\boxplus 1}$, 
see Lemma-Definition 17.35 and Lemma 17.37 (3)  in \cite{FOOO_Kuranishi}. 
Taking $\tau(X) \in (1/2, 1)$ as in Remark \ref{171206_2}
and successively applying Theorem \ref{170430_1} and Remark \ref{180308_1}, 
$\bar{X}:= X^{\boxplus 1/2}$ is equipped with an admissible CF-perturbation and 
an admissible strongly smooth map to the space of smooth loops (with marked points). 
In conclusion, we obtain the following data
for every $k \in \Z_{\ge 0}$, $m \in \Z_{\ge 0}$ and $P \in \{ \{m\}, [m, m+1]\}$. 

\begin{enumerate}
\item[(i):] Compact admissible K-spaces 
\begin{align*} 
&\bar{\mca{M}}_{k+1}(\beta: P)  \qquad ( \omega_n(\beta) < \ep(m+1-k)), \\
&\bar{\mca{N}}^0_{k+1}(\beta: P) \qquad (\omega_n(\beta) <\ep(m-1-k)),  \\
&\bar{\mca{N}}^{\ge 0}_{k+1}(\beta: P) \qquad (\omega_n(\beta) < \ep(m-k-U)), 
\end{align*}
and admissible CF-perturbations on these K-spaces. 
\item[(ii):] Admissible maps 
\begin{align} 
\ev^{\bar{\mca{M}}_{k+1}}:  \bar{\mca{M}}_{k+1}(\beta: P) & \to  P  \times \mca{L}_{k+1}(\partial \beta)  \\
\ev^{\bar{\mca{N}}^0_{k+1}}: \bar{\mca{N}}^0_{k+1}(\beta: P) & \to  P \times \mca{L}_{k+1}(\partial \beta)  \\
\ev^{\bar{\mca{N}}^{\ge 0}_{k+1}}: \bar{\mca{N}}^{\ge 0}_{k+1}(\beta: P) &\to P \times \mca{L}_{k+1}(\partial \beta)  
\end{align} 
such that compositions with $\id_P \times \ev_0$
(which are admissible maps to $P \times L$) 
are strata-wise strongly submersive with respect to the CF-perturbations in (i). 
\item[(iii):] Isomorphisms of admissible K-spaces, 
which are obtained from isomorphisms (\ref{170903_1})--(\ref{170903_6}) 
by replacing each moduli space $X$ with $\bar{X}$. 
For example, we obtain
\begin{equation}\label{171019_1} 
\partial \bar{\mca{M}} _{k+1}(\beta: m) \cong \bigsqcup_{\substack{k_1+k_2=k+1 \\ 1 \le i \le k_1 \\ \beta_1 + \beta_2 = \beta}} (-1)^{\ep_0} 
\bar{\mca{M}}_{k_1+1}(\beta_1: m) \fbp{i} {0} \bar{\mca{M}}_{k_2+1}(\beta_2: m) 
\end{equation} 
from (\ref{170903_1}). 
We require that these isomorphisms are compatible with 
CF-perturbations in (i) and evaluation maps (ii). 
\end{enumerate} 
Applying results in Section 7.1, one can define 
\begin{align} 
x_m(a,k)&:= \sum_{\omega_n(\bar{a}) < \ep(m+1-k)} (-1)^{n+1}  \ev_*(\bar{\mca{M}}_{k+1}(\bar{a}, \{m\})) \\ 
\bar{x}_m(a,k)&:= \sum_{\omega_n(\bar{a}) < \ep(m+1-k)} (-1)^{n+1}  \ev_*(\bar{\mca{M}}_{k+1}(\bar{a}, [m, m+1])), \\ 
y_m(a,k)&:=  \sum_{\omega_n(\bar{a}) < \ep(m-U-k)}  (-1)^{n+k+1} \ev_*(\bar{\mca{N}}_{k+1}^{\ge 0}(\bar{a}, \{m\})),  \\ 
\bar{y}_m(a,k)&:= \sum_{\omega_n(\bar{a}) < \ep(m-U-k)} (-1)^{n+k+1}   \ev_*(\bar{\mca{N}}_{k+1}^{\ge 0} (\bar{a}, [m, m+1])),  \\
z_m(a,k)&:=  \sum_{\omega_n(\bar{a}) < \ep(m-1-k)} (-1)^{n+k+1} \ev_*(\bar{\mca{N}}_{k+1}^0 (\bar{a}, \{m\})), \\ 
\bar{z}_m(a,k)&:= \sum_{\omega_n(\bar{a}) < \ep(m-1-k)}  (-1)^{n+k+1}  \ev_*(\bar{\mca{N}}_{k+1}^0 (\bar{a}, [m, m+1])). 
\end{align} 
\begin{rem} 
In the above formulas we abbreviate differential form $1$ (see Remark \ref{170918_3}) and 
CF-perturbations. We also abbreviate superscripts of $\ev$ for simplicity. 
\end{rem} 

\begin{rem} 
Here we explain one issue which can be overlooked by abbreviating CF-perturbations in the above formulas. 
As is clear from Section 7.1, to define a de Rham chain 
one has to fix a parameter of CF-perturbation $e \in (0, 1]$ 
(here we use a letter $e$, not to be confused with $\ep$ used in the above formulas). 
Hence we take a strictly decreasing sequence of positive real numbers $(e_m)_{m \ge 1}$ satisfying 
$\lim_{m \to \infty} e_m = 0$, 
and define $x_m(a,k)$ by 
\[ 
x_m(a,k) := \sum_{\omega_n(\bar{a}) < \ep(m+1-k)} (-1)^{n+1}  \ev_*(\bar{\mca{M}}_{k+1}(\bar{a}, \{m\}), \wh{\mf{S}}^{e_m}). 
\] 
$y_m(a,k)$ and $z_m(a,k)$ are defined by similar formulas, using $e_m$. 
Then 
$\bar{x}_m(a,k)$ is defined by interpolating $e_m$ and $e_{m+1}$ on the moduli space $\bar{\mca{M}}_{k+1}(\bar{a}, [m, m+1])$. 
$\bar{y}_m(a,k)$ and $\bar{z}_m(a,k)$ are defined in a similar way. 
\end{rem}

Let us check that the requirements in Theorem \ref{161215_1} are satisfied. 
$x_m= e_-(\bar{x}_m)$, $y_m = e_-(\bar{y}_m)$, $z_m = e_-(\bar{z}_m)$ follow from the above definition
and (\ref{171206_1}). 
Moreover 
\[ 
(x_{m+1} - e_+(\bar{x}_m)) (a,k) \ne 0 \implies  \omega_n(\bar{a}) \ge \ep(m+1-k)
\] 
show $x_{m+1} - e_+(\bar{x}_m) \in F^m$. 
By similar arguments one can show 
$y_{m+1} - e_+(\bar{y}_m) \in F^{m-U-1}$ and 
$z_{m+1} - e_+(\bar{z}_m) \in F^{m-2}$. 

The isomorphism (\ref{171019_1}) shows
\[ 
\partial \bar{x}_m(a,k) = \sum_{\substack{k_1+k_2=k+1 \\ a_1+a_2 = a \\ 1 \le i \le k_1}} (-1)^{(k_1-m)(k_2-1) + (k_1-1)} \bar{x}_m (a_1, k_1) \circ_i \bar{x}_m (a_2, k_2)
\] 
for every $(a,k)$ such that $\omega_n(\bar{a}) < \ep(m+1-k)$, 
thus $\part \bar{x}_m - \frac{1}{2} [ \bar{x}_m,  \bar{x}_m] \in F^m$. 
By similar arguments one can show 
$\part \bar{y}_m - [\bar{x}_m, \bar{y}_m] - \bar{z}_m \in F^{m-U-1}$ and 
$\part \bar{z}_m - [\bar{x}_m, \bar{z}_m] \in F^{m-2}$. 

Finally $x_m(\bar{a}, k) \ne 0 \implies \mca{M}_{k+1}(\bar{a}) \ne \emptyset$, 
thus $\omega_n(\bar{a}) \ge 2\ep$ or $a=0$, $k \ge 2$. 
Moreover 
$[x_m(0,2)] = (-1)^{n+1} [\mca{M}_3(0)] = (-1)^{n+1}[L]$. 
Similarly, 
$z_m(\bar{a}, k) \ne 0 \implies \mca{N}^0_{k+1}(\bar{a}) \ne \emptyset$, 
thus $\omega_n(\bar{a}) \ge 2\ep$ or $a=0$. 
Moreover
$[z_m(0,0)] = (-1)^{n+1}[\mca{N}^0_1(0)] = (-1)^{n+1}[L]$. 
\qed

\section{Strongly smooth map from a K-space with a CF-perturbation gives a de Rham chain} 

The goal of this section is to explain proofs of results presented in Section 7.1. 
Namely, given a strongly smooth map from a K-space (equipped with a differential form and a CF-perturbation) to $\mca{L}_{k+1}$ (the space of loops with $k+1$ marked points), 
we define a de Rham chain on $\mca{L}_{k+1}$ and prove Stokes' formula and fiber product formula. 
Here we imitate arguments in \cite{FOOO_Kuranishi} Sections 7 and 9 in our setting. 
In Section 8.1 we consider smooth maps from single K-charts.
In Section 8.2 we consider strongly smooth maps from spaces with good coordinate system (GCS), and in Section 8.3 strongly smooth maps from K-spaces. 
In Sections 8.2 and 8.3, we only consider spaces without boundaries (and corners), 
since generalizations to spaces with boundaries are straightforward. 

Throughout this section, $X$ denotes a separable, metrizable topolgoical space.

\subsection{Single K-chart} 

In this subsection, 
given a smooth map from a K-chart (equipped with a CF-perturbation and a differential form) 
to $\mca{L}_{k+1}$, 
we define a de Rham chain on $\mca{L}_{k+1}$. 
We also prove Stokes's formula and fiber product formula. 
We first consider K-charts without boundary, and then proceed to the case of K-charts with boundaries. 

\subsubsection{K-chart without boundary} 
Suppose we are given the following data: 
\begin{itemize}
\item $\mca{U}=(U, \mca{E}, s, \psi)$ is a K-chart of $X$. 
\item $f: U \to \mca{L}_{k+1}$ is a smooth map in the sense of Definition \ref{171205_1}. 
\item $\omega \in \mca{A}^*_c(U)$. 
\item $\mf{S} = (\mf{S}^\ep)_{0 < \ep \le 1}$ is a CF-perturbation of $\supp \omega$ such that $\ev_0 \circ f: U \to L$ is strongly submersive with respect to $\mf{S}$
(see Definition 7.24 and Definition-Lemma 7.25 in \cite{FOOO_Kuranishi}). 
\end{itemize}

We are going to define $f_*(\mca{U}, \omega, \mf{S}^\ep) \in C^\dR_*(\mca{L}_{k+1})$ for every $\ep \in (0,1]$. 
Let $( \mf{V}_{\mf{r}}, \mca{S}_{\mf{r}})_{\mf{r} \in \mf{R}}$ be a representative of $\mf{S}$
(see Definitions 7.15, 7.16 and 7.19 in \cite{FOOO_Kuranishi}), such that 
\begin{itemize} 
\item $\mf{V}_{\mf{r}} = (V_{\mf{r}}, E_{\mf{r}}, \phi_{\mf{r}}, \wh{\phi}_{\mf{r}})$ is a manifold chart 
(we do not consider group actions: see Remark \ref{170912_1}) of $(U, \mca{E})$ such that
$(\phi_{\mf{r}}(V_{\mf{r}}))_{\mf{r} \in \mf{R}}$ covers $U$. 
Let $s_{\mf{r}}: V_{\mf{r}} \to E_{\mf{r}}$ denote the pull back of $s$ by $\phi_{\mf{r}}$. 
\item $\mca{S}_{\mf{r}} = (W_{\mf{r}}, \eta_{\mf{r}}, \{ \mf{s}^\ep_{\mf{r}}\}_{\ep})$ is a CF-perturbation of 
$\mca{U}$ on $\mf{V}_{\mf{r}}$. Namely 
\begin{itemize}
\item $W_{\mf{r}}$ is an open neighborhood of $0$ of a finite-dimensional oriented vector space $\wh{W_{\mf{r}}}$.  
\item $\mf{s}^\ep_{\mf{r}}: V_{\mf{r}} \times W_{\mf{r}} \to E_{\mf{r}}$ is a $C^\infty$ map transversal to $0$ for every $\ep \in (0,1]$. 
\item $\lim_{\ep \to 0} \mf{s}^\ep_{\mf{r}} (y, \xi) = s_{\mf{r}}(y)$ in compact $C^1$ topology on $V_{\mf{r}} \times W_{\mf{r}}$.
\item $\eta_{\mf{r}} \in \mca{A}^{\dim W_{\mf{r}}} _c(W_{\mf{r}})$ such that $\int_{W_{\mf{r}}} \eta_{\mf{r}} = 1$. 
\end{itemize} 
\item $\ev_0 \circ f \circ \phi_{\mf{r}} \circ \pr_{V_\mf{r}}: (\mf{s}^\ep_{\mf{r}})^{-1}(0) \to L $ is a submersion for every $\mf{r} \in \mf{R}$ and $\ep \in (0, 1]$. 
\end{itemize} 

$V_{\mf{r}}$ and $E_{\mf{r}}$ are oriented so that $\phi_{\mf{r}}$ and $\wh{\phi}_{\mf{r}}$ preserve orientations. 
$(\mf{s}^\ep_{\mf{r}})^{-1}(0)$ is oriented so that the isomorphism 
\[ 
E_{\mf{r}} \oplus T  (\mf{s}^\ep_{\mf{r}})^{-1}(0) \cong T V_{\mf{r}} \oplus TW_{\mf{r}}
\] 
preserves orientations, following Convention 8.2.1 in \cite{FOOO_09}. 

We take a partition of unity $\{ \chi_{\mf{r}}\}_{\mf{r} \in \mf{R}}$ subordinate to $(\phi_{\mf{r}}(V_{\mf{r}}))_{\mf{r} \in \mf{R}}$, 
i.e., 
$\chi_\mf{r} \in C^\infty_c(U, [0,1])$ and $\supp \chi_\mf{r}  \subset \phi_\tau(V_\mf{r})$
for every $\mf{r} \in \mf{R}$, and 
$\sum_{\mf{r} \in \mf{R}} \chi_{\mf{r}} \equiv 1$ on $\supp \omega$.
Then, for each $\ep \in (0, 1]$, we define 
\begin{equation}
f_*(\mca{U}, \omega, \mf{S}^\ep):= \sum_{\mf{r} \in \mf{R}} f_*(\mca{U}, \chi_{\mf{r}} \omega, \mf{V}_{\mf{r}}, \mca{S}^\ep_{\mf{r}}), 
\end{equation}
where the RHS is defined as 
\begin{equation}\label{180112_1} 
f_*(\mca{U}, \chi_{\mf{r}} \omega, \mf{V}_{\mf{r}}, \mca{S}^\ep_{\mf{r}}):= (-1)^\dagger  ( (\mf{s}^\ep_{\mf{r}})^{-1}(0), f  \circ \phi_{\mf{r}} \circ \pr_{V_{\mf{r}}} , (\phi_{\mf{r}} \circ \pr_{V_{\mf{r}}})^* (\chi_{\mf{r}} \omega) \wedge (\pr_{W_{\mf{r}}})^*(\eta_{\mf{r}})), 
\end{equation}
\begin{equation}\label{171105_2} 
\dagger:= \dim W_{\mf{r}} \cdot ( \rk \mca{E} + |\omega|). 
\end{equation} 
$(\mf{s}^\ep_{\mf{r}})^{-1}(0)$ is an oriented $C^\infty$-manifold, 
and $\ev_0 \circ  f  \circ \phi_{\mf{r}} \circ \pr_{V_{\mf{r}}}$
is a submersion, thus
the RHS in (\ref{180112_1}) makes sense. 

\begin{rem} 
Strictly speaking, one has to fix an embedding of $(\mf{s}^\ep_{\mf{r}})^{-1}(0)$ to Euclidean space to define a de Rham chain. 
However it is easy to check that the de Rham chain does not depend on choice of embedding. 
\end{rem} 

A priori, the above definition may depend on choices of representative and partition of unity. 
In Lemma \ref{170323_1} below we prove well-definedness and Stokes' formula.

\begin{lem}\label{170323_1} 
\begin{enumerate} 
\item[(i):] For any $\omega_1, \omega_2 \in \mca{A}^*_c(U)$, $a_1, a_2 \in \R$ and $\ep \in (0, 1]$, 
\[
f_* (\mca{U}, a_1 \omega_1 + a_2 \omega_2 , \mf{S}^\ep) = a_1 f_*(\mca{U}, \omega_1, \mf{S}^\ep) + a_2 f_*(\mca{U}, \omega_2, \mf{S}^\ep).
\]
\item[(ii):] The above definition does not depend on choices of representative of the CF-perturbation $\mf{S}$ and partition of unity. 
\item[(iii):] $\partial f_*(\mca{U}, \omega, \mf{S}^\ep) = (-1)^{|\omega|+1} f_*(\mca{U}, d\omega, \mf{S}^\ep)$ for every $\ep \in (0, 1]$. 
\end{enumerate} 
\end{lem}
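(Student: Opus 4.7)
The plan is to handle the three parts in the order (i), (iii), (ii), since (i) is essentially tautological from the definition, (iii) is a direct application of Stokes' theorem at the chart level, and (ii) is the delicate one that forces us to use the de Rham chain quotient $Z_N$ in an essential way.

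For (i), I would simply observe that the definition (\ref{180112_1}) of each summand $f_*(\mca{U}, \chi_{\mf{r}} \omega, \mf{V}_{\mf{r}}, \mca{S}^\ep_{\mf{r}})$ is linear in $\omega$ once we decompose $\mca{A}^*_c(U) = \bigoplus_j \mca{A}^j_c(U)$; the sign $\dagger$ in (\ref{171105_2}) depends only on $|\omega|$, so on each graded piece the formula is $\R$-linear in the differential form. Summing over $\mf{r}$ preserves this, giving (i).

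For (iii), I would compute chart by chart. Since $\eta_{\mf{r}}$ is a top-degree form on $W_{\mf{r}}$, we have $d\eta_{\mf{r}} = 0$, so
\[
d\bigl( (\phi_{\mf{r}} \circ \pr_{V_{\mf{r}}})^*(\chi_{\mf{r}} \omega) \wedge (\pr_{W_{\mf{r}}})^*(\eta_{\mf{r}}) \bigr) = (\phi_{\mf{r}} \circ \pr_{V_{\mf{r}}})^*\bigl(d\chi_{\mf{r}} \wedge \omega + \chi_{\mf{r}} d\omega \bigr) \wedge (\pr_{W_{\mf{r}}})^*(\eta_{\mf{r}}).
\]
Applying the boundary operator (\ref{170828_1}) to each summand in the definition of $f_*(\mca{U}, \omega, \mf{S}^\ep)$ and then summing over $\mf{r}$, the $d\chi_{\mf{r}}$ contributions collect into $d(\sum_{\mf{r}} \chi_{\mf{r}}) \wedge \omega$, which vanishes on $\supp \omega$ (where $\sum_{\mf{r}} \chi_{\mf{r}} \equiv 1$) and is harmless elsewhere since the form is multiplied by $\omega$. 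The surviving $\chi_{\mf{r}} d\omega$ terms reassemble into $(-1)^{|\omega|+1} f_*(\mca{U}, d\omega, \mf{S}^\ep)$ once I track the shift in $\dagger$ induced by $|\omega| \mapsto |\omega|+1$.

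For (ii), which I expect to be the main obstacle, I would split the verification into two stages. First I fix a representative $(\mf{V}_{\mf{r}}, \mca{S}_{\mf{r}})_{\mf{r} \in \mf{R}}$ and show independence on the partition of unity: given two partitions $\{\chi_{\mf{r}}\}$ and $\{\chi'_{\mf{r}}\}$ subordinate to the same cover, the difference $f_*^{\chi} - f_*^{\chi'}$ equals the sum over $\mf{r}$ of $f_*(\mca{U}, (\chi_{\mf{r}}-\chi'_{\mf{r}})\omega, \mf{V}_{\mf{r}}, \mca{S}^\ep_{\mf{r}})$; since $\sum_{\mf{r}}(\chi_{\mf{r}}-\chi'_{\mf{r}})=0$ on $\supp \omega$, the telescoping cancellation takes place after crossing representatives via common refinements $\{\chi_{\mf{r}} \chi'_{\mf{r'}}\}$, with all intermediate identifications landing in the subspace $Z_N$. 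Second, I treat changes of representative: any two representatives of $\mf{S}$ are connected by a chain of elementary operations (adding charts, restricting to open sets, pulling back by open embeddings of manifold charts, and the coordinate-change compatibilities of Definition 7.19 in \cite{FOOO_Kuranishi}). Each elementary move produces a pair of chart summands that differ by a submersion $\pi\colon V' \to V$ lifting to a submersion on zero loci, for which the pushout identity $\pi_!$ applied to the relevant form lies precisely in the relation defining $Z_N$ inside (\ref{170619_1}). The hard point will be bookkeeping the compatibility of orientations of $(\mf{s}^\ep_{\mf{r}})^{-1}(0)$ (via the Convention from \cite{FOOO_09} 8.2.1) and the sign $\dagger$ under these coordinate changes; once that is pinned down, the equalities hold in $C^\dR_*(\mca{L}_{k+1})$ even though the individual representatives are genuinely different.
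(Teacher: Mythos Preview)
Your ordering (i), (iii), (ii) has a genuine logical gap. In your argument for (iii) you write that ``the $d\chi_{\mf{r}}$ contributions collect into $d(\sum_{\mf{r}} \chi_{\mf{r}}) \wedge \omega$''. But each term $f_*(\mca{U}, d\chi_{\mf{r}} \wedge \omega, \mf{V}_{\mf{r}}, \mca{S}^\ep_{\mf{r}})$ is a de Rham chain living on the manifold $(\mf{s}^\ep_{\mf{r}})^{-1}(0) \subset V_{\mf{r}} \times W_{\mf{r}}$, and these manifolds are genuinely different for different $\mf{r}$. You cannot simply add the differential forms $d\chi_{\mf{r}} \wedge \omega$ as if they lived on a common space. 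To justify this ``collection'' you would need to know that, for a form $\theta$ with $\supp \theta \subset \phi_{\mf{r}}(V_{\mf{r}}) \cap \phi_{\mf{r}'}(V_{\mf{r}'})$, the de Rham chains $f_*(\mca{U}, \theta, \mf{V}_{\mf{r}}, \mca{S}^\ep_{\mf{r}})$ and $f_*(\mca{U}, \theta, \mf{V}_{\mf{r}'}, \mca{S}^\ep_{\mf{r}'})$ coincide in $C^\dR_*(\mca{L}_{k+1})$. That is exactly the content of (ii).

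The paper therefore proceeds in the order (i), (ii), (iii). Part (ii) is reduced, via the linearity (i) and a common-refinement argument, to a single local claim: if $\mca{S}_1 = (W_1, \eta_1, \{\mf{s}^\ep_1\})$ and $\mca{S}_2 = (W_2, \eta_2, \{\mf{s}^\ep_2\})$ are equivalent CF-perturbations on one manifold chart and $\supp\omega \subset \phi(V)$, then the two resulting de Rham chains agree. By the definition of equivalence (Definition 7.5 in \cite{FOOO_Kuranishi}) one may assume there is a linear projection $\Pi: \wh{W}_2 \to \wh{W}_1$ with $\Pi_!(\eta_2) = \eta_1$ and $\mf{s}^\ep_2 = (\id_V \times \Pi)^*\mf{s}^\ep_1$; the submersion $\id_V \times \Pi$ restricted to zero loci then realises the two chains as related by a $Z_N$-generator, hence equal in the quotient. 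Once (ii) is available, (iii) is handled by localising: use (i) and (ii) to assume $\supp\omega$ small enough that some $\chi_{\mf{r}} \equiv 1$ on it, so the $d\chi_{\mf{r}}$ terms never appear and (iii) is a one-line sign check. Your sketch of (ii) is in the right spirit but would benefit from isolating this projection lemma explicitly rather than appealing to a chain of unspecified ``elementary moves''.
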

\begin{proof}
(i) is obvious as far as we fix a representative of $\mf{S}$ and a partition of unity.  
Thus to prove (ii), it is sufficient to prove the following claim: 
\begin{quote} 
Let $\mca{S}_i = (W_i, \eta_i, \{\mf{s}^\ep_i\}_\ep) \, (i=1,2)$ be two equivalent CF-perturbations 
of $\mca{U}$ on a manifold chart $(V, E, \phi, \wh{\phi})$ and 
$\omega \in \mca{A}^*_c(U)$ such that $\supp \omega \subset \phi(V)$. 
Then
\begin{align*} 
&( (\mf{s}^\ep_1)^{-1}(0), f \circ \phi \circ \pr_V,  (\phi \circ \pr_V)^*\omega \wedge (\pr_{W_1})^* \eta_1)  \\
&=(-1)^\dagger ( (\mf{s}^\ep_2)^{-1}(0), f \circ \phi \circ \pr_V,  (\phi \circ \pr_V)^*\omega \wedge (\pr_{W_2})^* \eta_2), \\
&\dagger:=  (\dim W_1 - \dim W_2)(\rk \mca{E} + |\omega|) 
\end{align*} 
\end{quote} 
By definition of equivalence (see Definition 7.5 in \cite{FOOO_Kuranishi}), 
we may assume that there exists a linear projection 
$\Pi: \wh{W}_2 \to \wh{W}_1$ such that $(\Pi)_! (\eta_2) = \eta_1$ and 
$\mf{s}^\ep_2 = (\id_V \times \Pi)^*(\mf{s}^\ep_1)$. 
Then, pushout of 
$\id_V \times \Pi: (\mf{s}^\ep_2)^{-1}(0) \to (\mf{s}^\ep_1)^{-1}(0)$ 
sends 
$(\phi \circ \pr_V)^*\omega \wedge (\pr_{W_2})^* \eta_2$
to  $(-1)^\dagger (\phi \circ \pr_V)^*\omega \wedge (\pr_{W_1})^* \eta_1$, 
which completes the proof. 

To prove (iii), by (i) and (ii) we may assume that $\supp \omega$ is sufficiently small and $\chi_{\mf{r}} \equiv 1$ on $\supp \omega$ for some $\mf{r} \in \mf{R}$. 
In this case (iii) is obvious except for signs, which can be checked by simple computations. 
\end{proof}

Now we can state the fiber product formula. 
Suppose, for each $i \in \{1, 2\}$, we have 
$X_i$, $\mca{U}_i = (U_i, \mca{E}_i, s_i, \psi_i)$, $f_i: U_i \to \mca{L}_{k_i+1}$, 
$\omega_i$, $\mf{S}_i$ as before. 
Then, for every $\ep \in (0, 1]$ one can define 
\[
(f_i)_*(\mca{U}_i, \omega_i, \mf{S}_i^\ep) \in C^\dR_* (\mca{L}_{k_i+1})
\]
for each $i \in \{1, 2\}$. 
On the other hand, for each $j \in \{1, \ldots, k_1\}$, 
one can take a fiber product of K-charts 
\[ 
\mca{U}_{12} = \mca{U}_1 \fbp{\ev_j \circ f_1}{\ev_0 \circ f_2} \mca{U}_2. 
\] 
One can also define a fiber product of CF-perturbations  $\mf{S}_1 \times \mf{S}_2$ on $\mca{U}_{12}$. 
Finally we define a differential form $\omega_{12}$ on $\mca{U}_{12}$ by 
\[ 
\omega_{12} := (-1)^{(\dim U_1 - \rk \mca{E}_1- |\omega_1| - n)|\omega_2|}  \cdot  \omega_1 \times \omega_2, 
\] 
and a smooth map 
\[ 
f_{12}: U_1 \fbp{\ev_j \circ f_1}{\ev_0 \circ f_2} U_2 \to \mca{L}_{k_1+k_2}; \quad 
(x_1, x_2) \mapsto \con_j (f_1(x_1), f_2(x_2)). 
\] 
Then one can state the fiber product formula as follows. 
The proof is obvious except for signs, 
which can be checked by direct computations.

\begin{lem}\label{171110_3} 
In the situation described above, there holds 
\[ 
(f_{12})_* (\mca{U}_{12}, \omega_{12}, \mf{S}^\ep_{12}) = 
(f_1)_* (\mca{U}_1, \omega_1, \mf{S}^\ep_1) \circ_j 
(f_2)_* (\mca{U}_2, \omega_2, \mf{S}^\ep_2)
\]
for every $\ep \in (0, 1]$. 
\end{lem}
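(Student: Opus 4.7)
The plan is to reduce both sides of the claimed equality to a sum of local contributions indexed by a bilocal partition of unity and then to verify the identity chart-by-chart. First I would choose representatives $(\mf{V}^i_{\mf{r}_i}, \mca{S}^i_{\mf{r}_i})_{\mf{r}_i \in \mf{R}_i}$ of $\mf{S}_i$ for $i=1,2$, together with partitions of unity $\{\chi^i_{\mf{r}_i}\}$ subordinate to the covers $\{\phi^i_{\mf{r}_i}(V^i_{\mf{r}_i})\}$. Since $\ev_0 \circ f_2$ is a submersion on each $(\mf{s}^{2,\ep}_{\mf{r}_2})^{-1}(0)$, the family $(\mf{V}^1_{\mf{r}_1} \fbp{\ev_j \circ f_1}{\ev_0 \circ f_2} \mf{V}^2_{\mf{r}_2}, \mca{S}^1_{\mf{r}_1} \times \mca{S}^2_{\mf{r}_2})$ is a representative of $\mf{S}_{12} = \mf{S}_1 \times \mf{S}_2$, and $\{\chi^1_{\mf{r}_1} \cdot \chi^2_{\mf{r}_2}\}$ gives a compatible partition of unity on $U_{12}$. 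Using Lemma \ref{170323_1} (ii) these choices are immaterial, and both sides of the identity decompose as sums over $(\mf{r}_1, \mf{r}_2) \in \mf{R}_1 \times \mf{R}_2$; it thus suffices to verify the identity term-by-term.

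On a single pair of charts, write $\mf{s}^{i,\ep}$ for the perturbation function and $W_i$ for the parameter space. The zero locus $(\mf{s}^{1,\ep} \times \mf{s}^{2,\ep})^{-1}(0)$ is canonically identified with the fiber product
\[
Z_1 \fbp{\ev_j \circ f_1 \circ \phi^1 \circ \pr_{V_1}}{\ev_0 \circ f_2 \circ \phi^2 \circ \pr_{V_2}} Z_2,
\]
where $Z_i := (\mf{s}^{i,\ep})^{-1}(0) \subset V_i \times W_i$. Under this identification the strongly smooth map in the left-hand side descends precisely to $\con_j \circ (f_1 \times f_2)$, and the differential form appearing in (\ref{180112_1}) for the fiber product datum factors, up to a reordering of wedge factors, as the exterior product of the two individual data used in the definitions of $(f_i)_*(\mca{U}_i, \omega_i, \mf{S}^\ep_i)$. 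Thus the set-theoretic and analytic content of the identity is immediate, and the only remaining issue is the compatibility of signs.

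The sign computation has four ingredients: the sign $\dagger = \dim W \cdot (\rk \mca{E} + |\omega|)$ from (\ref{171105_2}) applied on both sides; the orientation sign $(-1)^{\rk \mca{E}_2(\dim U_1 - n - \rk \mca{E}_1)}$ from the fiber product of K-charts in Section 4.2; the sign $(-1)^{(\dim U - |\omega| - n)|\omega'|}$ appearing in the definition of $\circ_j$ on de Rham chains in Section 4.3; and a rearrangement sign arising from commuting $(\pr_{W_1})^* \eta^1_{\mf{r}_1}$ past $(\phi^2 \circ \pr_{V_2})^*(\chi^2_{\mf{r}_2} \omega_2)$ when one writes the wedge form associated to the fiber-product datum as a genuine wedge of the individual ones. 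The explicit prefactor $(-1)^{(\dim U_1 - \rk \mca{E}_1 - |\omega_1| - n)|\omega_2|}$ in the definition of $\omega_{12}$ is calibrated exactly so that, after collecting these four contributions and reducing modulo $2$, the signs cancel. This bookkeeping is the only real content of the lemma and will be the main obstacle; the individual manipulations are routine, but one must keep consistent track of the orientation convention $E_{\mf{r}} \oplus T((\mf{s}^\ep_{\mf{r}})^{-1}(0)) \cong T V_{\mf{r}} \oplus T W_{\mf{r}}$ used to orient the zero loci, together with the Koszul signs produced by every permutation of wedge factors.
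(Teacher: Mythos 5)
Your proposal is correct and takes essentially the same approach as the paper, whose entire proof is the sentence ``The proof is obvious except for signs, which can be checked by direct computations.'' Your reduction — bilinearity of $\circ_j$ plus Lemma \ref{170323_1}(ii) to pass to the product partition of unity $\{\chi^1_{\mf{r}_1}\chi^2_{\mf{r}_2}\}$, canonical identification of $(\mf{s}^{1,\ep}\times\mf{s}^{2,\ep})^{-1}(0)$ with the fiber product of the two zero loci, and the tally of sign sources ($\dagger$ on each side, the K-chart fiber-product orientation twist, the $\circ_j$ Koszul sign, and the wedge reordering) — is precisely the computation the paper is implicitly invoking.
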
 

\subsubsection{K-chart with boundary} 

Suppose we are given the following data: 
\begin{itemize}
\item $\mca{U} = (U, \mca{E}, s, \psi)$ is an admissible K-chart on $X$, 
i.e. $U$ is an admissible manifold with boundaries (and corners), 
the vector bundle $\mca{E}$ and the section $s$ are also admissible. 
\item $f: U \to \mca{L}_{k+1}$ is an admissible map in the sense of Definition \ref{170701_1} (i). 
\item $\omega \in \mca{A}^*_c(U)$ is admissible. 
\item $\mf{S} = ( \mf{S}^\ep)_{0<\ep \le 1}$ is an admissible CF-perturbation of $\supp \omega$
such that $\ev_0 \circ f: U \to L$ is strata-wise strongly submersive with respect to $\mf{S}$. 
\end{itemize} 
Under these assumptions, 
our goal is to define $f_*(\mca{U}, \omega, \mf{S}^\ep) \in C^\dR_*(\mca{L}_{k+1})$ 
for every $\ep \in (0, 1]$.

Let $(\mf{V}_{\mf{r}}, \mca{S}_{\mf{r}})_{\mf{r} \in \mf{R}}$ be a representative of $\mf{S}$, 
and $(\chi_{\mf{r}})_{\mf{r} \in \mf{R}}$ be a partition of unity subordinate to 
$(\phi_{\mf{r}}(V_{\mf{r}}))_{\mf{r} \in \mf{R}}$ such that 
$\sum_{\mf{r}} \chi_{\mf{r}} \equiv 1$
on $\supp \omega$. 
Then we define 
\begin{equation}\label{171105_1} 
f_*( \mca{U}, \omega, \mf{S}^\ep) := \sum_{\mf{r} \in \mf{R}} f_*(\mca{U}, \chi_{\mf{r}} \omega, \mf{V}_{\mf{r}}, \mca{S}^\ep_{\mf{r}})
\end{equation} 
where each term in the RHS is defined below. 
The proof of well-definedness (the RHS does not depend on choices of representatives of $\mf{S}$ and partition of unity) 
is straightforward and omitted. 

Let $D:= \dim U$. 
We may assume that $V_{\mf{r}}$ is an open neighborhood of $(t_1,\ldots, t_D) \in  (\R_{\ge 0})^D$. 
We define 
\[
\mca{R}: \R^D \to  (\R_{\ge 0})^D ; \,(t_1,\ldots, t_D) \mapsto (t'_1, \ldots, t'_D)
\]
by $t'_i := \begin{cases} t_i &(t_i \ge 0) \\  0 &(t_i < 0)  \\ \end{cases}$ for every $1 \le i \le D$. 
We take $\kappa \in C^\infty(\R, [0,1])$ 
such that 
$\kappa \equiv 1$ on a neighborhood of $\R_{\ge 0}$, 
and $\kappa \equiv 0$ on a neighborhood of $\R_{\le -1}$. 
Now let us define the following data: 
\begin{itemize} 
\item $\bar{V}_{\mf{r}}:= \mca{R}^{-1}(V_{\mf{r}}) $, $\bar{E}_{\mf{r}} := \mca{R}^*E_{\mf{r}}$.
\item $\bar{\mf{s}}^\ep_{\mf{r}}: = ( \mca{R}|_{\bar{V}_{\mf{r}}} \times \id_{W_{\mf{r}}} )^*(\mf{s}^\ep_{\mf{r}})$. 
\item $\bar{f}_{\mf{r}} := f \circ \phi_{\mf{r}} \circ \mca{R}|_{\bar{V}_{\mf{r}}}$. 
\item $\overline{\chi_{\mf{r}} \omega}(t_1, \ldots, t_D):= \kappa(t_1) \cdots \kappa(t_D) \cdot (\phi_{\mf{r}}  \circ \mca{R}|_{\bar{V}_{\mf{r}}} )^*(\chi_{\mf{r}} \omega)$. 
\end{itemize} 
Finally, we define 
\begin{equation}\label{170914_1} 
 f_*(\mca{U}, \chi_{\mf{r}} \omega, \mf{V}_{\mf{r}}, \mca{S}^\ep_{\mf{r}})
:= (-1)^\dagger ( (\bar{\mf{s}}^\ep_{\mf{r}})^{-1}(0),  \bar{f}_{\mf{r}} \circ \pr_{\bar{V}_{\mf{r}}}, 
\pr^*_{\bar{V}_{\mf{r}}} ( \overline{\chi_{\mf{r}} \omega}) \wedge \pr^*_{W_{\mf{r}}} (\eta_{\mf{r}})), 
\end{equation} 
where the sign $\dagger$ is defined by (\ref{171105_2}). 
Note that $\ev_0 \circ \bar{f}_{\mf{r}} \circ \pr_{\bar{V}_{\mf{r}}}:  (\bar{\mf{s}}^\ep_{\mf{r}})^{-1}(0) \to L$ 
is a submersion, thus the map 
$\bar{f}_{\mf{r}} \circ \pr_{\bar{V}_{\mf{r}}}:  (\bar{\mf{s}}^\ep_{\mf{r}})^{-1}(0) \to L$ 
is smooth. 

\begin{rem} 
The RHS of (\ref{170914_1}) may depend on the choice of the cutoff function $\kappa$. 
Here we fix such $\kappa$ and drop it from our notation in the following arguments. 
\end{rem} 

The fiber product formula holds in the obvious manner and omitted. 
Stokes' formula is formulated as follows. 

\begin{prop}
\[ 
\partial (f_*(\mca{U}, \omega, \mf{S}^\ep)) =  
(-1)^{|\omega|} (f|_{\partial \mca{U}})_* (\partial \mca{U}, \omega|_{\partial \mca{U}}, \mf{S}^\ep|_{\partial \mca{U}}) +  
(-1)^{|\omega|+1} f_*(\mca{U}, d \omega, \mf{S}^\ep). 
\]
\end{prop}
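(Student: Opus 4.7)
The strategy is to differentiate the integrand in (\ref{170914_1}) via Leibniz and split the result into an ``interior'' contribution producing the $d\omega$-term and a ``vertical'' contribution producing the boundary term, paralleling the closed case (Lemma \ref{170323_1}(iii)). Fix a representative $(\mf{V}_\mf{r}, \mca{S}_\mf{r})_{\mf{r}\in \mf{R}}$ of $\mf{S}$ and a subordinate partition of unity $(\chi_\mf{r})_\mf{r}$; well-definedness follows from an argument analogous to Lemma \ref{170323_1}(ii), and one may arrange the representative and the partition so that their restrictions to $\partial U$ compute the RHS of the proposition. By definition (\ref{170828_1}) of the de Rham boundary, and since $\eta_\mf{r}$ is top-degree so $d\eta_\mf{r}=0$, each summand of $\partial f_*(\mca{U},\chi_\mf{r}\omega,\mf{V}_\mf{r},\mca{S}_\mf{r}^\ep)$ equals
\[
\pm\bigl((\bar{\mf{s}}^\ep_\mf{r})^{-1}(0),\,\bar{f}_\mf{r}\circ \pr,\, \pr^*(d\overline{\chi_\mf{r}\omega})\wedge \pr^*\eta_\mf{r}\bigr),
\]
with the sign determined by (\ref{171105_2}) and the degree of $\overline{\chi_\mf{r}\omega}\wedge\eta_\mf{r}$.

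Applying Leibniz to $\overline{\chi_\mf{r}\omega} = \kappa(t_1)\cdots\kappa(t_D)\cdot \mca{R}^*\phi_\mf{r}^*(\chi_\mf{r}\omega)$ yields
\[
d\overline{\chi_\mf{r}\omega} = \sum_{i=1}^D \kappa'(t_i)\,dt_i \prod_{j\ne i}\kappa(t_j)\cdot \mca{R}^*\phi_\mf{r}^*(\chi_\mf{r}\omega) + \prod_{i=1}^D\kappa(t_i)\cdot \mca{R}^*\phi_\mf{r}^*\bigl(d(\chi_\mf{r}\omega)\bigr).
\]
The second summand, summed over $\mf{r}$, assembles into $(-1)^{|\omega|+1}f_*(\mca{U},d\omega,\mf{S}^\ep)$ via $\sum_\mf{r}\chi_\mf{r}\equiv 1$ on $\supp\omega$ (so $\sum_\mf{r}d\chi_\mf{r}=0$ there and only the $\chi_\mf{r}d\omega$ terms survive). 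For the first summand, $\kappa'(t_i)$ is supported in $t_i\in(-1,0)$, and on this region $\mca{R}$ factors through the projection $\pi_i\colon(t_1,\ldots,t_D)\mapsto(t_1,\ldots,t_{i-1},0,t_{i+1},\ldots,t_D)$; hence both $\mca{R}^*\phi_\mf{r}^*(\chi_\mf{r}\omega)$ and $\bar{\mf{s}}^\ep_\mf{r}$ are independent of $t_i$ there, and $\pi_i$ extends to a $C^\infty$-submersion of the ambient chart times $W_\mf{r}$. Invoking the defining equivalence $(U,\ph,(\pi_i)_!\alpha)=(U',\ph\circ\pi_i,\alpha)$ of $C^\dR_*$ together with $\int\kappa'(t)\,dt=1$ collapses the $i$-th term to a de Rham chain supported on the codimension-one face $\{t_i=0\}\cap V_\mf{r}$, built from the restricted form, CF-perturbation, and map. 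Summation over $\mf{r}$ and $i$ reconstructs the normalized boundary $\partial\mca{U}$ and produces $(-1)^{|\omega|}(f|_{\partial\mca{U}})_*(\partial\mca{U},\omega|_{\partial\mca{U}},\mf{S}^\ep|_{\partial\mca{U}})$.

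The main obstacle is the sign bookkeeping in the boundary term: one must reconcile the $dt_i$-factor pulled to the front with the normalized-boundary orientation convention of Section 4.2, and verify consistency of the $\dagger$-shift (\ref{171105_2}) between $\mca{U}$ and $\partial\mca{U}$ (noting that $\rk\mca{E}$ is unchanged on either side, so only the changes in $|\omega|$ and $\dim W_\mf{r}$ matter, and both are preserved under restriction). A subsidiary point is that admissibility of $\omega$ is precisely what ensures $\mca{R}^*\phi_\mf{r}^*(\chi_\mf{r}\omega)$ is a genuine smooth form on $\bar V_\mf{r}$, legitimizing the termwise Leibniz computation; corners of codimension $\ge 2$ contribute nothing new, since each index $i$ is treated independently and the normalized boundary counts each codim-one face exactly once.
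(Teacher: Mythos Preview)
Your proposal is correct and follows essentially the same approach as the paper: both compute $d(\overline{\chi_\mf{r}\omega})$ via Leibniz, splitting into the interior term $\kappa(t_1)\cdots\kappa(t_D)\,\mca{R}^*\phi_\mf{r}^*(d(\chi_\mf{r}\omega))$ and the boundary terms $\sum_i \kappa(t_1)\cdots d\kappa(t_i)\cdots\kappa(t_D)\,\mca{R}^*\phi_\mf{r}^*(\chi_\mf{r}\omega)$, then invoke the outward-normal orientation convention $TU\cong \R_{\mathrm{out}}\oplus T(\partial U)$. The paper's own proof is extremely terse (it simply records this Leibniz expansion and the orientation convention), whereas you spell out explicitly how the $d\kappa(t_i)$ terms collapse to the face $\{t_i=0\}$ via the pushforward relation in $C^\dR_*$ and $\int\kappa'=1$; this is exactly the mechanism the paper leaves implicit.
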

\begin{proof}
This follows from 
\begin{align*} 
d(\overline{\chi_{\mf{r}}\omega}) (t_1,\ldots, t_D)  &=  \kappa(t_1) \cdots \kappa(t_D) (\phi_{\mf{r}} \circ \mca{R}|_{\bar{V}_{\mf{r}}})^* (d(\chi_{\mf{r}}\omega)) \\ 
&+ \sum_{i=1}^D \kappa(t_1) \cdots  d\kappa(t_i) \cdots \kappa(t_D) ( \phi_{\mf{r}} \circ \mca{R}|_{\bar{V}_{\mf{r}}})^*(\chi_{\mf{r}} \omega) 
\end{align*} 
and $\partial U$ is oriented so that $TU \cong \R_{\text{out}} \oplus T(\partial U)$ preserves orientations.  
\end{proof}

\subsubsection{K-chart with boundary over an interval} 

Suppose we are given the following data: 
\begin{itemize}
\item $\mca{U} = (U, \mca{E}, s, \psi)$ is an admissible K-chart on $X$. 
\item $f: U \to [a, b] \times \mca{L}_{k+1}$ is an admissible map in the sense of Definition \ref{170701_2} (i). 
\item $\omega \in \mca{A}^*_c(U)$ is admissible. 
\item $\mf{S} = ( \mf{S}^\ep)_{0<\ep \le 1}$ is an admissible CF-perturbation of $\supp \omega$
such that $\ev_0 \circ f: U \to [a,b] \times L$ is strata-wise strongly submersive with respect to $\mf{S}$. 
\end{itemize} 
Under these assumptions, 
our goal is to define 
$f_*(\mca{U}, \omega, \mf{S}^\ep) \in \bar{C}^\dR_*(\mca{L}_{k+1})$ for every $\ep \in  (0,1]$. 
For simplicity of notations, in the following we assume that $a=-1$, $b=1$. 

Let $(\mf{V}_{\mf{r}}, \mca{S}_{\mf{r}})_{\mf{r} \in \mf{R}}$ be a representative of $\mf{S}$, 
and $(\chi_{\mf{r}})_{\mf{r} \in \mf{R}}$ be a partition of unity subordinate to 
$(\phi_{\mf{r}}(V_{\mf{r}}))_{\mf{r} \in \mf{R}}$ such that 
$\sum_{\mf{r}} \chi_{\mf{r}} \equiv 1$
on $\supp \omega$. 
Then we define 
\begin{equation}\label{170918_1} 
f_*( \mca{U}, \omega, \mf{S}^\ep) := \sum_{\mf{r} \in \mf{R}} f_*(\mca{U}, \chi_{\mf{r}} \omega, \mf{V}_{\mf{r}}, \mca{S}^\ep_{\mf{r}})
\end{equation} 
where each term in the RHS is defined below. 
The proof of well-definedness is omitted. 

To define  
$f_*(\mca{U}, \chi_{\mf{r}} \omega, \mf{V}_{\mf{r}}, \mca{S}^\ep_{\mf{r}})$, 
it is sufficient to consider the following three cases: 
\begin{enumerate}
\item[(i):]  $f \circ \phi_{\mf{r}} (V_{\mf{r}})$ is contained in $(-1, 1) \times \mca{L}_{k+1}$. 
\item[(ii):] $f \circ \phi_{\mf{r}} (V_{\mf{r}})$ is contained in $[-1, 1) \times \mca{L}_{k+1}$ and intersects $\{-1\}  \times \mca{L}_{k+1}$. 
\item[(iii):] $f \circ \phi_{\mf{r}} (V_{\mf{r}})$ is contained in $(-1, 1] \times \mca{L}_{k+1}$ and intersects $\{1\} \times \mca{L}_{k+1}$. 
\end{enumerate} 
The case (i) is similar to the case in the previous subsubsection and omitted. 
In the following we only consider the case (ii), since the case (iii) is completely parallel. 

Let $D:= \dim U$. 
We may assume that $V_{\mf{r}}$ is an open neighborhood of 
$(0, t_2, \ldots, t_D)$ in $(\R_{\ge 0})^D$ and 
$f_\R \circ \ph_{\mf{r}}(t_1, \ldots, t_D) = t_1-1$ .
Here $f_\R$ denotes $\pr_{\R} \circ f$. Similarly, we set $f_{\mca{L}} := \pr_{\mca{L}_{k+1}} \circ f$. 

We define $\bar{V}_{\mf{r}}$, $\bar{E}_{\mf{r}}$ and $\bar{\mf{s}}^\ep_{\mf{r}}$ in the same way as in the previous subsubsection. 
We also define
$\bar{f}_{\mf{r}}$ and $\overline{\chi_{\mf{r}}\omega}$ as follows: 
\begin{itemize} 
\item $\bar{f}_{\mf{r}}: \bar{V}_{\mf{r}} \to \R \times \mca{L}_{k+1}$ is defined by 
\[
\pr_{\mca{L}_{k+1}} \circ \bar{f}_{\mf{r}} := f_{\mca{L}} \circ \phi_{\mf{r}} \circ \mca{R}|_{\bar{V}_{\mf{r}}}, \qquad 
\pr_{\R} \circ \bar{f}_{\mf{r}} (t_1, \ldots, t_D): = t_1 - 1. 
\] 
\item $\overline{\chi_{\mf{r}} \omega}(t_1, \ldots, t_D):= \kappa(t_2) \cdots \kappa(t_D) \cdot (\phi_{\mf{r}} \circ \mca{R}|_{\bar{V}_{\mf{r}}})^*(\chi_{\mf{r}} \omega)$. 
\end{itemize} 
Here $\kappa \in C^\infty(\R, [0,1])$ is taken in the previous subsubsection. 
Then we define 
\[
f_*(\mca{U}, \chi_{\mf{r}} \omega, \mf{V}_{\mf{r}}, \mca{S}^\ep_{\mf{r}}):= 
(-1)^\dagger 
((\bar{\mf{s}}^\ep_{\mf{r}})^{-1}(0), \bar{f}_{\mf{r}} \circ \pr_{\bar{V}_{\mf{r}}} , \tau_+, \tau_-,  
\pr^*_{\bar{V}_{\mf{r}}} ( \overline{\chi_{\mf{r}} \omega}) \wedge \pr^*_{W_{\mf{r}}} (\eta_{\mf{r}})), 
\] 
where the sign $\dagger$ is defined as before and 
$\tau_-$, $\tau_+$ are defined as follows. 
$\tau_+$ is defined in the obvious way, since $f \circ \phi_{\mf{r}}(V_{\mf{r}})$ does not intersect $\{1\} \times \mca{L}_{k+1}$.
On the other hand, $\tau_-$ is defined as a restriction of 
\[ 
(\bar{V}_{\mf{r}} \cap \{t_1 \le 0\}) \times W_{\mf{r}} \to \R_{\le -1} \times ( (V_{\mf{r}}  \cap \{t_1=0\}) \times W_{\mf{r}} ); 
\,
(t_1,\ldots, t_D, w) \mapsto (t_1-1, t_2, \ldots, t_D, w). 
\] 
This completes the definition of 
$f_*(\mca{U}, \chi_{\mf{r}} \omega, \mf{V}_{\mf{r}}, \mca{S}^\ep_{\mf{r}})$, 
thus the definition of 
$f_*( \mca{U}, \omega, \mf{S}^\ep)$. 

The fiber product formula is stated and proved in the obvious way. 
Stokes' formula 
\[ 
\partial f_*(\mca{U}, \omega, \mf{S}^\ep) = (-1)^{|\omega|} (f|_{\partial_h \mca{U}})*(\partial_h \mca{U}, \omega, \mf{S}^\ep) + (-1)^{|\omega|+1} f_*(\mca{U}, d \omega, \mf{S}^\ep), 
\] 
where $\partial_h \mca{U}$ is the restriction of $\mca{U}$ to $\partial_h U$, 
and 
\begin{align*} 
&e_+(f_*(\mca{U}, \omega, \mf{S}^\ep)) = (f_{\mca{L}}|_{U_1})_*(\mca{U}|_{U_1}, \omega|_{U_1}, \mf{S}^\ep|_{U_1}), \\ 
&e_-(f_*(\mca{U}, \omega, \mf{S}^\ep)) = (f_{\mca{L}}|_{U_{-1}})_*(\mca{U}|_{U_{-1}}, \omega|_{U_{-1}}, \mf{S}^\ep|_{U_{-1}}) 
\end{align*}
can be checked directly and proofs are omitted. 

\subsection{Space with GCS}

In this subsection, we assume that $X$ is compact. 
Let $\wt{\mca{U}} = ( \{ \mca{U}_{\mf{p}} \}_{\mf{p} \in \mf{P}} , \{ \Phi_{\mf{p}\mf{q}}\}_{\mf{p} \ge \mf{q}})$
be a GCS (good coordinate system) without boundary on $X$ (see Section 10). 
We start from the following definition.

\begin{defn}\label{171108_1} 
A strongly smooth map $\wt{f}$ from $(X, \wt{\mca{U}})$ to $\mca{L}_{k+1}$ is a family 
$(f_{\mf{p}})_{\mf{p} \in \mf{P}}$
which satisfies the following conditions: 
\begin{itemize}
\item For every $\mf{p} \in \mf{P}$, $f_{\mf{p}}$ is a smooth map  from $U_{\mf{p}}$ to $\mca{L}_{k+1}$. 
\item For every $\mf{q} \le \mf{p}$, there holds $f_\mf{p} \circ \ph_{\mf{p}\mf{q}} = f_{\mf{q}}|_{U_{\mf{p}\mf{q}}}$. 
\end{itemize} 
\end{defn} 

Let $\mca{K}$ be a support system of $\wt{\mca{U}}$
and $\wt{\mf{S}}$ be a CF-perturbation of $(\wt{\mca{U}}, \mca{K})$
(see Definitions 5.6 and 7.47 in \cite{FOOO_Kuranishi}). 
Here we recall the definition of support system
(in the case $Z=X$): 

\begin{defn}
A support system of $\wt{\mca{U}}$ is $\mca{K} = (\mca{K}_{\mf{p}}) _{\mf{p} \in \mf{P}}$ 
where $\mca{K}_{\mf{p}}$ is a compact set of $U_{\mf{p}}$ for each $\mf{p} \in \mf{P}$
which is a closure of an open subset $\mtrg{\mca{K}}_{\mf{p}}$, 
and $\bigcup_{\mf{p} \in \mf{P}} \psi_{\mf{p}}(\mtrg{\mca{K}}_{\mf{p}} \cap s^{-1}_{\mf{p}}(0)) =X$. 
\end{defn}

We assume that $\wt{\mf{S}}$ is transversal to $0$, 
and $\ev_0 \circ \wt{f}: (X, \wt{\mca{U}})  \to L$ is strongly submersive with respect to $\wt{\mf{S}}$. 
Also, let $\wt{\omega} =  (\omega_{\mf{p}})_{\mf{p} \in \mf{P}}$ be a differential form on $(X, \wt{\mca{U}})$. 
Given these data, we are going to define
\begin{equation}\label{171108_2} 
\wt{f} _*(X, \wt{\mca{U}}, \wt{\omega}, \wt{\mf{S}}^\ep) \in C^\dR_*(\mca{L}_{k+1})
\end{equation} 
for sufficiently small $\ep>0$. 
Note that the support system $\mca{K}$ is a part of the data to define (\ref{171108_2}), 
though it is implicit in the above formula. 
\begin{rem} 
In contrast to the case of single K-charts, where de Rham chain is defined for all $\ep \in (0, 1]$, 
the de Rham chain (\ref{171108_2}) is defined only when $\ep>0$ is sufficiently small. 
\end{rem} 

In Section 8.2.1 we state and prove some technical lemmas. 
In Section 8.2.2 we define (\ref{171108_2}) and check its well-definedness, invariance under GG-embedding, and Stokes' formula. 
We only consider spaces with GCS \textit{without} boundaries, since generalization to spaces with boundaries (and corners) is straightforward. 
The arguments of this subsection partly follow arguments in Sections 7.5--7.7 in \cite{FOOO_Kuranishi} with some modifications. 

\subsubsection{Technical lemmas} 

Recall that $|\mca{K}| := \bigg( \bigsqcup_{\mf{p} \in \mf{P}} \mca{K}_{\mf{p}} \bigg) / \sim$ equipped with the 
topology from $|\wt{\mca{U}}|$ is metrizable; see Definition 5.6 (3) in \cite{FOOO_Kuranishi}. 
We fix a metric $d$ on $|\mca{K}|$ which is compatible with this topology. 

For any $S \subset |\mca{K}|$ and $\delta>0$ we set 
\[
B_\delta(S) := \{ x \in |\mca{K}| \mid d(S, x) < \delta\}, \qquad
\bar{B}_\delta(S):= \{ x \in |\mca{K}| \mid d(S, x) \le \delta \}.
\]

Recall the notion of ``support set'' of CF-perturbations from Definition 7.72 in \cite{FOOO_Kuranishi}: 
\begin{quote} 
For each $\mf{p} \in \mf{P}$, let $\{ (\mf{V}_{\mf{r}}, \mca{S}_{\mf{r}}) \mid \mf{r} \in \mf{R}\}$ be a representative of the CF-perturbation $\mf{S}_{\mf{p}}$ of $\mca{U}_{\mf{p}}$. 
Then for each $\ep \in (0,1]$, we define $\Pi ( (\mf{S}^\ep_{\mf{p}})^{-1}(0)))$ to be the set consisting of all $x \in U_{\mf{p}}$ such that 
there exists $\mf{r} \in \mf{R}$, $y \in V_{\mf{r}}$, $\xi \in W_{\mf{r}}$ such that 
\[ 
\phi_{\mf{r}} (y) = x, \qquad s^\ep_{\mf{r}}(y, \xi)=0, \qquad \xi \in \supp \eta_{\mf{r}}. 
\]
This definition is independent of the choice of representative. 
Then we define 
\[ 
\Pi ((\wt{\mf{S}}^\ep)^{-1}(0)):= \bigcup_{\mf{p} \in \mf{P}} (\mca{K}_{\mf{p}} \cap \Pi ( (\mf{S}^\ep_{\mf{p}})^{-1}(0))) \subset |\mca{K}|
\]
and call it the \textit{support set} of $\wt{\mf{S}}^\ep$. 
\end{quote} 

\begin{lem}\label{170329_1} 
For any neighborhood $U$ of $\bigcup_{\mf{p} \in \mf{P}} (\mca{K}_{\mf{p}} \cap s_{\mf{p}}^{-1}(0))$ in $|\mca{K}|$, 
there exists $\ep_0>0$ such that 
$0 < \ep  < \ep_0 \implies \Pi ((\wt{\mf{S}}^\ep)^{-1}(0)) \subset U$. 
\end{lem}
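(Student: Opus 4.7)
The plan is to argue by contradiction. Suppose the conclusion fails; then there exist sequences $\ep_n \downarrow 0$ and $x_n \in \Pi((\wt{\mf{S}}^{\ep_n})^{-1}(0))$ with $x_n \notin U$. Since the index set $\mf{P}$ is finite (as $X$ is compact) and $|\mca{K}|$ is compact, after passing to a subsequence I may assume that all $x_n$ lie in a single $\mca{K}_{\mf{p}}$ and that $x_n$ converges in $|\mca{K}|$ to some point $x_\infty$. Because $|\mca{K}| \setminus U$ is closed we have $x_\infty \notin U$; in particular $x_\infty \notin s_{\mf{p}}^{-1}(0)$ by the hypothesis on $U$.

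Next, fix a representative $\{(\mf{V}_{\mf{r}}, \mca{S}_{\mf{r}})\}_{\mf{r} \in \mf{R}}$ of the CF-perturbation $\mf{S}_{\mf{p}}$ on $\mca{U}_{\mf{p}}$. By the definition of the support set $\Pi$, for each $n$ there exist $\mf{r}_n \in \mf{R}$, $y_n \in V_{\mf{r}_n}$ and $\xi_n \in \supp \eta_{\mf{r}_n}$ with $\phi_{\mf{r}_n}(y_n) = x_n$ and $\mf{s}^{\ep_n}_{\mf{r}_n}(y_n, \xi_n) = 0$. Since $\mca{K}_{\mf{p}}$ is compact and covered by the open sets $\phi_{\mf{r}}(V_{\mf{r}})$, I extract a finite subcover and, passing to a subsequence, arrange that $\mf{r}_n = \mf{r}$ is a single constant index and that all $x_n$ lie in $\phi_{\mf{r}}(K)$ for some compact $K \subset V_{\mf{r}}$. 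A further subsequence then gives $y_n \to y_\infty \in K$ with $\phi_{\mf{r}}(y_\infty) = x_\infty$, and since $\supp \eta_{\mf{r}}$ is compact I can also assume $\xi_n \to \xi_\infty \in \supp \eta_{\mf{r}}$.

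The final step uses the key property built into CF-perturbations: $\mf{s}^{\ep}_{\mf{r}}(y, \xi) \to s_{\mf{r}}(y)$ in the compact $C^1$ topology as $\ep \to 0$. Passing to the limit in the identity $\mf{s}^{\ep_n}_{\mf{r}}(y_n, \xi_n) = 0$ therefore yields $s_{\mf{r}}(y_\infty) = 0$. But then $x_\infty = \phi_{\mf{r}}(y_\infty) \in \mca{K}_{\mf{p}} \cap s_{\mf{p}}^{-1}(0) \subset U$, contradicting $x_\infty \notin U$.

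The main obstacle is purely a bookkeeping one: one must juggle the finite cover of $\mca{K}_{\mf{p}}$ by chart images in order to stabilize the index $\mf{r}$ along the subsequence, and combine this with the compactness of the fiber supports $\supp \eta_{\mf{r}}$ to extract the limit $(y_\infty, \xi_\infty)$. Once these compactness choices are in place, the convergence statement is immediate from the $C^1$-approximation property that defines a CF-perturbation, so no further analytic input is required.
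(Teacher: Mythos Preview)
Your proof is correct and follows essentially the same strategy as the paper: argue by contradiction, extract a convergent subsequence in a single $\mca{K}_{\mf{p}}$, and use the compact $C^1$ convergence $\mf{s}^\ep_{\mf{r}} \to s_{\mf{r}}$ together with compactness of $\supp \eta_{\mf{r}}$ to conclude that the limit point satisfies $s_{\mf{p}}(x_\infty)=0$, yielding the contradiction. The only cosmetic difference is that the paper picks a single manifold chart containing the limit point $x_\infty$ and observes that $x_n$ eventually lies in it (invoking that the support set is independent of the chosen representative), whereas you first stabilize the chart index via a finite subcover and pigeonhole; both routes lead to the same local computation.
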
 
\begin{proof}
If this is not the case, there exists a sequence $(\ep_m)_{m \ge 1}$ of positive real numbers converging to $0$, 
and a sequence $(x_m)_{m \ge 1}$ such that $x_m \in  \Pi ( (\wt{\mf{S}}^{\ep_m})^{-1}(0)) \setminus U$ for every $m \ge 1$. 
Up to subsequence $(x_m)_m$ has a limit in $|\mca{K}|$ which we denote by $x$. 

There exists $\mf{q} \in \mf{P}$ such that $x_m \in \mca{K}_{\mf{q}}$ for infinitely many $m$, thus we may assume that $x_m \in \mca{K}_{\mf{q}}$ for all $m$, 
then $x \in \mca{K}_{\mf{q}}$. It is sufficient to show that $s_{\mf{q}}(x)=0$, 
which implies $x_m \in U$ for sufficiently large $m$, a contradiction. 

Take a manifold chart $(V, E, \phi, \wh{\phi})$ of $(U_{\mf{q}}, \mca{E}_{\mf{q}})$ such that $x \in \phi(V)$
and $\mf{S}_{\mf{q}}$ is locally represented by $(W, \eta, (\mf{s}^\ep)_\ep)$
where $\mf{s}^\ep: V \times W \to E$ and $\eta$ is a compactly supported form on $W$. 
Let $s: V \to E$ denote the pullback of $s_{\mf{q}}$ by $\phi$. 
Then $\mf{s}^\ep(y, \xi) \to s(y) \,(y \in V, \, \xi \in W)$
as $\ep \to 0$ in the compact $C^1$ topology on $V \times W$. 

Take $y \in V$ so that $\phi(y)=x$ and 
$y_m \in V$ for sufficiently large $m$ so that $\phi(y_m)=x_m$. 
Then $y_m \to y$ and 
there exists $\xi_m \in \supp \eta$ such that $\mf{s}^{\ep_m}(y_m, \xi_m)=0$.
Since $\supp \eta$ is compact, this shows that $s(y) = 0$, 
which implies $s_{\mf{q}}(x)=0$. 
\end{proof} 

\begin{lem}\label{170330_1}
For any support system $\mca{K'} < \mca{K}$
(see Definition 5.6 (2) in \cite{FOOO_Kuranishi}), 
there exist $\delta>0$ and $\ep_0>0$ such that 
\[ 
0 < \ep  < \ep_0 \implies 
B_\delta ( \mca{K}'_{\mf{p}}) \cap \Pi ( (\wt{\mf{S}}^\ep)^{-1}(0)) \subset \mca{K}_{\mf{p}} \, (\forall \mf{p} \in \mf{P}).
\]
\end{lem}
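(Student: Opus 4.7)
The plan is to argue by contradiction, combining the compactness of the pieces of the support system, the finiteness of $\mf{P}$, and the openness of coordinate change domains in the GCS.

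Suppose the conclusion fails. Then there exist sequences $\delta_m \to 0$, $\ep_m \to 0$, indices $\mf{p}_m \in \mf{P}$, and points
\[
x_m \in B_{\delta_m}(\mca{K}'_{\mf{p}_m}) \cap \Pi ( (\wt{\mf{S}}^{\ep_m})^{-1}(0)) \setminus \mca{K}_{\mf{p}_m}.
\]
Since $\mf{P}$ is finite we may assume $\mf{p}_m = \mf{p}$ is constant. By the definition of the support set each $x_m$ has a representative $x_m^{\mf{q}_m} \in \mca{K}_{\mf{q}_m}$, and we may again assume $\mf{q}_m = \mf{q}$ is constant. Compactness of $\mca{K}'_{\mf{p}}$ and of $\mca{K}_{\mf{q}}$ allows us to pass to a further subsequence so that $x_m \to x$ in $|\mca{K}|$, where $x$ carries two representatives: $x^{\mf{p}} \in \mca{K}'_{\mf{p}}$ (as the limit of points in $\mca{K}'_{\mf{p}}$ within distance $\delta_m$ of $x_m$) and $x^{\mf{q}} \in \mca{K}_{\mf{q}}$ (as the limit of $x_m^{\mf{q}}$, using continuity of the quotient $\mca{K}_{\mf{q}} \to |\mca{K}|$).

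Next I would invoke the GCS structure. The equivalence relation defining $|\mca{K}|$ is generated by coordinate changes, so the existence of the compatible representatives $x^{\mf{p}}$ and $x^{\mf{q}}$ forces $\mf{p}$ and $\mf{q}$ to be comparable in the partial order on $\mf{P}$. Consider the case $\mf{p} \le \mf{q}$, so that $\ph_{\mf{q}\mf{p}} : U_{\mf{q}\mf{p}} \to U_{\mf{q}}$ is an open embedding with $\ph_{\mf{q}\mf{p}}(x^{\mf{p}}) = x^{\mf{q}}$. Since $x_m^{\mf{q}} \to x^{\mf{q}} \in \ph_{\mf{q}\mf{p}}(U_{\mf{q}\mf{p}})$ and the image is open in $U_{\mf{q}}$, for large $m$ there exists $y_m \in U_{\mf{q}\mf{p}} \subset U_{\mf{p}}$ with $\ph_{\mf{q}\mf{p}}(y_m) = x_m^{\mf{q}}$ and $y_m \to x^{\mf{p}}$. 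Because $\mca{K}' < \mca{K}$ means $\mca{K}'_{\mf{p}} \subset \interior \mca{K}_{\mf{p}}$, the limit $x^{\mf{p}}$ lies in the interior of $\mca{K}_{\mf{p}}$, so $y_m \in \mca{K}_{\mf{p}}$ for all large $m$. This gives a representative of $x_m$ in $\mca{K}_{\mf{p}}$, contradicting $x_m \notin \mca{K}_{\mf{p}}$. The case $\mf{q} \le \mf{p}$ is symmetric: one applies $\ph_{\mf{p}\mf{q}}$ to $x_m^{\mf{q}}$, noting that $\ph_{\mf{p}\mf{q}}(x^{\mf{q}}) = x^{\mf{p}} \in \interior \mca{K}_{\mf{p}}$ and using openness of $U_{\mf{p}\mf{q}}$.

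The main technical obstacle will be confirming the direct comparability of $\mf{p}$ and $\mf{q}$ used above, rather than having to chase a chain of coordinate changes through intermediate indices. This is handled by the GCS axioms in the form used in \cite{FOOO_Kuranishi}; if a chain $\mf{r}_0 = \mf{p}, \mf{r}_1, \ldots, \mf{r}_N = \mf{q}$ of consecutively comparable indices is needed to identify $x^{\mf{p}}$ with $x^{\mf{q}}$, the openness argument above iterates along the chain, and finiteness of $\mf{P}$ makes the induction terminate. Lemma \ref{170329_1} can serve as a preliminary reduction, pushing the $x_m$ close to $\bigcup_{\mf{p}}(\mca{K}_{\mf{p}} \cap s_{\mf{p}}^{-1}(0))$, where the identification of charts via GCS axioms is most transparent.
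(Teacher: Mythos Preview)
Your argument has a genuine gap in the case $\mf{p} \le \mf{q}$. You assert that $\ph_{\mf{q}\mf{p}} : U_{\mf{q}\mf{p}} \to U_{\mf{q}}$ is an \emph{open} embedding, but in a GCS the coordinate change $\Phi_{\mf{q}\mf{p}}$ is only required to be an embedding of K-charts, and typically $\dim U_{\mf{p}} < \dim U_{\mf{q}}$; then $\ph_{\mf{q}\mf{p}}(U_{\mf{q}\mf{p}})$ is a positive-codimension submanifold of $U_{\mf{q}}$. Hence the points $x_m^{\mf{q}} \in \mca{K}_{\mf{q}}$, although converging to a point on that submanifold, need not lie on it, and your preimages $y_m \in U_{\mf{p}}$ need not exist. (When $\dim U_{\mf{p}} = \dim U_{\mf{q}}$, or in the case $\mf{q} \le \mf{p}$, your argument does work, matching two of the three cases the paper treats.)

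The missing ingredient is precisely the piece of the K-chart embedding axiom you never invoke: at the limit point $x$ one has $s_{\mf{q}}(x)=0$ (by Lemma \ref{170329_1}) and the normal derivative $D_x s_{\mf{q}} : T_xU_{\mf{q}}/T_xU_{\mf{p}} \to (\mca{E}_{\mf{q}})_x/(\mca{E}_{\mf{p}})_x$ is an isomorphism. Combined with the $C^1$-convergence of the CF-perturbation to $s_{\mf{q}}$, this forces $x_m \notin \Pi((\mf{S}^{\ep_m}_{\mf{q}})^{-1}(0))$ once $x_m \notin \ph_{\mf{q}\mf{p}}(U_{\mf{q}\mf{p}})$, which yields the contradiction. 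The paper carries this out in two steps, first introducing an intermediate support system $\mca{K}''$ with $\mca{K}' < \mca{K}'' < \mca{K}$ and showing $\bar{B}_\delta(\mca{K}'_{\mf{p}}) \cap \bigcup_{\mf{q}} (\mca{K}_{\mf{q}} \cap s_{\mf{q}}^{-1}(0)) \subset \mca{K}''_{\mf{p}}$; this is what pins the limit $x$ inside $\mtrg{\mca{K}}_{\mf{p}}$ and makes the normal-direction argument available. Incidentally, your worry about chaining through intermediate indices is unnecessary: in a GCS the relation $\sim$ is defined by \emph{direct} coordinate changes and is already assumed to be an equivalence relation, so $\mf{p}$ and $\mf{q}$ are automatically comparable.
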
 
\begin{proof}
We take a support system $\mca{K}''$ such that 
$\mca{K}' < \mca{K}'' < \mca{K}$. 

\textbf{Step 1.} We first show that there exists $\delta>0$ such that 
\[ 
\bar{B}_\delta ( \mca{K}'_{\mf{p}}) \cap \bigcup_{\mf{q} \in \mf{P}} (\mca{K}_{\mf{q}} \cap s_{\mf{q}}^{-1}(0)) \subset \mca{K}''_{\mf{p}} \,(\forall \mf{p} \in \mf{P}). 
\]
Suppose that this is not the case. 
Then there exist $\mf{p} \in \mf{P}$ and a sequence $(x_m)_{m \ge 1}$ on $\bigcup_{\mf{q} \in \mf{P}} (\mca{K}_{\mf{q}} \cap s_{\mf{q}}^{-1}(0)) \setminus \mca{K}''_{\mf{p}}$
such that $d(x_m, \mca{K}'_{\mf{p}}) \to 0$ as $m \to \infty$. 

Since $\mf{P}$ is a finite set, there exists $\mf{q} \in \mf{P}$ such that $x_m \in \mca{K}_{\mf{q}} \cap s_{\mf{q}}^{-1}(0)$ for infinitely many $m$. 
We may assume that $x_m \in \mca{K}_{\mf{q}} \cap s_{\mf{q}}^{-1}(0)$ for all $m$, and 
there exists $x \in \mca{K}_{\mf{q}} \cap \mca{K}'_{\mf{p}}$ such that $d(x_m, x) \to 0$. 
Then $s_{\mf{q}}(x) = 0$, since $x_m \to x$ in $\mca{K}_{\mf{q}}$ and $s_{\mf{q}}(x_m) = 0$ for all $m$. 

Since $\mca{K}_{\mf{q}} \cap \mca{K}_{\mf{p}} \ne \emptyset$ in $|\wt{\mca{U}}|$, 
at least one of the following three cases holds: 
\begin{itemize} 
\item $\mf{p} \ge \mf{q}$: This case $\mca{K}_{\mf{q}} \setminus U_{\mf{p}}$ is compact, thus $d(\mca{K}_{\mf{q}} \setminus U_{\mf{p}}, \mca{K}'_{\mf{p}})>0$. 
Since $\lim_{m \to \infty} x_m = x$ and $x \in \mca{K}'_{\mf{p}}$, we obtain $x_m \in U_{\mf{p}}$ for sufficiently large $m$. 
Since $\mca{K}'_{\mf{p}} \subset \mtrg{\mca{K}}''_{\mf{p}}$ (this follows from $\mca{K}' < \mca{K}''$; see Definition 5.6 (2) in \cite{FOOO_Kuranishi}), 
we obtain $x_m \in \mca{K}''_{\mf{p}}$ for sufficiently large $m$, contradicting our assumption. 
\item $\mf{p} \le \mf{q}$ and $\dim U_{\mf{p}} = \dim U_{\mf{q}}$: This case $\mca{K}_{\mf{q}} \setminus U_{\mf{p}}$ is compact 
since $\Phi_{\mf{q}\mf{p}}$ is an open embedding, and we obtain a contradiction as in the previous case. 
\item $\mf{p} < \mf{q}$ and $\dim U_{\mf{p}} < \dim U_{\mf{q}}$: 
Since $\lim_{m \to \infty} x_m = x \in \mca{K}'_{\mf{p}} \subset  \mtrg{\mca{K}}''_{\mf{p}}$ and $x_m \notin \mca{K}''_{\mf{p}}$ for every $m$, 
we obtain $x_m \notin \ph_{\mf{q}\mf{p}}(U_{\mf{q}\mf{p}})$ for sufficiently large $m$. 
On the other hand $x = \lim_{m \to \infty} x_m$ satisfies $s_{\mf{q}}(x)=0$ and 
$D_x s_{\mf{q}} : T_xU_{\mf{q}}/ T_x U_{\mf{p}} \to (\mca{E}_{\mf{q}})_x/(\mca{E}_{\mf{p}})_x$
is an isomorphism, 
which implies that $s_{\mf{q}}(x_m) \ne 0$
for sufficiently large every $m$, 
contradicting our assumption. 
\end{itemize} 

\textbf{Step 2.} 
Taking $\delta$ as in Step 1, we prove that there exists $\ep_0>0$ such that 
\[
0 < \ep < \ep_0 \implies B_\delta(\mca{K}'_{\mf{p}}) \cap \Pi ( (\wt{\mf{S}}^\ep)^{-1}(0)) \subset  \mca{K}_{\mf{p}} \quad(\forall \mf{p} \in \mf{P}). 
\]
If this is not the case, there exist
a sequence $(\ep_m)_{m \ge 1} $ of positive real numbers converging to $0$, 
$\mf{p} \in \mf{P}$, and 
a sequence $(x_m)_{m \ge 1}$ such that
\[ 
x_ m \in (B_\delta(\mca{K}'_{\mf{p}}) \cap \Pi ( (\wt{\mf{S}}^{\ep_m})^{-1}(0)) ) \setminus \mca{K}_{\mf{p}}
\] 
for every $m$. 
Up to subsequence $(x_m)_m$ has a limit $x$ in $|\mca{K}|$. 
By Lemma \ref{170329_1}, $x \in \bigcup_{\mf{q} \in \mf{P}} (\mca{K}_{\mf{q}} \cap s_{\mf{q}}^{-1}(0))$. 
Also $x \in \bar{B}_\delta(\mca{K}'_{\mf{p}})$, thus by Step 1 we get $x \in \mca{K}''_{\mf{p}}$. 

There exists $\mf{q} \in \mf{P}$ such that $x_m \in \mca{K}_{\mf{q}}$ for infinitely many $m$, 
then we may assume that $x_m \in \mca{K}_{\mf{q}}$ for all $m$, in particular $x \in \mca{K}_{\mf{q}}$. 
Then $\mca{K}_{\mf{q}} \cap \mca{K}''_{\mf{p}} \ne \emptyset$, 
thus at least one of the following three cases holds: 
\begin{itemize} 
\item $\mf{p} \ge \mf{q}$: 
This case $\mca{K}_{\mf{q}} \setminus U_{\mf{p}}$ is compact, thus $d(\mca{K}_{\mf{q}} \setminus U_{\mf{p}}, \mca{K}''_{\mf{p}})>0$. 
Since $\lim_{m \to \infty} x_m = x \in \mca{K}''_{\mf{p}}$, 
we obtain $x_m \in U_{\mf{p}}$ for sufficiently large $m$. 
By $\mca{K}''_{\mf{p}} \subset \mtrg{\mca{K}}_{\mf{p}}$, 
we obtain $x_m \in \mca{K}_{\mf{p}}$ for sufficiently large $m$, contradicting our assumption. 
\item $\mf{p} \le  \mf{q}$ and $\dim U_{\mf{p}} = \dim U_{\mf{q}}$: 
Again $\mca{K}_{\mf{q}} \setminus U_{\mf{p}}$ is compact, and we obtain a contradiction as in the previous case. 
\item $\mf{p} < \mf{q}$ and $\dim U_{\mf{p}} < \dim U_{\mf{q}}$: 
Since $\lim_{m \to \infty} x_m = x \in \mtrg{\mca{K}_{\mf{p}}}$ and 
$x_m \notin \mca{K}_{\mf{p}}$ for every $m \ge 1$, 
we obtain $x_m \notin \ph_{\mf{q}\mf{p}}(U_{\mf{q}\mf{p}})$ for sufficiently large $m$. 
Since the CF-perturbation $\mf{S}^{\ep_m}_{\mf{q}}$ converges to $s_{\mf{q}}$ in compact $C^1$-topology, 
$s_{\mf{q}}(x)=0$ thus 
$D_x s_{\mf{q}} : T_xU_{\mf{q}}/ T_x U_{\mf{p}} \to (\mca{E}_{\mf{q}})_x/(\mca{E}_{\mf{p}})_x$ is an isomorphism, 
we obtain $x_m \notin \Pi ((\mf{S}_{\mf{q}}^{\ep_m})^{-1}(0))$ for sufficiently large $m$, 
contradicting our assumption. 
\end{itemize} 
\end{proof} 

\begin{lem} 
For sufficiently small $\delta>0$ there holds
\[ 
\mf{q} < \mf{p} \implies B_\delta(\mca{K}'_{\mf{p}}) \cap \mca{K}_{\mf{q}} \subset U_{\mf{p}\mf{q}}. 
\]
\end{lem}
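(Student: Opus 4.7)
The plan is to reduce this to a statement about disjointness of two compact sets in the metrizable space $|\mca{K}|$, then extract a positive distance.

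First I would fix a pair $\mf{q} < \mf{p}$ in the finite index set $\mf{P}$ and consider the two subsets $A_{\mf{p},\mf{q}} := \mca{K}_{\mf{q}} \setminus U_{\mf{p}\mf{q}}$ and $\mca{K}'_{\mf{p}}$ of $|\mca{K}|$. Since $U_{\mf{p}\mf{q}}$ is an open subset of $U_{\mf{q}}$, the set $A_{\mf{p},\mf{q}}$ is closed in the compact $\mca{K}_{\mf{q}}$, hence compact in $|\mca{K}|$; likewise $\mca{K}'_{\mf{p}}$ is compact in $|\mca{K}|$, and in particular both are closed in the Hausdorff space $|\mca{K}|$.

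The key step is to check that $A_{\mf{p},\mf{q}} \cap \mca{K}'_{\mf{p}} = \emptyset$ as subsets of $|\mca{K}|$. Suppose a point $x \in \mca{K}_{\mf{q}}$ and a point $y \in \mca{K}'_{\mf{p}} \subset \mca{K}_{\mf{p}}$ represent the same element in $|\mca{K}|$. Because $\mf{q} < \mf{p}$, the equivalence relation defining $|\wt{\mca{U}}|$ identifies $x$ and $y$ precisely through the coordinate change $\Phi_{\mf{p}\mf{q}}$; any chain of identifications via intermediate charts $\mf{q} < \mf{r} < \mf{p}$ is forced by the cocycle condition to factor through $\ph_{\mf{p}\mf{q}}$. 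Therefore $x \in U_{\mf{p}\mf{q}}$ and $\ph_{\mf{p}\mf{q}}(x) = y$, so $x \notin A_{\mf{p},\mf{q}}$. This rules out intersection.

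Given disjointness, compactness of $A_{\mf{p},\mf{q}}$ and closedness of $\mca{K}'_{\mf{p}}$ in the metric space $(|\mca{K}|, d)$ imply $\delta_{\mf{p},\mf{q}} := d(A_{\mf{p},\mf{q}}, \mca{K}'_{\mf{p}}) > 0$. Since $\mf{P}$ is finite, there are only finitely many pairs $\mf{q} < \mf{p}$, and we may set $\delta := \min_{\mf{q} < \mf{p}} \delta_{\mf{p},\mf{q}} > 0$. For this $\delta$, any $x \in B_\delta(\mca{K}'_{\mf{p}}) \cap \mca{K}_{\mf{q}}$ satisfies $d(x, \mca{K}'_{\mf{p}}) < \delta \le \delta_{\mf{p},\mf{q}}$, hence $x \notin A_{\mf{p},\mf{q}}$, i.e.\ $x \in U_{\mf{p}\mf{q}}$, as required.

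The main (only substantive) obstacle is the disjointness claim in the middle step: one must argue that identifications in $|\mca{K}|$ between points of $\mca{K}_{\mf{q}}$ and $\mca{K}_{\mf{p}}$ with $\mf{q} < \mf{p}$ occur only via $\ph_{\mf{p}\mf{q}}$. This is a structural property of the good coordinate system (cocycle condition and the fact that each quotient map $\mca{K}_{\mf{p}} \to |\mca{K}|$ is an embedding), and once invoked the rest is a standard compactness argument parallel in spirit to Step 1 of Lemma \ref{170330_1}.
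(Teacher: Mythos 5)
Your argument is correct and takes essentially the same route as the paper's: both reduce the statement to the fact that $\mca{K}_{\mf{q}} \setminus U_{\mf{p}\mf{q}}$ is compact and disjoint from $\mca{K}'_{\mf{p}}$ in $|\mca{K}|$, hence at positive distance, with a final minimum over the finitely many pairs $\mf{q} < \mf{p}$. The paper phrases the extraction of $\delta$ by contradiction with a sequence $\delta_m \to 0$ and a compactness/limit argument, leaving the disjointness (the assertion $d(x, \mca{K}'_{\mf{p}}) > 0$) implicit, whereas you verify that disjointness directly from the definition of $\sim$ on $\bigsqcup_{\mf{p}} U_{\mf{p}}$ and then argue via the positive distance between a compact set and a disjoint closed set; this is the same content stated in a slightly cleaner order.
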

\begin{proof}
Suppose that this is not the case. 
Then there exist $\mf{p}$, $\mf{q}$ such that 
$\mf{q} < \mf{p}$ and a sequence $(\delta_m)_m$ converging to $0$ 
and a sequence $(x_m)_m$ such that $x_m \in (B_{\delta_m}(\mca{K}'_{\mf{p}}) \cap \mca{K}_{\mf{q}}) \setminus U_{\mf{p}\mf{q}}$
for every $m$. 
Then the sequence $(x_m)_m$ has a limit point $x \in \mca{K}_{\mf{q}} \setminus U_{\mf{p}\mf{q}}$, 
then $d(x, \mca{K}'_{\mf{p}})>0$, 
contradicting $x_m \in B_{\delta_m}(\mca{K}'_{\mf{p}})\,(\forall m)$. 
\end{proof} 

\subsubsection{Definition and well-definedness}

We start from data $X$, $\wt{\mca{U}}$, $\wt{\omega}$, $\mca{K}$, $\wt{\mf{S}}$ and $\wt{f}$. 
We take the following choices
(our definition of partition of unity is slightly different from that of \cite{FOOO_Kuranishi}, Definition 7.64). 
\begin{itemize}
\item A support system $\mca{K}' < \mca{K}$.
\item A positive real number $\delta$ such that 
\begin{itemize}
\item There exists $\ep>0$ such that 
\[ 
0 < \ep'   < \ep  \implies 
B_{\delta}   ( \mca{K}'_{\mf{p}}) \cap \Pi ( (\wt{\mf{S}}^{\ep'})^{-1}(0)) \subset \mca{K}_{\mf{p}} \quad (\forall \mf{p} \in \mf{P}).
\]
\item $2 \delta < d(\mca{K}'_{\mf{p}}, \mca{K}'_{\mf{q}})$ for any $\mf{p}, \mf{q} \in \mf{P}$ such that $U_{\mf{p}} \cap U_{\mf{q}} = \emptyset$. 
\item $\mf{q} < \mf{p} \implies B_{\delta}(\mca{K}'_{\mf{p}}) \cap \mca{K}_{\mf{q}} \subset U_{\mf{p}\mf{q}}$.
\item  $\mca{K}'_{\mf{p}}(\delta):= \{ x \in \mca{K}_{\mf{p}} \mid d(\mca{K}'_{\mf{p}}, x) \le \delta \} \subset \mtrg{\mca{K}_{\mf{p}}}$ for every $\mf{p} \in \mf{P}$. 
\end{itemize}
\item 
Partition of unity $\chi = (\chi_{\mf{p}})_{\mf{p} \in \mf{P}}$ of $(X, \wt{\mca{U}}, \mca{K}', \delta)$. 
Namely, the following conditions are satisfied: 
\begin{itemize}
\item $\chi_{\mf{p}} : |\mca{K}| \to [0,1]$ is a strongly smooth function for every $\mf{p} \in \mf{P}$.
Namely, for every $\mf{q} \in \mf{P}$, 
$\chi_{\mf{p}}|_{\mca{K}_{\mf{q}}}$ can be extended to a $C^\infty$-function defined on a neighborhood of $\mca{K}_{\mf{q}}$ in $U_{\mf{q}}$. 
\item $\supp \chi_{\mf{p}} \subset B_{\delta}(\mca{K}'_{\mf{p}})$ for every $\mf{p} \in \mf{P}$. 
\item $\sum_{\mf{p} \in \mf{P}} \chi_{\mf{p}} = 1$ on a neighborhood of $\bigcup_{\mf{p} \in \mf{P}} s_{\mf{p}}^{-1}(0) \subset |\mca{K}|$. 
\end{itemize}
\end{itemize}
Then we define 
\[ 
\wt{f}_*(X, \wt{\mca{U}}, \wt{\omega}, \wt{\mf{S}}^\ep):= \sum_{\mf{p} \in \mf{P}} (f_{\mf{p}})_*( \mca{U}_{\mf{p}}, \chi_{\mf{p}} \omega_{\mf{p}} , \mf{S}^\ep_{\mf{p}}).
\]
$ (f_{\mf{p}})_*( \mca{U}_{\mf{p}}, \chi_{\mf{p}} \omega_{\mf{p}} , \mf{S}^\ep_{\mf{p}})$
in the RHS makes sense, since for every $\mf{p} \in \mf{P}$ there holds 
\[ 
\supp \chi_{\mf{p}} \cap U_{\mf{p}} \subset 
B_\delta(\mca{K}'_{\mf{p}}) \cap U_{\mf{p}} \subset 
\mca{K}'_{\mf{p}} (\delta) \subset \mtrg{\mca{K}}_{\mf{p}}
\]
and $\mf{S}_{\mf{p}} \in \mca{S}(\mca{K}_{\mf{p}})$. 
We need to recall the following definition (Definition 7.79 in \cite{FOOO_Kuranishi}): 

\begin{defn}\label{180301_1} 
Let $\mca{A}$ and $\mca{X}$ be sets, and 
$(F_a)_{a \in \mca{A}}$ be a family of maps such that 
$F_a: (0, \ep_a) \to \mca{X}$ for each $a \in \mca{A}$. 
We say that $F_a$ is independent on choices of $a$ in the sense of $\spadesuit$ if the following holds:
\begin{quote} 
$\spadesuit$: For any $a_1, a_2 \in \mca{A}$, there exists $0 < \ep_0 < \min \{\ep_{a_1}, \ep_{a_2}\}$ such that 
$F_{a_1}(\ep) = F_{a_2}(\ep)$ for every $\ep \in (0, \ep_0)$.
\end{quote}
\end{defn} 

\begin{lem}\label{170330_1.5}
The above definition of 
$\wt{f}_*(X, \wt{\mca{U}}, \wt{\omega}, \wt{\mf{S}}^\ep)$
is independent on choices of $\mca{K}'$, $\delta$, $\chi$
in the sense of $\spadesuit$. 
\end{lem}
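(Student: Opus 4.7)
The plan follows the strategy of Proposition 7.82 in \cite{FOOO_Kuranishi}, adapted to our de Rham chain setting. Given two valid choices $(\mca{K}'_i, \delta_i, \chi^{(i)})$ for $i = 1, 2$, I first construct a common refinement: a choice $(\mca{K}'_3, \delta_3, \chi^{(3)})$ satisfying the required properties with $\mca{K}'_3 < \mca{K}'_1, \mca{K}'_2$ and $\delta_3 \le \min(\delta_1, \delta_2)$, which exists by Lemma \ref{170330_1} and the standard partition-of-unity construction. It then suffices to prove that each $(\mca{K}'_i, \delta_i, \chi^{(i)})$ gives the same de Rham chain as $(\mca{K}'_3, \delta_3, \chi^{(3)})$ in the sense of $\spadesuit$, so I may assume $\mca{K}'_2 < \mca{K}'_1$ and $\delta_2 \le \delta_1$.

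The central technical ingredient is a coordinate-change compatibility statement: for $\mf{p} \ge \mf{q}$ in $\mf{P}$ and an admissible form $\eta \in \mca{A}^*_c(U_{\mf{q}})$ with $\supp \eta \subset \ph_{\mf{p}\mf{q}}(U_{\mf{p}\mf{q}})$, there exists $\ep_0 > 0$ such that for all $\ep \in (0, \ep_0)$,
\begin{equation*}
(f_{\mf{q}})_*(\mca{U}_{\mf{q}}, \eta, \mf{S}^\ep_{\mf{q}}) = (f_{\mf{p}})_*(\mca{U}_{\mf{p}}, \widetilde{\eta}_{\mf{p}\mf{q}}, \mf{S}^\ep_{\mf{p}}) \quad \in C^\dR_*(\mca{L}_{k+1}),
\end{equation*}
where $\widetilde{\eta}_{\mf{p}\mf{q}}$ extends $(\ph_{\mf{p}\mf{q}})_*\eta$ to a tubular neighborhood via a Thom form of the normal bundle $\mca{E}_{\mf{p}}/\wh\ph_{\mf{p}\mf{q}}(\mca{E}_{\mf{q}})$. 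The proof is local: in a representative of $\mf{S}_{\mf{p}}$ compatible (via the GCS) with a representative of $\mf{S}_{\mf{q}}$, the zero set $(\mf{s}^\ep_{\mf{q},\mf{r}})^{-1}(0)$ embeds as a submanifold in the corresponding zero set for $\mf{S}_{\mf{p}}$, and the local formula (\ref{180112_1}) together with the relation defining $Z_N$ produces the same de Rham chain on both sides. Uniformity of $\ep_0$ over the finite set of comparable pairs follows from compactness of $|\mca{K}|$ and Lemma \ref{170329_1}.

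Given this compatibility, I would conclude as follows. Since $\sum_{\mf{q}} \chi^{(2)}_{\mf{q}} \equiv 1$ on a neighborhood of $\bigcup_{\mf{p}} s_{\mf{p}}^{-1}(0)$, and by Lemma \ref{170329_1} the support set of $\wt{\mf{S}}^\ep$ lies in such a neighborhood for small $\ep$, each term $(f_{\mf{p}})_*(\mca{U}_{\mf{p}}, \chi^{(1)}_{\mf{p}} \omega_{\mf{p}}, \mf{S}^\ep_{\mf{p}})$ can be expanded as
\begin{equation*}
\sum_{\mf{q}} (f_{\mf{p}})_*(\mca{U}_{\mf{p}}, \chi^{(1)}_{\mf{p}} \chi^{(2)}_{\mf{q}} \omega_{\mf{p}}, \mf{S}^\ep_{\mf{p}}),
\end{equation*}
and similarly for $\chi^{(2)}$. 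The condition on $\delta_j$ (specifically $\mf{q} < \mf{p} \Rightarrow B_\delta(\mca{K}'_{\mf{p}}) \cap \mca{K}_{\mf{q}} \subset U_{\mf{p}\mf{q}}$) ensures that the form $\chi^{(1)}_{\mf{p}} \chi^{(2)}_{\mf{q}} \omega_{\mf{p}}$ is supported in the coordinate-change overlap whenever the charts are comparable, so the compatibility lemma lets me transfer each such piece to a common (higher) chart. A term-by-term comparison then identifies the two double sums, proving the two definitions agree for small $\ep$.

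The main obstacle is the rigorous verification of the coordinate-change compatibility as an equality in the quotient complex $C^\dR_*(\mca{L}_{k+1})$. One must exhibit the explicit submersion between the relevant zero sets that implements the equivalence defining $Z_N$, match it with the Thom-class extension $\widetilde\eta_{\mf{p}\mf{q}}$, and verify that the orientation conventions of Section 4.2 produce consistent signs. The strong smoothness of $\wt{f}$ ensures that the evaluation maps commute with the GCS coordinate changes, so the geometric setup is automatic; what remains is careful but essentially routine bookkeeping with orientations and Thom forms.
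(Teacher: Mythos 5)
Your overall framework — expand $\chi^{(1)}$ against $\chi^{(2)}$, reduce to a pairwise comparison of $(f_{\mf{p}})_*(\mca{U}_{\mf{p}}, \chi^{(1)}_{\mf{p}}\chi^{(2)}_{\mf{q}}\omega_{\mf{p}}, \mf{S}^\ep_{\mf{p}})$ with $(f_{\mf{q}})_*(\mca{U}_{\mf{q}}, \chi^{(1)}_{\mf{p}}\chi^{(2)}_{\mf{q}}\omega_{\mf{q}}, \mf{S}^\ep_{\mf{q}})$, and dispose of the incomparable pairs via $2\delta < d(\mca{K}'_{\mf{p}},\mca{K}'_{\mf{q}})$ — matches the paper. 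The reduction to a common refinement $\mca{K}'_3 < \mca{K}'_1,\mca{K}'_2$ is harmless but unnecessary: the paper handles arbitrary pairs directly by the same double-sum expansion.

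The ``central technical ingredient'' you propose, however, is off the mark. First, the statement ``the form $\chi^{(1)}_{\mf{p}}\chi^{(2)}_{\mf{q}}\omega_{\mf{p}}$ is supported in the coordinate-change overlap whenever the charts are comparable'' is false when $\mf{q} < \mf{p}$: the image $\ph_{\mf{p}\mf{q}}(U_{\mf{p}\mf{q}})$ is a submanifold of $U_{\mf{p}}$ of positive codimension (equal to $\rk\mca{E}_{\mf{p}} - \rk\mca{E}_{\mf{q}}$), so a nonzero compactly supported form on $U_{\mf{p}}$ cannot have support contained in it. What is actually true, and what Lemma~\ref{170330_1} provides, is that the \emph{intersection of the support with the zero locus $\Pi((\mf{S}^\ep_{\mf{p}})^{-1}(0))$} is contained in $\mca{K}_{\mf{q}}$ for small $\ep$, because the normal directions to the embedding are transversally cut out by the normal part of $s_{\mf{p}}$ (the isomorphism in the definition of K-chart embedding) and CF-perturbations converge to $s_{\mf{p}}$ in compact $C^1$. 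Your proof as written would fail at exactly this step.

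Second, the Thom-form extension $\widetilde\eta_{\mf{p}\mf{q}}$ is both extraneous and not directly applicable. The chain to be compared on the chart $\mca{U}_{\mf{p}}$ uses the form $\chi^{(1)}_{\mf{p}}\chi^{(2)}_{\mf{q}}\omega_{\mf{p}}$ (the fixed global $\wt\omega$ restricted to $U_{\mf{p}}$ and weighted by partition functions), not a Thom-class extension of a form from $U_{\mf{q}}$; so even if your Thom-form identity held, you would still owe an argument relating $\widetilde\eta_{\mf{p}\mf{q}}$ to $\chi^{(1)}_{\mf{p}}\chi^{(2)}_{\mf{q}}\omega_{\mf{p}}$, which would again reduce to the zero-set localization. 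Moreover, in the CF-perturbation framework the Thom class of $\mca{E}_{\mf{p}}/\wh\ph_{\mf{p}\mf{q}}(\mca{E}_{\mf{q}})$ plays no role: the obstruction bundle already accounts for the normal directions via transversality, not via a Thom form. The paper's actual argument in the nontrivial case $\mf{p}_1 < \mf{p}_2$ is: the form $\chi^1_1\chi^2_2\omega_1$ on $U_{\mf{p}_1}$ is literally supported in $U_{\mf{p}_2\mf{p}_1}$ (by $B_\delta(\mca{K}'_{\mf{p}_2}) \cap \mca{K}_{\mf{p}_1} \subset U_{\mf{p}_2\mf{p}_1}$), so both sides pull back to the same chain on $\mca{U}_{21} := \mca{U}_1|_{U_{21}}$: from the $\mf{p}_1$-side by restriction of an open subchart, and from the $\mf{p}_2$-side for small $\ep$ by Lemma~\ref{170330_1} together with the compatibility $\mf{S}_{21} = \ph_{21}^*\mf{S}_2$. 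You would need to recover this mechanism, and the Thom-form framing obscures rather than implements it.
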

\begin{proof} 
Let us take two choices $((\mca{K}')^i, \delta^i, \chi^i )_{i=1,2}$. 
We are going to prove 
\[ 
\sum_{\mf{p} \in \mf{P}} (f_{\mf{p}})_*( \mca{U}_{\mf{p}}, \chi^1_{\mf{p}} \omega_{\mf{p}}, \mf{S}^\ep_{\mf{p}}) = 
\sum_{\mf{p} \in \mf{P}} (f_{\mf{p}})_*( \mca{U}_{\mf{p}}, \chi^2_{\mf{p}} \omega_{\mf{p}}, \mf{S}^\ep_{\mf{p}})
\]
when $\ep$ is sufficiently small. 

By our assumption, $\sum_{\mf{p} \in \mf{P}} \chi^1_{\mf{p}} = \sum_{\mf{p} \in \mf{P}} \chi^2_{\mf{p}} = 1$ on a neighborhood of $\bigcup_{\mf{p} \in \mf{P}} s^{-1}_{\mf{p}}(0)$. 
Thus, Lemma \ref{170329_1} implies that, 
for sufficiently small $\ep>0$ and every $\mf{p} \in \mf{P}$, 
there holds $\sum_{\mf{p} \in \mf{P}} \chi^1_{\mf{p}} = \sum_{\mf{p} \in \mf{P}} \chi^2_{\mf{p}} = 1$
on $\mca{K}_{\mf{p}} \cap \Pi ( (\mf{S}^\ep_{\mf{p}})^{-1}(0))$. 
Then we obtain 
\begin{align*} 
\sum_{\mf{p} \in \mf{P}} (f_{\mf{p}})_*( \mca{U}_{\mf{p}}, \chi^1_{\mf{p}} \omega_{\mf{p}}, \mf{S}^\ep_{\mf{p}})&= \sum_{\mf{p}_1, \mf{p}_2} (f_{\mf{p}_1})_* ( \mca{U}_{\mf{p}_1}, \chi^1_{\mf{p}_1} \chi^2_{\mf{p}_2} \omega_{\mf{p}_1} , \mf{S}^\ep_{\mf{p}_1}) \\
\sum_{\mf{p} \in \mf{P}} (f_{\mf{p}})_*( \mca{U}_{\mf{p}}, \chi^2_{\mf{p}} \omega_{\mf{p}}, \mf{S}^\ep_{\mf{p}})&=
\sum_{\mf{p}_1, \mf{p}_2} (f_{\mf{p}_2})_* ( \mca{U}_{\mf{p}_2}, \chi^1_{\mf{p}_1} \chi^2_{\mf{p}_2} \omega_{\mf{p}_2} , \mf{S}^\ep_{\mf{p}_2}). 
\end{align*} 
Then it is sufficient to show 
\[ 
 (f_{\mf{p}_1})_* ( \mca{U}_{\mf{p}_1}, \chi^1_{\mf{p}_1} \chi^2_{\mf{p}_2} \omega_{\mf{p}_1} , \mf{S}^\ep_{\mf{p}_1}) = 
 (f_{\mf{p}_2})_* ( \mca{U}_{\mf{p}_2}, \chi^1_{\mf{p}_1} \chi^2_{\mf{p}_2} \omega_{\mf{p}_2} , \mf{S}^\ep_{\mf{p}_2})
\] 
for any $\mf{p}_1, \mf{p}_2 \in \mf{P}$. 
We may assume either $\mf{p}_1 \le \mf{p}_2$ or $\mf{p}_1 \ge \mf{p}_2$, 
since otherwise $U_{\mf{p}_1} \cap U_{\mf{p}_2} = \emptyset$, thus 
$\chi^1_{\mf{p}_1} \chi^2_{\mf{p}_2} \equiv 0$ by $2\delta < d(\mca{K}'_{\mf{p}_1}, \mca{K}'_{\mf{p}_2})$. 
Also this equality is clear when $\mf{p}_1 = \mf{p}_2$. 
Thus we may assume that $\mf{p}_1 < \mf{p}_2$. 

In the argument below, 
we abbreviate 
$\mf{p}_i$ by $i$. For example $\mca{U}_{\mf{p}_i}$ is abbreviated by $\mca{U}_i$. 
Let us introduce the following notations: 
\begin{itemize} 
\item $\mca{U}_{21}$ denotes $\mca{U}_1|_{U_{21}}  = (\ph_{21})^* \mca{U}_2$.
\item $f_{21}$ denotes $f_1|_{U_{21}} = f_2 \circ \ph_{21}$. 
\item $\omega_{21}$ denotes $\omega_1|_{U_{21}} = (\ph_{21})^* \omega_2$. 
\item $\mf{S}_{21}$ denotes $\mf{S}_1|_{U_{21}} = (\ph_{21})^* \mf{S}_2$. 
\end{itemize} 
Then there holds 
\[ 
(f_1)_* (\mca{U}_1, \chi^1_1 \chi^2_2 \omega_1 , \mf{S}^\ep_1) = (f_{21})_* (\mca{U}_{21}, \chi^1_1 \chi^2_2 \omega_{21}, \mf{S}^\ep_{21})
\] 
since $\supp \chi^2_{\mf{p}_2} \subset B_\delta(\mca{K}'_{\mf{p}_2})$ and 
$B_\delta(\mca{K}'_{\mf{p}_2}) \cap \mca{K}_{\mf{p}_1} \subset U_{\mf{p}_2\mf{p}_1}$.
On the other hand 
\[ 
(f_2)_* (\mca{U}_2, \chi^1_1 \chi^2_2 \omega_2 , \mf{S}^\ep_2) = (f_{21})_* (\mca{U}_{21}, \chi^1_1 \chi^2_2 \omega_{21}, \mf{S}^\ep_{21})
\] 
when $\ep$ is sufficiently small, 
since $\supp \chi^1_{\mf{p}_1} \subset B_\delta(\mca{K}'_{\mf{p}_1})$ and 
$B_\delta(\mca{K}'_{\mf{p}_1}) \cap \Pi ((\wt{\mf{S}}^\ep)^{-1}(0)) \subset \mca{K}_{\mf{p}_1}$
by Lemma \ref{170330_1}. 
\end{proof}

Next we prove the invariance by GG-embedding 
(for the definition of GG-embedding, see Definition 3.24 in \cite{FOOO_Kuranishi}). 
Lemma \ref{170330_3} below is an analogue of Proposition 9.16 in \cite{FOOO_Kuranishi}. 

\begin{lem}\label{170330_3} 
Let us consider $(\wt{\mca{U}}_i, \mca{K}_i, \wt{\mf{S}}_i, \wt{f}_i, \wt{\omega}_i)_{i=1,2}$
and $\wt{\Phi}$ such that 
\begin{itemize} 
\item For each $i \in \{1,2\}$, the tuple $(\wt{\mca{U}}_i, \mca{K}_i, \wt{\mf{S}}_i, \wt{f}_i, \wt{\omega}_i)$ 
satisfies the conditions to define $(\wt{f}_i)_*(X, \wt{\mca{U}}_i, \wt{\omega}_i, \wt{\mf{S}}^\ep_i)$ for sufficiently small $\ep>0$. 
\item $\wt{\Phi}:  \wt{\mca{U}_1} \to \wt{\mca{U}_2}$ is a GG-embedding. 
Namely, $\wt{\Phi} = (\mf{i}, (\Phi_{\mf{p}})_{\mf{p} \in \mf{P}_1})$ where $\mf{i}: \mf{P}_1 \to \mf{P}_2$ is an order preserving map, 
and $\Phi_{\mf{p}} = (\ph_{\mf{p}}, \wh{\ph}_{\mf{p}}):  (\mca{U}_1)_{\mf{p}} \to (\mca{U}_2)_{\mf{i}(\mf{p})}$ is an embedding of K-charts for each $\mf{p} \in \mf{P}_1$, 
such that compatibilities in Definition 3.24 in \cite{FOOO_Kuranishi} are satisfied. 
\item $\mca{K}_1$, $\mca{K}_2$ and $\wt{\mf{S}}_1$, $\wt{\mf{S}}_2$ are compatible with $\wt{\Phi}$ (see Definition 9.3 (3), (4) in \cite{FOOO_Kuranishi}). 
Moreover $\wt{\omega}_1 = \wt{\Phi}^* \wt{\omega}_2$, $\wt{f}_1 = \wt{f}_2 \circ \wt{\Phi}$. 
\end{itemize} 
Then, for sufficiently small $\ep>0$, there holds 
\[ 
(\wt{f}_1)_*(X, \wt{\mca{U}}_1, \wt{\omega}_1, \wt{\mf{S}}^\ep_1) = 
(\wt{f}_2)_*(X, \wt{\mca{U}}_2, \wt{\omega}_2, \wt{\mf{S}}^\ep_2). 
\]
\end{lem}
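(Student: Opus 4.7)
The plan is to mimic the bookkeeping in the proof of Lemma \ref{170330_1.5}, combined with the compatibility of $\wt{\Phi}$ with all the auxiliary data, in order to reduce to a term-by-term identity between integrals over single K-charts that is visibly true from how $\wt{\Phi}$ pulls back CF-perturbations, differential forms, and $\wt{f}_2$ to the data for $\wt{\mca{U}}_1$.

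First, by Lemma \ref{170330_1.5} both sides are defined in the sense of $\spadesuit$, so we are free to pick any allowed support systems and partitions of unity on each side. Choose support systems $\mca{K}'_i < \mca{K}_i$ on $\wt{\mca{U}}_i$ for $i=1,2$. Shrinking $\delta$ if necessary, Lemma \ref{170330_1} together with the relative compactness of support sets produces $\ep_0>0$ such that, for every $0<\ep<\ep_0$, the support set $\Pi((\wt{\mf{S}}_i^\ep)^{-1}(0))$ sits inside $\mca{K}_i$ and in fact within the $\delta$-neighborhood of $\mca{K}'_i$, and moreover $\sum_{\mf{p}} \chi^1_{\mf{p}} = \sum_{\mf{q}} \chi^2_{\mf{q}} = 1$ on those support sets (the latter is the key consequence of Lemma \ref{170329_1}). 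Using these equalities on supports, I would expand
\[
(\wt{f}_1)_*(X,\wt{\mca{U}}_1,\wt{\omega}_1,\wt{\mf{S}}_1^\ep)
=\sum_{\mf{p}\in\mf{P}_1,\,\mf{q}\in\mf{P}_2}
((f_1)_{\mf{p}})_*\!\bigl((\mca{U}_1)_{\mf{p}},\,\chi^1_{\mf{p}}\cdot(\chi^2_{\mf{q}}\!\circ\!\Phi_{\mf{p}})\cdot (\omega_1)_{\mf{p}},\,(\mf{S}_1^\ep)_{\mf{p}}\bigr),
\]
and similarly expand $(\wt{f}_2)_*(X,\wt{\mca{U}}_2,\wt{\omega}_2,\wt{\mf{S}}_2^\ep)$ as a double sum indexed by $(\mf{p},\mf{q})$, using $\chi^1_{\mf{p}}$ pulled back through $\Phi_{\mf{p}}$ wherever that is meaningful.

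Next, I would argue term by term. Fix $(\mf{p},\mf{q})\in\mf{P}_1\times\mf{P}_2$. If $\mf{q}$ is incomparable with $\mf{i}(\mf{p})$ in $\mf{P}_2$, then $(\mca{U}_2)_{\mf{i}(\mf{p})}\cap(\mca{U}_2)_{\mf{q}}=\emptyset$ in $|\wt{\mca{U}}_2|$, so by the choice $2\delta < d(\mca{K}'_{\mf{i}(\mf{p})},\mca{K}'_{\mf{q}})$ the product $\chi^1_{\mf{p}}\cdot(\chi^2_{\mf{q}}\circ\Phi_{\mf{p}})$ vanishes identically on $\supp\chi^1_{\mf{p}}$ and similarly on the $\wt{\mca{U}}_2$ side; such pairs contribute nothing. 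Otherwise, after possibly swapping roles, we may assume $\mf{i}(\mf{p})\le\mf{q}$, so that there is a coordinate-change embedding $\Phi^{(2)}_{\mf{q},\mf{i}(\mf{p})}:(\mca{U}_2)_{\mf{q}\mf{i}(\mf{p})}\hookrightarrow(\mca{U}_2)_{\mf{q}}$, and composing yields an embedding of K-charts $\Psi_{\mf{p},\mf{q}}:= \Phi^{(2)}_{\mf{q},\mf{i}(\mf{p})}\circ\Phi_{\mf{p}}|_{(\mca{U}_1)_{\mf{p}\mf{q}}}:(\mca{U}_1)_{\mf{p}}|_{V}\to(\mca{U}_2)_{\mf{q}}$ for an appropriate open subset $V\subset U_{\mf{p}}$. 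The compatibility of $\wt{\Phi}$ with coordinate changes, together with the compatibilities $\wt{\omega}_1 = \wt{\Phi}^*\wt{\omega}_2$, $\mf{S}_1=\wt{\Phi}^*\mf{S}_2$, and $\wt{f}_1 = \wt{f}_2\circ\wt{\Phi}$, plus compatibility of $\mca{K}_1$ and $\mca{K}_2$, then turn the required equality of the two $(\mf{p},\mf{q})$-terms into the single-chart identity
\[
((f_1)_{\mf{p}})_*\!\bigl((\mca{U}_1)_{\mf{p}},\,\eta,\,(\mf{S}_1^\ep)_{\mf{p}}\bigr)
= ((f_2)_{\mf{q}})_*\!\bigl((\mca{U}_2)_{\mf{q}},\,\Psi_{\mf{p},\mf{q}}^{\text{pushout of }\eta},\,(\mf{S}_2^\ep)_{\mf{q}}\bigr)
\]
with $\eta=\chi^1_{\mf{p}}\cdot(\chi^2_{\mf{q}}\circ\Phi_{\mf{p}})\cdot(\omega_1)_{\mf{p}}$, which is precisely the type of identity established in the proof of Lemma \ref{170323_1} (ii) and the invariance used implicitly in Lemma \ref{170330_1.5}, together with the fact that pushout of differential forms along a submersion followed by fiber integration commutes with pullback along an embedding of ambient charts.

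The main obstacle is the chart-by-chart step: one must keep careful track of the fact that the embedding $\Psi_{\mf{p},\mf{q}}$ is a \emph{strict} embedding of K-charts of possibly different dimensions with compatible obstruction bundles, so that the CF-perturbation on the smaller chart is genuinely a pullback of that on the larger one up to equivalence, and the sign discrepancies coming from the factors $(-1)^{\dim W\cdot(\rk\mca{E}+|\omega|)}$ in (\ref{171105_2}) cancel correctly when passing between $(\mca{U}_1)_{\mf{p}}$ and $(\mca{U}_2)_{\mf{q}}$. Once these sign and dimension normalizations are unpacked using Convention 8.2.1 of \cite{FOOO_09} and the orientation conventions of Section 4.2, the remaining work is parallel to the proof of Lemma \ref{170330_1.5}: all cross terms collapse by the support-shrinking argument, leaving equality of the two de Rham chains for all sufficiently small $\ep$.
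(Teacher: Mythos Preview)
Your overall strategy---expand both sides as double sums over $\mf{P}_1\times\mf{P}_2$ and compare term by term---is indeed what the paper does. But there is a genuine gap in the step where you ``similarly expand $(\wt{f}_2)_*$ as a double sum using $\chi^1_{\mf{p}}$ pulled back through $\Phi_{\mf{p}}$ wherever that is meaningful.'' The direction here is wrong: $\Phi_{\mf{p}}$ embeds $(U_1)_{\mf{p}}$ into $(U_2)_{\mf{i}(\mf{p})}$, so one can pull functions on $|\mca{K}_2|$ back to $|\mca{K}_1|$, not conversely. To split the sum for $(\wt{f}_2)_*$ you need strongly smooth functions on $|\mca{K}_2|$, indexed by $\mf{P}_1$, summing to $1$ on the whole support set $\Pi((\wt{\mf{S}}_2^\ep)^{-1}(0))$. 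For $\ep>0$ that support set is a thickening of $X$ inside $|\mca{K}_2|$ and is \emph{not} contained in the image of $|\mca{K}_1|\hookrightarrow|\mca{K}_2|$ (the image is a union of embedded submanifolds, typically of strictly lower dimension). So $\chi^1_{\mf{p}}$ is simply undefined at most of the points where you need it, and ``wherever that is meaningful'' does not cover enough.

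The paper fills this gap by constructing from scratch a family $((\chi^+_1)_{\mf{p}})_{\mf{p}\in\mf{P}_1}$ of strongly smooth functions on $|\mca{K}_2|$ with $\supp(\chi^+_1)_{\mf{p}}\subset B_\delta(\ph_{\mf{p}}((\mca{K}'_1)_{\mf{p}}))$ and $\sum_{\mf{p}}(\chi^+_1)_{\mf{p}}=1$ near $X\subset|\mca{K}_2|$ (Lemma \ref{171208_2}, essentially the partition-of-unity construction of \cite{FOOO_Kuranishi} Proposition 7.67). One then \emph{defines} the partition of unity $\chi_1$ on $|\mca{K}_1|$ by pulling $\chi^+_1$ back via $\wt{\Phi}$. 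With this in hand your term-by-term comparison works: the key equality passing from $(\mca{U}_1)_{\mf{p}}$ to $(\mca{U}_2)_{\mf{i}(\mf{p})}$ uses that, for small $\ep$, $\supp(\chi^+_1)_{\mf{p}}\cap\Pi((\wt{\mf{S}}_2^\ep)^{-1}(0))\subset \ph_{\mf{p}}((\mca{K}_1)_{\mf{p}})$ (a variant of Lemma \ref{170330_1}), and the remaining step is exactly the Lemma \ref{170330_1.5} argument you invoke. Note also that for any of this to work one must choose $\mca{K}'_1$ and $\mca{K}'_2$ compatible with $\wt{\Phi}$ from the start, which you did not impose.
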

\begin{proof}

Let us take a support system $\mca{K}'_i$ of $\wt{\mca{U}_i}$ for $i=1,2$ such that: 
\begin{itemize} 
\item $\mca{K}'_i < \mca{K}_i$ for $i=1, 2$. 
\item $\mca{K}'_1$ and $\mca{K}'_2$ are compatible with $\wt{\Phi}$. 
Namely, $\ph_{\mf{p}}((\mca{K}'_1)_{\mf{p}}) \subset \mtrg{(\mca{K}'_2)}_{\mf{i}(\mf{p})}$ for every $\mf{p} \in \mf{P}_1$. 
\end{itemize} 
First we need Lemma \ref{171208_2} below, 
whose proof is almost the same as the proof of Proposition 7.67 in \cite{FOOO_Kuranishi}. 
In the statement, we take a metric on $|\mca{K}_2|$ and 
its pullback to $|\mca{K}_1|$ via a natural embedding map 
$|\mca{K}_1| \to |\mca{K}_2|$.
This embedding map exists since 
$\mca{K}_1$ and $\mca{K}_2$ are compatible with $\wt{\Phi}$. 

\begin{lem}\label{171208_2}
For any $\delta>0$, there exists 
$\chi^+_1  = ((\chi^+_1)_{\mf{p}}) _{\mf{p} \in \mf{P}_1}$
which satisfies the following conditions: 
\begin{itemize}
\item $(\chi^+_1)_{\mf{p}}$ is a strongly smooth map from $|\mca{K}_2|$ to $[0,1]$ for every $\mf{p} \in \mf{P}_1$.
\item $\supp (\chi^+_1)_{\mf{p}} \subset B_\delta( \ph_{\mf{p}} (  (\mca{K}'_1)_{\mf{p}}))$ for every $\mf{p} \in \mf{P}_1$. 
Here $B_\delta$ denotes the $\delta$-neighborhood in $|\mca{K}_2|$. 
\item $\sum_{\mf{p} \in \mf{P}_1} (\chi^+_1)_{\mf{p}} = 1$ on a neighborhood of $\bigcup_{\mf{q} \in \mf{P}_2} s_{\mf{q}}^{-1}(0) \subset |\mca{K}_2|$. 
\end{itemize} 
\end{lem}
\begin{proof}
There exists a natural homeomorphism $\bigcup_{\mf{q} \in \mf{P}_2} s_{\mf{q}}^{-1}(0) \to X$. 
In the following argument we identify these two spaces, and in particular consider $X$ as a subspace of $|\mca{K}_2|$. 
First we need the following fact, which is essentially the same as Lemma 7.66 in \cite{FOOO_Kuranishi}: 
\begin{quote}
($\star$): For any open set $W$ of $|\mca{K}_2|$ containing a compact subset $K$ of $X$, 
there exists a strongly smooth function $g: |\mca{K}_2| \to [0,1]$ that has a compact support
contained in $W$ and $1$ on a neighborhood of $K$. 
\end{quote}

Applying the claim ($\star$) to $K: =\ph_{\mf{p}}((\mca{K}'_1)_{\mf{p}})$ and $W:= B_{\delta}(\ph_{\mf{p}}((\mca{K}'_1)_{\mf{p}}))$, 
we obtain $f_{\mf{p}}: |\mca{K}_2| \to [0, 1]$. 
Applying the claim ($\star$) to $K:= X$ and 
$W:= \bigg\{ x \in |\mca{K}_2| \,  \bigg{|} \sum_{\mf{q} \in \mf{P}_1} f_{\mf{q}}(x)>1/2 \bigg\}$, 
we obtain $g: |\mca{K}_2| \to [0,1]$. 
Finally, for each $\mf{p} \in \mf{P}_1$ 
we define $(\chi^+_1)_{\mf{p}}: |\mca{K}_2| \to [0,1]$ by  
\[ 
(\chi^+_1)_{\mf{p}}(x):= 
\begin{cases} 
g(x) f_{\mf{p}}(x) \biggl( \sum_{\mf{q} \in \mf{P}_1} f_{\mf{q}}(x) \biggr)^{-1} &(x \in W) \\ 
0 &(x \notin W). 
\end{cases}
\]
\end{proof}
For each $\mf{p} \in \mf{P}_1$, we define $(\chi_1)_{\mf{p}}: |\mca{K}_1| \to [0,1]$ by 
\[ 
(\chi_1)_{\mf{p}} |_{(\mca{K}_1)_{\mf{q}}} := (\chi^+_1)_{\mf{i}(\mf{p})} \circ \ph_{\mf{q}}|_{(\mca{K}_1)_{\mf{q}}}. 
\] 
Then $\chi_1 = ((\chi_1)_{\mf{p}})_{\mf{p} \in \mf{P}_1}$ is a partition of unity of 
$(X, \wt{\mca{U}}_1 , \mca{K}'_1, \delta)$. 
We also take a partition of unitiy of 
$(X, \wt{\mca{U}}_2, \mca{K}'_2, \delta)$, which we denote by $\chi_2 = ((\chi_2)_{\mf{q}})_{\mf{q} \in \mf{P}_2}$. 

Now we can complete the proof as follows: 
\begin{align*} 
&(\wt{f}_1)_*(X, \wt{\mca{U}}_1, \wt{\omega}_1,  \wt{\mf{S}}^\ep_1 ) \\
&\quad = \sum_{\mf{p} \in \mf{P}_1}  ( (f_1)_{\mf{p}})_* ((\mca{U}_1)_{\mf{p}}, (\chi_1)_{\mf{p}} (\omega_1)_{\mf{p}}, (\mf{S}^\ep_1)_{\mf{p}}) \\ 
&\quad = \sum_{\mf{p} \in \mf{P}_1}
((f_2)_{\mf{i}(\mf{p})})_*( (\mca{U}_2)_{\mf{i}(\mf{p})}, (\chi^+_1)_{\mf{i}(\mf{p})} (\omega_2)_{\mf{i}(\mf{p})} , (\mf{S}^\ep_2)_{\mf{i}(\mf{p})}) \\ 
&\quad = \sum_{\mf{q} \in \mf{P}_2}   ((f_2)_{\mf{q}} )_*((\mca{U}_2)_{\mf{q}}, (\chi_2)_{\mf{q}} (\omega_2)_{\mf{q}} , (\mf{S}^\ep_2)_{\mf{q}}) \\ 
&\quad = (\wt{f}_2)_*(X, \wt{\mca{U}}_2, \wt{\omega}_2, \wt{\mf{S}}^\ep_2). 
\end{align*} 
The first and fourth equality follows from the definition. 
The second equality holds since
$(\omega_1)_{\mf{p}} = (\ph_{\mf{p}})^*((\omega_2)_{\mf{i}(\mf{p})})\, (\forall \mf{p} \in \mf{P}_1)$ and 
\[ 
\supp (\chi^+_1)_{\mf{p}} \cap \Pi ((\mf{S}^\ep_2)^{-1}_{\mf{i}(\mf{p})}(0)) \subset
B_\delta(\ph_{\mf{p}}((\mca{K}'_1)_{\mf{p}})) \cap \Pi ( (\mf{S}^\ep_2)^{-1}_{\mf{i}(\mf{p})} (0)) 
\subset \ph_{\mf{p}} ((\mca{K}_1)_{\mf{p}})
\] 
when $\delta$ and $\ep$ are sufficiently small
(this can be proved by arguments similar to the proof of Lemma \ref{170330_1}). 
Proof of the third equality is similar to the proof of Lemma \ref{170330_1.5}. 
This completes the proof of Lemma \ref{170330_3}. 
\end{proof} 

Let us state and prove Stokes' formula. 

\begin{prop}\label{171109_1} 
For sufficiently small $\ep>0$, there holds 
\[ 
\partial (\wt{f}_*(X, \wt{\mca{U}}, \wt{\omega}, \wt{\mf{S}}^\ep) ) 
= (-1)^{|\omega|+1}  \wt{f}_*(X, \wt{\mca{U}}, d\wt{\omega}, \wt{\mf{S}}^\ep).
\]
\end{prop}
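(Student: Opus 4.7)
The plan is to apply the single chart Stokes formula (Lemma \ref{170323_1}(iii)) termwise to the defining sum
\[
\wt{f}_*(X, \wt{\mca{U}}, \wt{\omega}, \wt{\mf{S}}^\ep) = \sum_{\mf{p} \in \mf{P}} (f_{\mf{p}})_*(\mca{U}_{\mf{p}}, \chi_{\mf{p}} \omega_{\mf{p}}, \mf{S}^\ep_{\mf{p}}),
\]
and then verify that the ``error'' arising from the partition of unity vanishes. Concretely, using $d(\chi_{\mf{p}} \omega_{\mf{p}}) = d\chi_{\mf{p}} \wedge \omega_{\mf{p}} + \chi_{\mf{p}} d\omega_{\mf{p}}$ (since $|\chi_{\mf{p}}|=0$), one obtains
\[
\partial \wt{f}_*(X, \wt{\mca{U}}, \wt{\omega}, \wt{\mf{S}}^\ep) = (-1)^{|\omega|+1} \wt{f}_*(X, \wt{\mca{U}}, d\wt{\omega}, \wt{\mf{S}}^\ep) + (-1)^{|\omega|+1} E(\ep),
\]
where $E(\ep) := \sum_{\mf{p}} (f_{\mf{p}})_*(\mca{U}_{\mf{p}}, d\chi_{\mf{p}} \wedge \omega_{\mf{p}}, \mf{S}^\ep_{\mf{p}})$. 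The whole game is to show $E(\ep) = 0$ for all sufficiently small $\ep$.

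To handle $E(\ep)$, I would first insert $1 = \sum_{\mf{q} \in \mf{P}} \chi_{\mf{q}}$. Since $\sum \chi_{\mf{q}} \equiv 1$ on a neighborhood of $\bigcup_{\mf{q}} s_{\mf{q}}^{-1}(0)$ in $|\mca{K}|$, Lemma \ref{170329_1} guarantees that this identity holds on the support set $\Pi((\wt{\mf{S}}^\ep)^{-1}(0))$ whenever $\ep$ is small, so that
\[
E(\ep) = \sum_{\mf{p}, \mf{q}} (f_{\mf{p}})_*(\mca{U}_{\mf{p}}, \chi_{\mf{q}} \, d\chi_{\mf{p}} \wedge \omega_{\mf{p}}, \mf{S}^\ep_{\mf{p}}).
\]
Terms with $U_{\mf{p}} \cap U_{\mf{q}} = \emptyset$ vanish automatically because $2\delta < d(\mca{K}'_{\mf{p}}, \mca{K}'_{\mf{q}})$ forces the integrand to be zero; the remaining terms have $\mf{p}, \mf{q}$ comparable in the partial order on $\mf{P}$. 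Pairing $(\mf{p}, \mf{q})$ with $(\mf{q}, \mf{p})$, and WLOG taking $\mf{p} \le \mf{q}$, one uses Lemma \ref{170330_1} to see that $\supp \chi_{\mf{q}} \cap \mca{K}_{\mf{p}} \subset U_{\mf{q}\mf{p}}$ for small $\ep$, then transfers the $\mf{p}$-term to chart $\mf{q}$ via the coordinate change $\Phi_{\mf{q}\mf{p}}$ and the compatibility of $\wt{f}$, $\wt{\omega}$ and $\wt{\mf{S}}$. Exactly as in the proof of Lemma \ref{170330_1.5}, this yields
\[
(f_{\mf{p}})_*(\mca{U}_{\mf{p}}, \chi_{\mf{q}} d\chi_{\mf{p}} \wedge \omega_{\mf{p}}, \mf{S}^\ep_{\mf{p}}) + (f_{\mf{q}})_*(\mca{U}_{\mf{q}}, \chi_{\mf{p}} d\chi_{\mf{q}} \wedge \omega_{\mf{q}}, \mf{S}^\ep_{\mf{q}}) = (f_{\mf{q}})_*\bigl(\mca{U}_{\mf{q}}, d(\chi_{\mf{p}} \chi_{\mf{q}}) \wedge \omega_{\mf{q}}, \mf{S}^\ep_{\mf{q}}\bigr).
\]

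Combining off-diagonal pairs with the diagonal contribution $(f_{\mf{p}})_*(\mca{U}_{\mf{p}}, \chi_{\mf{p}} d\chi_{\mf{p}} \wedge \omega_{\mf{p}}, \mf{S}^\ep_{\mf{p}}) = \tfrac{1}{2} (f_{\mf{p}})_*(\mca{U}_{\mf{p}}, d(\chi_{\mf{p}}^2) \wedge \omega_{\mf{p}}, \mf{S}^\ep_{\mf{p}})$, one rewrites $E(\ep)$ as $\tfrac{1}{2}$ times a sum whose integrand is a $d$-image, and where the total integrand, assembled as a strongly smooth $1$-form on $|\mca{K}|$, equals $d\bigl(\bigl(\sum_{\mf{p}} \chi_{\mf{p}}\bigr)^2\bigr) = d(1) = 0$ on the relevant support set. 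Running this identity chart-by-chart back through the $(f_{\mf{r}})_*$ construction then gives $E(\ep) = 0$ for small $\ep$.

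The main obstacle is the bookkeeping in the third paragraph: each summand $(f_{\mf{p}})_*(\mca{U}_{\mf{p}}, \cdot, \mf{S}^\ep_{\mf{p}})$ lives on its own chart, so the identity $\sum_{\mf{p}} d\chi_{\mf{p}} = 0$ cannot be invoked pointwise. One must reindex pairs $(\mf{p}, \mf{q})$ by their maximum $\mf{r} = \max(\mf{p}, \mf{q})$, systematically transfer each term to chart $\mf{r}$ using Lemma \ref{170330_1}, and verify that the resulting inner form is, in each chart, the restriction of a globally exact form on $|\mca{K}|$. Because the whole argument is essentially the differentiated version of the independence-of-partition argument of Lemma \ref{170330_1.5}, I expect the proof to follow the same template without new ideas, though detailed sign and index checks are required.
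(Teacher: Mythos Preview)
Your setup is correct and matches the paper exactly: apply Lemma \ref{170323_1}(iii) termwise, isolate the error $E(\ep)=\sum_{\mf{p}}(f_{\mf{p}})_*(\mca{U}_{\mf{p}},d\chi_{\mf{p}}\wedge\omega_{\mf{p}},\mf{S}^\ep_{\mf{p}})$, and insert $\sum_{\mf{q}}\chi_{\mf{q}}=1$ using Lemma \ref{170329_1}. Where you diverge from the paper is in the chart transfer, and your version does not close.

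Your plan is to pair $(\mf{p},\mf{q})$ with $(\mf{q},\mf{p})$, push both to the chart $\mf{r}=\max(\mf{p},\mf{q})$, and argue that the resulting form on each chart $\mf{r}$ is the restriction of the globally exact form $\tfrac12\,d\bigl((\sum\chi)^2\bigr)$. But after collecting, the integrand on chart $\mf{r}$ is
\[
\sum_{\mf{p}<\mf{r}}d(\chi_{\mf{p}}\chi_{\mf{r}})+\tfrac12\,d(\chi_{\mf{r}}^2)
= d\Bigl(\chi_{\mf{r}}\bigl(\textstyle\sum_{\mf{p}\le\mf{r}}\chi_{\mf{p}}-\tfrac12\chi_{\mf{r}}\bigr)\Bigr),
\]
which is \emph{not} zero on the support set: the pairs with one index $>\mf{r}$ have been sent to other charts, so $\sum_{\mf{p}\le\mf{r}}\chi_{\mf{p}}$ need not equal $1$ on $\supp\chi_{\mf{r}}$. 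Being exact is of no help either, since $(f_{\mf{r}})_*(\mca{U}_{\mf{r}},dh\cdot\omega_{\mf{r}},\mf{S}^\ep_{\mf{r}})$ is generally nonzero. So the ``reindex by $\max$'' step does not produce a vanishing sum.

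The paper's fix is a one-line change to your transfer: do \emph{not} pair or go to the max chart. Instead, for each ordered pair $(\mf{p},\mf{q})$, transfer the term from chart $\mf{p}$ to chart $\mf{q}$ (the chart of the undifferentiated factor $\chi_{\mf{q}}$). This is exactly the swap proved in Lemma \ref{170330_1.5}, and it works in either direction of the order. One obtains
\[
E(\ep)=\sum_{\mf{p},\mf{q}}(f_{\mf{q}})_*\bigl(\mca{U}_{\mf{q}},\chi_{\mf{q}}\,d\chi_{\mf{p}}\,\omega_{\mf{q}},\mf{S}^\ep_{\mf{q}}\bigr)
=\sum_{\mf{q}}(f_{\mf{q}})_*\Bigl(\mca{U}_{\mf{q}},\chi_{\mf{q}}\bigl(\textstyle\sum_{\mf{p}}d\chi_{\mf{p}}\bigr)\omega_{\mf{q}},\mf{S}^\ep_{\mf{q}}\Bigr)=0,
\]
since on the support set $\sum_{\mf{p}}d\chi_{\mf{p}}=d(1)=0$ and this vanishing now occurs \emph{inside a single chart} $\mf{q}$. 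So the missing idea is simply: transfer to the chart of $\chi_{\mf{q}}$, not of $d\chi_{\mf{p}}$ or of $\max(\mf{p},\mf{q})$.
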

\begin{proof}
Take $\mca{K}' < \mca{K}$, $\delta>0$ and let $(\chi_{\mf{p}})_{\mf{p} \in \mf{P}}$ be a partition of unity of $(X, \wt{\mca{U}}, \mca{K}', \delta)$.
When $\ep>0$ is sufficiently small, 
\begin{align*} 
&\partial (\wt{f}_*(X, \wt{\mca{U}}, \wt{\omega}, \wt{\mf{S}}^\ep) ) 
- (-1)^{|\omega|+1}  \wt{f}_*(X, \wt{\mca{U}}, d\wt{\omega}, \wt{\mf{S}}^\ep) \\ 
&\quad = 
(-1)^{|\omega|+1} \sum_{\mf{p} \in \mf{P} } (f_{\mf{p}})_* (\mca{U}_{\mf{p}}, d \chi_{\mf{p}} \omega_{\mf{p}} , \mf{S}^\ep_{\mf{p}}) \\ 
&\quad = 
(-1)^{|\omega|+1} \sum_{\mf{p}, \mf{q} \in \mf{P}} (f_{\mf{p}})_* (\mca{U}_{\mf{p}}, \chi_{\mf{q}} d \chi_{\mf{p}} \omega_{\mf{p}}, \mf{S}^\ep_{\mf{p}}) \\
&\quad = 
(-1)^{|\omega|+1}
\sum_{\mf{p}, \mf{q} \in \mf{P}}  (f_{\mf{q}})_* (\mca{U}_{\mf{q}}, \chi_{\mf{q}} d\chi_{\mf{p}} \omega_{\mf{q}}, \mf{S}^\ep_{\mf{q}}) \\
&\quad = 
(-1)^{|\omega|+1}
\sum_{\mf{q} \in \mf{P}} (f_{\mf{q}})_*(\mca{U}_{\mf{q}}, \chi_{\mf{q}}  (\sum_{\mf{p} \in \mf{P}} d\chi_{\mf{p}}) \omega_{\mf{q}}, \mf{S}^\ep_{\mf{q}}) = 0. 
\end{align*} 
The first equality follows from Stokes' formula for K-charts: 
Lemma \ref{170323_1} (iii). 
The second and fifth equality holds since $\sum_{\mf{p} \in \mf{P}} \chi_{\mf{p}} = 1$ on 
$\Pi( (\wt{\mf{S}}^\ep)^{-1}(0))$ when $\ep$ is sufficiently small. 
The third equality holds by the argument similar to the proof of Lemma \ref{170330_1.5}. 
The fourth equality is obvious. 
\end{proof} 

\subsection{K-space} 

Let $(X, \wh{\mca{U}})$ be a compact, oriented K-space with a strongly smooth map 
$\wh{f}: (X, \wh{\mca{U}}) \to \mca{L}_{k+1}$, 
a differential form $\wh{\omega}$ on $(X, \wh{\mca{U}})$ and a CF-perturbation 
$\wh{\mf{S}} = (\wh{\mf{S}}^\ep)_{\ep \in (0,1]}$ on $(X, \wh{\mca{U}})$ 
which is transversal to $0$ and $\ev_0  \circ \wh{f}: (X, \wh{\mca{U}}) \to L$ is strongly submersive with respect to $\wh{\mf{S}}$. 
Under these assumptions we define a de Rham chain 
\[ 
\wh{f}_* (X, \wh{\mca{U}}, \wh{\omega}, \wh{\mf{S}}^\ep) \in C^\dR_*(\mca{L}_{k+1})
\]
for sufficiently small $\ep>0$, and check Stokes' formula and fiber product formula. 
We only consider K-spaces \textit{without} boundaries, 
since generalization to admissible  K-spaces with boundaries (and corners) 
will be straightforward. 

By Lemma 9.10 in \cite{FOOO_Kuranishi}, 
there exist 
\begin{itemize}
\item $\wt{\mca{U}}$: GCS of $X$, 
\item $\wt{\omega}$: differential form on $\wt{\mca{U}}$, 
\item $\wt{f}$: strongly smooth map from $(X, \wt{\mca{U}})$ to $\mca{L}_{k+1}$, 
\item $\mca{K}$: support system of $\wt{\mca{U}}$, 
\item $\wt{\mf{S}}$: CF-perturbation of $(\wt{\mca{U}}, \mca{K})$ which is transversal to $0$, and 
$\ev_0 \circ \wt{f}: (X, \wt{\mca{U}}) \to L$ is strongly submersive with respect to $\wt{\mf{S}}$, 
\item strict KG-embedding $\wh{\Phi}: \wh{\mca{U}}_0 \to \wt{\mca{U}}$, where $\wh{\mca{U}_0}$ is an open substructure of $\wh{\mca{U}}$,  
\end{itemize} 
satisfying the following compatibilities:
\begin{itemize} 
\item $\wh{\omega}|_{\wh{\mca{U}_0}} = \wh{\Phi}^*(\wt{\omega})$, 
\item $\wh{f}|_{\wh{\mca{U}_0}} = \wt{f} \circ \wh{\Phi}$, 
\item $\wh{\mf{S}}|_{\wh{\mca{U}_0}} = \wh{\Phi}^*(\wt{\mf{S}})$. 
\end{itemize} 
Then we define 
\begin{equation}\label{171208_1} 
\wh{f}_*(X, \wh{\mca{U}}, \wh{\omega}, \wh{\mf{S}^\ep}):= \wt{f}_*(X, \wt{\mca{U}}, \wt{\omega}, \wt{\mf{S}^\ep})
\end{equation} 
for sufficiently small $\ep>0$. 
Well-definedness
(i.e. the RHS of (\ref{171208_1}) does not depend on choices of $\wt{\mca{U}}$, $\wt{\omega}$, etc. in the sense of $\spadesuit$)
follows from invariance by GG-embedding (Lemma \ref{170330_3})
and arguments in \cite{FOOO_Kuranishi} Section 9.2
(proof of Proposition 9.16 $\implies$ Theorem 9.14). 
Stokes' formula in this setting (Theorem \ref{170628_2})
follows from Stokes' formula for GCS (Theorem \ref{171109_1}). 

Finally we give a sketch of the proof 
of the fiber product formula (Theorem \ref{170628_3}), 
imitating the proof of Proposition 10.23 in \cite{FOOO_Kuranishi}. 
Namely, we prove 
\begin{equation}\label{171110_1} 
(\wh{f}_{12})_*(X_{12}, \wh{\mca{U}}_{12}, \wh{\omega}_{12}, \wh{\mf{S}}_{12}^\ep) = 
(\wh{f}_1)_*(X_1, \wh{\mca{U}}_1, \wh{\omega}_1, \wh{\mf{S}}_1^\ep) \circ_j
(\wh{f}_2)_*(X_2, \wh{\mca{U}}_2, \wh{\omega}_2, \wh{\mf{S}}_2^\ep)
\end{equation} 
for sufficiently small $\ep>0$, where $(X_{12}, \wh{\mca{U}}_{12})$ denotes the fiber product of 
$(X_1, \wh{\mca{U}}_1)$ and $(X_2, \wh{\mca{U}}_2)$.
For each $i \in \{1, 2\}$, there exist a GCS $\wt{\mca{U}}_i$ on $X_i$ and a KG-embedding 
$\wh{\mca{U}}_i \to \wt{\mca{U}}_i$,
namely a strict KG-embedding 
$(\wh{\mca{U}}_i)_0 \to \wt{\mca{U}}_i$, 
where $(\wh{\mca{U}}_i)_0$ is an open substructure of $\wh{\mca{U}}_i$. 
Let $(\wh{\mca{U}}_{12})_0$ denote the fiber product of 
$(\wh{\mca{U}}_1)_0$ and $(\wh{\mca{U}}_2)_0$. 
One may assume that, for each $i \in \{1, 2\}$
there exist $\wt{\omega}_i$, $\wt{f}_i$, $\wt{\mf{S}}_i$ satisfying compatibilities. 

Let $\chi^i = (\chi^i_{\mf{p}_i})_{\mf{p}_i \in \mf{P}_i}$ be a partition of unity of $(X_i, \wt{\mca{U}}_i)$. 
For each $\mf{p}_i \in \mf{P}_i$, 
$\chi^i_{\mf{p}_i}$ induces a strongly smooth map 
$\wh{\chi}^i_{\mf{p}_i}: (X_i, \wh{\mca{U}}_i) \to [0, 1]$, 
and there holds 
$\sum_{\mf{p}_i \in \mf{P}_i} \wh{\chi}^i_{\mf{p}_i} = 1$. 
Then  (\ref{171110_1}) reduces to proving 
\begin{align}\label{171110_2} 
&(\wh{f}_{12})_*(X_{12}, (\wh{\mca{U}}_{12})_0, \wh{\chi}^1_{\mf{p}_1} \wh{\chi}^2_{\mf{p}_2} \wh{\omega}_{12}, \wh{\mf{S}}^\ep_{12}) =\\ 
&\qquad\qquad (\wh{f}_1)_*(X_1, (\wh{\mca{U}}_1)_0, \wh{\chi}^1_{\mf{p}_1} \wh{\omega}_1, \wh{\mf{S}}^\ep_1)  \circ_j 
(\wh{f}_2)_*(X_2, (\wh{\mca{U}}_2)_0, \wh{\chi}^2_{\mf{p}_2} \wh{\omega}_2, \wh{\mf{S}}^\ep_2) \nonumber
\end{align}
for every $\mf{p}_1 \in \mf{P}_1$ and $\mf{p}_2 \in \mf{P}_2$. 
Now there holds
\[
(\wh{f}_1)_*(X_1, (\wh{\mca{U}}_1)_0, \wh{\chi}^1_{\mf{p}_1} \wh{\omega}_1, \wh{\mf{S}}^\ep_1) = 
((f_1)_{\mf{p}_1})_*( (\mca{U}_1)_{\mf{p}_1}, (\chi_1)_{\mf{p}_1} (\omega_1)_{\mf{p}_1} , (\mf{S}_1)^\ep_{\mf{p}_1}),
\] 
and similar formulas hold for $(\wh{f}_{12})_*(X_{12}, (\wh{\mca{U}}_{12})_0, \wh{\chi}^1_{\mf{p}_1} \wh{\chi}^2_{\mf{p}_2} \wh{\omega}_{12}, \wh{\mf{S}}^\ep_{12})$
and $(\wh{f}_2)_*(X_2, (\wh{\mca{U}}_2)_0, \wh{\chi}^2_{\mf{p}_2} \wh{\omega}_2, \wh{\mf{S}}^\ep_2)$. 
Then (\ref{171110_2}) follows from the fiber product formula for single K-charts (Lemma \ref{171110_3}). 
This completes a sketch of the proof of Theorem \ref{170628_3}.

\section{Proof of $C^0$-approximation lemma (Theorem \ref{170430_2})} 

Recall that $\mca{L}_{k+1}$ is a subspace of $\Pi^{k+1}$, 
which consists of $(\Gamma_0, \ldots, \Gamma_k) \in \Pi^{k+1}$ such that $\ev_1(\Gamma_i) =\ev_0(\Gamma_{i+1})\,(0 \le i \le k-1)$ and $\ev_1(\Gamma_k) = \ev_0(\Gamma_0)$. 
Then, Theorem \ref{170430_2} is reduced to Lemma \ref{170430_3} below. 
We first give a proof of Lemma \ref{170430_3} assuming 
Lemma \ref{171114_1} (which is stated in the proof), 
and proceed to a proof of Lemma \ref{171114_1}. 

\begin{lem}\label{170430_3}
Let $(X, \wh{\mca{U}})$ be a compact K-space and 
$\wh{f}: (X, \wh{\mca{U}}) \to \Pi^\con$ be a strongly continuous map such that 
$\ev_j \circ \wh{f}: (X, \wh{\mca{U}}) \to L$ is strongly smooth for $j=0, 1$. 

Let $Z$ be a closed subset of $X$ and $\wh{g}: (Z, \wh{\mca{U}}|_Z) \to \Pi$
be a strongly smooth map 
(the notion of smooth map from a K-space to $\Pi$ is defined in the same way as Definition \ref{170902_2})
such that 
$\ev_j \circ \wh{g} = \ev_j \circ \wh{f}|_Z$ for  $j=0, 1$ and 
$\wh{g}$ is $\ep$-close to $\wh{f}|_Z$ (with respect to $d_\Pi$). 

If $\ep<\rho_L$, there exist an open substructure $\wh{\mca{U}_0}$ of $\wh{\mca{U}}$ 
and a strongly smooth map $\wh{g'}: (X, \wh{\mca{U}_0}) \to \Pi$ 
such that the following conditions hold: 
\begin{itemize} 
\item $\wh{g'}$ is $\ep$-close to $\wh{f}|_{\wh{\mca{U}_0}}$. 
\item $\ev_j \circ \wh{g'} = \ev_j \circ \wh{f} |_{\wh{\mca{U}_0}}$ for $j=0, 1$. 
\item $\wh{g'} = \wh{g}$ on $\wh{\mca{U}_0}|_Z$. 
\end{itemize} 
\end{lem}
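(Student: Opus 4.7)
The proof of Lemma \ref{170430_3} combines two ideas: smoothing in local K-charts, and patching via pointwise geodesic averaging on $L$, the latter being legitimate because $\ep < \rho_L$ guarantees that all values in question lie inside a geodesically convex ball. To organize the charts, I would first pass to a good coordinate system $\wt{\mca{U}} = (\mca{U}_{\mf{p}})_{\mf{p} \in \mf{P}}$ on an open substructure of $\wh{\mca{U}}$ (as in Lemma 9.10 of \cite{FOOO_Kuranishi}), together with a subordinate smooth partition of unity, so that the construction reduces to finitely many chart-wise problems.

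On a single chart $U_{\mf{p}}$, the map $\wh{f}$ is represented by a continuous $f_{\mf{p}} = (T_{\mf{p}}, \gamma_{\mf{p}}) : U_{\mf{p}} \to \Pi^\con$ whose endpoint evaluations are already smooth. I would reparametrize via $\tilde{\gamma}_{\mf{p}}(u,s) := \gamma_{\mf{p}}(u)(sT_{\mf{p}}(u))$ to obtain a continuous map $U_{\mf{p}} \times [0,1] \to L$ with smooth values at $s = 0, 1$. Smooth $T_{\mf{p}}$ to some $T'_{\mf{p}} \in C^\infty$ by mollification in Euclidean coordinates, shifting by a small positive constant to ensure $T'_{\mf{p}} > 0$. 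Smooth $\tilde{\gamma}_{\mf{p}}$ by embedding $L \hookrightarrow \R^N$ (Whitney), mollifying component-wise in $(u,s)$, and projecting back via the nearest-point retraction from a tubular neighborhood of $L$ — valid because the mollification stays arbitrarily close to $L$ for a fine mollifier. Choose the mollification parameter so that the $C^0$ error is uniformly less than $\ep/3$.

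Next I would enforce the exact endpoint values $\tilde{\gamma}'_{\mf{p}}(u,0) = \ev_0 \circ f_{\mf{p}}(u)$ and $\tilde{\gamma}'_{\mf{p}}(u,1) = \ev_1 \circ f_{\mf{p}}(u)$ by replacing $\tilde{\gamma}'_{\mf{p}}$ on a narrow strip near $s = 0$ and $s = 1$ with the unique minimizing geodesic interpolating between the desired endpoint and the interior value, which is well defined since all pairs involved are within the injectivity radius. Then reparametrize $s$ by a smooth diffeomorphism of $[0,1]$ flat to infinite order at $0$ and $1$, to produce the required vanishing $\partial_t^m \gamma' = 0$ for all $m \geq 1$ at the endpoints. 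The result is a smooth $g'_{\mf{p}} : U_{\mf{p}} \to \Pi$ that is close to $f_{\mf{p}}$ with the correct endpoint evaluations. To force agreement with $\wh{g}$ on $Z$ and to patch across charts, I would use pointwise geodesic averaging on $L$: given two candidate smooth paths $\gamma_1, \gamma_2$ each within $\rho_L$ of $f$, their weighted blend is $s \mapsto \exp_{\gamma_1(s)}\bigl(\lambda \cdot \exp^{-1}_{\gamma_1(s)} \gamma_2(s)\bigr)$, which is smooth in all parameters; the length parameters $T$ are combined by ordinary convex combination. Apply this with a smooth bump function equal to $1$ on a neighborhood of $Z$ (where $\wh{g}$ is defined) to blend $\wh{g}$ with the local smoothing, and with the partition of unity subordinate to the GCS to glue across charts.

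The main technical obstacle is verifying that this smoothing-and-averaging procedure respects the coordinate changes $\Phi_{\mf{p}\mf{q}}$ of the K-structure $\wh{\mca{U}}$, not merely the GCS cover $\wt{\mca{U}}$. The operations we use (mollification in local Euclidean coordinates, nearest-point projection into $L$, and geodesic averaging on $L$) are intrinsic once composed with the already-smooth endpoint maps $\ev_j \circ \wh{f}$, and hence commute with embeddings of K-charts; this is what allows the result to descend through the strict KG-embedding $\wh{\mca{U}_0} \to \wt{\mca{U}}$ to a genuine strongly smooth map on an open substructure $\wh{\mca{U}_0}$ of $\wh{\mca{U}}$. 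The shrinking to an open substructure provides the safety margin ensuring that all the nearest-point projections and inverse exponential maps used above remain defined.
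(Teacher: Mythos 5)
Your proposal and the paper share the same three high-level ingredients: reduce to a good coordinate system, smooth locally, and patch via geodesic averaging on $L$ (the paper's Lemma~\ref{171114_2} is essentially your pointwise averaging map, promoted from two inputs to arbitrarily many). However, there is a genuine gap in the step that is actually the whole content of the lemma.

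You claim that the operations you use, in particular ``mollification in local Euclidean coordinates,'' are ``intrinsic \ldots and hence commute with embeddings of K-charts.'' This is not true. Mollification is a chart-dependent operation: if $\Phi_{\mf{p}\mf{q}}$ is a coordinate change and you mollify $f_{\mf{q}}$ on $U_{\mf{q}}$ and mollify $f_{\mf{p}}$ on $U_{\mf{p}}$, the two results will not agree on the overlap even when $f_{\mf{q}} = f_{\mf{p}}\circ\ph_{\mf{p}\mf{q}}$, because convolution does not commute with nonlinear diffeomorphisms. (Composing with the smooth endpoint maps $\ev_j\circ\wh f$ does nothing to fix this, since the interior of the path is where the mollification is acting.) So the collection $(g'_{\mf{p}})_{\mf{p}}$ that you produce is not a strongly smooth map on the GCS, and the problem does not go away by invoking a partition of unity: the geodesic blend you use is defined inside a single chart, and a priori two charts will blend different data differently. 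The same difficulty appears when forcing agreement with $\wh g$ on $Z$ -- you need an extension of $\wh g$ off $Z$ that is itself compatible with coordinate changes, which is not automatic.

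The way the paper handles this is precisely what is missing from your argument: an induction over filters of the partially ordered index set $\mf P$ of the GCS (Lemma~\ref{170507_1}). One builds the smooth map one chart at a time, starting from the minimal charts, and at the inductive step one works on a maximal chart $\mf{p}_0$, taking local pieces near each point that are, by construction, pulled back either from the already-built smaller charts or from the given $\wh g$ on $Z$-charts. Only then does one blend with the averaging map $G$, using the property $G((x_i),(t_i))=x_{i_0}$ when $t_{i_0}=1$ to guarantee that the blend restricts to the previously constructed data on overlaps. Without such an ordered, chart-by-chart scheme, there is no reason the patched map should descend through a KG-embedding to a strongly smooth map on an open substructure of $\wh{\mca{U}}$. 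A secondary point: your two-input averaging formula $\exp_{\gamma_1}(\lambda\exp^{-1}_{\gamma_1}\gamma_2)$ is asymmetric and does not extend canonically to more than two inputs, whereas the partition-of-unity gluing involves arbitrarily many overlapping pieces; you would need to fix an ordering and iterate, which is exactly what the paper's Lemma~\ref{171114_2} does.
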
 

\begin{proof} 
\textbf{Step 1.} 
There exist 
\begin{itemize} 
\item a GCS $\wt{\mca{U}_Z}$ of $Z$, 
\item a KG-embedding from $\wh{\mca{U}} |_Z$ to $\wt{\mca{U}}_Z$, 
namely  an open substructure $\wh{\mca{U}_{Z,0}}$ of $\wh{\mca{U}}|_Z$ and a strict KG-embedding 
$\Phi_1: \wh{\mca{U}_{Z,0}}  \to \wt{\mca{U}_Z}$, 
\item a strongly continuous map $\wt{f_Z}: (Z, \wt{\mca{U}}_Z) \to \Pi^\con$, 
\item a strongly smooth map $\wt{g}: (Z, \wt{\mca{U}}_Z) \to \Pi$, 
\end{itemize} 
such that 
\begin{itemize}
\item $\Phi_1^* \wt{f}_Z = \wh{f} |_{\wh{\mca{U}_{Z,0}}}$ and $\Phi_1^* \wt{g} = \wh{g}|_{\wh{\mca{U}_{Z,0}}}$, 
\item $\wt{g}$ is $\ep$-close to $\wt{f}_Z$, 
\item $\ev_j \circ \wt{g} = \ev_j \circ \wt{f}_Z$ for $j=0, 1$. 
\end{itemize}

The GCS $\wt{\mca{U}_Z}$ exists by Theorem 3.30 in \cite{FOOO_Kuranishi}. 
By inspecting its proof (Section 11.1 in \cite{FOOO_Kuranishi}), 
one can take $\wt{\mca{U}_Z}$ so that each chart of 
$\wt{\mca{U}_Z}$ is an open subchart of a certain K-chart of $\wh{\mca{U}_Z}$, 
thus one can define $\wt{g}$ and $\wt{f}_Z$ by pulling back $\wh{f}$ and $\wh{g}$. 

\textbf{Step 2.} 
By Proposition 7.52 and Lemma 7.53 in \cite{FOOO_Kuranishi}, there exist 
\begin{itemize} 
\item a GCS $\wt{\mca{U}}$ of $X$.
\item a KG-embedding from $\wh{\mca{U}}$ to $\wt{\mca{U}}$, 
namely and open substructure $\wh{\mca{U}_{0, +}}$ of $\wh{\mca{U}}$ 
and a strict KG-embedding $\Phi_2: \wh{\mca{U}_{0, +}} \to \wt{\mca{U}}$, 
\item an extension from $\wt{\mca{U}_Z}$ to $\wt{\mca{U}}$ (see Definition 7.50 of \cite{FOOO_Kuranishi}), 
namely  an open substructure $\wt{\mca{U}_{Z,0}}$ of $\wt{\mca{U}_Z}$ 
and a strict extension $\Phi_3: \wt{\mca{U}_{Z,0}} \to \wt{\mca{U}}$, 
\item a strongly continuous map $\wt{f}: (X, \wt{\mca{U}}) \to \Pi^\con$
such that $\Phi_2^* \wt{f} = \wh{f} |_{\wh{\mca{U}_{0, +}}}$
and $\Phi_3^* \wt{f} = \wt{f_Z}|_{\wt{\mca{U}_{Z,0}}}$. 
\end{itemize} 

Now we can state a $C^0$-approximation result for GCS 
in  Lemma \ref{171114_1} below, 
which we assume for the moment. 

\begin{lem}\label{171114_1} 
There exist 
\begin{itemize}
\item $\wt{\mca{U}_{Z,00}}$: an open substructure of $\wt{\mca{U}_{Z, 0}}$, 
\item $\wt{\mca{U}_0}$: an open substructure of $\wt{\mca{U}}$, 
\item a strict extension from $\wt{\mca{U}_{Z,00}}$ to $\wt{\mca{U}_0}$, 
\item a strongly smooth map $\wt{g'}: (X, \wt{\mca{U}_0}) \to \Pi$ such that 
\begin{itemize}
\item $\wt{g'}$ is $\ep$-close to $\wt{f}|_{\wt{\mca{U}_0}}$, 
\item $\ev_j \circ \wt{g'} = \ev_j \circ \wt{f}\,(j=0,1)$ on $\wt{\mca{U}_0}$, 
\item $\wt{g'} = \wt{g}$ on $\wt{\mca{U}_{Z,00}}$. 
\end{itemize}
\end{itemize} 
\end{lem}

\textbf{Step 3.} 
Take an open substructure $\wh{\mca{U}_0}$ of $\wh{\mca{U}}$ with a strict KG-embedding $\wh{\mca{U}_0} \to \wt{\mca{U}_0}$
such that $\wh{\mca{U}_0}|_Z \to \wt{\mca{U}_0}$ factors through a strict KG-embedding $\wh{\mca{U}_0}|_Z \to \wt{\mca{U}_{Z,00}}$.
Finally, we can define $\wh{g'}: (X, \wh{\mca{U}_0}) \to \Pi$ by pulling back $\wt{g'}: (X, \wt{\mca{U}_0}) \to \Pi$ 
by $\wh{\mca{U}_0} \to \wt{\mca{U}_0}$. 
\end{proof} 

The rest of this section is devoted to the proof of Lemma \ref{171114_1}. 
First we need to prove Lemmas \ref{171114_2}, \ref{171114_3} and \ref{170513_1}. 

\begin{lem}\label{171114_2}
For any nonempty finite set $I$, there exists a $C^\infty$-map 
\[ 
G: \{ (x_i, t_i )_{i  \in I} \mid x_i \in L, \, t_i \in [0,1], \, \max_{i, j \in I} d_L(x_i, x_j) < \rho_L,  \, \sum_{i \in I} t_i = 1\} \to L 
\] 
such that the following properties hold: 
\begin{enumerate} 
\item[(i):]  If $t_i = \begin{cases} 1 &(i= i_0) \\ 0 &(i \ne i_0) \end{cases}$ for some $i_0 \in I$, then $G (x_i, t_i)_i = x_{i_0}$. 
\item[(ii):] If there exists $y \in L$ and $r \in (0, \rho_L]$ such that $d_L(y, x_i) < r\,(\forall i \in I)$, then 
$d_L(y, G(x_i, t_i)_i) < r$. 
\end{enumerate} 
\end{lem}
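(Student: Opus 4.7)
The plan is to define $G$ as the Riemannian center of mass (Karcher mean). Given data $(x_i, t_i)_{i \in I}$ in the domain, pick any $i_0 \in I$; since $\max_{i,j} d_L(x_i, x_j) < \rho_L$, all $x_i$ lie in the closed ball $\bar{B}(x_{i_0}, \rho_L)$, which is geodesically convex by the choice of $\rho_L$. Define $G((x_i, t_i)_{i \in I})$ to be the unique minimizer on $\bar{B}(x_{i_0}, \rho_L)$ of
\[
f(z) := \sum_{i \in I} t_i \, d_L(z, x_i)^2.
\]
Uniqueness follows from strict geodesic convexity: on a convex ball of radius less than the convexity radius, each $z \mapsto d_L(z, x_i)^2$ has Hessian bounded below by a positive multiple of the metric (standard Rauch-type comparison), and this property is preserved by convex combinations. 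Independence of $G$ from the choice of $i_0$ is automatic, since any two such minimizers both lie in the geodesically convex intersection of the respective balls, on which $f$ is again strictly convex.

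Property (i) is immediate: when $t_{i_0} = 1$ and the other $t_i = 0$, $f(z) = d_L(z, x_{i_0})^2$ is uniquely minimized at $x_{i_0}$. For property (ii), suppose $d_L(y, x_i) < r \leq \rho_L$ for every $i \in I$. Then $\bar{B}(y, r)$ is geodesically convex and contains all $x_i$; let $z^\star$ be the unique minimizer of $f$ on $\bar{B}(y, r)$. I claim $z^\star$ lies in the open ball $B(y, r)$: at any boundary point $z^\star \in \partial B(y, r)$, the negative gradient of $f$ equals $2 \sum_i t_i \exp_{z^\star}^{-1}(x_i)$, a convex combination of vectors pointing from $z^\star$ into the convex ball (since the geodesic from a boundary point to a strictly interior point starts by moving inward). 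Hence $f$ can be strictly decreased by moving inward, so the minimum must be interior to $\bar{B}(y, r)$. By uniqueness this interior minimizer agrees with $G$, giving $d_L(y, G) < r$ as required.

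Smoothness of $G$ in $(x_i, t_i)$ will follow by the implicit function theorem applied to the critical point equation $\nabla_z f = 0$: the Hessian of $f$ at the minimizer is positive definite and depends smoothly on parameters, so the implicit function theorem produces a $C^\infty$ local solution in $z$. The main (mild) obstacle is verifying a uniform positive lower bound on the Hessian as some weights $t_i$ approach $0$; this is handled by the fact that each individual $d_L(\cdot, x_i)^2$ has Hessian bounded below by a fixed positive constant times the metric throughout the convex ball, and this bound is preserved under any convex combination whose weights sum to $1$.
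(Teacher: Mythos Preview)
Your Karcher-mean approach is valid and genuinely different from the paper's. The paper fixes a total order on $I$ and builds $G$ by iterated geodesic interpolation (recursively taking the point at parameter $t_{i_0}$ on the minimizing geodesic from the weighted mean of the remaining points to $x_{i_0}$, where $i_0$ is the largest index with nonzero weight), and then reads off (ii) from geodesic convexity of $B(y,r)$. Your construction has the advantage of being permutation-invariant and of making the $C^\infty$-dependence on all parameters (including at the boundary faces $t_i=0$) transparent via the implicit function theorem; the paper's iterated construction is more elementary but carries an auxiliary order and, for $|I|\ge 3$, has a delicate smoothness issue at the vertices of the weight simplex (the renormalized weights $t_i/(1-t_{i_0})$ are not smooth functions of $t$ there), so your route is arguably cleaner.

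Two small points should be tightened. First, the strict convexity of each $d_L(\cdot,x_i)^2$ on the relevant balls, the formula $-\nabla f(z)=2\sum_i t_i\exp_z^{-1}(x_i)$, and the uniform Hessian lower bound all require a curvature-dependent radius condition of the form $2\rho_L<\min\bigl(\mathrm{inj}(L),\,\pi/\sqrt{K_+}\bigr)$ (with $K_+$ an upper sectional-curvature bound), which is slightly stronger than the paper's stated hypothesis on $\rho_L$; since $L$ is compact and $\rho_L$ is an auxiliary constant chosen once, one may simply shrink it. Second, the sentence ``by uniqueness this interior minimizer agrees with $G$'' is too quick: you have not yet placed $G$ (the minimizer on $\bar B(x_{i_0},\rho_L)$) and $z^\star$ (the minimizer on $\bar B(y,r)$) in a common convex region on which $f$ is strictly convex. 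A clean fix is to argue directly that any interior critical point $G$ must satisfy $d_L(y,G)<r$: if $d_L(y,G)\ge r>d_L(y,x_i)$ for all $i$, then by the first variation formula together with triangle comparison (valid under the curvature bound above) each $\exp_G^{-1}(x_i)$ makes an acute angle with $\exp_G^{-1}(y)$, so $f$ strictly decreases along the geodesic from $G$ toward $y$, contradicting $\nabla f(G)=0$.
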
 
\begin{proof} 
We fix an arbitrary total order on $I$. 
For each $t = (t_i)_{i \in I}$, let $i_0:= \max \{ i \mid t_i \ne 0\}$, 
and for every $\theta \in [0,1]$ let 
\[ 
(t^\theta)_i: = \begin{cases} 
t_i (1-\theta t_{i_0})/(1-t_{i_0}) &(i < i_0) \\ 
\theta t_{i_0} &(i = i_0) \\ 
0 &(i>i_0). 
\end{cases} 
\] 
Now we define $G$ so that it satisfies (i), 
and for any $x = (x_i)_i$, the map 
\[
[0, 1] \to L; \, \theta \mapsto G(x, t^\theta)
\] 
is the shortest geodesic connecting the end points. 
Now $G$ satisfies (ii) since 
any geodesic ball with radius $r \in (0, \rho_L]$ is geodesically convex, 
by the definition of $\rho_L$ (see Section 7.3). 
\end{proof} 

\begin{lem}\label{171114_3} 
For any $(T, \gamma) \in \Pi^\con$ and $\ep>0$, 
there exists $(T', \gamma') \in \Pi$ such that 
$d_\Pi((T,\gamma), (T', \gamma')) < \ep$.
\end{lem}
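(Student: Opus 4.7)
The plan is to decompose the approximation into two independent difficulties: smoothing a continuous path on $L$, and enforcing the infinite-order flatness condition at the two endpoints. The case $T=0$ is trivial: $\gamma$ is then the unique constant path at some point $q \in L$, and I take $T' := \ep/2$ together with $\gamma'$ the constant smooth path of length $T'$ at $q$, which trivially lies in $\Pi$ and satisfies $d_\Pi((0, \gamma), (T', \gamma')) = T' < \ep$.

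For $T > 0$, the first step is to smooth $\gamma$ inside $L$. I would fix once and for all a Whitney embedding $L \hookrightarrow \R^N$ together with a tubular neighborhood $U \supset L$ and a smooth retraction $\pi: U \to L$. Extending $\gamma$ to a continuous map $\bar\gamma: \R \to \R^N$ by constants outside $[0, T]$ and mollifying by convolution with a smooth approximate identity of sufficiently small width, I obtain a smooth curve $\eta: [0, T] \to \R^N$ with image inside $U$ and $\sup_t \lvert \bar\gamma(t) - \eta(t) \rvert$ as small as desired. Setting $\gamma_1 := \pi \circ \eta$ gives $\gamma_1 \in C^\infty([0, T], L)$ with $\sup_t d_L(\gamma(t), \gamma_1(t)) < \ep/2$. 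This step is entirely standard.

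The second step is to build a smooth reparametrization $\sigma_\delta: [0, T] \to [0, T]$ (depending on a small parameter $\delta > 0$) which is strictly increasing on $(0, T)$, satisfies $\sigma_\delta(0) = 0$ and $\sigma_\delta(T) = T$, has all derivatives vanishing at $0$ and $T$, and satisfies $\lVert \sigma_\delta - \id \rVert_{C^0} \to 0$ as $\delta \to 0$. Such a $\sigma_\delta$ can be produced, for instance, as $\sigma_\delta(t) := \int_0^t \mu_\delta(s)\, ds$, where $\mu_\delta \ge 0$ is smooth, vanishes to infinite order at $0$ and $T$, is identically equal to a constant $c_1(\delta) \to 1$ on $[\delta, T - \delta]$, and is normalized so that $\int_0^T \mu_\delta = T$. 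I then define $T' := T$ and $\gamma'(t) := \gamma_1(\sigma_\delta(t))$. Smoothness is clear, and Fa\`a di Bruno's formula shows that $\partial_t^m \gamma'(0) = \partial_t^m \gamma'(T) = 0$ for all $m \ge 1$, since every term in the formula involves at least one factor $\sigma_\delta^{(k)}$ with $k \ge 1$ evaluated at an endpoint. Hence $(T', \gamma') \in \Pi$.

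The conclusion follows from the triangle inequality
\[
d_L(\gamma(t), \gamma_1(\sigma_\delta(t))) \;\le\; d_L(\gamma(t), \gamma(\sigma_\delta(t))) + d_L(\gamma(\sigma_\delta(t)), \gamma_1(\sigma_\delta(t))),
\]
where the second term is bounded by $\ep/2$ by construction of $\gamma_1$, and the first term is made smaller than $\ep/2$ by choosing $\delta$ small enough, using uniform continuity of $\gamma$ on $[0, T]$ together with $\lVert \sigma_\delta - \id \rVert_{C^0} \to 0$. The only (mild) subtlety is the construction of $\sigma_\delta$: any function with all derivatives vanishing at $0$ necessarily satisfies $\sigma(t) = o(t^n)$ near $0$, so $\sigma_\delta$ cannot be $C^1$-close to $\id$; however, $C^0$-closeness is all that the metric $d_\Pi$ sees, so this is harmless.
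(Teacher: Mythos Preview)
Your proof is correct and, at its core, uses the same idea as the paper's: approximate the continuous path by a smooth one. The paper's argument is a one-liner: it sets $T' := T + \ep/2$ (thereby ensuring $T' > 0$ without a case distinction for $T = 0$) and simply appeals to density of $C^\infty([0,1], L)$ in $C^0([0,1], L)$, leaving the infinite-order flatness condition at the endpoints entirely implicit. Your version is more explicit: you handle $T = 0$ separately, keep $T' = T$ for $T > 0$, smooth via mollification in an ambient $\R^N$ followed by retraction, and then manufacture the endpoint flatness by composing with a reparametrization $\sigma_\delta$ all of whose derivatives vanish at $0$ and $T$. What you gain is a fully rigorous treatment of the one point the paper glosses over (membership in $\Pi$, not merely $C^\infty$); what the paper's choice $T' = T + \ep/2$ buys is a uniform argument that avoids your case split.
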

\begin{proof}
Take $T':= T+\ep/2$, and 
$\gamma': [0,T'] \to L$ so that $d_L(\gamma(sT), \gamma'(sT'))<\ep/2$ for any $s \in [0,1]$, 
which is possible since $C^\infty([0,1], L)$ is dense in $C^0([0,1], L)$ with respect to the $C^0$-topology. 
\end{proof}

\begin{lem}\label{170513_1}
Let $U$ be a $C^\infty$-manifold, 
$f: U \to \Pi^\con$ be a continuous map, 
such that $\ev_j \circ f: U \to L$ is of $C^\infty$ for $j=0, 1$. 

Let $V$ be a submanifold of $U$, 
$g: V \to \Pi$ be a smooth map such that 
$\ev_j \circ g = \ev_j \circ f|_V\,(j=0, 1)$ and 
$g$ is $\ep$-close to $f|_V$. 

Then for any $x \in V$, there exists an open neighborhood $W$ of $x$ in $U$ 
and a smooth map $g': W \to \Pi$ such that 
$g' = g$ on $W \cap V$, 
$\ev_j \circ g' = \ev_j \circ f|_W\,(j=0,1 )$, 
and $g'$ is $\ep$-close to $f|_W$. 
\end{lem}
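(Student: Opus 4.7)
The plan is to extend $g$ to a neighborhood of $x$ in two stages: first by pulling back along a tubular retraction, and then by correcting the two endpoints of each path so that they agree with $\ev_0 \circ f$ and $\ev_1 \circ f$, using a localized modification near $t = 0$ and $t = T$ built from parallel transport and a cutoff that is flat of infinite order at $0$. Since $V$ is a submanifold of $U$, choose a tubular neighborhood $W_0 \ni x$ and a smooth retraction $r\colon W_0 \to V \cap W_0$, and set $\tilde g := g \circ r$. Writing $\tilde g(u) = (T(u), \tilde\gamma(u))$ with $T(u) := T_g(r(u))$ and $\tilde\gamma(u)(t) := \gamma_g(r(u))(t)$, the map $\tilde g$ is smooth, equals $g$ on $V \cap W_0$, and satisfies $\ev_j \circ \tilde g(u) = \ev_j \circ f(r(u))$, which differs from $\ev_j \circ f(u)$ in general. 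Strict $\ep$-closeness on a small compact neighborhood $K$ of $x$ in $V$ yields $\delta_0 > 0$ with $\sup_{v \in K} d_\Pi(g(v), f(v)) \le \ep - 2\delta_0$; a shrinking to a neighborhood $W_1$ of $x$ gives $d_\Pi(\tilde g(u), f(u)) \le \ep - \delta_0$ and $d_L(\ev_j f(r(u)), \ev_j f(u)) < \rho_L$ on $W_1$ by continuity of $f$ into $\Pi^\con$.

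On $W_1$, let $v_j(u) := \exp^{-1}_{\ev_j f(r(u))}(\ev_j f(u))$ for $j = 0, 1$, which are smooth tangent vector fields vanishing on $V \cap W_1$. Parallel-transport $v_0(u)$ along $\tilde\gamma(u)$ starting at $t = 0$ to obtain a smooth field $V_0(u, t) \in T_{\tilde\gamma(u)(t)} L$, and similarly parallel-transport $v_1(u)$ backward from $t = T(u)$ to obtain $V_1(u, t)$. Choose a smooth cutoff $\chi\colon \R \to [0, 1]$ with $\chi \equiv 1$ on $(-\infty, 0]$ and $\chi \equiv 0$ on $[1, \infty)$; automatically $\chi(0) = 1$ and $\chi^{(m)}(0) = 0$ for all $m \ge 1$. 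Pick $\delta > 0$ with $3\delta < \inf_{v \in K} T_g(v)$, set $\chi_\delta(t) := \chi(t/\delta)$, and define $g'(u) := (T(u), \gamma'(u))$ on $W := W_1$ by
\[
\gamma'(u)(t) := \exp_{\tilde\gamma(u)(t)}\bigl(\chi_\delta(t)\,V_0(u,t) + \chi_\delta(T(u) - t)\,V_1(u,t)\bigr).
\]
Direct inspection gives $\gamma'(u)(0) = \ev_0 f(u)$, $\gamma'(u)(T(u)) = \ev_1 f(u)$, and $g' = g$ on $V \cap W$ (as $v_j(u) = 0$ there). Since parallel transport is an isometry and $|\chi_\delta| \le 1$, one has the pointwise bound $d_L(\gamma'(u)(t), \tilde\gamma(u)(t)) \le |v_0(u)| + |v_1(u)|$, so $d_\Pi(g'(u), f(u)) \le (|v_0(u)| + |v_1(u)|) + (\ep - \delta_0) < \ep$ after a final shrinking of $W$ to force $|v_0(u)| + |v_1(u)| < \delta_0$.

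The main obstacle is to verify that $g'$ actually maps into $\Pi$ (not only $\Pi^\con$), that is, the endpoint flatness $\partial_t^m \gamma'(u)(0) = \partial_t^m \gamma'(u)(T(u)) = 0$ for all $m \ge 1$. Since $g$ maps into $\Pi$, $\partial_t^m \tilde\gamma(u)(0) = 0$ for $m \ge 1$; the parallel-transport ODE $\partial_t V_0 + \Gamma(\tilde\gamma)(V_0, \partial_t \tilde\gamma) = 0$ then forces $\partial_t^m V_0(u, 0) = 0$ for $m \ge 1$ by induction on $m$, since each term arising by differentiating the ODE at $t = 0$ carries at least one factor $\partial_t^a \tilde\gamma(u, 0)$ with $a \ge 1$. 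Near $t = 0$ the second summand in $\gamma'$ vanishes identically by the support condition $3\delta < T(u)$, so Leibniz combined with $\chi_\delta^{(m)}(0) = 0$ for $m \ge 1$ shows that $\xi(u, t) := \chi_\delta(t) V_0(u,t) + \chi_\delta(T(u) - t) V_1(u,t)$ satisfies $\xi(u, 0) = v_0(u)$ and $\partial_t^m \xi(u, 0) = 0$ for $m \ge 1$. Writing $\gamma'(u)(t) = E(\xi(u, t))$, where $E\colon TL \to L$ is the smooth map sending a tangent vector to the endpoint at time $1$ of its geodesic, the higher chain rule (Fa\`a di Bruno) expresses $\partial_t^m \gamma'(u)(0)$ as a polynomial in $\partial_t^b \xi(u, 0)$ for $b \ge 1$, all of which vanish; hence $\partial_t^m \gamma'(u)(0) = 0$. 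The argument at $t = T(u)$ is symmetric, completing the verification.
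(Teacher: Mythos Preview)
Your proof is correct and follows essentially the same approach as the paper: pull back $g$ via a tubular retraction $r$, compute endpoint correction vectors via $\exp^{-1}$, parallel-transport them along the pulled-back path, damp them with a cutoff, and apply $\exp$. The only cosmetic difference is that the paper uses a single transition cutoff $\chi$ (writing $\chi\,\xi^y_1 + (1-\chi)\,\xi^y_0$) whereas you use two bump cutoffs supported near the two ends; you also supply the verification that the output lands in $\Pi$ (endpoint flatness) and make the $\ep$-closeness estimate explicit, both of which the paper leaves implicit.
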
 
\begin{proof}
The last condition  ``$g'$ is $\ep$-close to  $f|_W$'' can be achieved by taking $W$ sufficiently small, 
since $f$ is continuous and $\ep$-closeness is an open condition. 
Thus it is sufficient to define $W$ and a smooth map $g': W \to \Pi$ such that 
$g' = g$ on $W \cap V$ and $\ev_j \circ g'  =\ev_j \circ f|_W\,(j=0, 1)$. 

Let $W$ be a sufficiently small neighborhood of $x$
, so that there exists a $C^\infty$-map $r: W \to W \cap V$ satisfying $r|_{W \cap V} = \id_{W \cap V}$, and 
\[
d_L(\ev_j (f(y)), \ev_j (g \circ r(y))) < r_{\text{inj}}(L) 
\]
for every $y \in W$ and $j \in \{0,1\}$, where $r_{\text{inj}}(L)$ denotes the injectivity radius of $L$. 
We define $\xi_j(y) \in T_{\ev_j(g \circ r(y))}L$ by 
$\exp (\xi_j(y)) = \ev_j (f(y))$. 

We set $g(z):= (T(z), \gamma(z))$ for any $z \in W \cap V$. 
For every $y \in W$ and $j \in \{0, 1\}$, 
we define $\xi^y_j(\theta) \in T_{\gamma(r(y))(\theta)} L \, (0 \le \theta \le T(r(y)))$ so that 
\[ 
\xi^y_0(0) = \xi_0(y), \quad \nabla_\theta \xi^y_0  \equiv 0, \quad 
\xi^y_1(T(r(y)) = \xi_1(y), \quad \nabla_\theta  \xi^y_1 \equiv 0. 
\] 
Taking a $C^\infty$-function $\chi: [0,1] \to [0,1]$ such that  
$\chi \equiv 0$ near $0$ and 
$\chi \equiv 1$ near $1$, 
we define 
$\xi'(y) \in C^\infty( \gamma(r(y))^*TL)$ by 
\[ 
\xi'(y) (\theta):= \chi(\theta/T(r(y))) \cdot \xi^y_1(\theta ) + (1 - \chi (\theta/T(r(y)))) \cdot \xi^y_0(\theta)
\qquad (0 \le \theta \le T(r(y))). 
\]
Finally, we define 
$\gamma'(y): [0, T(r(y))] \to L$ by 
\[ 
\gamma'(y)(\theta):= \exp (\gamma(r(y))(\theta), \xi'(y)(\theta))
\] 
and 
$g'(y):= (T(r(y)), \gamma'(y))$. 
It is easy to check that this map $g'$ is smooth and satisfies required conditions. 
\end{proof}

Now let us start the proof of Lemma \ref{171114_1}. 
Let $\mf{P}$ denote the index set of $\wt{\mca{U}}$, 
and $\mf{P}_Z \subset \mf{P}$ denote the index set of $\wt{\mca{U}_Z}$. 
Let $\{\mca{U}_{\mf{p}}\}_{\mf{p} \in \mf{P}}$ be K-charts of $\wt{\mca{U}}$, and we denote $\mca{U}_{\mf{p}} = (U_{\mf{p}}, \ldots )$ for each $\mf{p} \in \mf{P}$. 
Let $\{\mca{U}^Z_{\mf{p}}\}_{\mf{p} \in \mf{P}_Z}$ be K-charts of $\wt{\mca{U}_{Z,0}}$ and we denote $\mca{U}^Z_{\mf{p}}=(U^Z_{\mf{p}}, \ldots )$ for each $\mf{p} \in \mf{P}_Z$.
We also take support systems $\{ \mca{K}_{\mf{p}}\}_{\mf{p} \in \mf{P}}$ of $\wt{\mca{U}}$ 
and $\{\mca{K}^Z_{\mf{p}}\}_{\mf{p} \in \mf{P}_Z}$ of $\wt{\mca{U}_{Z,0}}$, 
such that 
$(\ph_3)_{\mf{p}} (\mca{K}^Z_{\mf{p}}) \subset \mtrg{\mca{K}}_{\mf{p}}$
for every $\mf{p} \in \mf{P}_Z$,
where $\Phi_3 = (\ph_3, \wh{\ph}_3)$ is a strict extension from $\wt{\mca{U}_{Z,0}}$ to $\wt{\mca{U}}$. 

A subset $\mf{F} \subset \mf{P}$ is called a \textit{filter} 
if $\mf{p}, \mf{q} \in \mf{P}$, $\mf{p} \ge \mf{q}$, $\mf{p} \in \mf{F}$ imply $\mf{q} \in \mf{F}$. 
In particular the empty set is a filter (see Section 12.3 in \cite{FOOO_Kuranishi}). 
Now Lemma \ref{171114_1} is proved by applying Lemma \ref{170507_1} below for $\mf{F}=\mf{P}$, 
and setting 
\[ 
(\mca{U}_0)_{\mf{p}}: = \mca{U}_{\mf{p}}|_{V_{\mf{p}}} \,(\forall \mf{p} \in \mf{P}), \quad
(\mca{U}_{Z,00})_{\mf{p}}: = (\mca{U}_{Z,0})_{\mf{p}}|_{W_{\mf{p}}} \,(\forall \mf{p} \in \mf{P}_Z).
\]

\begin{lem}\label{170507_1} 
For any filter $\mf{F}$ of $\mf{P}$, 
there exist 
$(V_{\mf{p}}, g'_{\mf{p}})_{\mf{p} \in \mf{F}}$ and $(W_{\mf{p}})_{\mf{p} \in \mf{P}_Z}$
such that the following conditions are satisfied: 
\begin{itemize}
\item $V_{\mf{p}}$ is an open neighborhood of $\mca{K}_{\mf{p}}$ in $U_{\mf{p}}$ for every $\mf{p} \in \mf{F}$. 
\item $g'_{\mf{p}}: V_{\mf{p}} \to \Pi$ is a smooth map. 
Moreover, $g'_{\mf{p}}$ is $\ep$-close to $f_{\mf{p}}|_{V_{\mf{p}}}$ and $\ev_j \circ g'_{\mf{p}} = \ev_j \circ f_{\mf{p}}|_{V_{\mf{p}}}$ for $j=0,1$.
\item $(g'_{\mf{p}})_{\mf{p} \in \mf{F}}$ is compatible with coordinate changes. 
Specifically, for any $\mf{p}, \mf{p}' \in \mf{F}$ such that $\mf{p} \ge \mf{p}'$, 
there holds $g_{\mf{p}'} = g_{\mf{p}} \circ \ph_{\mf{p}\mf{p}'}$ on $V_{\mf{p}'} \cap \ph_{\mf{p}\mf{p}'}^{-1}(V_{\mf{p}})$. 
\item $W_{\mf{p}}$ is an open neighborhood of $\mca{K}^Z_{\mf{p}}$ in $U^Z_{\mf{p}}$ for every $\mf{p} \in \mf{P}_Z$. 
\item $(g'_{\mf{p}})_{\mf{p} \in \mf{F}}$ is compatible with $\wt{g}$. Specifically:
\begin{itemize}
\item For any $\mf{p} \in \mf{P}_Z \cap \mf{F}$, there holds $(\ph_3)_{\mf{p}}(W_{\mf{p}}) \subset V_{\mf{p}}$ and $g'_{\mf{p}} = g_{\mf{p}} \circ (\ph_3)_{\mf{p}}$ on $W_{\mf{p}}$. 
\item For any $\mf{q} \in \mf{P}_Z \setminus \mf{F}$ and $\mf{p} \in \mf{F}$ satisfying $\mf{p} < \mf{q}$, there holds 
$g_{\mf{q}} \circ \ph_{\mf{q}\mf{p}} = g'_{\mf{p}}$ on $V_{\mf{p}} \cap (\ph_{\mf{q}\mf{p}})^{-1}(W_{\mf{q}})$. 
\end{itemize} 
\end{itemize} 
\end{lem}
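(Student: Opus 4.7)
\textbf{Proof proposal for Lemma \ref{170507_1}.}
The plan is to induct on the cardinality of $\mf{F}$, which is finite since the GCS indexing set $\mf{P}$ is finite (the space $X$ is compact). The base case $\mf{F} = \emptyset$ is trivial: just take $W_{\mf{p}}$ to be an open neighborhood of $\mca{K}^Z_{\mf{p}}$ on which $g_{\mf{p}} \circ (\ph_3)_{\mf{q}} = g_{\mf{q}}$ holds whenever $\mf{q} > \mf{p}$ in $\mf{P}_Z$ is forced, and check this is consistent. For the inductive step, suppose the lemma is proved for a filter $\mf{F}$, and let $\mf{p}_0$ be a minimal element of $\mf{P} \setminus \mf{F}$ (so $\mf{F}' := \mf{F} \cup \{\mf{p}_0\}$ is still a filter). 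We must construct $V_{\mf{p}_0}$ and $g'_{\mf{p}_0}$ extending the previously-built data, and possibly shrink some $W_{\mf{p}}$ with $\mf{p} \in \mf{P}_Z$, $\mf{p} > \mf{p}_0$.

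The key observation is that, on the closed subset
\[
Z_{\mf{p}_0} := \bigcup_{\mf{p} < \mf{p}_0,\, \mf{p} \in \mf{F}} \ph_{\mf{p}_0 \mf{p}}(V_{\mf{p}} \cap U_{\mf{p}_0 \mf{p}}) \;\cup\; \bigl[ \text{if } \mf{p}_0 \in \mf{P}_Z \text{:  } (\ph_3)_{\mf{p}_0}(W_{\mf{p}_0}) \bigr]
\]
of a neighborhood of $\mca{K}_{\mf{p}_0}$ in $U_{\mf{p}_0}$, a smooth map $g_0 : Z_{\mf{p}_0} \to \Pi$ is already prescribed by the induction hypothesis (possibly after shrinking $W_{\mf{p}_0}$ to ensure $(\ph_3)_{\mf{p}_0}(W_{\mf{p}_0}) \subset U_{\mf{p}_0}$). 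All these prescriptions agree on overlaps by the compatibility parts of the induction hypothesis together with the compatibility of $\wt{g}$ with coordinate changes, so $g_0$ is well-defined; it is $\ep$-close to $f_{\mf{p}_0}|_{Z_{\mf{p}_0}}$ and satisfies $\ev_j \circ g_0 = \ev_j \circ f_{\mf{p}_0}|_{Z_{\mf{p}_0}}$ for $j=0,1$.

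Now I would apply Lemma \ref{170513_1} to every point of the compact set $\mca{K}_{\mf{p}_0}$: at points of $Z_{\mf{p}_0}$ this yields local extensions from (closed neighborhoods of) $Z_{\mf{p}_0}$ to full open neighborhoods, and at points outside $Z_{\mf{p}_0}$ we apply the same lemma with $V = \emptyset$ or a single point (in which case it is essentially Lemma \ref{171114_3}) to get a local smooth lift. This yields a finite open cover $\{O_\alpha\}_\alpha$ of a neighborhood $V_{\mf{p}_0}^{(0)}$ of $\mca{K}_{\mf{p}_0}$ together with local smooth maps $g_\alpha : O_\alpha \to \Pi$, each $\ep$-close to $f_{\mf{p}_0}|_{O_\alpha}$, with matching endpoint evaluations, and coinciding with $g_0$ wherever $g_0$ is defined. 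The gluing is then performed by picking a partition of unity $\{\chi_\alpha\}$ subordinate to $\{O_\alpha\}$ with the additional property that $\chi_\alpha \equiv 0$ on a neighborhood of $Z_{\mf{p}_0} \setminus O_\alpha$ whenever $O_\alpha \cap Z_{\mf{p}_0} = \emptyset$, and averaging the loop lengths $T_\alpha(u)$ and, pointwise in the parameter, the values $\gamma_\alpha(u)(sT_\alpha(u))$ via the map $G$ of Lemma \ref{171114_2}. Because $\ep < \rho_L$ and all $g_\alpha$ are $\ep$-close to the common continuous map $f_{\mf{p}_0}$, the inputs to $G$ lie within the required ball of radius $\rho_L$ in $L$; property (ii) of $G$ then guarantees that the glued map $g'_{\mf{p}_0}$ is again $\ep$-close to $f_{\mf{p}_0}$, and property (i) guarantees it coincides with $g_0$ on $Z_{\mf{p}_0}$ and preserves the endpoint evaluations (which are, by construction, common to all $g_\alpha$).

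The hardest part will be the bookkeeping in the averaging step: one must set up the partition of unity so that $g'_{\mf{p}_0}$ honestly extends $g_0$ (not just agrees with it on the intersection with some smaller subset), which is why we need $\chi_\alpha$ to vanish near $Z_{\mf{p}_0} \setminus O_\alpha$, and one must check that averaging loops with variable lengths yields a \emph{smooth} map into $\Pi$, not merely a continuous one. The smoothness reduces to smoothness of $G$, which is given by Lemma \ref{171114_2}, and to standard smooth dependence on parameters for geodesic interpolation. Finally, take $V_{\mf{p}_0}$ to be a relatively compact open neighborhood of $\mca{K}_{\mf{p}_0}$ inside $V_{\mf{p}_0}^{(0)}$, and (if $\mf{p}_0 \in \mf{P}_Z$) shrink $W_{\mf{p}_0}$ so that $(\ph_3)_{\mf{p}_0}(W_{\mf{p}_0}) \subset V_{\mf{p}_0}$, completing the induction.
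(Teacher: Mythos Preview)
Your overall strategy---induction on $|\mf{F}|$, local construction via Lemma~\ref{170513_1}, and patching by the averaging map $G$ of Lemma~\ref{171114_2}---matches the paper's proof. However, your definition of the ``already prescribed'' set $Z_{\mf{p}_0}$ has a genuine omission that breaks the induction.

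The last compatibility bullet in the statement requires, for every $\mf{q}\in\mf{P}_Z\setminus\mf{F}'$ with $\mf{q}>\mf{p}_0$, that $g'_{\mf{p}_0}=g_{\mf{q}}\circ\ph_{\mf{q}\mf{p}_0}$ on $V_{\mf{p}_0}\cap\ph_{\mf{q}\mf{p}_0}^{-1}(W_{\mf{q}})$. Your $Z_{\mf{p}_0}$ only collects constraints coming from $\mf{p}<\mf{p}_0$ in $\mf{F}$ and (if $\mf{p}_0\in\mf{P}_Z$) from $\mf{p}_0$ itself; it contains no piece corresponding to $\mf{q}\in\mf{P}_Z$ with $\mf{q}>\mf{p}_0$. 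Consequently the $g'_{\mf{p}_0}$ you construct is under no obligation to agree with $g_{\mf{q}}\circ\ph_{\mf{q}\mf{p}_0}$ anywhere, and in general it will not. Shrinking $W_{\mf{q}}$ afterwards cannot repair this: $W_{\mf{q}}$ must remain a neighborhood of $\mca{K}^Z_{\mf{q}}$, so whenever $\mca{K}_{\mf{p}_0}$ meets $\ph_{\mf{q}\mf{p}_0}^{-1}\bigl((\ph_3)_{\mf{q}}(\mca{K}^Z_{\mf{q}})\bigr)$---which is exactly the situation case~(i) of the paper's proof is designed for---the constraint is nonvacuous no matter how small you make $W_{\mf{q}}$ and $V_{\mf{p}_0}$.

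The fix is to enlarge $Z_{\mf{p}_0}$ by the open sets $\ph_{\mf{q}\mf{p}_0}^{-1}\bigl((\ph_3)_{\mf{q}}(W_{\mf{q}})\bigr)\subset U_{\mf{p}_0}$ for all such $\mf{q}$, prescribing $g'_{\mf{p}_0}$ there to be $g_{\mf{q}}\circ\ph_{\mf{q}\mf{p}_0}$. Since these pieces are \emph{open} in $U_{\mf{p}_0}$ (the extension maps $(\ph_3)_{\mf{q}}$ are open embeddings), no use of Lemma~\ref{170513_1} is needed on them; one only needs to arrange, as the paper does, that the local neighborhoods $W_x$ used in cases~(ii) and~(iii) are chosen disjoint from the relevant $\mca{K}^Z_{\mf{q}}$, so that after averaging the prescribed values survive near $\mca{K}^Z_{\mf{q}}$. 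With this addition your argument goes through and is essentially the paper's.
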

\begin{proof}
The proof is by induction on the cardinality of $\mf{F}$. 
There is nothing to prove when $\mf{F} = \emptyset$.
To discuss the induction step, let $\mf{F}$ be a filter, $\mf{p}_0$ be its maximal element
(namely $\mf{p} \in \mf{F}, \mf{p} \ge \mf{p}_0 \implies \mf{p} = \mf{p}_0$), 
and suppose that there exist $(V_{\mf{p}}, g'_{\mf{p}})_{\mf{p} \in \mf{F} \setminus \{\mf{p}_0\}}$ and $(W_{\mf{q}})_{\mf{q} \in \mf{P}_Z}$
satisfying the conditions in the lemma. 

For each $x \in \mca{K}_{\mf{p}_0}$, we take an open neighborhood $W_x$ of $x$ in $U_{\mf{p}_0}$
and a smooth map $g'_x: W_x \to \Pi$ in the way described below. 
We consider three cases (here we consider $x$ as a point in $|\wt{\mca{U}}| = \bigsqcup_{\mf{p} \in \mf{P}} U_{\mf{p}}/\sim$, and $\mca{K}^Z_{\mf{p}}$, $\mca{K}_{\mf{p}}$ as subspaces of $|\wt{\mca{U}}|$). 
\begin{itemize}
\item[(i):] There exists $\mf{q} \in \mf{P}_Z \setminus (\mf{F} \setminus \{\mf{p}_0\})$ such that $x \in \mca{K}^Z_{\mf{q}}$. 
\item [(ii):] $x \notin \mca{K}^Z_{\mf{q}}$ for any $\mf{q} \in \mf{P}_Z \setminus (\mf{F} \setminus \{\mf{p}_0\})$, but there exists $\mf{q} \in \mf{F} \setminus \{\mf{p}_0\}$ 
such that $x \in \mca{K}_{\mf{q}}$. 
\item[(iii):] $x \notin \mca{K}^Z_{\mf{q}}$ for any $\mf{q} \in \mf{P}_Z \setminus (\mf{F} \setminus \{\mf{p}_0\})$, and $x \notin \mca{K}_{\mf{q}}$ for any $\mf{q} \in \mf{F} \setminus \{\mf{p}_0\}$. 
\end{itemize} 

In case (i), take maximal $\mf{q} \in \mf{P}_Z \setminus (\mf{F} \setminus \{\mf{p}_0\})$ such that $x \in \mca{K}^Z_{\mf{q}}$. 
This condition implies $\mf{q} > \mf{p}_0$ since $U_{\mf{q}} \cap U_{\mf{p}_0} \ne \emptyset$ and $\mf{q} \in \mf{F}$. 
Then take $W_x$ such that: 
\begin{itemize}
\item 
$\ol{W_x} \cap \mca{K}^Z_{\mf{q}'} = \emptyset$ for any $\mf{q}' \in \mf{P}_Z \setminus (\mf{F} \setminus \{\mf{p}_0\})$ which does \textit{not} satisfy $\mf{q}' \le \mf{q}$
(note this condition implies that $\mca{K}^Z_{\mf{q}} \cap \mca{K}^Z_{\mf{q}'} = \emptyset$), 
\item 
$W_x \subset (\ph_{\mf{q}\mf{p}_0})^{-1}(W_{\mf{q}})$. 
\end{itemize} 
Then we define $g'_x: W_x \to \Pi$ by 
$g'_x: = g_{\mf{q}} \circ \ph_{\mf{q}\mf{p}_0}|_{W_x}$. 

In case (ii), take maximal $\mf{q} \in \mf{F} \setminus \{\mf{p}_0\}$ such that $x \in \mca{K}_{\mf{q}}$.
This condition implies $\mf{q} < \mf{p}_0$. 
Then take $W_x$ such that: 
\begin{itemize}
\item $\ol{W_x} \cap \mca{K}_{\mf{q}'} = \emptyset$ for any $\mf{q}' \in \mf{F} \setminus \{\mf{p}_0\}$ which does \textit{not} satisfy $\mf{q}' \le \mf{q}$
(note this condition implies that $\mca{K}_{\mf{q}} \cap \mca{K}_{\mf{q}'} = \emptyset$), 
\item $\ol{W_x} \cap \mca{K}^Z_{\mf{q}'} = \emptyset$ for any $\mf{q}' \in \mf{P}_Z \setminus (\mf{F} \setminus \{\mf{p}_0\})$, 
\item $(\ph_{\mf{p}_0\mf{q}})^{-1}(W_x) \subset V_{\mf{q}}$. 
\end{itemize} 
When $W_x$ is sufficiently small, Lemma \ref{170513_1} shows that there exists a smooth map $g'_x: W_x \to \Pi$ such that 
\begin{itemize} 
\item $g'_x$ is $\ep$-close to $f_{\mf{p}_0}|_{W_x}$.
\item $\ev_j \circ g'_x = \ev_j \circ f_{\mf{p}_0}|_{W_x}$ for $j=0, 1$. 
\item $g'_x \circ \ph_{\mf{p}_0\mf{q}} = g'_{\mf{q}}$ on $(\ph_{\mf{p}_0\mf{q}})^{-1}(W_x)$. 
\end{itemize} 

In case (iii), take $W_x$ so that 
$\ol{W_x} \cap \mca{K}_{\mf{q}} = \emptyset$ for every $\mf{q} \in \mf{F} \setminus \{\mf{p}_0\}$, and 
$\ol{W_x} \cap \mca{K}^Z_{\mf{q}} = \emptyset$ for every $\mf{q} \in \mf{P}_Z \setminus (\mf{F} \setminus \{\mf{p}_0\})$. 
When $W_x$ is sufficiently small, 
Lemmas \ref{171114_3} and \ref{170513_1} (applied to $V= \{x\}$)
show that there exists a smooth map $g'_x: W_x \to \Pi$ such that 
\begin{itemize} 
\item $g'_x$ is $\ep$-close to $f_{\mf{p}_0}|_{W_x}$. 
\item $\ev_j \circ g'_x = \ev_j \circ f_{\mf{p}}|_{W_x}$ for $j=0, 1$. 
\end{itemize} 

Since $\mca{K}_{\mf{p}_0}$ is compact, 
one can take finitely many points $\{x_i\}_{i \in I}$ such that 
$\{W_{x_i}\}_{i \in I}$ covers $\mca{K}_{\mf{p}_0}$. 
For each $i \in I$, we take 
$g'_{x_i}: W_{x_i} \to \Pi$ and denote it as 
$g'_{x_i} = (\gamma_i, T_i)$. 
Let us take a $C^\infty$-function $\chi_i: U_{\mf{p}_0} \to \R_{\ge 0}$ for each $i \in I$, 
such that $\supp \chi_i \subset W_{x_i}$
and $\sum_{i \in I} \chi_i = 1$ on a neighborhood of $\mca{K}_{\mf{p}_0}$, which we denote by $V_{\mf{p}_0}$. 
Then we define $g'_{\mf{p}_0}: V_{\mf{p}_0} \to \Pi$ by 
$g'_{\mf{p}_0} := (T, \gamma)$, such that 
$T: = \sum_{i \in I} \chi_i T_i$, 
and for every $y \in V_{\mf{p}_0}$
\[
\gamma(y): [0, T(y)] \to L ; \quad 
\theta \mapsto 
G (\gamma_i(y)(T_i(y)\theta/T(y)), \chi_i(y))_{i \in I} 
\]
where $G$ is defined in Lemma \ref{171114_2}. 
If $i \in I$ satisfies $y \in V_{x_i}$, 
then
$g'_{x_i}(y)$ is $\ep$-close to $f_{\mf{p}_0}(y)$, 
and $\ep < \rho_L$, thus 
$g'_{\mf{p}_0}(y)$ is $\ep$-close to $f_{\mf{p}_0}(y)$. 

Now we can finish the proof by replacing $V_{\mf{p}}$ with a smaller neighborhood of $\mca{K}_{\mf{p}}$ for each $\mf{p} \in \mf{F} \setminus \{\mf{p}_0\}$, 
and $W_{\mf{q}}$ with a smaller neighborhood of $\mca{K}^Z_{\mf{q}}$ for each $\mf{q} \in \mf{P}_Z$. 
\end{proof}

\section{Some basic notions in the theory of Kuranishi structures}

Here we recall some basic notions
in the theory of Kuranishi structures (abbreviated as K-structures), 
mainly to fix notations used throughout this paper. 
When we use notions which are not recalled here, 
we directly refer to \cite{FOOO_Kuranishi}. 
Throughout this section, 
$X$ denotes a separable, metrizable topological space. 

\textbf{Kuranishi chart (K-chart)} 

A \textit{K-chart} of $X$ is a tuple 
$\mca{U}=(U, \mca{E}, s, \psi)$ such that: 
\begin{itemize}
\item $U$ is a $C^\infty$-manifold, 
\item $\mca{E}$ is a $C^\infty$-vector bundle on $U$, 
\item $s$ is a $C^\infty$-section of $\mca{E}$, 
\item $\psi: s^{-1}(0) \to X$ is a homeomorphism onto an open set of $X$. 
\end{itemize}
$\dim \mca{U}: = \dim U - \rk \mca{E}$ is called the dimension of $\mca{U}$. 
An orientation of $\mca{U}$ is a pair of orientations of $U$ and $\mca{E}$. 
A K-chart at $p \in X$ is a K-chart $\mca{U}=(U, \mca{E}, s, \psi)$ such that
$p \in \Image \psi$. We denote $o_p:= \psi^{-1}(p) \in s^{-1}(0)$. 

\begin{rem}\label{170912_1} 
In the standard definition (see Definition 3.1 in \cite{FOOO_Kuranishi}), 
one assumes that 
$U$ is an orbifold and $\mca{E}$ is an orbibundle. 
However, 
in the present paper we are working with pseudo-holomorphic disks \textit{without} sphere bubbles, 
thus we do not need to take quotients by finite group actions on moduli spaces, 
hence it is sufficient to work with vector bundles on manifolds. 
\end{rem}

\textbf{Embedding of K-charts} 

Let $\mca{U}_i = (U_i, \mca{E}_i, s_i, \psi_i)\,(i=1, 2)$ be K-charts of $X$. 
An \textit{embedding} of K-charts $\Phi: \mca{U}_1 \to \mca{U}_2$ 
is a pair $\Phi = (\ph, \wh{\ph})$ such that: 
\begin{itemize} 
\item $\ph: U_1 \to U_2$ is an embedding of $C^\infty$-manifolds, 
\item $\wh{\ph}: \mca{E}_1 \to \mca{E}_2$ is an embedding of $C^\infty$-vector bundles over $\ph$, 
\item $\wh{\ph} \circ s_1 = s_2 \circ \ph$, 
\item $\psi_2 \circ \ph = \psi_1$ on $s_1^{-1}(0)$, 
\item for any $x \in s_1^{-1}(0)$, the covariant derivative
\begin{equation}\label{171119_1} 
D_{\ph(x)} s_2: 
\frac{ T_{\ph(x)} U_2}{(D_x\ph)(T_x U_1)} \to \frac{ (\mca{E}_2)_{\ph(x)}}{\wh{\ph}((\mca{E}_1)_x)} 
\end{equation} 
is an isomorphism. 
\end{itemize} 
When K-charts $\mca{U}_1$ and $\mca{U}_2$ are oriented, 
we say that $\Phi = (\ph, \wh{\ph})$ is orientation preserving, 
if an isomorphism 
\[ 
\det TU_2 \otimes (\det TU_1)^\vee \cong 
\det \mca{E}_2 \otimes (\det \mca{E}_1)^\vee
\] 
induced by (\ref{171119_1}) preserves orientations.

\textbf{Coordinate changes}

Let $\mca{U}_i = (U_i, \mca{E}_i, s_i, \psi_i) \,(i=1, 2)$ be K-charts of $X$. 
A \textit{coordinate change} in weak sense (resp. strong sense) 
from $\mca{U}_1$ to $\mca{U}_2$ is a triple 
$\Phi_{21} = (U_{21}, \ph_{21}, \wh{\ph}_{21})$ 
satisfying (i) and (ii) (resp. (i), (ii) and (iii)): 
\begin{itemize} 
\item[(i):] $U_{21}$ is an open subset of $U_1$, 
\item[(ii):] $(\ph_{21}, \wh{\ph}_{21})$ is an embedding of  K-charts $\mca{U}_1|_{U_{21}} \to \mca{U}_2$. 
\item[(iii):] $\psi_1(s_1^{-1}(0) \cap U_{21}) = \Image \psi_1 \cap \Image \psi_2$. 
\end{itemize} 

\textbf{Kuranishi structure (K-structure)} 

A \textit{K-structure}  $\wh{\mca{U}}$ of $X$ (of dimension $d$) consists of 
\begin{itemize} 
\item a K-chart (of dimension $d$) $\mca{U}_p = (U_p, \mca{E}_p, s_p, \psi_p)$ at $p$ for every $p \in X$, 
\item a coordinate change in weak sense 
$\Phi_{pq} = (U_{pq}, \ph_{pq}, \wh{\ph}_{pq}): \mca{U}_q \to \mca{U}_p$ 
for every $p \in X$ and $q \in \Image (\psi_p)$, 
\end{itemize}
such that 
\begin{itemize} 
\item $o_q \in U_{pq}$ for every $q \in \Image \psi_p$, 
\item for every $p \in X$, $q \in \Image \psi_p$ and $r \in \psi_q(s_q^{-1}(0) \cap U_{pq})$, 
there holds 
$\Phi_{pr}|_{U_{pqr}} = \Phi_{pq} \circ \Phi_{qr}|_{U_{pqr}}$
where $U_{pqr} := \ph^{-1}_{qr}(U_{pq}) \cap U_{pr}$. 
\end{itemize} 
The pair $(X, \wh{\mca{U}})$ is called a space with Kuranishi structure
(abbreviated as a \textit{K-space})
of dimension $d$. 
We say $\wh{\mca{U}}$ is oriented if each K-chart $\mca{U}_p$ is oriented and 
$\ph_{pq}$ preserves orientations for every $p \in X$ and $q  \in \Image \psi_p$. 

\begin{rem} 
In Definition 3.11 \cite{FOOO_Kuranishi} the notion of a relative K-space $(X, Z; \wh{\mca{U}})$ is introduced, 
however in this paper we only need the absolute case ($X=Z$). 
\end{rem} 

\textbf{Strongly continuous/smooth maps from K-spaces} 

Let $(X, \wh{\mca{U}})$ be a K-space and $Y$ be a topological space. 
A \textit{strongly continuous map} 
$\wh{f}: (X, \wh{\mca{U}}) \to Y$ 
assigns a continuous map 
$f_p: U_p \to Y$ for every $p \in X$ such that 
$f_p \circ \ph_{pq} = f_q$ on $U_{pq}$ for every $p \in X$ and $q \in \Image \psi_p$. 
When $Y$ has a structure of a $C^\infty$-manifold, 
$\wh{f}$ is called 
\textit{strongly smooth}
 if $f_p: U_p \to Y$ is of $C^\infty$ for every $p \in X$.
Moreover, 
$\wh{f}$ is called 
\textit{weakly submersive}, 
if $f_p$ is a submersion for every $p \in X$. 

\textbf{Good coordinate system (GCS)}

Finally we recall the definition of a 
\textit{good coordinate system} (GCS). 
A GCS $\wt{\mca{U}}$ of $X$ consists of 
\[ 
( (\mf{P}, \le) , \{ \mca{U}_{\mf{p}}\}_{\mf{p} \in \mf{P}} , \{ \Phi_{\mf{p}\mf{q}}\}_{\mf{q} \le \mf{p}})
\] 
such that: 
\begin{itemize} 
\item $(\mf{P}, \le)$ is a finite partially ordered set. 
\item $\mca{U}_{\mf{p}} = (U_{\mf{p}}, \mca{E}_{\mf{p}}, s_{\mf{p}}, \psi_{\mf{p}})$ is a K-chart for each $\mf{p} \in \mf{P}$, and $\bigcup_{\mf{p} \in \mf{P}} \Image \psi_{\mf{p}} = X$. 
\item $\Phi_{\mf{p}\mf{q}}$ is a coordinate change $\mca{U}_{\mf{q}} \to \mca{U}_{\mf{p}}$ in strong sense. 
\item If $\mf{r} \le \mf{q} \le \mf{p}$, 
there holds $\Phi_{\mf{p}\mf{r}}|_{U_{\mf{p}\mf{q}\mf{r}}}  = \Phi_{\mf{p}\mf{q}} \circ \Phi_{\mf{q}\mf{r}}|_{U_{\mf{p}\mf{q}\mf{r}}}$, 
where $U_{\mf{p}\mf{q}\mf{r}}: = \ph^{-1}_{\mf{q}\mf{r}}(U_{\mf{p}\mf{q}}) \cap U_{\mf{p}\mf{r}}$. 
\item If $\Image \psi_{\mf{p}} \cap \Image \psi_{\mf{q}} \ne \emptyset$, then either $\mf{p} \le \mf{q}$ or $\mf{q} \le \mf{p}$ holds. 
\item Let us define a relation $\sim$ on $\bigsqcup_{\mf{p} \in \mf{P}} U_{\mf{p}}$ as follows: 
$x \sim y$ if and only if one of the following holds: 
\begin{itemize}
\item $\mf{p} = \mf{q}$ and $x=y$. 
\item $\mf{p} \le \mf{q}$ and $y = \ph_{\mf{q}\mf{p}}(x)$. 
\item $\mf{q} \le \mf{p}$ and $x = \ph_{\mf{p}\mf{q}}(y)$. 
\end{itemize} 
Then the relation $\sim$ is an equivalence relation, and the quotient 
$\bigg(\bigsqcup_{\mf{p} \in \mf{P}} U_{\mf{p}}\bigg)/\sim$, 
equipped with the quotient topology, is Hausdorff. 
\end{itemize} 

The definitions of strongly continuous/smooth maps naturally extend to spaces with GCS. 
Finally, there exists a natural notion of embeddings from a K-structure to a GCS (KG-embedding; see Definition 3.29 in \cite{FOOO_Kuranishi}). 
For any K-structure $\wh{\mca{U}}$ on a compact space $X$, 
there exist a GCS $\wt{\mca{U}}$ and a KG-embedding $\wh{\mca{U}} \to \wt{\mca{U}}$; 
see Theorem 3.30 in \cite{FOOO_Kuranishi}.

\end{document}